\title{Natural operations in Intersection Cohomology}
\date{\today}
\author{David Chataur}
\address{Lamfa\\
Universit\'e de Picardie Jules Verne\\
33, rue Saint-Leu\\
80039 Amiens Cedex~1\\
         France}
\email{David.Chataur@u-picardie.fr}
\author{Daniel Tanr\'e}
\address{D\'epartement de Math{\'e}matiques\\
         UMR-CNRS 8524 \\
         Universit\'e de Lille\\
         59655 Villeneuve d'Ascq Cedex\\
         France}
\email{Daniel.Tanre@univ-lille.fr}
\thanks{The first author was supported by the research project ANR-18-CE93-0002  ``OCHOTO''. 
The second author was partially supported by the MINECO and FEDER research project MTM2016-78647-P
and the ANR-11-LABX-0007-01  ``CEMPI''}
\subjclass[2010]{55N33, 55S45, 55S05}
\keywords{Intersection homology; Blown up cohomology; Representability; Poset}
\renewcommand\l@subsection{\@tocline{2}{0pt}{2pc}{5pc}{}}
\renewcommand\l@subsubsection{\@tocline{3}{0pt}{4pc}{10pc}{}}
\theoremstyle{plain}
\newtheorem{theorem}{Theorem}
\newtheorem{conjecture}{Conjecture}
\newtheorem{proposition}{Proposition}[section]
\newtheorem{theoremb}[proposition]{Theorem}
\newtheorem{lemma}[proposition]{Lemma}
\newtheorem{corollary}[proposition]{Corollary}
\theoremstyle{definition}
\newtheorem{definition}[proposition]{Definition}
\newtheorem{example}[proposition]{Example}
\newtheorem{question}{Questions}
\newtheorem{perspective}{Perspective}
\theoremstyle{remark}
\newtheorem{remark}[proposition]{Remark}
\numberwithin{equation}{section}
   \newcommand{\negro}{\normalcolor} 
\newcommand{\appendref}[1]{Appendice~\ref{#1}}
\newcommand{\secref}[1]{Section~\ref{#1}}
\newcommand{\subsecref}[1]{Subsection~\ref{#1}}
\newcommand{\thmref}[1]{Theorem~\ref{#1}}
\newcommand{\propref}[1]{Proposition~\ref{#1}}
\newcommand{\lemref}[1]{Lemma~\ref{#1}}
\newcommand{\corref}[1]{Corollary~\ref{#1}}
\newcommand{\conjref}[1]{Conjecture~\ref{#1}}
\newcommand{\remref}[1]{Remark~\ref{#1}}
\newcommand{\defref}[1]{Definition~\ref{#1}}
\def\ov{\overline}
\def\ti{\widetilde}
\def\cC{{\mathcal C}}
\def\cE{{\mathcal E}}
\def\cL{{\mathcal L}}
\def\cO{{\mathcal O}}
\def\cT{{\mathcal T}}
\def\cR{{\mathcal R}}
\def\cS{{\mathcal S}}
\def\cU{{\mathcal U}}
\def\cZ{{\mathcal Z}}
\def\crH{{\mathscr H}}
\def\crK{{\mathscr K}}
\def\crS{{\mathscr S}}
\def\bQ{{\mathbf Q}}
\def\1{{\mathbf 1}}
\def\tc{{\mathtt c}}
\def\tv{{\mathtt v}}
\def\ttf{{\mathtt f}}
\def\tti{{\mathtt{i}}}
\def\ttn{{\mathtt{n}}}
\def\tts{{\mathtt s}}
\def\ttt{{\mathtt t}}
\def\ttv{{\mathtt v}}
\def\ttM{{\mathtt M}}
\def\ttN{{\mathtt N}}
\def\ttP{{\mathtt P}}
\def\ttQ{{\mathtt Q}}
\def\B{\mathbb{B}}
\def\F{\mathbb{F}}
\def\H{\mathbb{H}}
\def\N{\mathbb{N}}
\def\Q{\mathbb{Q}}
\def\Z{\mathbb{Z}}
\def\Ker{{\rm Ker\,}}
\def\im{{\rm Im\,}}
\def\Hom{{\rm Hom}}
\def\HomD{{\rm Hom}^{\Delta}}
\def\op{{\rm{op}}}
\def\nat{{\rm{Nat}}}
\def\natr{{\mathrm{Nat}}_{R}}
\def\face{{\mathrm{Face}}}
\def\id{{\rm id}}
\def\sing{{\rm Sing}}
\def\singp{{\rm Sing}^{\ttP}}
\def\tN{{\widetilde{N}}}
\def\tG{{\widetilde{G}}}
\def\reg{{\rm reg}}
\def\ffs{{filtered face set}}
\def\ffss{{filtered face sets}}
\def\FFS{{{\mathrm{Ffs}}_{\ttP}}}
\def\ev{{\tt eval}}
\def\sq{{\rm Sq}}
\def\rc{{\mathring{\tc}}}
\def\tDelta{{\widetilde{\Delta}}}
\def\dDelta{{{\pmb\Delta}}}
\def\ttK{{\mathtt{K}}}
\def\emlp{{\text{\sc{EML}}}_{\ov{p}}}
\def\menos{\backslash}
\newcounter{ejemplo}
\newcounter{figura}
\def\set{{\mathrm{Set}}}
\def\top{{\mathrm{Top}}}
\def\sset{{\mathrm{Sset}}}
\def\ssetp{{\mathrm{Sset}}_{\ttP}}
\def\ssets{{\mathrm{Sset}}_{[1]}}
\def\topp{{\mathrm{Top}}_{\ttP}}
\def\mdg{\mathbf{M_{dg}}}
\def\tkpn{{\mathtt K}(R,n,\ttP,\ov{p})}
\def\tkqm{{\mathtt K}(R,m,\ttP,\ov{q})}
\def\tkqn{{\mathtt K}(R,n,\ttP,\ov{q})}
\def\tkpp{{\mathtt K}(R,n-1,\ttP,\ov{p})}
\begin{document}

\begin{abstract}
Eilenberg-MacLane spaces, that classify the singular cohomology groups of topological spaces,
admit natural constructions in the framework of simplicial sets.
The existence of  similar spaces for the intersection cohomology groups of a stratified space
is a long-standing open problem asked by M. Goresky and R. MacPherson. 
One feature of this work is a construction of such simplicial sets.
From works of R. MacPherson, J. Lurie and others, it is now commonly accepted that 
the simplicial set of singular simplices associated to a topological space has to be replaced by 
the simplicial set  of singular simplices that respect the stratification.  
This is encoded in the category of simplicial sets over the nerve of the poset of  strata. For each perversity, 
we define a functor from it, with values in 
the category of cochain complexes over a commutative ring.  
This construction is based upon a simplicial blow up and the associated
cohomology is the intersection cohomology as it was defined by M. Goresky and R. MacPherson.
This functor admits an adjoint and we use it to get classifying spaces for intersection cohomology.
Natural intersection cohomology operations are understood 
in terms of intersection cohomology of these classifying spaces.
As in the classical case, they form infinite loop spaces.
In the last section, we examine the depth one case of stratified spaces with only one singular stratum. 
We observe that the classifying spaces are
Joyal's projective cones over  classical Eilenberg-MacLane spaces.  We establish some of their properties  and
conjecture that, for Goresky and MacPherson perversities, all intersection cohomology operations are
induced by classical ones. 
\end{abstract}

\maketitle

\tableofcontents

\section*{Introduction}
M. Goresky and R. MacPherson introduced intersection homology which extend Poincar\'e duality 
from smooth manifolds to some singular spaces, the pseudomanifolds, 
admitting a decomposition into manifolds of different dimensions, called strata,
assembled so that each point has a conical neighbourhood.
Intersection homology relies on the notion of  perversity,  a parameter denoted $\ov{p}$
which measures the tangential degree of the component of chains along the strata.
For the intersection homology with rational coefficients, one gets the same picture as for topological manifolds 
\cite{MR440533, GM1}.
For instance, there exists a signature, which is a bordism invariant, when one applies the theory to Witt spaces
\cite{Sieg}. But when working over a commutative ring, subtle and important differences occur.

\medskip
Let us focus on Poincar\'e duality as an isomorphism between cohomology and homology
given by a cap product with the fundamental class. If the intersection cohomology is defined from a linear dual
of the intersection chain complex, we do not recover such Poincar\'e duality isomorphism for a general
commutative ring $R$ of coefficients,
without restriction on the torsion part of the intersection homology of the links of some singular strata, \cite{GS}.
We refer the reader to the monographs
\cite{MR2286904, Bor, Greg,MR2207421}
for a detailed account of these results and their applications.

\medskip
In previous works \cite{CST1,CST2}, we have introduced a cohomology obtained from a process of blow up
of singularities at the level of simplices. We call it blown up cohomology (or TW-cohomology) and denote it
$\crH_{\ov{p}}^*(-;R)$.
This cohomology coincides with the intersection cohomology obtained from the dual chain complex
if coefficients are in a field but differs in general.
One of its  main features is the existence of cup products of classes and  of cap products with intersection
homology classes.
In particular, the cap product with the fundamental class of a compact oriented pseudomanifold
gives a Poincar\'e isomorphism between the blown up cohomology and the intersection homology
\cite[Theorem B]{CST2}.
Versions for the non compact case also exist in \cite{CST2,ST1, ST2,Greg}.

\medskip
In their second main paper (\cite{GM2}), Goresky and MacPherson 
define  and characterize complexes of sheaves whose hypercohomology coincides with intersection homology.
The prototype  is named Deligne sheaf and denoted $\bQ_{\ov{p}}$. In \cite{CST5},
we prove that the sheafification of the blown up cochains is isomorphic to the Deligne sheaf 
in the derived category of complexes of sheaves of the space in consideration.
Thus the construction that we develop in this work also applies  to cohomological operations for the
hypercohomology associated to $\bQ_{\ov{p}}$.

\medskip
 \emph{One purpose of this work} is the definition of a perverse analog of Eilenberg-MacLane spaces,
and prove that their blown up cohomology is isomorphic to the set of intersection cohomological operations
(see \thmref{thm:main3} below), an exact duplicate of the topological situation.
In particular, our result answers the long standing question asked by Goresky and MacPherson as Problem 11 in \cite{Bor}:
\emph{
Is there a category of spaces, maps and
homotopies, and a ``classifying space'' $B$ so that intersection cohomology of $X$
can be interpreted as homotopy classes of maps from $X$ to
$B$?
}
Let us also mention that the existence of Steenrod squares in intersection cohomology was established by M. Goresky
(\cite{MR761809}) and adapt to the blown up cohomology in \cite{CST6}. (The main interest of \cite{CST6}
lies in the proof of a conjecture made in \cite{MR1014465}, see \conjref{conjecture2} below.)

\subsection*{Operations in singular cohomology}
Let us summarize the situation. If  $X$ is a topological space, we can  use  the simplicial set $\sing \,X$,
formed of the singular simplices, and move the problem into the simplicial paradigm. 
Thus, let $\sset$ be the category of simplicial sets and $\mdg$ be
 the category of cochain complexes of $R$-modules. 
 Cohomology of simplicial sets can be defined as the homology of the normalized cochain funtor,
 $$N^*(-;R)\colon \sset^\op\to \mdg.$$
 If we apply it to the simplicial set $\sing\, X$, we recover the singular cohomology of the 
 topological space $X$.
 
 \medskip
Let $K\in\sset$ and $M\in\mdg$. The functor $N^*$ admits an adjoint
 $\langle -\rangle$, defined by $\langle M\rangle_{k}=\Hom_{\mdg}(M,N^*(\Delta[k]))$. 
 Taking $M=R(n)$ with $R(n)^k=0$ if $k\neq n$ and $R(n)^n=R$, we obtain an Eilenberg-MacLane space
$K(R,n)=\langle R(n)\rangle$, \cite[Corollary III.2.7]{MR1711612}, giving an isomorphism between
homotopy classes and cohomology,
\begin{equation}\label{equa:introeml}
[K,K(R,n)]_{\sset}\cong H^n(K;R).
\end{equation}
 The category $\sset$ is  simplicially enriched and  
 we can define a simplicial set by,
 $$
 \HomD_{\sset}(K,\langle M\rangle)_{k}=\Hom_{\sset}(K\times \Delta[k],\langle M\rangle).
 $$
For $M=R(n)$, this simplicial set  is an abelian simplicial group and thus 
of the homotopy type of a  product of Eilenberg-MacLane space
(\cite[Th\'eor\`eme 6]{MooreInvariant}),
\begin{equation}\label{equa:introgem}
 \HomD_{\sset}(K,K(R,n))\simeq
\prod_{k=0}K(H^{n-k}(K;R),k),
\end{equation}
the determination \eqref{equa:introeml} corresponding to the image by $\pi_{0}$ of \eqref{equa:introgem}.
Mention also  that the family of Eilenberg-MacLane spaces 
$K(R,n)_{n}$
is an infinite loop space, the based loop space
$\Omega K(R,n)$
being homotopy equivalent to $K(R,n-1)$.

\medskip
By definition,  a  cohomological operation of type $(R,n,m)$
is a natural transformation
between the  functors
$H^n(-;R)$ and $H^m(-;R)$,
from $\sset$ to the category of $R$-modules.
We denote $\natr(H^n,H^m)$
the set of cohomological operations of this type. 
The representability theorem stated in  \eqref{equa:introeml} reveals crucial in the determination of  cohomology operations
since, as a direct consequence of Yoneda's lemma, there is an isomorphism
\begin{equation}\label{equaintroyonedaeml}
\natr(H^n,H^m)
\cong
H^m(K(R,n);R),
\end{equation}
between the set of operations and  the cohomology of Eilenberg-MacLane spaces.

\medskip
These previous notions constitute a well known material and most of them are in a talk of Jean-Pierre~Serre 
 \cite{Serrekpin}  in the  Cartan seminar.
However, they meet also recent progress and deep results in homotopy theory. The singular set
$\sing\, X$ enters in the framework of quasi-categories developed by A.~Joyal (\cite{joyalbarcelona, zbMATH05219541})
and J.~Lurie (\cite{LurieSecondBook,MR2522659}), where $\sing\, X$ has for 0-morphisms the points of $X$,
1-morphisms the paths in $X$, 2-morphisms the homotopy of paths,...
Its first truncation gives the classical Poincar\'e groupoid.
As we develop it below, our results contain the extension of this material to stratified spaces,
with a presentation taking in account  higher categorical structures.

\subsection*{Stratified spaces}
It is time to specify the objects of our study.
The prototype comes from the notion of pseudomanifolds (\defref{def:pseudomanifold}), 
that covers many notions of interest as (\cite{GM2})
real analytic varieties, Whitney stratified sets, Thom-Mather stratified spaces,...
A more general situation is a Hausdorff topological space together with a  partition  
$$X=\sqcup_{\tts\in \ttP}S_{\tts}$$ 
whose elements are  
non-empty, locally closed subsets of $X$, called strata.
If the partition is locally finite and any closure of a stratum is a union of strata,
then we say that $X$, with its partition, is \emph{a stratified space}, see \defref{def:decomposed}.
A crucial property of stratified spaces is the existence of a structure of poset on $\ttP$ for the relation
$\tts\preceq \ttt$ if $S_{\tts}\subset \ov{S}_{\ttt}$.
Endowing $\ttP$ with the associated Alexandrov topology, a structure of stratified space can be encoded
in a continuous map $X\to \ttP$, with some additional properties, see \defref{def:stratification}.

\medskip
To obtain a simplicial representation of a stratified space $X$, we consider the nerve  $\ttN(\ttP)$ of the poset $\ttP$
and define a simplicial set over $\ttN(\ttP)$, $\singp X$, as the pullback of
$$\sing\, X\to \sing\, \ttP \leftarrow \ttN(\ttP).$$
This definition coincides with those of filtered simplices in \cite[Definition A.3]{CST1} and \cite{MacPherson90}. 
In terms of quasi-categories, $\singp X$ meets an environment similar to that of $\sing\, X$. 
Remember that the first truncation of $\sing\, X$ is equivalent to the Poincar\'e groupoid.
Here, the first truncation of $\singp X$
is equivalent to the category of exit paths up to stratified homotopy, introduced in an unpublished work of MacPherson.
(An exit path is a path which is stratum-increasing.)
This similitude between $\sing\, X$ and $\singp X$  goes farther: the category of locally constant sheaves on a connected, locally contractible, topological space
is well-known to be equivalent to the category of $\pi_{1}X$-sets.
On a stratified space, MacPherson  proves that the  category of sheaves that are
locally constant on each stratum (also called constructible sheaves) is equivalent to the category 
of set valued functors on the category  of exit paths up to stratified homotopy.
 In \cite{MR2575092}, D. Treumann extends this result into a 2-categorical framework.
 Finally,  this has been  generalized by Lurie (\cite[Theorem A.9.3]{LurieSecondBook})  as an equivalence 
 between quasi-categories involving the quasi-category of simplicial set-valued functors defined on $\singp X$, 
 recovering the results of MacPherson and
 Treumann from the first  and the second truncations.
 Let us mention also the work (\cite{MR2591969}) of J. Woolf who refines \cite{MR2575092} for the
 homotopically stratified spaces of F. Quinn (\cite{Qui}).
 
 \medskip
 Thus,  moving the  notion of stratified space in  the simplicial paradigm by considering the simplicial map
 $\singp X\to \ttN(\ttP)$ defined above is an exact replica of the situation with $\sing\,X$. In view of Lurie's
 theorem, one can ask the following questions.
 
 \begin{question}
 Let $X\to \ttP$ be a stratified space, $\singp X\to \ttN(\ttP)$ the associated simplicial set over the
 nerve of $\ttP$ and $\ov{p}$ be a perversity of associated Deligne sheaf $\bQ_{\ov{p}}$.
 \begin{enumerate}[A)]
 \item Can we define the  intersection cohomology groups of simplicial sets over $\ttP$ so that
  $\H^*(X;\bQ_{\ov{p}})$  is the intersection cohomology of $\singp X$?
 \item Does there exist a simplicial set over $\ttP$, $\ttK(R,n,\ttP, {\ov{p}})$, so that the intersection
 cohomology is recovered as the homotopy classes of the simplicial set 
 $\HomD_{\ssetp}(-, \ttK(R,n,\bQ_{\ov{p}}))$?
 \item If (A) holds, does the intersection cohomology of $\ttK(R,n,\ttP, {\ov{p}})$ correspond to cohomological operations on
  intersection cohomology?
 \end{enumerate}
 \end{question}

\subsection*{The results}
Let $\topp$ be the category of topological spaces over a poset $\ttP$ and 
$\ssetp$ be the category of simplicial sets over $\ttN(\ttP)$.
The blown up cochain complex, $\tN^*_{\ov{p}}(-;R)\colon \topp\to \mdg$, already introduced
in \cite{CST4, CST3, CST5}, factorizes through $\ssetp$ as
\begin{equation}\label{equa:introboum}
\tN^*_{\ov{p}}(-;R)\colon
\xymatrix@1{
\topp\ar[r]^{\singp}
&
\ssetp\ar[r]
&
\mdg.
}
\end{equation}
By abuse of notation, we denote also $\tN^*_{\ov{p}}(-;R)\colon\ssetp\to\mdg$ 
the functor that appears in \eqref{equa:introboum} and by $\crH_{\ov{p}}^*(-;R)$ its homology. 
The first question is answered in \eqref{equa:deligneforever}
as follows.

\begin{theorem}\label{thm:main1}
Let $X\to \ttP$ be a pseudomanifold over the poset $\ttP$ and $\ov{p}$ be a perversity on $\ttP$.
Then, there are isomorphisms
$$\crH_{\ov{p}}^*(X;R)\cong
H^*(\tN^*_{\ov{p}}(\singp X);R)
\cong
\H^*(X;\bQ_{\ov{p}}).$$
\end{theorem}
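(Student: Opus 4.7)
The plan is to prove the two isomorphisms separately, using the middle object $H^*(\tN^*_{\ov{p}}(\singp X);R)$ as a bridge between the combinatorial (blown up cochains) and sheaf-theoretic (Deligne sheaf) descriptions.

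For the first isomorphism $\crH_{\ov{p}}^*(X;R)\cong H^*(\tN^*_{\ov{p}}(\singp X);R)$, I would appeal directly to the factorization \eqref{equa:introboum}. The blown up cochain complex of a topological space $X\to\ttP$ defined in \cite{CST1,CST2} depends on $X$ only through its set of filtered singular simplices together with their cosimplicial structure. The pullback description of $\singp X$ recalled above presents exactly those filtered simplices as the $n$-simplices of a simplicial set over $\ttN(\ttP)$; the text already asserts the compatibility of this convention with the original ``Definition A.3'' of \cite{CST1}. Once that identification is made explicit, the factorization \eqref{equa:introboum} becomes a literal equality of cochain complexes, and the first isomorphism is immediate. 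Notably, no hypothesis on $X$ beyond being filtered over $\ttP$ is needed for this half.

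For the second isomorphism $H^*(\tN^*_{\ov{p}}(\singp X);R)\cong \H^*(X;\bQ_{\ov{p}})$, the strategy is to sheafify and invoke the main comparison theorem of \cite{CST5}: the sheafification of the presheaf $U\mapsto \tN^*_{\ov{p}}(\singp U;R)$ on $X$ is quasi-isomorphic, in the derived category of complexes of sheaves on $X$, to the Deligne sheaf $\bQ_{\ov{p}}$. From this I would then deduce the equality of global invariants by a standard hypercohomology argument, checking that the blown up cochain presheaf has enough acyclicity (softness, or equivalently a resolution by fine enough sheaves up to quasi-isomorphism) for its global sections to compute the right-derived global sections functor. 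The pseudomanifold hypothesis enters precisely here, since the axiomatic characterization of $\bQ_{\ov{p}}$ — and thus the identification of the sheafification with it — rests on the conical link structure of $X$.

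The hard step, already handled in \cite{CST5}, is the local input to this sheaf comparison: one must show that on a distinguished neighborhood of a point of a singular stratum, the blown up cohomology satisfies the truncation and attachment conditions that characterize $\bQ_{\ov{p}}$ up to quasi-isomorphism. This in turn relies on the cone formula for blown up cohomology established in the authors' earlier work. Once that local identification is in hand, the two isomorphisms of the theorem assemble formally from the factorization \eqref{equa:introboum} and the local-to-global passage through sheafification.
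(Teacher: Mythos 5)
Your treatment of the second isomorphism is, in spirit, what the paper does: it simply cites \cite[Theorem A]{CST5}, and your sketch of that sheaf-theoretic comparison (sheafify, verify the Deligne axioms locally using the cone formula, deduce the global statement from hypercohomology) is a fair account of the cited argument.

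The first isomorphism, however, contains a genuine gap. You assert that once the identification of $\singp X$ with the filtered simplices of \cite{CST1} is made, the factorization \eqref{equa:introboum} ``becomes a literal equality of cochain complexes, and the first isomorphism is immediate.'' That is false. The blown up complex of \cite{CST1,CST2,CST5} is built on \emph{filtered face sets}, i.e.\ presheaves on $\Delta[\ttP]^{\face}$ (no degeneracies), and its local building blocks involve the full, \emph{non-normalized} cochain complexes on simplices; the functor $\tN^*_{\ov{p}}$ defined here lives on presheaves on $\Delta[\ttP]$ (with degeneracies) and is built from \emph{normalized} cochains. The paper is explicit that the two cochain complexes $\tN^*_{\ov{p}}(\Psi_{L})$ and $\tN^{\Delta,*}_{\ov{p}}(\cO_{\ttP}(\Psi_{L}))$ do not coincide -- already for $\ttP=\{0\}$ and $L=\Delta[0]$ one is the normalized Moore complex and the other the non-normalized one. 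Bridging them is precisely the content of \propref{prop:deltaornot}, which constructs a natural homotopy equivalence on each $\Delta[J]$ using the Eilenberg--Zilber theorem and the Alexander--Whitney map (applied factor-by-factor through the cone/tensor decomposition of the blow up, with the pullback presentation of the cone needed to handle the apex correctly) and then passes to inverse limits while checking preservation of perverse degree. None of this is automatic, so your proof as written does not establish the first isomorphism; you would need to supply this comparison argument, or at least cite it, rather than declare the complexes equal.
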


For the introduction of perverse Eilenberg-MacLane spaces, we show the existence of  an adjunction
$$\xymatrix{\ssetp&& \mdg \ar@<1ex>[ll]^(.46){\langle-\rangle_{\ov{p}}}
\ar@<1ex>[ll];[]^(.50){\tN^*_{\ov{p}}   }\\}
$$
We take over the classical construction by setting $\tkpn =\langle R(n)\rangle_{\ov{p}}$.
The following statement (see \corref{cor:productEML}) is the answer to the second question.

\begin{theorem}\label{thm:main2}
Let $X\to \ttP$ be a pseudomanifold over the poset $\ttP$ and $\ov{p}$ be a perversity on $\ttP$.
Then, there is a homotopy equivalence
$$\HomD_{\ssetp}(\singp X, \tkpn)
\simeq
\prod_{k\geq 0} K(\H^{n-k}(X;\bQ_{\ov{p}}),k).$$
In particular, there are isomorphisms
$$\pi_{0}(\HomD_{\ssetp}(\singp X, \tkpn)
\cong
[\singp X,\tkpn]_{\ssetp}
\cong
\H^{n}(X;\bQ_{\ov{p}}).
$$
\end{theorem}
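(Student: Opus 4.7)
The strategy is to transfer the classical Dold--Kan/Moore argument into the perverse framework, using as ingredients the adjunction $\tN^*_{\ov{p}}\dashv\langle-\rangle_{\ov{p}}$, the definition $\tkpn=\langle R(n)\rangle_{\ov{p}}$, Moore's theorem on abelian simplicial groups, and Theorem A above for the final identification of cohomology groups.

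First, I would observe that $R(n)$ is canonically an $R$-module object in $\mdg$; since $\langle-\rangle_{\ov{p}}$ is an additive right adjoint, it preserves abelian group objects, so $\tkpn$ acquires the structure of an $R$-module object in $\ssetp$. Consequently the simplicial set $\HomD_{\ssetp}(\singp X,\tkpn)$ is a simplicial $R$-module, and Moore's theorem (invoked in the introduction for the classical case and formally identical here) reduces the proof to the computation
\begin{equation*}
\pi_k\HomD_{\ssetp}(\singp X,\tkpn)\;\cong\;\H^{n-k}(X;\bQ_{\ov{p}}).
\end{equation*}

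To carry out this computation, I would unfold the simplicial structure via the adjunction:
\begin{equation*}
\HomD_{\ssetp}(\singp X,\tkpn)_k \;\cong\; \Hom_{\mdg}\bigl(\tN^*_{\ov{p}}(\singp X\times\Delta[k]),\,R(n)\bigr),
\end{equation*}
and then establish a perverse Eilenberg--Zilber quasi-isomorphism
$\tN^*_{\ov{p}}(\singp X\times\Delta[k])\simeq\tN^*_{\ov{p}}(\singp X)\otimes N^*(\Delta[k])$,
natural in the cosimplicial variable $[k]$. Setting $C=\tN^*_{\ov{p}}(\singp X)$ and applying the Dold--Kan correspondence, the simplicial $R$-module $[k]\mapsto\Hom_{\mdg}(C\otimes N^*(\Delta[k]),R(n))$ has $k$-th homotopy group equal to $H^{n-k}(C;R)$; Theorem A identifies this with $\H^{n-k}(X;\bQ_{\ov{p}})$, which combined with Moore's theorem yields the announced product decomposition. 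The final pair of isomorphisms in the statement is then the $k=0$ instance of this computation, together with the standard identification $\pi_0\HomD_{\ssetp}(\singp X,\tkpn)\cong[\singp X,\tkpn]_{\ssetp}$ arising from $\tkpn$ being a fibrant object of $\ssetp$, whose fibrancy is itself a consequence of its $R$-module structure.

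The main obstacle is the perverse Eilenberg--Zilber step. Because $\tN^*_{\ov{p}}$ is built from a simplicial blow up of filtered face sets and the perversity $\ov{p}$ enters stratum by stratum, the classical Alexander--Whitney and shuffle constructions must be adapted to the product $\singp X\times\Delta[k]$, stratified over $\ttN(\ttP)$ via the projection onto $\singp X$, and one must check that the resulting comparison maps are natural quasi-isomorphisms with respect to the cosimplicial structure in $[k]$. A secondary, more bookkeeping issue is to verify that the $R$-module structure on $\tkpn$ is compatible with its structure map to $\ttN(\ttP)$, so that the Moore decomposition indeed takes place inside $\ssetp$ and the factors $K(\H^{n-k}(X;\bQ_{\ov{p}}),k)$ appear as Eilenberg--MacLane objects in the classical simplicial sense.
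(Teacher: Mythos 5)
Your overall strategy — identify $\HomD_{\ssetp}(\singp X,\tkpn)$ as a simplicial $R$-module, invoke Moore's theorem to split it as a product of Eilenberg--MacLane spaces, and then compute the homotopy groups — is the same strategy the paper uses for Corollary~\ref{cor:productEML}. But the \emph{way} you propose to compute the homotopy groups diverges from the paper and leaves a genuine gap, and you also have the adjunction the wrong way around.

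The slip first: by Proposition~\ref{prop:adjoint} the adjunction reads
$\Hom_{\ssetp}(\Psi_{L}\otimes\Delta[k],\langle R(n)\rangle_{\ov{p}})\cong\Hom_{\mdg}(R(n),\tN^*_{\ov{p}}(\Psi_{L}\otimes\Delta[k]))=\cZ^n\tN^*_{\ov{p}}(\Psi_{L}\otimes\Delta[k])$,
so the $k$-simplices of $\HomD_{\ssetp}(\singp X,\tkpn)$ are the degree-$n$ \emph{cocycles} of $\tN^*_{\ov{p}}(\singp X\otimes\Delta[k])$, not elements of $\Hom_{\mdg}(\tN^*_{\ov{p}}(\singp X\otimes\Delta[k]),R(n))$. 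This matters for the Dold--Kan bookkeeping you are about to do.

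The genuine gap is the ``perverse Eilenberg--Zilber'' step. You correctly flag it as the main obstacle, but the paper does not prove anything of the strength you need — a natural quasi-isomorphism $\tN^*_{\ov{p}}(\Psi_{L}\otimes\Delta[k])\simeq\tN^*_{\ov{p}}(\Psi_{L})\otimes N^*(\Delta[k])$ compatible with the cosimplicial structure in $[k]$ — and it is not at all routine: the blown up cochain functor is not a cochain functor on a product of simplicial sets in the ordinary sense, the perversity filtration enters tensor-factor by tensor-factor, and the Alexander--Whitney/shuffle adaptation you would need does not obviously preserve perverse degree. The paper sidesteps this entirely: it proves only the $\Delta[1]$ case, in the very concrete form of Proposition~\ref{prop:productwithDelta1} (an explicit cochain homotopy $\tG$ built by hand, requiring the cup product of~\cite{CST4} via Proposition~\ref{prop:epimorphism}), and then leverages this single acyclicity statement to get the delooping $\tkpp\simeq\Omega_{\epsilon}\tkpn$ (Theorem~\ref{thm:EMLinfinitloop}), whence all the $\pi_k$ follow by induction. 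Your route tries to compute all $\pi_k$ at once, which forces the full Eilenberg--Zilber in every simplicial degree and under cosimplicial naturality — a considerably heavier lift than the single path-object construction the paper actually carries out. Without that lemma (and with the Hom direction corrected) the proof is incomplete; with it, it would constitute a genuinely different and more computation-heavy proof of the same result.
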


We also show in \thmref{thm:EMLinfinitloop} that the family $(K(R,n,\ttP,\ov{p}))_{n}$ 
is an infinite loop object in the category $\ssetp$. Finally, the behavior of cohomological operations on
the hypercohomology of Deligne's sheaves is deduced from  \propref{prop:emloperation} as follows. 

\begin{theorem}\label{thm:main3}
Let  $\ov{p}$ and $\ov{q}$ be perversities on a poset $\ttP$.
For any couple of integers $(n,m)$, there is an isomorphism
$$\natr(\crH^{n}_{\ov{p}},\crH^{m}_{\ov{q}})\cong
\crH^m_{\ov{q}}(\tkpn;R).
$$
\end{theorem}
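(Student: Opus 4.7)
The plan is to apply a Yoneda-type argument to the representability result furnished by \thmref{thm:main2}. The statement $\natr(\crH^{n}_{\ov{p}},\crH^{m}_{\ov{q}})\cong \crH^m_{\ov{q}}(\tkpn;R)$ is formally the same as the classical identification $\natr(H^n,H^m)\cong H^m(K(R,n);R)$ recalled in \eqref{equaintroyonedaeml}, so my goal is to transport that Yoneda proof into the over-nerve simplicial category $\ssetp$ and then feed it through \thmref{thm:main1} to land in the Deligne-sheaf hypercohomology.

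First, I would promote \thmref{thm:main2} to a representability statement for the functor $\crH^{n}_{\ov{p}}(-;R)\colon \ssetp^{\op}\to \Mod$. The adjunction $\tN^*_{\ov{p}}\dashv\langle-\rangle_{\ov{p}}$ specializes, for the cochain complex $R(n)$ concentrated in degree $n$, to a natural bijection
\[
\Hom_{\mdg}\bigl(R(n),\tN^*_{\ov{p}}(K)\bigr)\;\cong\;\Hom_{\ssetp}\bigl(K,\tkpn\bigr),
\]
and via the simplicial enrichment $\HomD_{\ssetp}(-,\tkpn)$ (as in \eqref{equa:introgem}), passing to $\pi_{0}$ and using \thmref{thm:main2} yields a natural isomorphism $\crH^{n}_{\ov{p}}(K;R)\cong [K,\tkpn]_{\ssetp}$ of $R$-modules for all $K\in\ssetp$. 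The analogous statement holds for $(m,\ov{q})$ and $\tkqm$.

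Next, I would invoke the Yoneda lemma in the homotopy category $\mathrm{Ho}(\ssetp)$: for any functor $F\colon \mathrm{Ho}(\ssetp)^{\op}\to\Mod$, the set of natural transformations from the representable $[-,\tkpn]_{\ssetp}$ to $F$ is in bijection with $F(\tkpn)$ by evaluation at the universal class $\iota_{n}\in \crH^{n}_{\ov{p}}(\tkpn;R)$. Taking $F=\crH^{m}_{\ov{q}}(-;R)\cong [-,\tkqm]_{\ssetp}$ and composing with the representability of step one gives
\[
\natr\bigl(\crH^{n}_{\ov{p}},\crH^{m}_{\ov{q}}\bigr)\;\cong\;\nat\bigl([-,\tkpn]_{\ssetp},[-,\tkqm]_{\ssetp}\bigr)\;\cong\;[\tkpn,\tkqm]_{\ssetp}\;\cong\;\crH^{m}_{\ov{q}}(\tkpn;R).
\]

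The main obstacle I expect is the bookkeeping around the $R$-module enrichment: a priori the Yoneda bijection is between sets of set-valued natural transformations, whereas $\natr$ demands $R$-linearity at each object. To handle this I would use the simplicial abelian group structure on $\tkpn$, transported from $R(n)\in\mdg$ through the adjoint $\langle-\rangle_{\ov{p}}$ (this is also what underlies \thmref{thm:EMLinfinitloop} and the infinite loop space structure of the family $(\tkpn)_{n}$). Any class $u\in \crH^m_{\ov{q}}(\tkpn;R)$ produces a transformation $f\mapsto f^{*}u$ which is automatically $R$-linear because the operations $+$ and $r\cdot(-)$ on $[-,\tkpn]_{\ssetp}$ are induced by morphisms $\tkpn\times\tkpn\to\tkpn$ and $\tkpn\to\tkpn$ of $\ssetp$; conversely every $R$-linear natural transformation corresponds to such a class by evaluating on the universal class $\iota_{n}$. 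Finally, pre-composing with $\singp$ and using \thmref{thm:main1} transfers the identification to natural operations on $\H^{*}(-;\bQ_{\ov{p}})$, closing the argument.
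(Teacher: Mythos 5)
Your proposal is correct and follows essentially the same route as the paper: Theorem~\ref{thm:main3} is Proposition~\ref{prop:emloperation}, whose proof in the paper is precisely Yoneda applied to the representability statement in Proposition~\ref{prop:emlsset} (itself a consequence of Theorem~\ref{thm:isohomotopyclassesmdg}, the homotopy-level adjunction $\tN^*_{\ov{p}}\dashv\langle-\rangle_{\ov{p}}$). Note that the representability you need is for general s-fibrant $\Psi_L\in\ssetp$, which is Proposition~\ref{prop:emlsset} rather than Theorem~\ref{thm:main2} (the latter is the specialization to $\singp\psi_X$ for a pseudomanifold $X$), but your intermediate step already derives exactly what \propref{prop:emlsset} says. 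The extra paragraph about $R$-linearity is a worthwhile elaboration the paper leaves implicit: the Yoneda bijection is a priori set-level, and you correctly observe that the abelian simplicial group structure on $\tkpn$ (coming from $R(n)\in\mdg$ through $\langle-\rangle_{\ov{p}}$) forces every $f\mapsto f^*u$ to be $R$-linear, and conversely.
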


Thus, an important task is the computation of intersection cohomology of the perverse 
\break
Eilenberg-MacLane  spaces.
For that, we begin with the perversities introduced by Goresky and MacPherson in 
\cite{GM1,GM2}, that we call here GM-perversities. They have the particularity of depending only on the codimension
of the strata and we may choose subspaces of $\N$ as posets. We investigate  the simplest case
of isolated singularities  for which we can choose $\ttP=[1]=\{0,1\}$.
We notice that the corresponding perverse Eilenberg-MacLane  spaces are Joyal's cylinders in the sense
of \cite[Section~7]{joyalbarcelona}. More precisely, with the terminology of \cite{joyalbarcelona}, 
they are projective cone over classical Eilenberg-MacLane spaces. 
For instance, we can consider the two constant perversities, $\ov{0}$ and $\ov{\infty}$, with value 0 and $\infty$
respectively. For them, we get:
\begin{itemize}
\item $\ttK(R,n,[1],\ov{\infty})=\Delta[0]\ast K(R,n)$,
\item $\ttK(R,n,[1],\ov{0})=\Delta[1]\times K(R,n)/\Delta[0]\times K(R,n).$
\end{itemize}

Our actual knowledge of perverse Eilenberg-MacLane  spaces and their comparison with
the classical ones leads us to the following conjecture. 
Recall first the existence (\cite[Proposition 3.1]{CST7} of a 
natural chain isomorphism, $N^*(\Delta)\to \tN^*_{\ov{0}}(\Delta)$, which induces a natural chain injection
$N^*(\Delta)\to \tN^*_{\ov{p}}(\Delta)$ for any positive perversity $\ov{p}$.

\begin{conjecture}\label{conjecture1}
 Let  $\ttP=[n]$ and $\ov{p}$, $\ov{q}$ be two perversities of Goresky and MacPherson. Then,  all perverse cohomological operations come from the classical cohomology situation; i.e., the previous natural chain injection induces an 
 injective map
 $$\crH^m_{\ov{q}}(\tkpn;R)\longrightarrow H^m(K(R,n);R).$$
 \end{conjecture}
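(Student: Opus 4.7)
The plan is to approach \conjref{conjecture1} via the interpretation of $\crH^*_{\ov{q}}(\tkpn;R)$ as hypercohomology of the Deligne sheaf established in \thmref{thm:main1}, together with the adjunction $\tN^*_{\ov{p}}\dashv\langle-\rangle_{\ov{p}}$. The natural chain injection $N^*(\Delta)\to \tN^*_{\ov{p}}(\Delta)$ is natural in $\Delta$, so by this adjunction it corresponds to a canonical morphism $\tkpn\to K(R,n)$ of (stratified) simplicial sets; the map of the conjecture is then the one induced on degree-$m$ cohomology. Equivalently, via \thmref{thm:main3}, it can be read as the comparison between classical and perverse cohomology operations.

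First I would settle the depth-one case $\ttP=[1]$ by using the explicit projective-cone descriptions of the perverse Eilenberg-MacLane spaces recalled in the excerpt. A Mayer--Vietoris decomposition over the regular and singular pieces, combined with the standard formula for the intersection cohomology of an open cone in terms of truncated cohomology of the link, should express $\crH^*_{\ov{q}}(\tkpn;R)$ as a truncation of $H^*(K(R,n);R)$ depending on $\ov{p}$ and $\ov{q}$; injectivity is then automatic.

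For general $\ttP=[n]$, I would proceed by induction on depth. The simplicial set $\tkpn$ should admit a natural stratified filtration whose associated graded pieces are perverse Eilenberg-MacLane spaces over sub-posets of smaller length, and the resulting spectral sequence should have $E_{2}$-terms built from $H^{*}(K(R,n);R)$, assembled according to the codimension-by-codimension truncations prescribed by $\ov{p}$ and $\ov{q}$. Compatibility of this spectral sequence with the analogous filtration on the classical side, together with the naturality of $N^{*}\to\tN^{*}_{\ov{p}}$, would then yield the required injectivity.

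The main obstacle I anticipate is controlling cross-strata interactions. Different strata impose different truncations on the Deligne sheaf via the values $\ov{p}(c)$ and $\ov{q}(c)$, and while the local structure on an open cone over a link is classical, globally assembling these pieces may, a priori, create hypercohomology classes with no classical counterpart. The heart of the conjecture is that this cannot happen for GM perversities: any such class would have to be compatible with the classical local structure at every stratum, and one hopes this rigidity is enough to force it to come from $H^{*}(K(R,n);R)$. Making this rigidity precise --- presumably through a careful analysis of the attaching maps in the above filtration, or equivalently through the constructibility of $\bQ_{\ov{q}}$ on the geometric realization of $\tkpn$ --- is where the real work lies, and it is where the restriction to GM perversities (rather than general loose perversities) is expected to play a decisive role.
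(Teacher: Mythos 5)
This statement is a \emph{conjecture} in the paper, not a theorem: the authors explicitly do not prove it, and record only partial evidence --- for the depth-one poset $\ttP=[1]$ and in cohomological degrees $m\le n$ --- via Propositions~\ref{prop:kskeletonofcone}, \ref{prop:kskeleton}, \ref{prop:emlk} and \thmref{thm:conjen0}. So there is no proof in the paper against which your argument could be compared, and any complete argument would be new mathematics rather than a reconstruction.

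As a strategy sketch, your outline points in a reasonable direction but has gaps that go beyond the one you flag yourself. Most seriously, your opening move invokes \thmref{thm:main1} to interpret $\crH^*_{\ov{q}}(\tkpn;R)$ as hypercohomology of a Deligne sheaf; but that theorem is stated for a \emph{pseudomanifold} $X\to\ttP$, and $\tkpn$ is a simplicial set over $\ttN(\ttP)$ (a Joyal cylinder/projective cone), not the singular simplicial set of a pseudomanifold. Nothing in the paper licenses transporting the Deligne-sheaf description to $\tkpn$ directly, so the ``cone formula for the link'' step and the subsequent constructibility argument rest on an unestablished identification. Likewise, the proposed depth-induction spectral sequence with controllable attaching maps is exactly the part the authors leave open; your own last paragraph concedes that ``the real work lies'' there, which is a candid but accurate signal that this is a plan, not a proof. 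For what it is worth, the paper's partial results proceed rather differently: they compare $n$-skeleta of $\tkpn$ across perversities (showing the $n$-skeleton is perversity-independent), use excision of the cone apex to reduce to $K(R,n)^{(n)}$, and, for the $\ov{0}$-perversity case, an acyclic-models argument; none of these passes through a Deligne-sheaf/hypercohomology interpretation of the Eilenberg--MacLane simplicial sets.
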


For the poset $\ttP=[1]$, we prove this conjecture in low degrees, for any ring $R$ and any perversities
$\ov{p}$, $\ov{q}$ when $m\leq n$, cf. Propositions~\ref{prop:kskeletonofcone}, \ref{prop:kskeleton}, \ref{prop:emlk}
and \thmref{thm:conjen0}.
If \conjref{conjecture1} is true, we can state a more precise conjecture, based on
computations in the rational case in  \cite{CST1}  and from
\cite{MR761809,MR1014465,CST6} for $\F_{2}$. 

\begin{conjecture}\label{conjecture2}
When $\ttP=[n]$, we have the following isomorphisms of perverse algebras.
\begin{itemize}
\item $\crH^\ast_{\ov{\bullet}}(K(\Q,m,[n],\ov{p});\Q)\cong \land_{\ov{\bullet}} \,x$, 
where $\land_{\ov{\bullet}} \,x$ is the free  rational commutative graded perverse algebra over one generator $x$
of differential degree $m$ and perverse degree $\ov{p}$.
\item $\crH^\ast_{\ov{\bullet}}(K(\F_{2},m,[n],\ov{p});\F_{2})\cong 
\crK_{\ov{\bullet}}^\ast (x)$, where
$\crK_{\ov{\bullet}}^\ast (x)$ is the free unstable perverse algebra over one generator $x$
of differential degree $m$ and perverse degree $\ov{p}$.
\end{itemize}
\end{conjecture}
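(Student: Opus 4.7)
The plan is to construct a canonical comparison map from the relevant free perverse algebra to $\crH^*_{\ov{\bullet}}(K(R,m,[n],\ov p);R)$, show it is injective by exhibiting explicit cocycles, and deduce surjectivity from \conjref{conjecture1}. By \thmref{thm:main2}, the identity map of $\tkpm$ represents a fundamental class $\iota\in\crH^m_{\ov p}(\tkpm;R)$. Together with the perverse cup products of \cite{CST2} and, over $\F_2$, with the perverse Steenrod squares of \cite{CST6}, this endows $\crH^*_{\ov\bullet}(\tkpm;R)$ with the structure of a graded commutative (resp.~unstable) perverse algebra. The universal property of the free perverse algebra then yields a unique morphism $\Phi$ sending the generator $x$ to $\iota$, in both the rational and mod~$2$ settings.

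For surjectivity, assume \conjref{conjecture1}. Every perverse cohomology class of $\tkpm$ is then detected in the classical $H^*(K(R,m);R)$, which by Serre (rational case) and by Cartan (mod~$2$ case) is precisely the free commutative or free unstable algebra on the classical fundamental class. Because cup products and perverse Steenrod squares of $\iota$ lift the classical generators, the image of $\Phi$ accounts for every admissible monomial, and the only remaining task is to check that each such monomial sits in the prescribed perverse degree. This verification can be performed directly for $[n]=[1]$ using the explicit presentations
\[
\ttK(R,m,[1],\ov\infty)=\Delta[0]\ast K(R,m), \qquad \ttK(R,m,[1],\ov 0)=\Delta[1]\times K(R,m)/\Delta[0]\times K(R,m),
\]
and then extended to $[n]$ with $n\ge 2$ by induction on depth combined with the loop equivalence $\Omega\, \tkpn\simeq \tkpp$ provided by \thmref{thm:EMLinfinitloop}.

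Injectivity of $\Phi$ is the more tractable half. Rationally, $\tkpm$ is a simplicial abelian group, hence a perverse $H$-space, so the perverse Sullivan model machinery of \cite{CST1} produces a minimal model that is freely generated in a single bidegree with zero differential; its cohomology is then visibly the free graded commutative perverse algebra on $x$. Over $\F_2$, one argues by induction on $m$: granted the computation for $\tkpp$, one transgresses admissible monomials of perverse Steenrod squares across the perverse path-loop fibration of \thmref{thm:EMLinfinitloop}, and follows the classical Borel--Serre--Cartan argument in the perverse framework to obtain linear independence at level $m$.

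The main obstacle is \conjref{conjecture1} itself: without it the upper bound on $\crH^*_{\ov\bullet}(\tkpm;R)$ is lost, and the authors establish it only in the restricted range $[n]=[1]$, $m\le n$. Going beyond this seems to require a detailed analysis of the way the simplicial blow-up of \cite{CST5} interacts with the cosimplicial structure underlying cohomology operations at the cochain level, so as to rule out genuinely new perverse operations. A secondary but essential difficulty, in the mod~$2$ case, is the formulation and proof of a perverse Kudo transgression theorem compatible with both the unstability axioms and the perverse grading; this is what is needed to run the inductive step for the injectivity of $\Phi$ uniformly in $m$.
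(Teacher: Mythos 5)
The statement you are asked to prove is labeled \conjref{conjecture2} in the paper, and the authors never offer a proof; they introduce it as a conjecture that ``we can state\ldots if \conjref{conjecture1} is true,'' supported only by rational computations in \cite{CST1} and mod-$2$ computations/conjectures in \cite{MR761809,MR1014465,CST6}. So there is no proof in the paper to compare against, and nothing in the paper's text would let you upgrade this from a conjecture to a theorem. Your write-up is a strategy outline, not a proof, and to your credit you say so yourself; but given the framing it is worth underlining that the argument never closes.

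Within that caveat, the outline is reasonable and aligned with what the paper hints at: build the comparison map $\Phi$ via Yoneda and the fundamental class $\iota$, obtain surjectivity (onto the putative free perverse algebra) from \conjref{conjecture1} together with the classical Serre/Cartan computations of $H^*(K(R,m);R)$, and obtain injectivity separately. The two genuine gaps are exactly the ones you name, and neither is a detail. First, \conjref{conjecture1} is proved in the paper only for $\ttP=[1]$ and $m\le n$ (Propositions~\ref{prop:kskeletonofcone}, \ref{prop:kskeleton}, \ref{prop:emlk}, \thmref{thm:conjen0}), so your upper bound is missing in almost all cases; without it the claim that ``every perverse cohomology class of $\tkpm$ is detected in $H^*(K(R,m);R)$'' is an assumption, not a fact. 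Second, the injectivity argument over $\F_2$ requires a perverse Kudo transgression theorem compatible with the perverse grading and with the perverse unstability axioms in $\crK_{\ov\bullet}^*(x)$, and no such statement is available in the literature cited by the paper. The rational step via \cite{CST1} is the most plausible of the three, but even there you would need to verify that the perverse-minimal-model formalism produces a model for $\tkpm$ that is free on one generator in the prescribed perverse degree, which is not a formal consequence of the $H$-space structure alone and is not carried out in \cite{CST1}. In short: you have correctly reproduced the paper's suggested route and its obstructions, but the conjecture remains open.
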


The perverse algebra $\crK_{\ov{\bullet}}^\ast (x)$ is a polynomial algebra $\F_{2}[\{\sq^I x\}_{I\in \mathrm{Ad}}]$
generated by admissible sequences of Steenrod squares
$$\sq^I x=\sq^{i_{1}}\dots \sq^{i_{k}} x$$
where $i_{j}\geq 2 i_{j+1}$ and
$\sum_{j}i_{j}-2i_{j+1}<m$. The perverse degree of $\sq^I$ is computed from the following formula
(conjectured in \cite{MR1014465} and settled in \cite{CST6}):
``If $a$ is of perverse degree $\ov{p}$ then $\sq^j a$ is of perverse degree
$\min(2\ov{p},\ov{p}+j)$.''

\begin{perspective}
From our  simplicial constructions, we can introduce generalized intersection cohomology theories.
Taking an infinite loop space, $(S(n))_{n}$ in $\ssetp$, we  can define, for any stratified topological space
$X\to\ttP$, an abelian group by
$$\crS^n(X)=\pi_{0}\HomD_{\ssetp}(\singp X,S(n)).
$$
Given a perversity $\ov{p}\colon \ttP\to \ov{\Z}$ and an infinite loop space $(L(n))_{n}$ in $\sset$, can we find
an infinite loop space $(L(n,\ov{p}))_{n}$ in $\ssetp$ which brings a generalized intersection cohomology theory?
For this last part, a good understanding of the situation developed in \secref{sec:operations} for  $\ttP=[1]$
 will be a first step.
 We also plan to treate the question of topological invariance of intersection cohomology within the framework
 developed here in future work.
\end{perspective}

\subsection*{Outline}

In \secref{sec:topstrat}, we present the relations between stratified spaces and the category $\ssetp$ of simplicial sets 
over the nerve of a poset, $\ttN(\ttP)$. The structure of simplicial category on $\ssetp$ is detailed in 
\secref{sec:spaceposet}; we also introduce the simplicial category $\ssetp^+$ of restricted  simplicial sets over $\ttN(\ttP)$
and build an adjunction between these two categories. The restricted  simplicial sets over $\ttN(\ttP)$ are  crucial
objects in the blown up process that we introduce in \secref{sec:boum}. From this construction, we define an adjunction
between $\ssetp$ and the category of cochain complexes over a commutative ring in
\secref{sec:functors}. We extend this adjunction to homotopy classes of morphisms to prepare the
construction of Eilenberg-MacLane spaces done in
\secref{sec:perversEML}, where we prove the results stated in Theorems~\ref{thm:main1}, \ref{thm:main2} and \ref{thm:main3}. Finally in \secref{sec:operations}, we analyze the case $\ttP=[1]$, as stated before.

\medskip
Two appendices complete this work. In \cite{CST1}, we  defined a blown up cohomology 
for filtered face sets. 
These latter are simplicial sets over the poset $\N$ without degeneracies,
a role similar to that of $\Delta$-sets \cite{MR0300281} for simplicial sets. 
In  \appendref{sec:ffsareback}, we introduce the category $\FFS$ of filtered face sets over a poset $\ttP$
and  show that the concepts
introduced in the present work are compatible with that of \cite{CST1}; 
this allows the use of results of \cite{CST1} in \secref{sec:operations}.
Finally,
\appendref{sec:simplicialcat} is a brief  reminder on homotopy and loop spaces in simplicial categories, 

\medskip
As  a guide for the reader, we summarize in the following diagram the connections used between 
the category $\ssetp$ of simplicial sets over $\ttN(\ttP)$ and its surroundings.
$$  \xymatrix{
 &&&&
 \FFS
 \ar@/_2pc/[ddllll]_-{^\Delta \tN^*_{\ov{p}}}
 \ar@<1ex>[dd]^-{F}
  &&\\
 &&&&&&\\
\mdg
\ar@<1ex>[rr]^-{\langle -\rangle^+_{\ov{p}}}
 \ar@/^3pc/[rrrr]^-{\langle -\rangle_{\ov{p}}}
&& 
\ssetp^+\ar@<1ex>[rr]^-{\ttn}
\ar@<1ex>[ll]^-{\tN_{\ov{p}}^{+}}
\ar@/_1pc/[rr]_{\tti}
&&
\ssetp
\ar@/^3pc/[llll]^-{\tN_{\ov{p}}} 
\ar[ll]^-{\cR}\ar@<1ex>[rr]^-{\cU}
\ar@<1ex>[dd]^-{|-|}
\ar@<1ex>[uu]^{\cO}
&&
\sset\ar@<1ex>[ll]^-{-\times\ttN(\ttP)}
\\
&&&&&&\\
&&&&\topp
\ar@<1ex>[uu]^{\singp}
&&
}
$$

\medskip
The authors would like to thank Martintxo Saralegi-Aranguren
for insightful advice on this work.
%

\section{Stratified topological spaces}\label{sec:topstrat}

\begin{quote}
Stratified topological spaces and maps offer a geometric setting for the definition of intersection cohomology
and the existence of morphisms between them, having regard to the level of cochain complexes. 
After a brief reminder, we present them as topological spaces
 over the  poset of their strata, endowed with the Alexandrov topology.
We also recall the notion of filtered simplices which prepares the study of intersection cohomology 
from simplicial objects.
\end{quote}

\subsection{Stratified spaces and maps}\label{subsec:stratifiedspacesdef}

Let us introduce the stratified spaces, corresponding to the $\cS$-decomposition of 
\cite[\& I.1.1]{MR932724}.

\begin{definition}\label{def:decomposed}
A \emph{stratified space} is a Hausdorff topological space endowed with a  partition  
$$X=\sqcup_{\tts\in \ttP}S_{\tts}$$ 
whose elements are  
non-empty, locally closed subsets of $X$, called strata, and  satisfying the following properties:
\begin{enumerate}[(i)] 
\item\emph{the Frontier condition:}
{for any pair of strata, $S$ and $S'$ with
 $S\cap \overline{S'}\neq \emptyset$,  one has
$S\subset \ov{S'}$,}
\item for any subset $J\subset\ttP$, one has
$\cup_{\tts\in J} \ov{S_{\tts}}=
\ov{\cup_{\tts\in J} S_{\tts}}$.
\end{enumerate}
\noindent
A  stratum is \emph{regular} if it is an open subset of $X$.
A stratified space is said \emph{regular} if it owns regular strata.
\end{definition}

Property (ii) is satisfied if the family of strata $(S_{\tts})_{\tts\in\ttP}$
is locally finite (as in \cite[\& I.1.1.]{MR932724}) and, a fortiori, if $\ttP$ is finite.
By definition, a subset $S$ is  locally closed  if $S=U\cap C$ with $U$ open and $C$ closed in $X$,
or, equivalently, if $S=U\cap \ov{S}$.
Recall also that the frontier condition is equivalent to
\begin{equation}\label{equa:frontier}
\ov{S}=\sqcup_{S'\cap\ov{S}\neq \emptyset}S'.
\end{equation}

\begin{proposition}\label{prop:stratifieetordre}
Let $X=\sqcup_{\tts\in \ttP}S_{\tts}$ be a stratified space.
Then the set $\ttP$ is a poset for the relation
$\tts \preceq \ttt$ if $S_{\tts}\subseteq \ov{S_{\ttt}}$. 
(We write $\tts\prec \ttt$ if  $\tts\preceq \ttt$ and $\tts\neq \ttt$.)
\end{proposition}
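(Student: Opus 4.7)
The plan is to verify the three axioms of a partial order on $\ttP$ using the defining properties of a stratified space, namely that the $S_{\tts}$ form a disjoint partition into locally closed subsets satisfying the frontier condition. Reflexivity and transitivity are immediate, so the real content is antisymmetry, which is where local closedness enters.

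First I would dispatch \emph{reflexivity}: since $S_{\tts}\subseteq\ov{S_{\tts}}$ trivially, $\tts\preceq\tts$. Then \emph{transitivity}: if $S_{\tts}\subseteq\ov{S_{\ttt}}$ and $S_{\ttt}\subseteq\ov{S_{\tu}}$, taking closures on the first inclusion gives $\ov{S_{\tts}}\subseteq\ov{S_{\ttt}}\subseteq\ov{S_{\tu}}$, so $S_{\tts}\subseteq\ov{S_{\tu}}$; this uses no specific property of strata beyond the topology.

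For \emph{antisymmetry}, I would start with $S_{\tts}\subseteq\ov{S_{\ttt}}$ and $S_{\ttt}\subseteq\ov{S_{\tts}}$, pass to closures to conclude $\ov{S_{\tts}}=\ov{S_{\ttt}}$, and then exploit local closedness. Because each stratum is locally closed, $S_{\tts}$ (resp.\ $S_{\ttt}$) equals $U\cap\ov{S_{\tts}}$ for some open $U\subseteq X$ (resp.\ $V\cap\ov{S_{\ttt}}$), so $S_{\tts}$ and $S_{\ttt}$ are both non-empty open subsets of the common closed set $Z:=\ov{S_{\tts}}=\ov{S_{\ttt}}$. Since $S_{\ttt}$ is dense in $Z$ and $S_{\tts}$ is a non-empty open subset of $Z$, we have $S_{\tts}\cap S_{\ttt}\neq\emptyset$. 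Disjointness of the partition then forces $S_{\tts}=S_{\ttt}$, i.e.\ $\tts=\ttt$.

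The only subtle step is antisymmetry, and the key ingredient is precisely the local closedness of the strata; without it one could imagine pathological partitions where two distinct parts share the same closure. Note that the frontier condition (or its reformulation \eqref{equa:frontier}) is not needed here, although it is what makes the resulting poset structure geometrically meaningful.
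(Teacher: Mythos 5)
Your proof is correct and rests on the same key insight as the paper's — the local closedness of strata is what forces $S_{\tts}\cap S_{\ttt}\neq\emptyset$, and disjointness of the partition then finishes antisymmetry. The paper phrases the non-empty intersection via a point $x\in S_{\tts}$ and its neighborhood $U$ rather than via density inside the common closure $Z$, but this is the same argument in a slightly different wrapper; your remark that the frontier condition is not needed here is also accurate and matches the paper, which uses only local closedness and the partition property.
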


\begin{proof}
Let $S_{\tts}$ and $S_{\ttt}$ be two strata of $X$ such that $\tts\preceq \ttt$ and $\ttt\preceq \tts$, we have to prove ${\tts}=\ttt$.
Let $x\in S_{\tts}$. Since $S_{\tts}$ is  locally closed, there exists an open subset $U$ of $X$, such that
$S_{\tts}=U\cap \ov{S_{\tts}}$.
As $x\in S_{\tts}\subset \ov{S_{\ttt}}$, we have $U\cap S_{\ttt}\neq \emptyset$.
On the other hand, from $S_{\ttt}\subset \ov{S_{\tts}}$, we get
$S_{\tts}=U\cap \ov{S_{\tts}}\supset U\cap S_{\ttt}\neq \emptyset$,
which implies $S_{\tts}\cap S_{\ttt}\neq \emptyset$
and $S_{\tts}= S_{\ttt}$ since they are members of a partition.
\end{proof}

\begin{remark}\label{rem:alexandrov}
Let $(\ttP,\preceq)$ be a  poset. We endow $\ttP$ with the \emph{Alexandrov topology.} 
The open sets are the subsets $U$ such that
if $\tts\in U$ and $\tts\preceq \ttt$ then $\ttt\in U$.
The corresponding closed sets are the subsets $F$ such that
if $\tts\in F$ and $\ttt\preceq \tts$ then $\ttt\in F$.
Therefore, any singleton $\{\tts\}$ is locally closed as the intersection
$]-\infty, \tts]\cap [\tts,\infty[$.
Recall also that, in $\ttP$, any union of closed subsets is closed.
\end{remark}

If $X$ is a stratified space, we define a surjective map $\psi_{X}\colon X\to \ttP$ by sending  a point
$x\in S_{\tts}$ to $\tts\in\ttP$. In particular, $\psi_{X}$ sends a regular stratum on a maximal element of
the poset.

\medskip
We encode now the requirements of \defref{def:decomposed} as properties of the map $\psi_{X}$,
see  \cite{LurieSecondBook} or \cite{2018arXiv180411274T} for a similar approach.

\begin{definition}\label{def:stratification}
\emph{A stratification of a topological space} $X$ by a poset $\ttP$ is an open continuous surjective map 
$\psi_{X}\colon X\to\ttP$, where $\ttP$ is equipped with the Alexandrov topology.
\end{definition}

\begin{proposition}\label{prop:stratandstrat}
A Hausdorff topological space is stratified if, and only if, it admits a stratification.
\end{proposition}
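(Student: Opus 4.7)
The plan is to prove both directions by translating between the partition data and the map to the poset, using the Alexandrov topology throughout. Recall that in $\ttP$ a subset is open iff it is upward closed (hence closed iff downward closed), and that arbitrary unions of closed sets stay closed.

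For the forward direction, starting from a stratified space $X=\sqcup_{\tts\in\ttP}S_{\tts}$ with the order from \propref{prop:stratifieetordre}, I would define $\psi_{X}(x)=\tts$ for $x\in S_{\tts}$. Surjectivity is automatic since strata are non-empty. For continuity, given an open $U\subseteq\ttP$ (upward closed), its complement $F$ is downward closed, so I would show that $\psi_{X}^{-1}(F)=\bigcup_{\tts\in F}S_{\tts}$ is closed. By condition~(ii) of \defref{def:decomposed} this union agrees with $\bigcup_{\tts\in F}\overline{S_{\tts}}$; using the frontier reformulation \eqref{equa:frontier}, each $\overline{S_{\tts}}$ is the disjoint union of strata $S_{\ttu}$ with $\ttu\preceq\tts$, and since $F$ is downward closed all such $\ttu$ lie in $F$, giving $\bigcup_{\tts\in F}\overline{S_{\tts}}=\bigcup_{\tts\in F}S_{\tts}$, which is then its own closure. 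For openness of $\psi_{X}$, given an open $V\subseteq X$ with $x\in V\cap S_{\tts}$ and $\tts\preceq\ttt$, the hypothesis $S_{\tts}\subseteq\overline{S_{\ttt}}$ forces $V\cap S_{\ttt}\neq\emptyset$, so $\psi_{X}(V)$ is upward closed, hence open.

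For the converse, I start with a stratification $\psi\colon X\to\ttP$ and set $S_{\tts}=\psi^{-1}(\tts)$. Non-emptiness follows from surjectivity and partition from the fact that $\psi$ is a map; local closedness of $S_{\tts}$ follows since $\{\tts\}=\,]\!-\!\infty,\tts]\cap[\tts,\infty[\,$ is locally closed in $\ttP$ and $\psi$ is continuous. The key identity to establish is
\[
\overline{S_{\ttt}}=\psi^{-1}(\overline{\{\ttt\}})=\bigsqcup_{\tts\preceq\ttt}S_{\tts}.
\]
The inclusion $\subseteq$ uses continuity (preimage of closed is closed). For $\supseteq$, I would use openness of $\psi$: if $\tts\preceq\ttt$ and $x\in S_{\tts}$, then any open neighbourhood $W$ of $x$ has $\psi(W)$ open in $\ttP$ containing $\tts$, hence containing $\ttt$ by upward closedness, so $W$ meets $S_{\ttt}$. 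The frontier condition is then immediate, and condition~(ii) of \defref{def:decomposed} reduces to the identity
\[
\bigcup_{\tts\in J}\overline{S_{\tts}}=\psi^{-1}\!\Bigl(\bigcup_{\tts\in J}\overline{\{\tts\}}\Bigr),
\]
whose right-hand side is closed (arbitrary unions of closed subsets in $\ttP$ are closed, cf.~\remref{rem:alexandrov}) and clearly contains $\bigcup_{\tts\in J}S_{\tts}$, so it coincides with $\overline{\bigcup_{\tts\in J}S_{\tts}}$.

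The only delicate points are the use of openness of $\psi$ to recover the frontier condition, and conversely the use of the frontier condition (combined with condition~(ii)) to recover continuity. Everything else is formal Alexandrov-topology bookkeeping, so I expect no substantial obstacle beyond keeping these translations aligned.
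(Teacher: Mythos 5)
Your proof is correct and follows essentially the same route as the paper's: both hinge on establishing the identity $\ov{S_{\ttt}}=\psi_{X}^{-1}(\ov{\ttt})$ and deducing the frontier condition, local finiteness condition (ii), and the properties of $\psi_{X}$ from it. The only real difference is that the paper routes both directions through \lemref{lem:open} (the characterization of open continuous maps by $f^{-1}(\ov{B})=\ov{f^{-1}(B)}$), whereas you unpack that lemma and verify continuity (via closed preimages of downward-closed sets) and openness (via upward closure of $\psi_{X}(V)$) directly from the Alexandrov-topology definitions; this is a little longer but entirely equivalent.
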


In the proof, we use the following characterization of open maps.

\begin{lemma}{\cite[Lemma 3.3]{2018arXiv180411274T}}\label{lem:open}
A map $f\colon X\to Y$ between topological spaces is open if, and only if, 
$f^{-1}(\ov{B})\subset \ov{f^{-1}(B)}$, for any $B\subset Y$.
In particular, $f$ is open and continuous if, and only if,
$f^{-1}(\ov{B})= \ov{f^{-1}(B)}$, for any $B\subset Y$.
\end{lemma}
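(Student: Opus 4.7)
The plan is to prove the two asserted equivalences by unpacking the definitions of openness and closure in terms of each other, together with the classical characterization of continuity via preimages of closures.

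For the forward direction of the first equivalence, assume $f$ is open and pick $x\in f^{-1}(\ov{B})$. To show $x\in\ov{f^{-1}(B)}$, I would test it against an arbitrary open neighborhood $U$ of $x$: since $f$ is open, $f(U)$ is an open neighborhood of $f(x)\in\ov{B}$, so $f(U)\cap B\neq\emptyset$, and any $y=f(u)\in f(U)\cap B$ witnesses $u\in U\cap f^{-1}(B)$. Thus $U$ meets $f^{-1}(B)$, which gives $x\in\ov{f^{-1}(B)}$.

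For the converse, assume $f^{-1}(\ov{B})\subset\ov{f^{-1}(B)}$ for every $B\subset Y$, and let $U\subset X$ be open. To prove $f(U)$ is open, I would check that its complement is closed by setting $B=Y\setminus f(U)$. Then $f^{-1}(B)\subset X\setminus U$, and since $U$ is open, $\ov{f^{-1}(B)}\subset X\setminus U$. The standing hypothesis then gives $f^{-1}(\ov{B})\subset X\setminus U$, equivalently $U\cap f^{-1}(\ov{B})=\emptyset$, so $f(U)\cap\ov{B}=\emptyset$. This forces $\ov{B}\subset Y\setminus f(U)=B$, so $B$ is closed and $f(U)$ is open.

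For the "in particular" statement, I would invoke the standard characterization of continuity: $f$ is continuous if and only if $\ov{f^{-1}(B)}\subset f^{-1}(\ov{B})$ for every $B\subset Y$ (equivalently, $f^{-1}$ of closed sets are closed). Combining this reverse inclusion with the one coming from openness yields the desired equality $f^{-1}(\ov{B})=\ov{f^{-1}(B)}$, and conversely an equality trivially implies both inclusions, hence both continuity and openness.

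The proof is essentially a formal manipulation of closures and preimages, so there is no real obstacle; the only delicate point is remembering to test the hypothesis on the complementary set $B=Y\setminus f(U)$ rather than on $U$ itself, since the hypothesis is phrased on subsets of the target.
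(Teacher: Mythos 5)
Your proof is correct. The paper itself does not prove this lemma — it is quoted verbatim from Tamaki--Tanaka (their Lemma 3.3) — so there is no in-text argument to compare against; your write-up is the standard self-contained verification (neighborhood-chasing for the forward implication, testing the hypothesis on the complement $B=Y\setminus f(U)$ for the converse, and the closed-preimage characterization of continuity for the ``in particular'' clause), and all steps check out.
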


\begin{proof}[Proof of \propref{prop:stratandstrat}]
Suppose first that $X=\sqcup_{\tts\in \ttP}S_{\tts}$ is a stratified space. 
The frontier condition of \defref{def:decomposed}   implies that $\ttP$ is a poset and that
$\psi_{X}^{-1}(\ov{\tts})=\psi_{X}^{-1}(]-\infty,\tts])=\ov{\psi_{X}^{-1}(\tts)}$.
 Let $B=\cup_{j\in J}\{\tts_{j}\}\subset \ttP$.
We have
\begin{eqnarray*}
\psi_{X}^{-1}(\ov{B})
&=&
\psi_{X}^{-1}(\ov{\cup_{j\in J}\{\tts_{j}\}})
=
\psi_{X}^{-1}(\cup_{j\in J}\ov{\tts_{j}})
=
\cup_{j\in J}\psi_{X}^{-1}(\ov{\tts_{j}})
=
\cup_{j\in J}\ov{\psi_{X}^{-1}(\tts_{j})}\\
&=&
\ov{\cup_{j\in J}\psi_{X}^{-1}(\tts_{j})}
=
\ov{\psi_{X}^{-1}(B)}.
\end{eqnarray*}
Thus, with \lemref{lem:open}, $\psi_{X}$ is stratification.

\medskip
Reciprocally, suppose that $\psi_{X}\colon X\to\ttP$ is a stratification.
We obtain a decomposition
$X=\sqcup_{\tts\in\ttP}\psi_{X}^{-1}(\tts)$,
with $\psi_{X}^{-1}(\tts)\neq \emptyset$ and locally closed.
Since $\psi_{X}$ is open and continuous, we have
$\psi_{X}^{-1}(\ov{\tts})=\ov{\psi_{X}^{-1}(\tts)}$ for any $\tts\in \ttP$. Thus,
$\psi_{X}^{-1}(\tts)\cap \ov{\psi_{X}^{-1}(\ttt)}=\psi_{X}^{-1}(\tts)\cap \psi_{X}^{-1}(\ov{\ttt})=
\psi_{X}^{-1}(\tts)\cap \psi_{X}^{-1}(]-\infty,\ttt)=
\psi_{X}^{-1}(\tts \cap ]-\infty, \ttt])$.
With this equality, from
$\psi_{X}^{-1}(\tts)\cap \ov{\psi_{X}^{-1}(\ttt)}\neq\emptyset$, we deduce $\tts\preceq\ttt$, which is the Frontier condition.
Let $J\subset \ttP$. Using \lemref{lem:open}, we have
$$
\cup_{j\in J}\ov{\psi_{X}^{-1}(\tts_{j})}
=
\cup_{j\in J}\psi_{X}^{-1}(\ov{\tts_{j}})
=
\psi_{X}^{-1}(\cup_{j\in J}\ov{\tts_{j}})
=
\psi_{X}^{-1}(\ov{\cup_{j\in J}\{\tts_{j}\}})
=
\ov{\cup_{j\in J}\psi_{X}^{-1}(\tts_{j})}.
$$
\end{proof}

  We introduce now the morphisms between stratified spaces.

\begin{definition}\label{def:stratifiedmap}
A \emph{stratified map,} $f\colon X=\sqcup_{s\in\ttP_{X}} S_{s}\to Y=\sqcup_{t\in\ttP_{Y}}T_{\ttt}$, is a continuous map between stratified spaces
such that,
for each stratum  $S_{\tts}$ of $X$,
there exists a unique stratum $T_{\ttt_{\tts}}$ of $Y$ with $f(S_{\tts})\subset  T_{\ttt_{\tts}}$.
We denote $\ttf\colon \ttP_{X}\to \ttP_{Y}$, the  map $\ttf(\tts)=\ttt_{\tts}$.
\end{definition}
  
\begin{proposition}\label{prop:stratifiedmap}
If $f\colon X\to Y$ is a stratified map,  the map,
$\ttf\colon \ttP_{X}\to\ttP_{Y}$ is increasing. 
\end{proposition}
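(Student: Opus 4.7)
The plan is to unwind the definitions and use continuity of $f$ together with the Frontier condition to transfer the partial order from $\ttP_X$ to $\ttP_Y$.

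First I would fix $\tts_1 \preceq \tts_2$ in $\ttP_X$, which by \propref{prop:stratifieetordre} means $S_{\tts_1} \subseteq \overline{S_{\tts_2}}$. Applying the continuous map $f$ and using that continuous maps satisfy $f(\overline{A}) \subseteq \overline{f(A)}$, I obtain
\[
f(S_{\tts_1}) \;\subseteq\; f(\overline{S_{\tts_2}}) \;\subseteq\; \overline{f(S_{\tts_2})} \;\subseteq\; \overline{T_{\ttf(\tts_2)}},
\]
where the last inclusion uses the defining property of a stratified map, $f(S_{\tts_2}) \subseteq T_{\ttf(\tts_2)}$.

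Next, since $f(S_{\tts_1}) \subseteq T_{\ttf(\tts_1)}$ as well, and $S_{\tts_1}$ is non-empty (strata are non-empty by \defref{def:decomposed}), this gives
\[
T_{\ttf(\tts_1)} \cap \overline{T_{\ttf(\tts_2)}} \;\neq\; \emptyset.
\]
The Frontier condition for the stratification of $Y$ (equation \eqref{equa:frontier}, or equivalently the statement of \defref{def:decomposed}(i)) then forces $T_{\ttf(\tts_1)} \subseteq \overline{T_{\ttf(\tts_2)}}$, which by the definition of $\preceq$ on $\ttP_Y$ means precisely $\ttf(\tts_1) \preceq \ttf(\tts_2)$.

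No step presents a real obstacle: the argument is a short chain of set-theoretic inclusions bridging continuity of $f$ and the Frontier condition. The only point worth stating cleanly is that one must invoke \emph{non-emptiness} of the strata when extracting $T_{\ttf(\tts_1)} \cap \overline{T_{\ttf(\tts_2)}} \neq \emptyset$ from the image inclusion, which is why the hypothesis that strata are non-empty in \defref{def:decomposed} is essential here.
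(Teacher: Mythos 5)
Your proof is correct and follows essentially the same argument as the paper: deduce $f(S_{\tts_1}) \subseteq \overline{f(S_{\tts_2})}$ from continuity, conclude $T_{\ttf(\tts_1)} \cap \overline{T_{\ttf(\tts_2)}} \neq \emptyset$, and invoke the Frontier condition. You spell out a couple of steps the paper compresses (the use of $f(S_{\tts_2}) \subseteq T_{\ttf(\tts_2)}$ and the non-emptiness of strata), which is a welcome clarification but not a different approach.
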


\begin{proof}
Let $\tts_1 \preceq \tts_2$ in $\ttP_{X}$. From the  continuity of $f$, 
we deduce $f(S_{\tts_{1}}) \subset \ov{f(S_{\tts_{2}})}$ 
and 
$T_{\ttf(\tts_{1})}\cap \ov{T_{\ttf(\tts_{2})}}\neq \emptyset$.
The frontier condition implies $\ttf(\tts_{1})\preceq \ttf(\tts_{2})$.
\end{proof}

The next result is a direct consequence of Propositions~\ref{prop:stratandstrat} and \ref{prop:stratifiedmap}.

\begin{corollary}\label{cor:mapstratification}
A continuous map, $f\colon X=\sqcup_{s\in\ttP_{X}} S_{s}\to Y=\sqcup_{t\in\ttP_{Y}}T_{\ttt}$, 
between stratified spaces is a stratified map if, and only if, there exists a commutative diagram
of continuous maps between the associated stratifications,
$$\xymatrix{
X\ar[r]^-{f}\ar[d]_{\psi_{X}}&
Y\ar[d]^{\psi_{Y}}\\
\ttP_{X}\ar[r]^-{{\ttf}}&\ttP_{Y}.
}$$
\end{corollary}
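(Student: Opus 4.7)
The plan is to unpack both directions of the equivalence, reading off the continuous map $\ttf\colon\ttP_{X}\to\ttP_{Y}$ from the stratified map structure, and conversely reading off the stratum-preserving property from the commutative square. In both directions, \propref{prop:stratandstrat} supplies the stratifications $\psi_{X}$ and $\psi_{Y}$ as open continuous surjections, so it remains only to produce (or use) $\ttf$ with the right properties.

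For the forward direction, assume $f$ is stratified. \defref{def:stratifiedmap} already provides a map of posets $\ttf\colon \ttP_{X}\to \ttP_{Y}$ satisfying $f(S_{\tts})\subset T_{\ttf(\tts)}$, and \propref{prop:stratifiedmap} shows that $\ttf$ is increasing. To see that the square commutes, pick $x\in X$; if $x\in S_{\tts}$ then $\psi_{X}(x)=\tts$ and $f(x)\in T_{\ttf(\tts)}$, so $\psi_{Y}(f(x))=\ttf(\tts)=\ttf(\psi_{X}(x))$. Continuity of $\ttf$ is immediate from the Alexandrov topology described in \remref{rem:alexandrov}: if $U\subset \ttP_{Y}$ is open (i.e.\ an up-set) and $\tts\in \ttf^{-1}(U)$ with $\tts\preceq \ttt$, then $\ttf(\tts)\preceq \ttf(\ttt)$ forces $\ttf(\ttt)\in U$, so $\ttf^{-1}(U)$ is again an up-set.

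For the backward direction, suppose we are given continuous $f$ and $\ttf$ making the square commute. For any stratum $S_{\tts}=\psi_{X}^{-1}(\tts)$ of $X$ and any $x\in S_{\tts}$, commutativity yields $\psi_{Y}(f(x))=\ttf(\psi_{X}(x))=\ttf(\tts)$, whence
$$f(S_{\tts})\subset \psi_{Y}^{-1}(\ttf(\tts))=T_{\ttf(\tts)}.$$
This is exactly the condition in \defref{def:stratifiedmap}, and the stratum $T_{\ttf(\tts)}$ is unique because the $T_{\ttt}$ form a partition. Hence $f$ is a stratified map.

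There is no serious obstacle here; the statement is essentially a reformulation of the definitions. The only point that could trip the reader is the implicit use of the Alexandrov topology to identify continuous maps between posets with order-preserving maps, which is why \propref{prop:stratifiedmap} together with \remref{rem:alexandrov} is the right input.
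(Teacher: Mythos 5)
Your proof is correct and is exactly the unwinding the paper intends when it calls the corollary a direct consequence of Propositions~\ref{prop:stratandstrat} and~\ref{prop:stratifiedmap}: the forward direction reads off $\ttf$ from \defref{def:stratifiedmap}, uses \propref{prop:stratifiedmap} for monotonicity, and notes that order-preserving maps are Alexandrov-continuous; the backward direction reads off the stratum-preserving condition from commutativity of the square, with uniqueness free from the partition. Nothing to add.
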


\subsection{Filtered simplices}\label{subsec:fileterd}

Let us introduce the filtered simplices to prepare the simplicial approach developed in \secref{sec:spaceposet}.

\medskip
Let $\dDelta$ be the simplicial category whose objects are the nonnegative integers $[n]=\{0,\dots,n\}$
and whose morphisms are the order preserving maps. 
Among them, we quote  the  cofaces and codegeneracies,
$d^i\colon [n-1]\to [n]$
and
$s^i\colon [n+1]\to [n]$,
for $0\leq i\leq n$.
The \emph{standard simplicial $n$-simplex}  is defined by
$\Delta[n]=\Hom_{\dDelta}(-,[n])\colon \dDelta^{\op} \to \set$.
Its realization is the geometric $n$-simplex, $|\Delta[n]|=\Delta^n$.

\medskip
A \emph{simplicial set} is a functor
$K\colon \dDelta^{\op}\to \set$,
where $\set$ is the category of sets. 
The elements of $K_{n}=K([n])$ are called $n$-simplices of $K$.
The image, by the functor $K$, of the cofaces and the codegeneracies are, respectively, 
the faces  $d_{i}\colon K_{n}\to K_{n-1}$ 
and 
the degeneracies $s_{i}\colon K_{n}\to K_{n+1}$,
for $0\leq i\leq n$.
A \emph{simplicial map} is a natural transformation between simplicial sets
and  $\sset$ is the associated category.

\medskip
Filtered simplices are the crucial notion for the existence of a simplicial blow up associated to a stratified space.
We already have expressed them in the case of filtered spaces in \cite{CST4}, \cite{CST1}, \cite{CST3}.

\begin{definition}\label{def:filteredsimplex}
Let $\psi_{X}\colon X= \sqcup_{s\in\ttP} S_{s} \to \ttP$ be a stratification. 
 A  \emph{filtered simplex (over $\ttP$)} of $X$ is a continuous map,
$\sigma\colon \Delta^m=[e_{0},\dots,e_{m}]\to X$, such that, for any $\tts\in \ttP$,
 $\sigma^{-1}S_{\tts}$ is the empty set, or a face  $A=[e_{0},\dots,e_{a}]$ or a difference of two faces 
 $B\menos A$ with $B=[e_{0},\dots,e_{b}]$ and $a<b$. 
We denote
$\singp \psi_{X}$ the simplicial set of the filtered simplices of the stratification $\psi_{X}$.
\end{definition}

The notion of nerve of a poset allows a simplicial presentation of filtered simplices. Recall 
that the \emph{nerve of a poset $\ttP$}  is the simplicial set, $\ttN(\ttP)$, whose n-simplices are
the chains  of increasing elements of $\ttP$, $\tts_{0}\preceq\dots\preceq \tts_{n}$.

\begin{proposition}\label{prop:propfiltered}
Let $\psi_{X}\colon X= \sqcup_{s\in\ttP} S_{s} \to \ttP$ be a stratification and
$\sigma\colon \Delta^m=[e_{0},\dots,e_{m}]\to X$
be a filtered simplex. Then, 
the association $e_{i}\mapsto \psi_{X}(\sigma(e_{i}))$ defines a simplicial map
$$\Psi_{X}\colon \singp\psi_{X}\to \ttN(\ttP).$$
\end{proposition}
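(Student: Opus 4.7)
The plan is to verify two things: first, that for any filtered simplex $\sigma\colon\Delta^m\to X$ the sequence $(\psi_X(\sigma(e_0)),\dots,\psi_X(\sigma(e_m)))$ is an increasing chain in $\ttP$ (hence defines a simplex of $\ttN(\ttP)$), and second, that this assignment commutes with faces and degeneracies.

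For the first point, since the strata partition $X$, each vertex $e_i$ sits in a unique $\sigma^{-1}(S_{\tts_i})$ with $\tts_i=\psi_X(\sigma(e_i))$. Fix $i<j$. By the definition of filtered simplex, $\sigma^{-1}(S_{\tts_j})$ is either a face $[e_0,\dots,e_a]$ or a difference $[e_0,\dots,e_b]\menos[e_0,\dots,e_a]$ with $a<b$; uniformizing, write it as $B\menos A$ with $A=[e_0,\dots,e_a]$, $B=[e_0,\dots,e_b]$ and $a<b\le m$ (taking $a=-1$ in the first case). Since $e_j\in B\menos A$, either $i>a$, in which case $e_i\in B\menos A$ too and $\tts_i=\tts_j$; or $i\le a$, in which case $e_i\in A\subset\overline{B\menos A}$ inside $\Delta^m$. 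Then continuity of $\sigma$ gives
\[
\sigma(e_i)\in \overline{\sigma(B\menos A)}\subset \overline{S_{\tts_j}},
\]
so $S_{\tts_i}\cap\overline{S_{\tts_j}}\neq\emptyset$, and the frontier condition of \defref{def:decomposed} yields $S_{\tts_i}\subset\overline{S_{\tts_j}}$, i.e.\ $\tts_i\preceq\tts_j$. In either case $\tts_i\preceq\tts_j$, so the chain is increasing.

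For the second point, the assignment $\sigma\mapsto(\tts_0,\dots,\tts_m)$ only depends on the composite $\psi_X\circ\sigma$ evaluated at the vertices $e_i$. Faces and degeneracies in $\singp\psi_X$ are precomposition with the cofaces $d^i$ and codegeneracies $s^i$ of $\dDelta$, and by construction the faces and degeneracies in $\ttN(\ttP)$ act on a chain in exactly the same way (deletion or repetition of the $i$-th entry). Hence $\Psi_X$ commutes with all $d_i$ and $s_i$, so it is a simplicial map.

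The step I expect to be the main (though modest) obstacle is the first one: one must identify the correct closure relation inside $\Delta^m$ (namely that $A\subset\overline{B\menos A}$ when $a<b$) in order to apply the frontier condition after transport by $\sigma$. Everything else is formal bookkeeping on vertices.
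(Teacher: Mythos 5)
Your proof is correct and uses essentially the same idea as the paper: identify the preimage of a stratum via the filtered-simplex definition, use continuity plus the frontier condition to compare strata, then observe that compatibility with faces and degeneracies is routine. The only (mild) variation is that you prove $\tts_i\preceq\tts_j$ directly for all pairs $i<j$ using the closure relation $A\subset\overline{B\setminus A}$ inside $\Delta^m$, whereas the paper compares only consecutive vertices $e_i,e_{i+1}$ (via the half-open segment $]e_i,e_{i+1}]$) and then iterates; both arguments hinge on the same topological observation, and yours is, if anything, a hair more direct.
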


\begin{proof}
First, we prove that $\psi_{X}({\sigma}(e_{i}))=\tts$ and $\psi_{X}({\sigma}(e_{i+1}))=\ttt$
imply  $\tts\preceq \ttt$.
We may suppose $\tts\neq \ttt$. From $\sigma(e_{i})\in S_{\tts}$ and $\sigma(]e_{i},e_{i+1}])\subset S_{\ttt}$, we deduce
$\sigma(e_{i})\in \ov{S_{\ttt}}$. We therefore  have
$S_{\tts}\cap \ov{S_{\ttt}}\neq\emptyset$.
The Frontier condition implies
$S_{\tts}\subset \ov{S_{\ttt}}$ and thus $\tts\prec \ttt$.  
By grouping the identical strata, an
iteration of this process along the vertices of $\Delta^m=[e_{0},\dots,e_{m}]$ gives a decomposition,
\begin{equation}\label{equa:deltajoint}
\Delta^m=[e_{0},\dots,e_{q_{0}},\mid e_{q_{0}+1},\dots,e_{q_{0}+q_{1}+1},\mid\dots\mid e_{m-q_{\ell}},\dots,e_{m}],
\end{equation}
such that 
$\sigma(e_{\alpha})\in S_{i}$ for $\alpha=\sum_{k< i}q_{k}+i+j$
with $j\in\{0,\dots,q_{i}\}$, $q_{-1}=0$ and
$\tts_{i}=\psi_{X}(S_{i})\prec \tts_{i+1}=\psi_{X}(S_{i+1})$ for all $i\in\{0,\ldots,\ell\}$.
Thus,  $\psi_{X}\circ {\sigma}$ is a simplex of $\ttN(\ttP)$, that we write
\begin{equation}\label{equa:decomposition}
\psi_{X}\circ{\sigma}=
\begin{array}{ccc}
\underbrace{\tts_{0}\preceq\dots\preceq \tts_{0}}&\prec\dots\prec &
\underbrace{\tts_{\ell}\preceq\dots\preceq \tts_{\ell}}\\
q_{0}+1& &q_{\ell}+1
\end{array}
=\tts_{0}^{[q_{0}]}\prec\dots\prec \tts_{\ell}^{[q_{\ell}]}.
\end{equation}
The compatibility with faces and degeneracies
is immediate and  $\Psi_{X}$ is a simplicial map. 
\end{proof}

\begin{remark}
From the decomposition
$\Delta^m=\Delta^{q_{0}}\ast\dots\ast\Delta^{q_{\ell}}$, 
established in \eqref{equa:deltajoint}, we observe  that, for any $i\in\{0,\dots,\ell\}$,
\begin{equation}\label{equa:petitestrate}
\sigma^{-1}(S_{0}\sqcup\dots\sqcup S_{i})=\Delta^{q_{0}}\ast\dots\ast\Delta^{q_{i}}.
\end{equation}
With this filtration, a filtered simplex, $\sigma\colon \Delta\to X$, is a stratified map. Thus,
stratified maps preserve  filtered simplices.
\end{remark}

\begin{proposition}\label{prop:stratifdegree}
Let 
$f\colon \psi_{X}\to \psi_{Y}$
be a stratified map.  
Then
there is a commutative diagram in $\sset$,
$$\xymatrix{
\sing^{\ttP_{X}}\psi_{X}\ar[rr]^-{\sing(f)}\ar[d]_{\Psi_{X}}&&
\sing^{\ttP_{Y}}\psi_{Y}\ar[d]^{\Psi_{Y}}\\
\ttN(\ttP_{X})\ar[rr]^-{\ttN(\ttf)}&&\ttN(\ttP_{Y}),
}$$
where $\Psi_{X}$, $\Psi_{Y}$ are defined in \propref{prop:propfiltered} and $\ttf$
in \defref{def:stratifiedmap}. 
\end{proposition}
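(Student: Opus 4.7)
The plan is to proceed in two steps: first, check that the horizontal map $\sing(f)$ restricts to a well-defined simplicial map between the filtered singular sets, and second, verify the commutativity of the diagram pointwise on simplices, using the explicit decomposition \eqref{equa:deltajoint}.

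For the first step, I would take a filtered simplex $\sigma\colon\Delta^m=[e_{0},\dots,e_{m}]\to X$ and show that $f\circ\sigma$ is again a filtered simplex, with respect to the stratification $\psi_{Y}$. Using the canonical decomposition \eqref{equa:deltajoint}, write
$$\Delta^m=[e_{0},\dots,e_{q_{0}}\mid\dots\mid e_{m-q_{\ell}},\dots,e_{m}]$$
with $\sigma$ sending the $i$-th block into $S_{\tts_{i}}$ and $\tts_{0}\prec\dots\prec\tts_{\ell}$. Since $f(S_{\tts_{i}})\subset T_{\ttf(\tts_{i})}$ and $\ttf$ is increasing by \propref{prop:stratifiedmap}, the image $f\circ\sigma$ sends the $i$-th block into $T_{\ttf(\tts_{i})}$ with $\ttf(\tts_{0})\preceq\dots\preceq\ttf(\tts_{\ell})$. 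Adjacent blocks whose image strata coincide must be amalgamated; this merging only replaces some consecutive joins $\Delta^{q_{i}}\ast\Delta^{q_{i+1}}$ by a single larger simplex $\Delta^{q_{i}+q_{i+1}+1}$, so the preimages $(f\circ\sigma)^{-1}(T_{\ttt})$ remain of the required form (empty, a face, or a difference of two faces). Hence $f\circ\sigma\in\sing^{\ttP_{Y}}\psi_{Y}$, and the map respects faces and degeneracies as it is just the restriction of $\sing(f)$.

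For the second step, I would simply evaluate both compositions on a filtered simplex $\sigma$. On the one hand, $\Psi_{X}(\sigma)=\tts_{0}^{[q_{0}]}\preceq\dots\preceq\tts_{\ell}^{[q_{\ell}]}$ by \eqref{equa:decomposition}, so
$$\ttN(\ttf)\bigl(\Psi_{X}(\sigma)\bigr)=\ttf(\tts_{0})^{[q_{0}]}\preceq\dots\preceq\ttf(\tts_{\ell})^{[q_{\ell}]}.$$
On the other hand, $\Psi_{Y}(f\circ\sigma)$ sends each vertex $e_{i}$ to $\psi_{Y}(f(\sigma(e_{i})))$; by \corref{cor:mapstratification} applied to $f$, this equals $\ttf(\psi_{X}(\sigma(e_{i})))$, which is exactly $\ttf(\tts_{j})$ when $e_{i}$ lies in the $j$-th block. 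Therefore both composites yield the same chain in $\ttN(\ttP_{Y})$ (even before any simplification that would fuse equal consecutive entries), which settles the commutativity.

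There is essentially no serious obstacle here; the only mildly delicate point is the merging phenomenon in step one, where the block structure can coarsen because $\ttf$ may not be strictly increasing. One has to be careful that this coarsening does not violate the face/difference-of-faces condition in \defref{def:filteredsimplex}, but since a join of consecutive standard faces of $\Delta^m$ sharing the initial vertex $e_{0}$ is again such a face, the required form is preserved. Everything else reduces to unwinding the definitions and invoking \corref{cor:mapstratification}.
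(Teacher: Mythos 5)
Your proposal is correct and follows the same approach as the paper, which in fact omits the proof, relying on the observation in the remark preceding the proposition that stratified maps preserve filtered simplices; your argument fills in exactly the details behind that remark (well-definedness of $\sing(f)$ via the merging of consecutive blocks, then pointwise commutativity from $\psi_{Y}\circ f=\ttf\circ\psi_{X}$). One small clarification worth making explicit: the amalgamation of blocks always yields \emph{consecutive} blocks precisely because $\ttf$ is increasing and $\tts_{0}\prec\dots\prec\tts_{\ell}$ is a chain, so if $\ttf(\tts_{j})=\ttf(\tts_{k})$ with $j<k$ then $\ttf(\tts_{i})$ is forced to equal this common value for all $j\le i\le k$.
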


\section{Simplicial sets  over a poset}\label{sec:spaceposet}

\begin{quote} 
We introduce the  category $\ssetp$ of simplicial sets over a poset $\ttP$. 
We endow  it with a structure of simplicial category and connect it to  classical simplicial sets
and to topological spaces over a poset.
In the construction of a simplicial blow up, the notion of regular simplices turns out to be crucial. So, we introduce
the category of restricted  simplicial sets over a poset, $\ssetp^+$, and describe three functors between
$\ssetp$ and $\ssetp^+$.
\end{quote}


Let $\ttP$ be a \emph{fixed} poset, of associated nerve $\ttN(\ttP)$.
Let  $\Delta[\ttP]$ be the category of simplices of $\ttN(\ttP)$, whose
objects are the simplicial maps $\Delta[k]\to \ttN(\ttP)$,
and morphisms  the commutative triangles of simplicial maps,
\begin{equation}\label{equa:morphisms}
\xymatrix{
\Delta[k]\ar[rr]\ar[dr]&&\Delta[\ell]\ar[ld]\\
&\ttN(\ttP).&
}\end{equation}

\begin{definition}\label{def:overposet}
A \emph{simplicial set over $\ttP$} is a presheaf on the category $\Delta[\ttP]$.
We denote $\ssetp$ the category of natural transformations between simplicial
sets over $\ttP$. 
\end{definition}

If $\Phi_{K}\colon (\Delta[\ttP])^{\op}\to \set$ is a presheaf, we define a simplicial set
$$K=\left\{(\sigma,x)\mid \sigma\in\ttN(\ttP) \text{ and } x\in \Phi_{K}(\sigma)\right\}.$$
The projection $(\sigma,x)\mapsto \sigma$ is a simplicial map 
$\Psi_{K}\colon K\to \ttN(\ttP)$. 
Conversely, if $\Psi_{K}\colon K\to \ttN(\ttP)$ is a simplicial map, we define a presheaf
$\Phi_{K}\colon K\to \ttN(\ttP)$ as follows: to any $\sigma\in\ttN(\ttP)$, we associate the set
$\Phi_{K}(\sigma)$ of the lifting simplicial maps
$$\xymatrix{
&K\ar[d]^-{\Psi_{K}}\\
\Delta[n]\ar[r]^-{\sigma}\ar@{-->}[ru]&
\ttN(\ttP).
}$$
Therefore, an object of $\ssetp$ can  be seen as a simplicial map
$\Psi_{K}\colon K\to \ttN(\ttP)$. 
With this point of view,  a morphism of $\ssetp$ is a commutative triangle
$$\xymatrix{
K\ar[rr]^-{f}\ar[dr]_{\Psi_{K}}&&
L\ar[dl]^{\Psi_{L}}\\
&\ttN(\ttP).&
}$$

\begin{remark}\label{rem:overposets}
The main result of this work is a presentation of perverse cohomological operations 
by the use of the representability of intersection cohomology. 
This can be achieved by considering simplicial sets over one fixed
poset. However, the category of simplicial sets over posets can easily be defined as it occurs in
\propref{prop:stratifdegree}.

Let $\ttP$ and $\ttQ$ be posets of associated nerves $\ttN(\ttP)$ and $\ttN(\ttQ)$. 
 A morphism $(f,\ttf)$ between two simplicial sets over posets, $(\Psi_{K},\ttP)$ and $(\Psi_{L},\ttQ)$, 
is a commutative diagram of simplicial maps,
$$\xymatrix{
K\ar[r]^-{f}\ar[d]_{\Psi_{K}}&
L\ar[d]^{\Psi_{L}}\\
\ttN(\ttP)\ar[r]^-{\ttf}&\ttN(\ttQ).
}$$
In \propref{prop:mapinterseccohomology}, we place in this more general context
the existence of  homomorphisms between intersection cohomology groups, induced by simplicial maps.
\end{remark}

Appendix~B contains basic recalls on simplicial categories, sometimes also called simplicially enriched category.
The category $\sset$ is a closed simplicial model category (\cite{MR0223432}) with the following classes of maps:
the cofibrations are the monomorphisms, the fibrations are the Kan fibrations and the
weak-equivalences are the simplicial maps whose realization is a weak equivalence in the category of topological
spaces. The simplicial structure comes from $K\otimes L= K\times L$ and 
$\left(\HomD_{\sset}(K,L)\right)_{n}=\Hom_{\sset}(K\times \Delta[n],L)$.

\begin{remark}\label{rem:ssetpsimplicial}
\emph{ The category $\ssetp$ inherits a structure of simplicial category} with
 $\Psi_{K}\otimes L=K\times L\xrightarrow{\rm proj} K\xrightarrow{\Psi_{K}} \ttN(\ttP)$,
 for $\Psi_{K}\in \ssetp$ and $L\in\sset$.
 If $\Psi_{K_{1}}, \,\Psi_{K_{2}}\in \ssetp$, the simplicial set $\HomD_{\ssetp}(\Psi_{K_{1}},\Psi_{K_{2}})$ is the simplicial subset of $\HomD_{\sset}(K_{1},K_{2})$
 formed of simplicial maps which commute over $\ttN(\ttP)$.
 Following \defref{def:sfibrant}, an object $\Psi_{K}$ of $\ssetp$ is s-fibrant if the simplicial set
 $\HomD_{\ssetp}(\Psi_{L},\Psi_{K})$ is Kan for any $\Psi_{L}\in \ssetp$.
Moreover a simplicial map over $\ttP$, $f\colon \Psi_{K_{1}}\to \Psi_{K_{2}}$, is a weak-equivalence, if
 it induces an isomorphism 
 $\pi_{0}\HomD_{\ssetp}(\Psi_{L},\Psi_{K_{1}})\xrightarrow{\cong}\pi_{0} \HomD_{\ssetp}(\Psi_{L},\Psi_{K_{2}})$
 for any $\Psi_{L}\in \ssetp$.
 \end{remark}

\subsection{Connection with  topological spaces over a poset}\label{sunsec:realization}
Let $\ttP$ be \emph{a fixed poset.} We consider a slight more general situation than that of stratified space,
called $\ttP$-stratification in \cite[Definition~A.5.1]{LurieSecondBook}.

\begin{definition}\label{def:topoverP}
A \emph{topological space over $\ttP$} is a map,
$\psi_{X}\colon X\to \ttP$, which is continuous for the Alexandrov topology on $\ttP$.
We denote $\topp$ the category of topological spaces over $\ttP$ with morphisms, 
$f\in\Hom_{\topp}( \psi_{X}, \psi_{Y})$, the continuous maps $f\colon X\to Y$
 such that $\psi_{Y}\circ f=\psi_{X}$.
\end{definition}

Let $\psi_{X}\colon X\to \ttP$ be an object of $\topp$. We define the
simplicial set $\singp\psi_{X}$ as the following pullback in $\sset$,
 \begin{equation}\label{equa:filteredpullback}
 \singp\psi_{X}=\ttN(\ttP)\times_{\sing\;\ttP}\sing \,X,
 \end{equation}
whose elements  are pairs $(\psi_{X}\circ{\sigma}, \sigma)$  with
 $\sigma\in\sing\,X$ 
 and $\psi_{X}\circ\sigma \in\ttN(\ttP)$.
This construction coincides with the simplicial set of filtered simplices, introduced in \defref{def:filteredsimplex}.
 
 \medskip
 We extend the classical adjunction between the categories $\top$ and $\sset$ as:
 $$\xymatrix{
\ssetp
\ar@<1ex>[rr]^-{|-|}
&&\topp.
\ar@<1ex>[ll]^-{\singp}
}$$
The functor $\singp$ is defined in \eqref{equa:filteredpullback}.
 The realization functor  
$|-|$  sends  the object
$\Psi_{K}\colon K\to \ttN(\ttP)$ of $\ssetp$ to the composite
$$\psi_{|K|}\colon |K|\xrightarrow{|\Psi_{K}|} |\ttN(\ttP)|\xrightarrow{\chi_{\ttP}}\ttP\in\topp.$$
Recall that the map $\chi_{\ttP}$ is a natural weak homotopy equivalence called the last vertex map,
see \cite{MR0196744}. 
For instance, if $\ttP=[n]$, $\chi_{\ttP}$ associates to
$(t_{0},\dots,t_{n})\in \Delta^n=|\ttN([n])|$ the greatest index $i\in[n]$ such that $t_{i}\neq 0$.

\begin{remark}\label{rem:toppsimplicial}
The category $\top$ endows a structure of closed simplicial model category with
 $X\otimes K= X\times |K|$ and 
$\left(\HomD_{\top}(X,Y)\right)_{n}=\Hom_{\top}(X\times \Delta^n,Y)$.
With constructions similar to those of \remref{rem:ssetpsimplicial}, one can equip the category $\topp$ with a structure of
simplicial category.
\end{remark}

The adjunction between simplicial sets and topological spaces over $\ttP$ can be enriched to a
simplicial adjunction, see \cite[Proposition 5.1.12]{2018arXiv180104797D} for instance.
\begin{proposition}\label{prop:isohomotopyclassestop}
Let $\Psi_{K}\colon K \to \ttN(\ttP)$ be an  object of $\ssetp$
and $\psi_{X}\colon X\to \ttP$ an object of $\topp$.
 Then, there is an isomorphism of simplicial sets,
 \begin{equation}\label{equa:homdeltatop}
 \HomD_{\topp}(|\Psi_{K}|,\psi_{X})\cong \HomD_{\ssetp}(\Psi_{K},\singp\psi_{X}).
 \end{equation}
\end{proposition}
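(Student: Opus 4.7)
The plan is to establish the isomorphism levelwise and then observe that all identifications are natural in the auxiliary simplex. At simplicial level $n$, unfolding the simplicial enrichments of $\ssetp$ and $\topp$ (see \remref{rem:ssetpsimplicial} and \remref{rem:toppsimplicial}), the left-hand side consists of continuous maps $f\colon |K|\times\Delta^n\to X$ satisfying $\psi_X\circ f=\chi_\ttP\circ|\Psi_K|\circ\pr_1$, while the right-hand side consists of simplicial maps $g\colon K\times\Delta[n]\to\singp\psi_X$ whose composite with the structural projection $\singp\psi_X\to\ttN(\ttP)$ equals $\Psi_K\circ\pr_1$.

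The first step is to exploit the pullback description \eqref{equa:filteredpullback}. Let $\iota\colon\ttN(\ttP)\to\sing\,\ttP$ denote the canonical comparison map appearing in that pullback (concretely, it sends an order-preserving chain $\sigma\colon[k]\to\ttP$ to the continuous map $\chi_\ttP\circ|\sigma|\colon\Delta^k\to\ttP$). The universal property of the pullback then identifies a $g$ as above with a simplicial map $g_2\colon K\times\Delta[n]\to\sing\,X$ that satisfies $\sing(\psi_X)\circ g_2=\iota\circ\Psi_K\circ\pr_1$.

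The second step is to invoke the classical adjunction $|-|\dashv\sing$. It supplies a bijection between such $g_2$ and continuous maps $f\colon |K|\times\Delta^n=|K\times\Delta[n]|\to X$. Under this bijection, the compatibility condition transports to $\psi_X\circ f=(\iota\circ\Psi_K\circ\pr_1)^{\sharp}$, where $(-)^{\sharp}$ denotes the topological adjoint. The crucial observation is that the adjoint of $\iota$ is exactly the last vertex map $\chi_\ttP\colon |\ttN(\ttP)|\to\ttP$; this is a formal consequence of the triangle identity, since $\iota$ is by design the simplicial adjoint of $\chi_\ttP$. Consequently the condition reads $\psi_X\circ f=\chi_\ttP\circ|\Psi_K|\circ\pr_1$, matching the left-hand side.

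The main point that requires care is the bookkeeping of the mate correspondence between $\iota$ and $\chi_\ttP$, which is formal but must be threaded precisely through the pullback so that the compatibility over $\ttN(\ttP)$ becomes the compatibility over $\ttP$. Once this is in hand, naturality of every step in the variable $\Delta[n]$ upgrades the levelwise bijection to the announced isomorphism of simplicial sets, and the proof is complete.
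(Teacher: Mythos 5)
Your proof is correct, and it supplies a genuine gap: the paper does not prove this proposition itself but refers the reader to \cite[Proposition 5.1.12]{2018arXiv180104797D}. Your strategy --- unwind the level-$n$ hom-sets using the tensorings of \remref{rem:ssetpsimplicial} and \remref{rem:toppsimplicial}, use the pullback \eqref{equa:filteredpullback} to trade a map into $\singp\psi_X$ over $\ttN(\ttP)$ for a map into $\sing\,X$ satisfying a compatibility over $\sing\,\ttP$, and push that condition across $|-|\dashv\sing$ so it becomes the compatibility over $\ttP$ --- is the natural argument, and the mate computation identifying the adjoint of $\iota\circ\Psi_K\circ\pr_1$ with $\chi_\ttP\circ|\Psi_K|\circ\pr_1$ is carried out correctly.

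Two small points of care. First, the paper never names the comparison map $\ttN(\ttP)\to\sing\,\ttP$ in \eqref{equa:filteredpullback}; you are right that the only choice compatible both with the paper's assertion that the pullback recovers the filtered simplices of \defref{def:filteredsimplex} and with its use of $\chi_\ttP$ in the realization functor is the adjoint transpose of $\chi_\ttP$, but since this is implicit it deserves to be stated rather than invoked as ``by design.'' Second, the step $|K\times\Delta[n]|\cong|K|\times\Delta^n$ uses that $\Delta[n]$ is finite (so its realization is compact Hausdorff); it is worth making that explicit, since it is precisely what guarantees that the tensoring in $\topp$ of $|\Psi_K|$ with $\Delta[n]$ is the realization of the tensoring in $\ssetp$, after which naturality in $[n]$ of the pullback, the adjunction bijection, and the realization-of-products isomorphism upgrades the levelwise bijection to the asserted isomorphism of simplicial sets.
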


An application of $\pi_{0}$ to \eqref{equa:homdeltatop}
gives the following property.

\begin{corollary}
There is  an isomorphism  between the homotopy classes,
$$[|\Psi_{K}|,\psi_{X}]_{\topp} 
\cong [\Psi_{K},\singp \psi_{X}]_{\ssetp}.$$
\end{corollary}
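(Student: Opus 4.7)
The plan is to obtain this corollary as a direct consequence of \propref{prop:isohomotopyclassestop}, essentially by applying the functor $\pi_{0}\colon \sset\to \set$ to both sides of the isomorphism \eqref{equa:homdeltatop}. Since $\pi_{0}$ sends isomorphisms of simplicial sets to bijections of sets, this immediately produces a bijection
\[
\pi_{0}\HomD_{\topp}(|\Psi_{K}|,\psi_{X})\;\cong\; \pi_{0}\HomD_{\ssetp}(\Psi_{K},\singp\psi_{X}).
\]

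Next, I would identify each of these $\pi_{0}$'s with the corresponding set of homotopy classes in the relevant simplicial category. For this the main input is the general fact, recalled in Appendix~B, that in any simplicial category $\crC$ the set of homotopy classes $[A,B]_{\crC}$ is precisely $\pi_{0}\HomD_{\crC}(A,B)$. To apply it here I would invoke the two preceding remarks: \remref{rem:toppsimplicial} endows $\topp$ with a simplicial structure in which $\HomD_{\topp}$ is the one used in \eqref{equa:homdeltatop}, and \remref{rem:ssetpsimplicial} does the analogous thing for $\ssetp$. Composing these identifications with the $\pi_{0}$-bijection above yields the claimed isomorphism.

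No serious obstacle is expected, since the statement is a formal consequence of a simplicial enrichment of the adjoint pair $(|-|,\singp)$ already set up in \propref{prop:isohomotopyclassestop}. The only point worth double-checking is compatibility of the simplicial enrichments on the two sides with the underlying homotopy theories, i.e.\ that the $\pi_{0}$ of the respective mapping spaces genuinely coincides with homotopy classes in the sense of \defref{def:sfibrant} and the conventions in Appendix~B; this is ensured by the two remarks cited above.
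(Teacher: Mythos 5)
Your proposal is exactly what the paper does: the corollary is stated as an immediate application of $\pi_{0}$ to the isomorphism \eqref{equa:homdeltatop} from \propref{prop:isohomotopyclassestop}, with the identification $[A,B]=\pi_{0}\HomD(A,B)$ supplied by the simplicial-category conventions of Appendix~B and Remarks \ref{rem:ssetpsimplicial} and \ref{rem:toppsimplicial}. Nothing more is needed.
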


\begin{remark}
Let $\psi_{X}\colon X\to \ttP$ be a stratified space with $X$ conically stratified
(\cite[Definition A.5.5]{LurieSecondBook}).
In  \cite[Theorem A.6.4]{LurieSecondBook}, Lurie proves that
the associated simplicial set over $\ttP$,
$ \Psi_{X}\colon \singp \psi_{X} \to \ttN(\ttP)$,
is a fibrant object in the Joyal closed model structure on $\ssetp$.
 This is not true in  general for  stratified spaces,
see \cite[Example 4.13]{2018arXiv180104797D}.
(Fibrant objects for the Joyal structure have the lifting property for the horns
$\Lambda^i[n]$ such that $0<i<n$, $n\geq 1$.)
In this work, for having a representability theorem, we use the simplicial structure on $\ssetp$ and 
the following result is sufficient for our purpose. 
\end{remark}

\begin{proposition}\label{prop:singsfibrant}
Let $\psi_{X}\colon X\to \ttP$ be a topological space over the poset $\ttP$.
The  associated simplicial set over $\ttP$, $\Psi_{X}\colon \singp \psi_{X}\to \ttN(\ttP)$, is s-fibrant in $\ssetp$.
\end{proposition}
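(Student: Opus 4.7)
The plan is to reduce s-fibrancy of $\Psi_X$ to a topological statement via the simplicial adjunction of \propref{prop:isohomotopyclassestop}, and then settle that topological statement with the standard geometric retraction of a simplex onto a horn. Since by definition s-fibrancy asks that $\HomD_{\ssetp}(\Psi_L,\Psi_X)$ be Kan for every $\Psi_L\in\ssetp$, the adjunction isomorphism
$$\HomD_{\ssetp}(\Psi_L,\singp\psi_X)\;\cong\;\HomD_{\topp}(|\Psi_L|,\psi_X)$$
lets us replace the question by proving that for every $\psi_Y\colon Y\to\ttP$ in $\topp$, the simplicial set $\HomD_{\topp}(\psi_Y,\psi_X)$ is Kan.

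The next step is to unfold what a horn-filling problem means in $\HomD_{\topp}(\psi_Y,\psi_X)$. An $n$-simplex of this simplicial set is, by the simplicial structure on $\topp$ recorded in \remref{rem:toppsimplicial}, a continuous map $f\colon Y\times \Delta^n\to X$ satisfying $\psi_X\circ f=\psi_Y\circ\pr_Y$. Therefore a map $\Lambda^i[n]\to \HomD_{\topp}(\psi_Y,\psi_X)$ corresponds exactly to a continuous map
$$h\colon Y\times |\Lambda^i[n]|\longrightarrow X$$
with $\psi_X\circ h=\psi_Y\circ\pr_Y$, and filling the horn amounts to extending $h$ to a map $H\colon Y\times\Delta^n\to X$ over $\ttP$ in the same sense.

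The extension is produced by pulling back along a retraction in the $\Delta^n$-factor. For every $n\geq 1$ and every $0\leq i\leq n$, the subspace $|\Lambda^i[n]|\subset \Delta^n$ is a strong deformation retract of $\Delta^n$: radial projection from any interior point of the missing $i$-th face gives a continuous retraction $r\colon\Delta^n\to |\Lambda^i[n]|$. Setting
$$H\;=\;h\circ(\id_Y\times r)\colon Y\times\Delta^n\longrightarrow X,$$
one has $H|_{Y\times|\Lambda^i[n]|}=h$ because $r$ is a retraction. Moreover, since $r$ only alters the second factor, $\pr_Y\circ(\id_Y\times r)=\pr_Y$, so $\psi_X\circ H=\psi_Y\circ\pr_Y$ as required. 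Thus $H$ is a simplex of $\HomD_{\topp}(\psi_Y,\psi_X)$ filling the horn, which completes the proof.

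The main (and only real) obstacle is the existence of the retraction $r\colon\Delta^n\to|\Lambda^i[n]|$; this is an entirely classical fact and the reason the topological mapping space of any two spaces is automatically Kan. The whole novelty here is simply to observe that acting trivially on the first factor respects the condition of lying over $\ttP$, so the classical Kan property transfers verbatim to $\topp$, and then to $\ssetp$ via \propref{prop:isohomotopyclassestop}.
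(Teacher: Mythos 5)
Your proof is correct and follows essentially the same route as the paper's: reduce to a horn-extension problem in $\topp$ via the simplicial adjunction of \propref{prop:isohomotopyclassestop}, and solve it by precomposing with the classical retraction $\Delta^n\to|\Lambda^i[n]|$, which is compatible with the map to $\ttP$ because the simplicial tensor $\psi_Y\otimes\Delta[n]$ factors through the projection onto $Y$. The paper phrases the same idea as a chain of surjectivity statements between $\Hom$-sets rather than unfolding simplices explicitly, but the content is identical.
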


\begin{proof}
Let $\Psi_{K}\colon K\to \ttN(\ttP)$ be any object of $\ssetp$. 
With \defref{def:sfibrant}, we have to prove that
the simplicial set
$\HomD_{\ssetp}(\Psi_{K},\Psi_{X})$ is a fibrant simplicial set.
This is equivalent  to prove the 
surjectivity of
$$\Hom_{\sset}(\Delta[m],\HomD_{\ssetp}(\Psi_{K},\Psi_{X}))
\to
\Hom_{\sset}(\Lambda^k[m],\HomD_{\ssetp}(\Psi_{K},\Psi_{X})),
$$
for any $m\geq 1$ and any $0\leq k\leq m$.
This  amounts (cf. \defref{def:simplicialcat}) to the surjectivity of
$$\Hom_{\ssetp}(\Psi_{K}\otimes \Delta[m],\Psi_{X})
\to
\Hom_{\ssetp}(\Psi_{K}\otimes \Lambda^k[m],\Psi_{X}).$$
Using the adjunction $(|-|,\singp)$ and the compatibility of the realization functor with products, 
the previous surjectivity is equivalent to the surjectivity of
$$\Hom_{\topp}(|\Psi_{K}|\times  |\Delta[m]|,\psi_{X})
\to
\Hom_{\topp}(|\Psi_{K}|\times  |\Lambda^k[m]|,\psi_{X}),$$
which arises from the existence of a retraction to the canonical  injection
$|\Lambda^k[m]|\to |\Delta[m]|$.
\end{proof}

\subsection{Connection with simplicial sets}\label{subsec:ssetpsset}
There is an adjunction,
$$\xymatrix{
\ssetp
\ar@<1ex>[rr]^-{\cU}
&&\sset,
\ar@<1ex>[ll]^-{\ttN(\ttP)\times-}
}$$
where $\cU$ is a forgetful functor sending  $\Psi_{L}\in \ssetp$ to $L\in \sset$,
and the functor $ \ttN(\ttP)\times-$ sends the simplicial set $Z$ to
 the  projection $p_{\ttP}\colon  \ttN(\ttP) \times Z \to \ttN(\ttP)$.

\begin{proposition}\label{prop:forgetpair}
The pair of functors $(\cU, \ttN(\ttP)\times-)$ forms a simplicial adjunction: for each $Z\in \sset$ and 
each $\Psi_{L}\in\ssetp$, there is an isomorphism
$$\HomD_{\sset}(\cU(\Psi_{L}),Z)\cong\HomD_{\ssetp}(\Psi_{L},\ttN(\ttP)\times Z).$$
\end{proposition}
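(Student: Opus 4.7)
The plan is to exhibit a natural bijection at the level of $n$-simplices and then observe that it is compatible with the face and degeneracy maps coming from $\Delta[n]$, hence assembles into a simplicial isomorphism.

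First I would unwind the two sides using the definitions. By the simplicial enrichment of $\sset$,
\[
\HomD_{\sset}(\cU(\Psi_L),Z)_n = \Hom_{\sset}(L\times\Delta[n],Z).
\]
By the simplicial enrichment of $\ssetp$ described in Remark~\ref{rem:ssetpsimplicial}, $\Psi_L\otimes\Delta[n]$ is the simplicial map $L\times\Delta[n]\xrightarrow{\mathrm{proj}}L\xrightarrow{\Psi_L}\ttN(\ttP)$, and
\[
\HomD_{\ssetp}(\Psi_L,\ttN(\ttP)\times Z)_n = \Hom_{\ssetp}\bigl(\Psi_L\otimes\Delta[n],\,p_{\ttP}\bigr),
\]
i.e.\ the set of simplicial maps $g\colon L\times\Delta[n]\to \ttN(\ttP)\times Z$ satisfying $p_{\ttP}\circ g=\Psi_L\circ\mathrm{proj}$.

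Next I would build the bijection directly. Write any such $g$ as its pair of components $g=(g_1,g_2)$ with $g_1\colon L\times\Delta[n]\to\ttN(\ttP)$ and $g_2\colon L\times\Delta[n]\to Z$. The commutativity condition over $\ttN(\ttP)$ forces $g_1=\Psi_L\circ\mathrm{proj}$, so $g$ is determined entirely by the free component $g_2$. Conversely, every simplicial map $f\colon L\times\Delta[n]\to Z$ yields a map $(\Psi_L\circ\mathrm{proj},f)\colon L\times\Delta[n]\to\ttN(\ttP)\times Z$, automatically compatible with $p_{\ttP}$. The assignments
\[
g\longmapsto \mathrm{pr}_Z\circ g, \qquad f\longmapsto (\Psi_L\circ\mathrm{proj},f)
\]
are mutually inverse by the universal property of the product in $\sset$, so we get a bijection in each simplicial degree $n$.

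Finally I would check naturality in $[n]$. Any morphism $\alpha\colon[m]\to[n]$ in $\dDelta$ acts on both simplicial sets by precomposition with $\mathrm{id}_L\times\Delta[\alpha]$; since the bijection is defined by projecting onto (respectively pairing with) the $Z$-component, and the forced first component $\Psi_L\circ\mathrm{proj}$ transforms correctly under precomposition, the bijection is natural in $[n]$. Naturality in $\Psi_L$ and $Z$ is verified by the same argument. This promotes the $n$-wise bijections to an isomorphism of simplicial sets, which is the claimed simplicial adjunction. The only potential pitfall is bookkeeping of the commutativity condition over $\ttN(\ttP)$, but once the description $\Psi_L\otimes\Delta[n]=\Psi_L\circ\mathrm{proj}_L$ is in hand, the verification reduces to the universal property of the product.
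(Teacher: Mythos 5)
Your proof is correct and follows essentially the same route as the paper's: both establish the bijection at the level of morphisms via the universal property of the product (a map into $\ttN(\ttP)\times Z$ over $\ttN(\ttP)$ is determined by its $Z$-component) and then promote it to a simplicial isomorphism using the identity $\cU(\Psi_L\otimes\Delta[n])=\cU(\Psi_L)\otimes\Delta[n]$. You spell out the component-wise bookkeeping more explicitly than the paper, but the underlying argument is the same.
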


\begin{proof}
To $f\in \Hom_{\sset}(\cU(\Psi_{L}),Z)$, one associates
$(f,\Psi_{L})\in \Hom_{\ssetp}(\Psi_{L}, \ttN(\ttP)\times Z)$, and
this correspondence is clearly a bijection.
The simplicial adjunction follows from the following isomorphisms between the sets of $n$-simplices,
\begin{eqnarray*}
\left(\HomD_{\sset}(\cU(\Psi_{L}),Z)\right)_{n}
&=&
\Hom_{\sset}(\cU(\Psi_{L})\otimes \Delta[n],Z)\\
&=&
\Hom_{\sset}(\cU(\Psi_{L}\otimes \Delta[n]),Z)\\
&\cong&
 \Hom_{\ssetp}(\Psi_{L}\otimes \Delta[n], \ttN(\ttP)\times Z)\\
&=&
\left(\HomD_{\ssetp}(\Psi_{L}, \ttN(\ttP)\times Z)\right)_{n}.
\end{eqnarray*}
\end{proof}

\subsection{Restricted  simplicial sets over a poset}\label{subsec:ssetp+}

\begin{definition}\label{def:simplexregular}
Let $\ttP$ be a poset.
A simplex $\tts_{0}\preceq \tts_{1}\preceq\dots\preceq \tts_{\ell}$
of $\ttN(\ttP)$ is \emph{regular} if $\tts_{\ell}$ is a maximal element of $\ttP$.
We denote $\Delta[\ttP]^+$ the full sub-category of $\Delta[\ttP]$ whose objects are the \emph{regular simplices}
of $\ttN(\ttP)$. 
\end{definition}

\begin{definition}
A \emph{restricted simplicial set over $\ttP$} is a functor $\Psi_{K}\colon (\Delta[\ttP]^+)^{\op}\to \set$.
The category of restricted  simplicial sets, with morphisms the natural transformations, is denoted $\ssetp^+$.
As $\Delta[\ttP]^+\subset \Delta[\ttP]$, there is a \emph{restriction functor} $$\cR\colon \ssetp\to \ssetp^+.$$
\end{definition}

We also adapt the  notion of regular stratum of a stratified topological space  to the simplicial paradigm.
We call it non singular to make a clear distinction from the previous notion of regular simplex.

\begin{definition}\label{def:nonsingular}
Let $\Psi_{K}\in\ssetp$. 
The \emph{non singular part} of $\Psi_{K}$ is the restriction of $\Psi_{K}$
to the simplicial subset of $K$ formed of simplices $\sigma$
such that $\Psi_{K}\circ \sigma$ is a maximal element of $\ttP$.
If each simplex of $\Psi_{K}$ is non singular, we say that $\Psi_{K}$ is a \emph{completely regular simplicial set.} 
\end{definition}

 \begin{example}
Let $\ttP=[n]$. An object $\Psi_{K}$ of $\ssetp$ can be described as a family of sets $K_{i_{0},\dots,i_{n}}$, with 
$$i_{j}\in \ \{-1\}\cup  \N.$$ 
The elements of $K_{i_{0},\dots,i_{n}}$ are the simplices $\Delta[i_{0}]\ast\dots\ast\Delta[i_{n}]$,
the value -1 corresponding to an empty subset.
The objects of $\ssetp^+$ correspond to the families  $K_{i_{0},\dots,i_{n}}$ with $i_{n}\neq -1$.
The non singular part of $\Psi_{K}$ is the union of 
$K_{-1,\dots,-1,i_{n}}$ with $i_{n}\neq -1$.
\end{example}

We now introduce two functors from $\ssetp^+$ to $\ssetp$,

$$\xymatrix{
\ssetp^+
\ar@<1ex>[rr]^-{\ttn}
\ar@/_1pc/[rr]_{\tti}
&&
\ssetp
\ar[ll]^-{\cR}
}$$

\begin{definition}\label{def:functorn}
The functor $\ttn\colon \ssetp^+\to\ssetp$ is defined for any $\sigma\in\ttN(\ttP)$ by,
$$(\ttn(\Psi_{K}))_{\sigma}=\left\{
\begin{array}{cl}
(\Psi_{K})_{\sigma}&\text{if } \sigma \text{ is regular},\\
\tv&\text{otherwise,}
\end{array}\right.
$$
where $\tv$ is an additional vertex.
\end{definition}

\begin{definition}\label{def:functiri}
The functor $\tti\colon \ssetp^+\to\ssetp$ is  defined by left Kan extension.
First, to any restricted  simplicial set over $\ttP$,
$\left(\Delta[i_{0}]\ast\dots\ast\Delta[i_{n}]\right)^+$, 
we associate the simplicial set over $\ttP$,
$\Delta[i_{0}]\ast\dots\ast\Delta[i_{n}]$.
Then, as an object $\Psi_{K}$ of $\ssetp^+$ is a colimit in $\ssetp^+$,
$$\Psi_{K}=\varinjlim_{\left(\Delta[i_{0}]\ast\dots\ast\Delta[i_{n}]\right)^+\to \Psi_{K}} \left(\Delta[i_{0}]\ast\dots\ast\Delta[i_{n}]\right)^+,$$
we set $\tti(\Psi_{K})$ as a colimit in $\ssetp$,
$$\tti\left(\Psi_{K}\right)=\varinjlim_{\left(\Delta[i_{0}]\ast\dots\ast\Delta[i_{n}]\right)^+\to \Psi_{K}} \Delta[i_{0}]\ast\dots\ast\Delta[i_{n}].$$
\end{definition}

We present below examples of the compositions $\tti\circ \cR$ and $\ttn\circ \cR$.

 \begin{example}\label{exam:papillon}[{\it The composition $\tti\circ \cR$.}]
 Let $\ttP= [1]$.
This first example starts with an ordered simplicial complex, constituted of two 2-simplices with a vertex $a$ in common.
The vertex $a$ is  singular and the other ones completely regular:
$$K=\Delta[0]\ast\Delta[1]\cup_{\Delta[0]}\Delta[0]\ast\Delta[1].$$ 
Below, we draw $K$, 
its restriction $\cR(K)\in\ssetp^+$ and the composite $\tti(\cR(K))\in\ssetp$.

\hskip 1.2cm
\begin{tikzpicture}[black,scale=.4]
\draw [fill=black, opacity=0.2] (0,0) -- (3,2) -- (0,4) -- cycle;
\draw[line width=0.5mm](0,0) -- (3,2) -- (0,4) -- (0,0);
 
 \draw(0,0) node {$\bullet$} ;
  \draw(3,2) node {$\bullet$} ;
   \draw(0,4) node {$\bullet$} ;
    \draw(6,0) node {$\bullet$} ;
     \draw(6,4) node {$\bullet$} ;

\draw [fill=black, opacity=0.2] (3,2) -- (6,0) -- (6,4) -- cycle;
\draw[line width=0.5mm](3,2) -- (6,0) -- (6,4) -- (3,2);
\draw(3,-1) node {\negro$K$};
\end{tikzpicture}
\hskip 1.2cm
\begin{tikzpicture}[black,scale=.4]

\draw [fill=black, opacity=0.2] (0,0) -- (2.7,1.8) -- (2.7,2.2)  -- (0,4) -- cycle;
\draw[line width=0.5mm] (2.7,2.2)  -- (0,4) -- (0,0) -- (2.7,1.8) ;
 
 \draw(0,0) node {$\bullet$} ;
 
   \draw(0,4) node {$\bullet$} ;
    \draw(6,0) node {$\bullet$} ;
     \draw(6,4) node {$\bullet$} ;

\draw [fill=black, opacity=0.2] (3.3,1.8) -- (6,0) -- (6,4) -- (3.3,2.2) -- cycle;
\draw[line width=0.5mm](3.3,1.8) -- (6,0) -- (6,4) --  (3.3,2.2);
\draw(3,-1) node {$\cR(K)$};
\end{tikzpicture}
\hskip 1cm
\begin{tikzpicture}[black,scale=.4]
\draw [fill=black, opacity=0.2] (0,0) -- (3,2) -- (0,4) -- cycle;
\draw[line width=0.5mm](0,0) -- (3,2) -- (0,4) -- (0,0);
 
 \draw(0,0) node {$\bullet$} ;
  \draw(3,2) node {$\bullet$} ;
   \draw(0,4) node {$\bullet$} ;
    \draw(7,0) node {$\bullet$} ;
     \draw(7,4) node {$\bullet$} ;
      \draw(4,2) node {$\bullet$} ;

\draw [fill=black, opacity=0.2] (4,2) -- (7,0) -- (7,4) -- cycle;
\draw[line width=0.5mm](4,2) -- (7,0) -- (7,4) -- (4,2);
\draw(3,-1) node {$\tti(\cR(K))$};
\end{tikzpicture}

\noindent
The link (\cite[Definition 1.11]{CST1}) of the singular vertex of $K$ is not connected, 
which means by definition that $K$ is not normal (\cite[Definition 1.55]{CST1}).
In contrast, $(\tti\circ\cR)(K)$ is normal.
One can also notice that $\ttn(\cR(K))=K$ for this example.
\end{example}

The composition $\tti\circ \cR$ corresponds (see \propref{prop:normal}) 
to the process of normalization introduced in \cite{GM1} 
and justifies the following definition. 

\begin{definition}\label{def:normalisation}
 The composite $\tti\circ \cR\colon \ssetp\to\ssetp$ is called the \emph{normalization functor.}
 \end{definition}

\begin{example}\label{exam:crac}[{\it The composition $\ttn\circ \cR$.}]
Let $\ttP=[1]$.
We consider the ordered simplicial complex $K=\Delta[1]\ast \Delta[0]$,
with $\Delta[1]$ singular and $\Delta[0]$ completely regular. 
We denote $a,b$ the vertices of $\Delta[1]$ and $c$ the vertex of $\Delta[0]$.
Below, we draw $K$, 
its restriction $\cR(K)\in\ssetp^+$ and the composite $\ttn(\cR(K))\in\ssetp$.
 
 \hskip 2cm
\begin{tikzpicture}[black,scale=.4]
\draw [fill=black, opacity=0.2] (0,0) -- (3,2) -- (0,4) -- cycle;
\draw[line width=0.5mm](0,0) -- (3,2) -- (0,4) -- (0,0);
 
 \draw(0,0) node {$\bullet$} ;
  \draw(-0.6,0) node {$a$} ;
  \draw(3,2) node {$\bullet$} ;
  \draw(3.6,2) node {$c$} ;
   \draw(0,4) node {$\bullet$} ;
    \draw(-0.6,4) node {$b$} ;

\draw(1.5,-1) node {\negro$K$};
\end{tikzpicture}
\hskip 2cm
\begin{tikzpicture}[black,scale=.4]
\draw [fill=black, opacity=0.2] (0,0) -- (3,2) -- (0,4) -- cycle;
\draw[line width=0.5mm](0,0) -- (3,2) -- (0,4) ;
 
  \draw(3,2) node {$\bullet$} ;
   \draw(3.6,2) node {$c$} ;

\draw(1.5,-1) node {$\cR(K)$};
\end{tikzpicture}
\hskip 2cm
\begin{tikzpicture}[black,scale=.4]
\draw [line width=0.5mm] (0,2)  arc (180:0:1.5) ;
\draw [line width=0.5mm] (0,2)  arc (-180:0:1.5) ; 
\draw [fill=black, opacity=0.2] (0,2) arc (180:0:1.5);
\draw [fill=black, opacity=0.2] (0,2) arc (-180:0:1.5);

  \draw(0,2) node {$\bullet$} ;
  \draw(3,2) node {$\bullet$} ;
   \draw(3.6,2) node {$c$} ;
 \draw(-0.6,2) node {$\tv$} ;

\draw(1.5,-1) node {$\ttn(\cR(K))$};
\end{tikzpicture}

\noindent
The image $\ttn(\cR(K))\in \ssetp$ is the quotient of $\Delta[1]\ast \Delta[0]$ by the relation $\Delta[1]=\ttv$.
One can also notice that $\tti
 (\cR(K))=K$ for this example.
\end{example}

The next result comes directly from the definitions of these functors.

\begin{proposition}\label{prop:leftrightR}
The functors $\ttn$ and $\tti$ are respectively the right and the left adjoints of the restriction functor, $\cR$.
They verify $\cR\circ \tti=
\cR\circ \ttn=
\id$.
\end{proposition}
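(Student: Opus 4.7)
The plan is to work in the presheaf presentation: objects of $\ssetp$ (resp.\ $\ssetp^+$) are functors $\Delta[\ttP]^{\op}\to\set$ (resp.\ $(\Delta[\ttP]^+)^{\op}\to\set$), and $\cR$ is precisely precomposition with the fully faithful inclusion $\iota\colon \Delta[\ttP]^+\hookrightarrow\Delta[\ttP]$. In this language, $\tti\dashv\cR$ is immediate from the general theory of Kan extensions, since \defref{def:functiri} constructs $\tti$ as the left Kan extension along $\iota$, and restriction along $\iota$ is its canonical right adjoint. Moreover, $\iota$ being fully faithful, the unit of this adjunction is an isomorphism, giving $\cR\circ\tti=\id$.

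For the adjunction $\cR\dashv\ttn$, I would begin by checking that $\ttn(\Psi_K)$ is a well-defined presheaf on $\Delta[\ttP]$. The combinatorial heart of the matter is that the non-regular simplices of $\ttN(\ttP)$ form a sieve: if $\sigma=\tts_0\preceq\cdots\preceq\tts_\ell$ has non-maximal last vertex $\tts_\ell$ and $\tau\to\sigma$ is any morphism of $\Delta[\ttP]$, then the last vertex $\tts_{f(k)}$ of $\tau$ satisfies $\tts_{f(k)}\preceq\tts_\ell$; were it maximal, maximality would force $\tts_{f(k)}=\tts_\ell$, contradicting non-regularity of $\sigma$. Hence every structure map of $\ttn(\Psi_K)$ with codomain a non-regular simplex takes values in a singleton, the structure maps between regular simplices come from $\Psi_K$, and the simplicial identities are inherited from $\Psi_K$ together with the uniqueness of maps to a terminal object.

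The adjunction $\cR\dashv\ttn$ then follows formally: a natural transformation $\alpha\colon \Psi_L\to\ttn(\Psi_K)$ is uniquely determined by its components at regular simplices, since at non-regular ones the codomain is a singleton. The sieve property above ensures that no naturality square of the shape \emph{regular over non-regular} arises, so the only substantive constraints are the naturality squares among regular simplices, which amount exactly to the data of a morphism $\cR(\Psi_L)\to\Psi_K$ in $\ssetp^+$. This produces the natural bijection
$$\Hom_{\ssetp}(\Psi_L,\ttn(\Psi_K))\cong\Hom_{\ssetp^+}(\cR(\Psi_L),\Psi_K),$$
and $\cR\circ\ttn=\id$ is built into the definition of $\ttn$ on regular simplices.

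The main delicate point will be the sieve property for non-regular simplices in $\Delta[\ttP]$; once that elementary lemma is in hand, both adjunctions and both equalities reduce to formal manipulations with Kan extensions and hom-sets into terminal objects.
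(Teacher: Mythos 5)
Your proof is correct and takes the same route as the paper, which offers no argument beyond asserting that the statement follows directly from the definitions. The sieve property of non-regular simplices that you isolate and prove (no morphism in $\Delta[\ttP]$ goes from a regular simplex to a non-regular one, since the last vertex can only move up the poset) is exactly the combinatorial fact needed to make $\ttn(\Psi_K)$ a well-defined presheaf on $\Delta[\ttP]$ and to reduce both adjunctions and the equalities $\cR\circ\tti=\cR\circ\ttn=\id$ to formalities.
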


Thus $\cR$ preserves limits and colimits, $\tti$ is compatible with colimits and $\ttn$ with limits. 
Also, the functors $\cR$, $\ttn$, $\tti$ verify, for any 
$\Psi_{K},\Psi_{K_{1}},\Psi_{K_{2}}\in \ssetp^+$ and $\Psi_{L}\in\ssetp$,
\begin{eqnarray}
\Hom_{\ssetp}(\Psi_{L},\ttn(\Psi_{K}))
&\cong&
\Hom_{\ssetp^+}(\cR(\Psi_{L}),\Psi_{K}),\label{equa:leftright}\\
\Hom_{\ssetp}(\tti(\Psi_{K}),\Psi_{L})
&\cong&
\Hom_{\ssetp^+}(\Psi_{K},\cR(\Psi_{L})),\label{equa:leftright2}
\end{eqnarray}
which imply,
with $\cR\circ \tti=\cR\circ \ttn=\id$,
\begin{eqnarray}\label{equa:leftright3}
\Hom_{\ssetp^+}(\Psi_{K_{1}},\Psi_{K_{2}}))
&\cong&
\Hom_{\ssetp}(\tti(\Psi_{K_{1}}),\ttn(\Psi_{K_{2}}))\\ 
&\cong&
\Hom_{\ssetp}(\tti(\Psi_{K_{1}}),\tti(\Psi_{K_{2}}))\nonumber \\
&\cong&
\Hom_{\ssetp}(\ttn(\Psi_{K_{1}}),\ttn(\Psi_{K_{2}})).\nonumber
\end{eqnarray}

\subsection{Simplicial structures}
The structure of simplicial category on $\ssetp$
induces a structure of simplicial category on $\ssetp^+$ by
\begin{equation}\label{equa:simplicial+}
\Hom^{\Delta}_{\ssetp^+}(\Psi_{K_{1}},\Psi_{K_{2}})=\Hom^{\Delta}_{\ssetp}(\tti(\Psi_{K_{1}}),\ttn(\Psi_{K_{2}})).
\end{equation}
Let $\Psi_{K}\in \sset^+$. The  products on $\ssetp$ and $\ssetp^+$ are linked by
$$\Psi_{K}\otimes \Delta[n]=\cR(\tti(\Psi_{K})\otimes \Delta[n]).$$
One can also observe from the definition of $\tti$ that 
\begin{equation}\label{equa:producti}
\tti(\Psi_{K}\otimes \Delta[n])=\tti(\Psi_{K})\otimes \Delta[n].
\end{equation}

\begin{lemma}\label{lem:simplicial+}
The set of $n$-simplices of the simplicial set 
$\Hom^{\Delta}_{\ssetp^+}(\Psi_{K_{1}},\Psi_{K_{2}})$
is given by
$$(\Hom^{\Delta}_{\ssetp^+}(\Psi_{K_{1}},\Psi_{K_{2}}))_{n}=
\Hom_{\ssetp^+}(\Psi_{K_{1}}\otimes \Delta[n],\Psi_{K_{2}}).$$
\end{lemma}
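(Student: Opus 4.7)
The plan is to chain the various adjunction and compatibility formulas just established. I will work strictly from the formulas \eqref{equa:simplicial+}, \eqref{equa:producti}, and the adjunction \eqref{equa:leftright3}, together with the description of $n$-simplices of the mapping space in the ambient simplicial category $\ssetp$.

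First, by the very definition \eqref{equa:simplicial+} of the simplicial enrichment on $\ssetp^+$, the simplicial set $\HomD_{\ssetp^+}(\Psi_{K_1},\Psi_{K_2})$ equals $\HomD_{\ssetp}(\tti(\Psi_{K_1}),\ttn(\Psi_{K_2}))$. Passing to $n$-simplices and using the formula
$(\HomD_{\ssetp}(A,B))_n=\Hom_{\ssetp}(A\otimes\Delta[n],B)$
recalled in Remark~\ref{rem:ssetpsimplicial}, I obtain
$$(\HomD_{\ssetp^+}(\Psi_{K_1},\Psi_{K_2}))_n=\Hom_{\ssetp}\bigl(\tti(\Psi_{K_1})\otimes\Delta[n],\ttn(\Psi_{K_2})\bigr).$$

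Second, I invoke the compatibility of $\tti$ with the simplicial tensor recorded in \eqref{equa:producti}, namely $\tti(\Psi_{K_1})\otimes\Delta[n]=\tti(\Psi_{K_1}\otimes\Delta[n])$, to rewrite the right hand side as
$\Hom_{\ssetp}(\tti(\Psi_{K_1}\otimes\Delta[n]),\ttn(\Psi_{K_2}))$. Finally, applying the first isomorphism of \eqref{equa:leftright3} (equivalently, the adjunction \eqref{equa:leftright2} together with $\cR\circ\ttn=\id$) to the pair $(\Psi_{K_1}\otimes\Delta[n],\Psi_{K_2})$ in $\ssetp^+$ transports this hom-set back into $\ssetp^+$, yielding
$$\Hom_{\ssetp}(\tti(\Psi_{K_1}\otimes\Delta[n]),\ttn(\Psi_{K_2}))\cong\Hom_{\ssetp^+}(\Psi_{K_1}\otimes\Delta[n],\Psi_{K_2}).$$
Composing the three identifications gives the claimed equality.

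There is no serious obstacle: the proof is a three-step unfolding of definitions. The only point that requires a moment of care is making sure the tensor $\Psi_{K_1}\otimes\Delta[n]$ appearing on the right is the one induced on $\ssetp^+$ (that is, $\cR(\tti(\Psi_{K_1})\otimes\Delta[n])$), so that step two is an honest identification in $\ssetp$ rather than an abuse of notation; this is guaranteed by $\cR\circ\tti=\id$ combined with \eqref{equa:producti}.
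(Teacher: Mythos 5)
Your proof is correct and follows exactly the same chain of identifications as the paper's: definition \eqref{equa:simplicial+}, the description of $n$-simplices of a mapping space in $\ssetp$, the compatibility \eqref{equa:producti} of $\tti$ with the tensor by $\Delta[n]$, and the adjunction \eqref{equa:leftright3} to land back in $\ssetp^+$. The closing remark about the $\ssetp^+$-tensor being $\cR(\tti(\Psi_{K_1})\otimes\Delta[n])$ is a nice sanity check but not a deviation from the paper's argument.
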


\begin{proof}
Using \eqref{equa:leftright3}, \eqref{equa:simplicial+} and the adjunction, we have
\begin{eqnarray*}
\left(\Hom^{\Delta}_{\ssetp^+}(\Psi_{K_{1}},\Psi_{K_{2}})\right)_{n}
&=&
\left(\Hom^{\Delta}_{\ssetp}(\tti(\Psi_{K_{1}}),\ttn(\Psi_{K_{2}}))\right)_{n}\\
&=&
\Hom_{\ssetp}(\tti(\Psi_{K_{1}})\otimes \Delta[n],\ttn(\Psi_{K_{2}}))\\
&=&
\Hom_{\ssetp}(\tti(\Psi_{K_{1}}\otimes \Delta[n]),\ttn(\Psi_{K_{2}}))\\
&=&
\Hom_{\ssetp^+}(\Psi_{K_{1}}\otimes \Delta[n], \Psi_{K_{2}}).
\end{eqnarray*}
\end{proof}

\begin{proposition}\label{prop:inadjoint}
The functors $(\cR,\ttn)$ and $(\tti,\cR)$ are  adjoint pairs of simplicial functors; i.e.,
for $\Psi_{L}\in \ssetp$ and $\Psi_{K}\in \ssetp^+$, we have
$$\HomD_{\ssetp}(\Psi_{L},\ttn(\Psi_{K}))\cong
\HomD_{\ssetp^+}(\cR(\Psi_{L}),\Psi_{K})
$$
and
$$\HomD_{\ssetp^+}(\Psi_{K},\cR(\Psi_{L}))
\cong
\HomD_{\ssetp}(\tti(\Psi_{K}),\Psi_{L}).
$$
\end{proposition}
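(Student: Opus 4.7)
The plan is to verify both isomorphisms by computing the sets of $n$-simplices of the two sides and exhibiting a natural bijection, after which naturality in the simplicial variable $[n]$ will follow from functoriality of the constructions involved. The main ingredients are Lemma~\ref{lem:simplicial+}, which describes $n$-simplices in $\HomD_{\ssetp^+}$ in terms of the product $\otimes\Delta[n]$, the set-level adjunctions \eqref{equa:leftright} and \eqref{equa:leftright2} from \propref{prop:leftrightR}, and the compatibility relation \eqref{equa:producti} between $\tti$ and the products on $\ssetp$ and $\ssetp^+$.

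For the second adjunction, I would run the following chain, at the level of $n$-simplices:
$$
\left(\HomD_{\ssetp^+}(\Psi_{K},\cR(\Psi_{L}))\right)_{n}
= \Hom_{\ssetp^+}(\Psi_{K}\otimes \Delta[n],\cR(\Psi_{L}))
$$
by \lemref{lem:simplicial+}; then by the set-level adjunction \eqref{equa:leftright2} and by \eqref{equa:producti},
$$
\cong \Hom_{\ssetp}(\tti(\Psi_{K}\otimes \Delta[n]),\Psi_{L})
= \Hom_{\ssetp}(\tti(\Psi_{K})\otimes \Delta[n],\Psi_{L})
= \left(\HomD_{\ssetp}(\tti(\Psi_{K}),\Psi_{L})\right)_{n},
$$
the last equality being the definition of the simplicial enrichment on $\ssetp$.

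For the first adjunction, the analogous argument requires an auxiliary identity. Starting from
$$
\left(\HomD_{\ssetp}(\Psi_{L},\ttn(\Psi_{K}))\right)_{n}
= \Hom_{\ssetp}(\Psi_{L}\otimes \Delta[n],\ttn(\Psi_{K}))
\cong \Hom_{\ssetp^+}(\cR(\Psi_{L}\otimes \Delta[n]),\Psi_{K})
$$
by \eqref{equa:leftright}, one must check that $\cR(\Psi_{L}\otimes\Delta[n])$ agrees with the product $\cR(\Psi_{L})\otimes\Delta[n]$ formed in $\ssetp^+$, i.e.\ with $\cR(\tti(\cR(\Psi_{L}))\otimes\Delta[n])$. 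This I would establish by evaluating both functors on an arbitrary regular $\sigma\in\ttN(\ttP)_{k}$: on the one hand $\cR(\Psi_{L}\otimes\Delta[n])(\sigma)=(\Psi_{L})_{\sigma}\times\Delta[n]_{k}=\cR(\Psi_{L})(\sigma)\times\Delta[n]_{k}$ (the last equality because $\sigma$ is regular), and on the other hand the same expression arises using $\cR\circ\tti=\id$ from \propref{prop:leftrightR}; the identifications are compatible with the morphisms in $\Delta[\ttP]^+$ since these relate only regular simplices. Granted this identity, the chain closes via \lemref{lem:simplicial+} to give $\left(\HomD_{\ssetp^+}(\cR(\Psi_{L}),\Psi_{K})\right)_{n}$.

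Finally, naturality in $[n]$ is automatic: each step of the two chains is induced by a natural construction in the variable $\Delta[n]$ (products, applications of $\cR$, $\tti$, $\ttn$, and the already-established bijections \eqref{equa:leftright}--\eqref{equa:leftright2}), so the resulting bijections assemble into isomorphisms of simplicial sets. The only delicate point I anticipate is the verification of the product-compatibility $\cR(\Psi_{L}\otimes\Delta[n])=\cR(\Psi_{L})\otimes\Delta[n]$ in the first adjunction, since $\tti\circ\cR$ is not the identity on $\ssetp$; the argument rests on the observation that this non-triviality disappears after applying $\cR$ again, because it occurs only over non-regular simplices of $\ttN(\ttP)$.
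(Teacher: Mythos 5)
Your proof is correct and takes essentially the same route as the paper's: the paper also deduces the result from Lemma~\ref{lem:simplicial+} and the two identities $\cR(\Psi_{L}\otimes \Delta[n])=\cR(\Psi_{L})\otimes \Delta[n]$ and $\tti(\Psi_{K}\otimes \Delta[n])=\tti(\Psi_{K})\otimes \Delta[n]$, applied to the set-level adjunctions. Your version goes slightly further by actually verifying the first of those identities on regular simplices, which the paper simply asserts; that extra paragraph is sound and a reasonable thing to spell out, given that $\tti\circ\cR\neq\id$ makes the claim non-obvious.
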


\begin{proof}
Using $\cR(\Psi_{L}\otimes \Delta[n])=\cR(\Psi_{L})\otimes \Delta[n]$ and 
$\tti(\Psi_{K}\otimes \Delta[n])=\tti(\Psi_{K})\otimes \Delta[n]$,  
the proof is a consequence of the properties of adjunctions and \lemref{lem:simplicial+}.
\end{proof}

\section{Blown up cochains on a poset}\label{sec:boum}

\begin{quote}
In this section, we recall the notion of perversity and 
present the blown up cohomology associated to a poset, $\ttP$,  which corresponds to a local situation.
\end{quote}

\subsection{Perversity on a poset}
First comes the principal tool for the Goresky and MacPherson theory: the notion of perversity.

\begin{definition}\label{def:poset}
A \emph{perversity on a poset $\ttP$} is a map $\ov{p}\colon \ttP\to \ov{\Z}=\Z\cup\{-\infty,\infty\}$
taking the value 0 on the maximal elements.
If $\ov{p}$ and $\ov{q}$ are two perversities on $\ttP$, we write  $\ov{p}\leq \ov{q}$ when
$\ov{p}(\tts)\leq\ov{q}(\tts)$, for each $\tts\in \ttP$.  

Given two perversities, $\ov{p}$ and $\ov{q}$, we define a new perversity, $\ov{p}+\ov{q}$,
by $(\ov{p}+\ov{q})(\tts)=\ov{p}(\tts)+\ov{q}(\tts)$, with the conventions
$k + (-\infty)=-\infty+k=-\infty$, for any $k\in\ov{\Z}$, and
$\ell+(+\infty)=+\infty +\ell=+\infty$, for any $\ell\in\Z$.
(The first  convention is required in  the definition of the cup product in blown up cohomology, 
see \cite[Proposition 4.2]{CST4}.)
\end{definition} 

For stratified topological spaces, perversities defined on the set of strata
already appeared in \cite{MacPherson90} but, 
historically, the first ones in \cite{GM1, GM2} correspond to perversities defined on the poset
$\N^\op$. In fact, in the case of geometrical data,  the strata come with an intrinsic notion of 
dimension and in the two seminal papers quoted above, perversities retain only this information.
Let us give an illustration  with  pseudomanifolds.
Being the spaces with singularities satisfying  a Poincar\'e duality (see \cite{GM1, CST2, ST1})
they play a central role in the theory. 

 \begin{definition}\label{def:pseudomanifold}
A \emph{topological pseudomanifold of dimension $n$} (or a pseudomanifold) is 
a Hausdorff  space
together with a filtration by closed subsets,
$$
X_{-1}=\emptyset\subseteq X_0 \subseteq X_1 \subseteq \dots \subseteq X_{n-2} \subseteq X_{n-1} \subsetneqq X_n =X,
$$
such that, for each $i\in\{0,\dots,n\}$, 
$X_i\backslash X_{i-1}$ is a topological manifold of dimension~$i$ or the empty set. 
The subspace $X_{n-1}$ is called \emph{the singular set} and each 
point $x \in X_i \backslash X_{i-1}$ with $i\neq n$ admits
\begin{enumerate}[(i)]
\item an open neighborhood $V$ of $x$ in $X$, endowed with the induced filtration,
\item an open neighborhood $U$ of $x$ in  $X_i\backslash X_{i-1}$, 
\item a  compact pseudomanifold $L$  of dimension  $n-i-1$, whose cone $\rc L$ is endowed with the conic filtration, 
$(\rc L)_{i}=\rc L_{i-1}$,
\item a homeomorphism, $\varphi \colon U \times \rc L\to V$, 
such that
\begin{enumerate}[(a)]
\item $\varphi(u,\tv)=u$, for any $u\in U$, where $\tv$  is the apex of $\rc L$,
\item $\varphi(U\times \rc L_{j})=V\cap X_{i+j+1}$, for any $j\in \{0,\dots,n-i-1\}$.
\end{enumerate}
\end{enumerate}
The pseudomanifold $L$ is called the \emph{link} of $x$. 
\end{definition}

By taking the partition $X=\sqcup \,X_{i}\backslash X_{i-1}$, we get a stratified space 
(\cite[Theorem G]{CST1})
as in \defref{subsec:stratifiedspacesdef} and thus a stratification over its poset of strata.
As it  appears in the next sections, the study of intersection cohomology of stratified spaces 
does not need any notion of dimension or codimension. But, as quoted above,  a
feature of pseudomanifolds is the existence of a geometrical notion of dimension.
Moreover, in the seminal work of Goresky and MacPherson, the perversities in use need this notion of dimension.
Not only, perversities need it but they have a geometrical meaning, as  they control
 the tangential component of the singular simplices  relatively to the strata of $X$. 

To keep this peculiarity of pseudomanifolds, we can give preference to the opposite poset of the natural integers
instead of the poset of strata. More precisely, to any $n$-dimensional pseudomanifold $X$ we associate
the continous map $\varphi_{X}\colon X\to [n]^{\op}$, sending a point
$x\in X$ to the codimension of the stratum $S$ with $x\in S$.
Let us observe that $\varphi_{X}^{-1}(k)=X_{k}\menos X_{k-1}$.
With these observations, let us recall how perversities appears in \cite{GM1}.

\begin{definition}\label{def:GMperversite}
A \emph{{\rm GM}-perversity} is a map $\ov{p}\colon [n]^{\op}\to\Z$ such that
$\ov{p}(0)=\ov{p}(1)=\ov{p}(2)=0$ and
$\ov{p}(i)\leq\ov{p}(i+1)\leq\ov{p}(i)+1$, for all $i\geq 2$.
As particular case, we have the null perversity~$\ov{0}$ constant with value~0 and the \emph{top perversity} 
$\ov{t}$ defined by
$\ov{t}(i)=i-2$ if $i\geq 2$. 
For any perversity, $\ov{p}$, the perversity $D\ov{p}:=\ov{t}-\ov{p}$ is called the
\emph{complementary perversity} of $\ov{p}$.
\end{definition}

We complete this paragraph with the transfer of perversities through a map of posets.
An exhaustive topological study of these operations on perversities
is done in \cite{saralegiaranguren2019refinement}. 
This transfer has been also used in \cite{CST3} in relation with the analysis
of the topological invariance of intersection homology.

\begin{definition}\label{def:perversitypullback}
Let $\ttf\colon \ttP\to \ttQ$ be a morphism of  posets. 
 If $\ov{q}\colon \ttQ\to \ov{\Z}$
is a perversity on $\ttQ$, \emph{the pullback perversity} $\ttf^*\ov{q}$  on $\ttP$ is defined by
$\ttf^*\ov{q}(\tts)=\ov{q}\circ \ttf(\tts)$ if $\tts$ is not maximal.
 If $\ov{p}\colon \ttP\to \ov{\Z}$ is a perversity on $\ttP$, \emph{ the pushforward perversity}  $\ttf_{*}\ov{p}$  on $\ttQ$ is defined on a not maximal element $\ttt$ by
$$\ttf_{*}\ov{p}(\ttt)=\inf_{\ttf(\tts)=\ttt}\ov{p}(\tts).$$
\end{definition}

\subsection{Perverse degree}\label{subsec:perversedegree}
Recall some notations and conventions, already used in previous works as \cite[Subsection 2.1]{CST2}.
The cone on the simplicial set $\Delta[n]=[e_{0},\dots,e_{n}]$ is the simplicial set 
$\tc \Delta[n]=[e_{0},\dots,e_{n},\tv]$. 
The apex $\tv$ is called a \emph{virtual vertex} and can be considered as the cone on the empty set,
$\tc \emptyset =[\tv]$.
Given a simplex $F$  of  $\Delta[n]$, we denote by $(F,0)$ the same simplex viewed as a face of 
$\tc \Delta[n]$ and by $(F,1)$  the face $\tc F$ of $\tc \Delta[n]$.

\medskip
The blown up cochains associated to a perversity need the introduction of an extra degree  that we detail now.
To any regular simplex 
$\Delta[m]=\Delta[q_{0}]\ast\dots\ast\Delta[q_{\ell}]$ of $\ttN(\ttP)$,
we associate the prism
$\tDelta[m]=\tc\Delta[q_{0}]\times \dots \times \tc \Delta[q_{\ell-1}]\times \Delta[q_{\ell}]$,
called \emph{the blow up of $\Delta[m]$.}
A face of the blow up $\tDelta[m]$ is a product 
\begin{equation}\label{equa:fepsilon}
(F,\varepsilon)=(F_{0},\varepsilon_{0})\times\dots\times (F_{\ell-1},\varepsilon_{\ell-1})\times F_{\ell},
\end{equation}
where, following the previous conventions,
\begin{itemize}
\item if $\varepsilon_{i}=0$ or $i=\ell$, then $F_{i}$ is a face of $\Delta[q_{i}]$,
\item  if $\varepsilon_{i}=1$ and $F_{i}\neq \emptyset$, then $(F_{i},1)$ is the cone $\tc F_{i}$ on the face 
$F_{i}$ of $\Delta[q_{i}]$,
\item  if $F_{i}=\emptyset$, then $\varepsilon_{i}=1$ and $(F_{i},1)=\tv_{i}$.
\end{itemize}
For any $i\in\{0,\dots,\ell-1\}$, we denote
$$\|(F,\varepsilon)\|_{i}=
 \dim ((F_{i+1},\varepsilon_{i+1})\times\dots\times (F_{\ell-1},\varepsilon_{\ell -1})\times F_{\ell}).
$$

\begin{definition}\label{def:degreperversboum}
Let  $\Delta[m]=\Delta[q_{0}]\ast\dots\ast\Delta[q_{\ell}]=\tts_{0}^{[q_{0}]}\prec\dots\prec \tts_{\ell}^{[q_{\ell}]}$
be a regular simplex of $\ttN(\ttP)$ and
 $(F,\varepsilon)=(F_{0},\varepsilon_{0})\times\dots\times (F_{\ell-1},\varepsilon_{\ell-1})\times F_{\ell}$
be a face  of the blow up $\tDelta[m]$.
The  \emph{perverse degree} of 
$(F,\varepsilon)$  along the element $\tts\in \ttP$ is
$$
\|(F,\varepsilon)\|_{\tts}=\left\{
\begin{array}{ccl}
-\infty&\text{if} &\tts\notin \{\tts_{0},\dots,\tts_{\ell}\} \text{ or } (\tts=\tts_{i} \text{ and } \varepsilon_{i}=1),\\
\|(F,\varepsilon)\|_{i}&\text{if} & \tts=\tts_{i} \text{ and } \varepsilon_{i}=0.
\end{array}\right.$$
\end{definition}

\begin{remark}
The perverse degree of \defref{def:degreperversboum} coincides with the perverse degree associated to a weight decomposition
introduced in \cite[Definition 3.3]{CST4}.
\end{remark}

We fix a commutative ring with unit  $R$. Let $j\in\N$  and
 $N^*(\Delta[j])=\Hom(N_{*}(R\Delta[j]),R)$  the dual of the Moore complex associated to $R\Delta[j]$. 
For each simplex $F \in \Delta[j]$, we write $\1_{F}$ the element of $N^*(\Delta[j])$ 
taking the value 1 on $F$ and 0 otherwise.
Let
$\Delta[m]=\Delta[q_{0}]\ast\dots\ast\Delta[q_{\ell}]$, with $q_{i}\geq 0$ for all $i$.
We first define the blown up complex on $\Delta[m]$ by
\begin{equation}\label{equa:boumcochain}
\tN^*(\Delta[m])=N^*(\tc\Delta[q_{0}])\otimes\dots\otimes N^*(\tc\Delta[q_{\ell-1}])\otimes N^*(\Delta[q_{\ell}]).
\end{equation}
The elements 
$\1_{(F,\varepsilon)}=
\1_{(F_{0},\varepsilon_{0})}\otimes\dots\otimes \1_{(F_{\ell-1},\varepsilon_{\ell-1})}\otimes \1_{F_{\ell}}$
form a basis of $\tN^*(\Delta[m])$. (By convention, we also set $\varepsilon_{\ell}=0)$.


\medskip
We describe the maps induced by the morphisms of $\ttN(\ttP)^+$ between the blown up complexes.
Let us begin with the
\emph{regular} 
face operators of $\Delta[m]=\Delta[q_{0}]\ast\dots\ast\Delta[q_{\ell}]$ with $q_{i}\geq 0$ for all $i$.
Let  $\alpha\colon\nabla\to \Delta[m]$ be a face map with $\nabla$ and $\Delta[m]$ regular.
The induced filtration on $\nabla$ gives a decomposition
$$\nabla=\ov{\nabla}_{0}\ast\dots\ast\ov{\nabla}_{\ell},\;\text{ with } \ov{\nabla}_{i}=\nabla\cap \Delta[q_{i}],$$
in which some $\ov{\nabla}_{i}$ can be the empty set. 
We  get rid of these empty factors 
to obtain what we call \emph{the solid $\Delta[m]$-decomposition} of $\nabla$,
$$\nabla=\Delta[p_{0}]\ast\dots\ast\Delta[p_{k}], \;\text{ with } p_{i}\geq 0 \text{ for } 0\leq i\leq k.$$
More precisely, as $\nabla$ is regular, there is a strictly increasing map,
$\eta\colon \{0,\dots,k\}\to \{0,\dots,\ell\}$, with $\eta(k)=\ell$, defined by
$$\ov{\nabla}_{j}=\left\{
\begin{array}{ccl}
\emptyset&\text{if}&j\notin \im(\eta),\\
\Delta[p_{i}]&\text{if}&j=\eta(i).
\end{array}\right.
$$

Let
$\tN^*(\nabla)=N^*(\tc\Delta[p_{0}])\otimes\dots\otimes N^*(\tc\Delta[p_{k-1}])\otimes N^*(\Delta[p_{k}])$
be the blown up complex of $\nabla$ endowed with its solid $\Delta[m]$-decomposition.
The face map
$\alpha\colon\nabla\to \Delta[m]$
induces a cochain map,
$$\alpha^*\colon \tN^*(\Delta[m])\to\tN^*(\nabla),$$
defined as follows. Let
$\1_{(F,\varepsilon)}=
\1_{(F_{0},\varepsilon_{0})}\otimes\dots\otimes \1_{(F_{\ell-1},\varepsilon_{\ell-1})}\otimes \1_{F_{\ell}}\in \tN^*(\Delta[m])$. We say that $F$ is \emph{$\nabla$-compatible} if $(F_{i},\varepsilon_{i})=(\emptyset,1)$ for all 
$i\notin \im(\eta)$.
(The $\nabla$-compatibility ensures that $F=F_{0}\ast\dots\ast F_{\ell}$ is a face of $\nabla$.)
We have:
$$\alpha^*(\1_{(F,\varepsilon)})=\left\{
\begin{array}{cl}
0&  \text{if } F \text{ is not } \nabla\text{-compatible},\\
\1_{(H,\varepsilon)}&\text{otherwise},
\end{array}\right.
$$
where
$\1_{(H,\varepsilon)}=\1_{(H_{0},\varepsilon_{0})}\otimes\dots\otimes \1_{(H_{k-1},\varepsilon_{k-1})}\otimes \1_{H_{k}}$,
with $(H_{i},\varepsilon_{i})=(F_{\eta(i)},\varepsilon_{\eta(i)})$ for all $i\in\{0,\dots,k\}$.

\medskip
Let us consider  a degeneracy map, $\beta\colon \Delta[m]\to\Delta[m+1]$.
Such map $\beta$ being the repetition of a vertex  in some $\Delta[q_{i}]$, %
we  have a chain map, $N_{*}(\Delta[q_{i}])\to N_{*}(\Delta[q_{i}+1])$, which  gives
cochain maps
$N^{*}(\Delta[q_{i}+1])\to N^{*}(\Delta[q_{i}])$ and
$N^{*}(\tc \Delta[q_{i}+1])\to N^{*}(\tc \Delta[q_{i}])$. 
Tensoring with the identity map
on the other components of the tensor product gives 
the  cochain map
$\beta^*\colon \tN^*(\Delta[m+1])\to \tN^*(\Delta[m])$.

\medskip
Let  $\ov{p}$ be a perversity on the poset $\ttP$.
Let $R$ be a commutative ring and $\mdg$  be the category of positively graded differential graded $R$-modules,
 with a differential of degree +1.
 We have defined a functor
 $\tN^*\colon \Delta[\ttP]^+\to \mdg$,
 sending $\Delta[m]\in \Delta[\ttP]^+$ on the differential complex $\tN^*(\Delta[m])$.

 \begin{definition}\label{def:degrepervers}
Let $\Delta[m]\in \Delta[\ttP]^+$.
\begin{enumerate}[1)]
\item The  \emph{perverse degree} of 
$\1_{(F,\varepsilon)}\in \tN^*(\Delta[m])$ along  an element $\tts\in \ttP$ is
the perverse degree of $(F,\varepsilon)$ along $\tts$.
For a cochain $\omega = \sum_b\lambda_b \ \1_{(F_b,\varepsilon_b) }\in\tN^*(\Delta[m])$ with 
$\lambda_{b}\neq 0$ for all $b$,
the \emph{perverse degree along $\tts$} is
$$\|\omega \|_{\tts}=\max_{b}\|{(F_b,\varepsilon_b)}\|_{\tts}.$$
By convention, we set $\|0\|_{\tts}=-\infty$.
We denote $\|\omega\|\colon \ttP\to \ov{\Z}$ the map which associates $\|\omega\|_{\tts}$
 to any $\tts\in\ttP$. 
 \item The cochain $\omega$
 is \emph{$\ov{p}$-allowable} if $\| \omega\|\leq \ov{p}$ 
 and of \emph{$\ov{p}$-intersection} if $\omega$ and 
 its differential $\delta\omega$ are $\ov{p}$-allowable. 
  We denote $\tN^*_{\ov{p}}(\Delta[m];R)$ (or $\tN^*_{\ov{p}}(\Delta[m])$ if there is no ambiguity)
  the complex of $\ov{p}$-intersection cochains on $\Delta[m]$ and by
  $$\tN^*_{\ov{p}}\colon \Delta[\ttP]^+\to \mdg$$
  the associated functor.
 Finally, we extend it in a functor
 $$\tN^*_{\ov{p}}\colon \Delta[\ttP]\to \mdg,$$
 by setting $\tN^*(\Delta[m])=0$ if $\Delta[m]$ is a not regular simplex.
  \end{enumerate}
\end{definition}

\section{Blown up cochains of simplicial sets  over a poset} 
\label{sec:functors}

\begin{quote}
Let $R$ be a commutative ring with unit and $\mdg$  be the category of positively graded differential graded $R$-modules,
 with a differential of degree +1.
Let  $\ttP$  be a poset and $\ov{p}\colon \ttP\to\ov{\Z}$ a perversity.  
We define a pair of adjoint functors
$$\xymatrix{\ssetp&& \mdg \ar@<1ex>[ll]^(.46){\langle-\rangle_{\ov{p}}}
\ar@<1ex>[ll];[]^(.50){\tN^*_{\ov{p}}   }\\}
$$
between the categories of simplicial sets over  $\ttP$ and  $\mdg$ and prove
the existence of an extension of this adjunction to homotopy classes.
\end{quote}

\subsection{Construction of the two functors}\label{subsec:twofunctors}
Let $\ttP$ be a poset and $\ov{p}$ be a perversity on $\ttP$. 
 We undertake the presentation made by Bousfield and Gugenheim 
 for the Sullivan theory of  rational homotopy type, see \cite[Chapter 8]{MR0425956}.
 For any $\sigma\in\ttN(\ttP)$ and $k\in\N$, we set
 $$\ttM_{\ov{p}}(\sigma,k) =\tN^k_{\ov{p}}(\Delta[\sigma]).$$
We observe that $\ttM_{\ov{p}}(\bullet,\ast)$ is a simplicial differential graded module over $\ttN(\ttP)$.
Let $\Psi_{L}\in \ssetp$ and $M\in\mdg$.
First, we define a functor  $\tN^*_{\ov{p}}(-)\colon \ssetp\to \mdg$  by
$$\tN_{\ov{p}}^k(\Psi_{L})=\Hom_{\ssetp}(\Psi_{L},\ttM_{\ov{p}}(\bullet,k)).$$
In particular, if  $\Psi_{L}$ is the identity map on $\ttN(\ttP)$, denoted $\ttN(\ttP)$, we have
\begin{equation}\label{equa:tNNP}
\tN^*_{\ov{p}}(\ttN(\ttP))=\Hom_{\ssetp}(\ttN(\ttP),\tN^*_{\ov{p}}(\Delta[\bullet]),
\end{equation}
which associates to any $\sigma\in\ttN(\ttP)$ an element of $\tN^*_{\ov{p}}(\Delta[\sigma])$.

\medskip
Let $M\in\mdg$. The element  $\langle M\rangle_{\ov{p}}\in \ssetp$ is defined by the images of the 
$\sigma\in \Delta[\ttP]$ and we set
$$\langle M\rangle_{\ov{p}}(\sigma)=\Hom_{\mdg}(M,\ttM_{\ov{p}}(\sigma,*)).$$

\begin{proposition}\label{prop:adjoint}
The functors $\langle -\rangle_{\ov{p}}$ and $\tN^*_{\ov{p}}(-)$ are adjoint; i.e., for any 
 $\Psi_{L} \in \ssetp$ and $M\in\mdg$, there are bijections
$$\xymatrix@1{
 \Hom_{\ssetp}(\Psi_{L},\langle M\rangle_{\ov{p}})
  \ar@<1ex>[rr]^-{\beta}
  &&
  \Hom_{\mdg}(M,\tN^*_{\ov{p}}(\Psi_{L})).
 \ar@<1ex>[ll]^-{\alpha}_-{\cong}
 }
$$
\end{proposition}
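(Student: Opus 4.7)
The plan is to exhibit $\alpha$ and $\beta$ explicitly as a transposition of arguments, and then to check that the compatibilities required on each side match under the swap. Unwinding definitions, an element $f\in\Hom_{\ssetp}(\Psi_L,\langle M\rangle_{\ov{p}})$ assigns to every simplex $\tau\in(\Psi_L)_\sigma$ lying over $\sigma\in\ttN(\ttP)$ a chain map $f_\tau\colon M\to \tN^*_{\ov{p}}(\Delta[\sigma])$, naturally in $\sigma$. An element $g\in\Hom_{\mdg}(M,\tN^*_{\ov{p}}(\Psi_L))$ assigns to each $m\in M^k$ a simplicial map $g(m)\colon \Psi_L\to \ttM_{\ov{p}}(\bullet,k)$ over $\ttN(\ttP)$, hence to each such $\tau$ an element $g(m)(\tau)\in\tN^k_{\ov{p}}(\Delta[\sigma])$, commuting with the differential of $M$. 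I would then define
$$\alpha(g)(\tau)(m):=g(m)(\tau),\qquad \beta(f)(m)(\tau):=f_\tau(m),$$
so that $\alpha$ and $\beta$ are manifestly mutually inverse at the level of sets.

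Next, I would check that the two prescriptions actually land in the correct hom-sets. For $\alpha$, fixing $\tau$ over $\sigma$, the assignment $m\mapsto g(m)(\tau)$ preserves degrees and commutes with differentials because $g$ is a morphism in $\mdg$ and the evaluation at $\tau$ is a chain map $\tN^*_{\ov{p}}(\Psi_L)\to\tN^*_{\ov{p}}(\Delta[\sigma])$; thus $\alpha(g)(\tau)\in\langle M\rangle_{\ov{p}}(\sigma)$. Naturality of $\alpha(g)$ as a morphism of presheaves on $\Delta[\ttP]$ reduces to showing that for any morphism $\theta\colon\Delta[\sigma']\to\Delta[\sigma]$ in $\Delta[\ttP]$ one has $\theta^*\!\circ g(m)(\tau)=g(m)(\tau\circ\theta)$, which is exactly the statement that $g(m)$ is a simplicial map over $\ttN(\ttP)$.

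Symmetrically, for $\beta$, fixing $m\in M^k$ the rule $\tau\mapsto f_\tau(m)$ defines an element of $\tN^k_{\ov{p}}(\Psi_L)=\Hom_{\ssetp}(\Psi_L,\ttM_{\ov{p}}(\bullet,k))$ precisely because each $f_\tau$ is natural in $\tau$. Compatibility of the resulting map with the differentials in $M$ and in $\tN^*_{\ov{p}}(\Psi_L)$ follows from the fact that every $f_\tau$ is a chain map. Naturality of $\alpha$ and $\beta$ with respect to morphisms in $\Psi_L$ and $M$ is then automatic.

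The main (minor) obstacle is bookkeeping: one must verify that the naturality conditions in the $\ttN(\ttP)$-direction for $f$ and for $g$ really do correspond under the swap. The key observation here is that the simplicial structure of $\ttM_{\ov{p}}(\bullet,\ast)$ over $\ttN(\ttP)$ plays exactly the dual role to the $\sigma$-indexing used to define $\langle M\rangle_{\ov{p}}$, so the two compatibility conditions translate tautologically into each other. Conceptually, this is just the Yoneda-type adjunction obtained by writing any $\Psi_L\in\ssetp$ as a colimit of representables $\Delta[\sigma]\to\ttN(\ttP)$ and transporting the evident adjunction on representables through these colimits.
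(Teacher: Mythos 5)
Your proof is correct and follows essentially the same route as the paper: both define $\alpha$ and $\beta$ by explicit transposition of arguments, $\alpha(g)(\tau)(m)=g(m)(\tau)$ and $\beta(f)(m)(\tau)=f(\tau)(m)$, in the spirit of Bousfield--Gugenheim. The paper leaves the verification of compatibilities implicit by citation, whereas you spell them out; the substance is the same.
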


\begin{proof}The two bijections can be written down explicitly as in \cite{MR0425956}.
Let $\sigma\in\ttN(\ttP)$, $k\in\N$, $x\in{L}_{\sigma}$, $w\in M^k$. 
We set
$\alpha(g)(x)(w)=g(w)(x)\in \ttM_{\ov{p}}(\sigma,k)$ and $\beta(f)(w)(x)=f(x)(w)$.
 \end{proof}
 
 This is an adjunction of contravariant functors. If we replace $\mdg$ by
 the opposite category $(\mdg)^{\op}$, these two functors give a pair of adjoint covariant functors, the functor
 corresponding to $\tN^*_{\ov{p}}$ being the left adjoint. Therefore, 
 \emph{$\tN^*_{\ov{p}}(-)$ 
 transforms inductive limits in $\ssetp$ in projective limits in $\mdg$.} This can also be seen directly  from the definition
 and the compatibility of the $\Hom$-functors with limits.
 
 \medskip
 Similarly, we define a functor  $\tN^{+,*}_{\ov{p}}\colon \ssetp^+\to \mdg$  by
$$\tN^{+,k}_{\ov{p}}(\Psi_{K})=\Hom_{\ssetp^+}(\Psi_{K},\ttM_{\ov{p}}(\bullet,k))$$
and a functor
$\langle -\rangle^+_{\ov{p}}\colon \mdg\to \ssetp^+$ by
$$\langle M\rangle^+_{\ov{p}}(\sigma)=\Hom_{\mdg}(M,\ttM_{\ov{p}}(\sigma,*)),$$
for each $\sigma\in \Delta[\ttP]^+$, $\Psi_{K}\in \ssetp^+$, $M\in\mdg$. We check easily 
\begin{equation}\label{equa:+or+}
\langle M\rangle_{\ov{p}}=\ttn\left(\langle M\rangle_{\ov{p}}^+\right)
\quad \text{and}\quad
\tN^*_{\ov{p}}(\Psi_{L})= (\tN^{+,*}_{\ov{p}}\circ \cR)(\Psi_{L}).
\end{equation}
 As above, 
these two functors are adjoint.

\begin{definition}\label{def:allowableboum}
Let $\Psi_{K}\in \ssetp^+$,  
the homology of $\tN^{+,*}_{\ov{p}}(\Psi_{K})$ is called the
 \emph{blown up intersection cohomology}  of $\Psi_{K}$, 
 for the perversity~$\ov{p}$, with coefficients in $R$,
 and denoted  $\crH_{\ov{p}}^{+,*}({\Psi_{K}};R)$.
 (If there is no ambiguity, we also denote it $\crH_{\ov{p}}^*({{\Psi_{K}}})$ or $\crH^*_{\ov{p}}(K)$.)

Similarly, for $\Psi_{L}\in \ssetp$, the homology of
$\tN^*_{\ov{p}}(\Psi_{L})$ is called the   \emph{blown up intersection cohomology}  of $\Psi_{L}$ 
and denoted $\crH_{\ov{p}}^*({\Psi_{L}};R)$.
 \end{definition}

Let $\Psi_{L}\in\ssetp$ and $\Psi_{K}\in\ssetp^+$. From
$\tN^*_{\ov{p}}(\Psi_{L};R)=\tN^{+,*}_{\ov{p}}(\cR(\Psi_{L});R)$
and
$\cR\circ \tti=\cR\circ \ttn=\id$, we deduce
\begin{equation}\label{equa:blowupcohomologyin}
\crH^*_{\ov{p}}(\Psi_{L})=\crH^{+,*}_{\ov{p}}(\cR(\Psi_{L}))\;\text{ and }\;
\crH^{+,*}_{\ov{p}}(\Psi_{K})\cong
\crH^*_{\ov{p}}(\tti(\Psi_{K}))\cong
\crH^*_{\ov{p}}(\ttn(\Psi_{K})).
\end{equation}

\subsection{Simplicial maps and blown up cohomology}\label{subsec:boomlesmaps}

Simplicial maps have a nice behavior with respect to blown up cohomology. 
We prove it in the general context of simplicial maps between simplicial sets over 
possibly different posets.
The following result uses the notion of pullback perversity of a perversity,
introduced in \defref{def:perversitypullback}.

 \begin{proposition}\label{prop:mapinterseccohomology}
 Let $\ttP$ and $\ttQ$ be two posets,
 $\ov{p}$, $\ov{q}$ be two perversities defined on $\ttP$ and $\ttQ$ respectively.
  We consider a commutative diagram of simplicial maps, 
 $$\xymatrix{
K\ar[r]^-{f}\ar[d]_{\Psi_{K}}&
L\ar[d]^{\Psi_{L}}\\
\ttN(\ttP)\ar[r]^-{\ttf}&\ttN(\ttQ).
}$$
We denote by $\tN^*_{\ov{q}}(\Psi_{L},\ttQ;R)$ and
$\tN^*_{\ov{p}}(\Psi_{K},\ttP;R)$
the blown up cochains corresponding to the perversities $\ov{q}$, $\ov{p}$ and the posets $\ttQ$, $\ttP$,
respectively.
If $\ov{p}\geq \ttf^*\ov{q}$, then $f$ induces a cochain map
$\tN^*_{\ov{q}}(\Psi_{L},\ttQ;R)
\to
\tN^*_{\ov{p}}(\Psi_{K},\ttP;R)$ defined by
$(f^*\omega)_{\sigma}=\omega_{f\circ\sigma}$.
Therefore, there is an induced homomorphism between the associated blown up cohomology.
\end{proposition}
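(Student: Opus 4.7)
The approach is to verify that the formula $(f^{*}\omega)_{\sigma}=\omega_{f\circ\sigma}$, once interpreted through a natural comparison between blow-up complexes, indeed gives a cochain map of the required type. First, for a simplex $\sigma\colon\Delta[m]\to K$, the commutativity of the diagram gives $\Psi_{L}\circ f\circ\sigma=\ttf\circ\Psi_{K}\circ\sigma$. If $\Psi_{K}\circ\sigma=\tts_{0}^{[q_{0}]}\prec\cdots\prec\tts_{\ell}^{[q_{\ell}]}$, then $\ttf\circ\Psi_{K}\circ\sigma$ has joint decomposition $\ttu_{0}^{[p_{0}]}\prec\cdots\prec\ttu_{k}^{[p_{k}]}$ obtained by collecting consecutive blocks whose images under $\ttf$ coincide; this is a coarsening of the $\ttP$-decomposition. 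Hence $\omega_{f\circ\sigma}$ lives in the blow-up complex $\tN^{*}_{\ov{q}}(\Delta[\ttf\Psi_{K}\sigma])$ built from the coarse $\ttQ$-decomposition, while $(f^{*}\omega)_{\sigma}$ must lie in the finer $\tN^{*}_{\ov{p}}(\Delta[\Psi_{K}\sigma])$.

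Next I will construct a natural cochain map
$$
\Theta_{\sigma}\colon\tN^{*}(\Delta[\ttf\Psi_{K}\sigma])\longrightarrow\tN^{*}(\Delta[\Psi_{K}\sigma]),
$$
by dualizing, block by block, the simplicial surjection $\tc\Delta[q_{a}]\times\cdots\times\tc\Delta[q_{b}]\twoheadrightarrow\tc\Delta[p_{j}]$ induced by the poset map sending an input tuple to its first non-apex coordinate (and apex-tuple to the apex). This respects the cone structure and is manifestly functorial with respect to the face and degeneracy operators described in \secref{sec:boum}. Setting $(f^{*}\omega)_{\sigma}=\Theta_{\sigma}(\omega_{f\circ\sigma})$ then produces a candidate cochain in $\tN^{*}(\Delta[\Psi_{K}\sigma])$.

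The central verification is the $\ov{p}$-allowability of $f^{*}\omega$. For a basis element $\1_{(G,\eta)}$ on the coarse side, each basis element $\1_{(F,\varepsilon)}$ appearing in $\Theta_{\sigma}(\1_{(G,\eta)})$ has, by construction, its cone-apex pattern inside each merged block determined by the pattern of $(G,\eta)$. A direct case analysis then shows that the perverse degree of $\1_{(F,\varepsilon)}$ along $\tts_{i}\in\ttP$ is bounded above by the perverse degree of $\1_{(G,\eta)}$ along $\ttu_{j(i)}=\ttf(\tts_{i})$. Combined with the hypothesis $\ov{p}\ge\ttf^{*}\ov{q}$, this yields, for every $\tts\in\ttP$,
$$
\|f^{*}\omega\|_{\tts}\le\|\omega\|_{\ttf(\tts)}\le\ov{q}(\ttf(\tts))=(\ttf^{*}\ov{q})(\tts)\le\ov{p}(\tts).
$$
Applying the same bound to $\delta\omega$ shows that $\delta(f^{*}\omega)=f^{*}(\delta\omega)$ is $\ov{p}$-allowable too, so $f^{*}\omega\in\tN^{*}_{\ov{p}}(\Psi_{K},\ttP;R)$. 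Naturality of $f^{*}\omega$ in $\sigma$ (compatibility with faces and degeneracies of $K$) follows from naturality of $\Theta_{\bullet}$ together with the fact that $\omega$ is a natural transformation, and commutation with the differentials uses that each $\Theta_{\sigma}$ is a cochain map.

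The main obstacle is setting up the comparison $\Theta_{\sigma}$ and verifying its functoriality with respect to the regular face and degeneracy maps of \secref{sec:boum}: the cone-apex components per merged block must be tracked carefully so that the perverse-degree bound holds factor by factor. Once this is in place, the hypothesis $\ov{p}\ge\ttf^{*}\ov{q}$ immediately yields the allowability, and the induced homomorphism between the blown up intersection cohomology groups follows by passing to homology.
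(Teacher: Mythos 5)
Your proposal begins with a genuinely useful observation: the paper's formula $(f^{*}\omega)_{\sigma}=\omega_{f\circ\sigma}$ requires, when $\ttf$ is not injective, an identification between the blow-up complex $\tN^{*}_{\ov{q}}(\Delta[\ttf\Psi_{K}\sigma])$ built from the coarse $\ttQ$-decomposition and the blow-up complex $\tN^{*}_{\ov{p}}(\Delta[\Psi_{K}\sigma])$ built from the finer $\ttP$-decomposition, because merging consecutive blocks changes the tensor factors. The paper's own proof treats this identification as given and simply writes $\|f^*(\omega_{\sigma})\|_{\tts_{k_{i}}}=\|\omega_{\sigma}\|_{\ttt_{i}}$ ``by definition''; spelling it out is a reasonable thing to want to do.

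However, the comparison map $\Theta_{\sigma}$ as you describe it has a target mismatch: dualizing the blockwise ``first non-apex'' surjection $\tc\Delta[q_{a}]\times\cdots\times\tc\Delta[q_{b}]\twoheadrightarrow\tc\Delta[p_{j}]$ produces a map into $N^{*}(\tc\Delta[q_{a}]\times\cdots\times\tc\Delta[q_{b}])$, which is \emph{not} the tensor product $N^{*}(\tc\Delta[q_{a}])\otimes\cdots\otimes N^{*}(\tc\Delta[q_{b}])$ appearing in the definition of $\tN^{*}$ from \secref{sec:boum} (the graded ranks already disagree for $\tc\Delta[0]\times\tc\Delta[0]$). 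An Eilenberg--Zilber/Alexander--Whitney step is missing, and once you insert it you must re-examine compatibility with the face and degeneracy operators of \defref{def:degrepervers} and with the differential. Moreover, the three statements on which the whole argument rests --- that $\Theta_{\sigma}$ is a cochain map, that $\Theta_{\bullet}$ is natural in $\sigma$, and that every basis element in the image satisfies $\|\1_{(F,\varepsilon)}\|_{\tts}\le\|\1_{(G,\eta)}\|_{\ttf(\tts)}$ --- are each announced (``manifestly functorial'', ``a direct case analysis then shows'') but not carried out. These are precisely the points that take work; in particular the perverse-degree bound requires tracking which cone-apex patterns survive after normalization and the shuffle sums, and it is not a formality. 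The concluding chain of inequalities matches the paper's. So what you have is a correct plan that makes explicit the step the paper suppresses, but it is not yet a proof: the construction of $\Theta_{\sigma}$ needs a correct target, and all three of its required properties remain to be proved.
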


\begin{proof}
The association $\omega\mapsto f^*\omega$ is compatible with the face operators,
$(d_{i}^*f^*(\omega))_{\sigma}=
d_{i}^*\omega_{f\circ \sigma}=\omega_{f\circ\sigma\circ d_{i}}=(f^*(\omega))_{\sigma\circ d_{i}}$
and similarly with the degeneracy operators. Moreover, if $\delta$ denotes the differentials, we have
$(\delta f^*(\omega))_{\sigma}=\delta( f^*(\omega)_{\sigma})=\delta\omega_{f\circ \sigma}
=(f^*(\delta\omega))_{\sigma}$. 
Compatibility with perversities remains to be taken into account.

\medskip
Let $\sigma\colon \Delta[n]\to K$ and $\omega\in \tN^*_{\ov{q}}(\Psi_{L},\ttQ;R)$.
The images of the simplex $\Delta[n]$ can be written
\begin{itemize}
\item $(\Psi_{K}\circ \sigma)(\Delta[n])=\tts_{1}\prec\dots\prec \tts_{k_{1}}\prec \tts_{k_{1}+1}\prec\dots\prec \tts_{k_{p}}$,
\item $(\Psi_{L}\circ f \circ \sigma)(\Delta[n])=\ttt_{1}\prec \ttt_{2}\prec \dots\prec \ttt_{p}$,
\end{itemize}
with $\ttf(\tts_{k_{i}+j})=\ttt_{i+1}$, for all $j\in\{1,\dots,k_{i+1}-k_{i}\}$. Therefore, by definition, we have
$$\|f^*(\omega_{\sigma})\|_{\tts_{k_{i}}}
=
\|\omega_{\sigma}\|_{\ttt_{i}}.
$$
We fix $i$ and $j$, and denote $\tts=\tts_{k_{i}+j}$ and $\ttt=\ttt_{i}$. Recall that, by hypothesis, we have
$\|\omega_{\sigma}\|_{\ttt}\leq \ov{q}(\ttt)$. This inegality and the hypotheses  imply,
$$
\|f^*(\omega_{\sigma})\|_{\tts}\leq \|f^*(\omega_{\sigma})\|_{\tts_{k_{i}}}=\|\omega_{\sigma}\|_{\ttt}
\leq \ov{q}(\ttt) =\ov{q}(\ttf(\tts))
\leq
(\ttf^*\ov{q})(\tts)\leq \ov{p}(\tts),
$$
and the $\ov{p}$-allowability of $f^*\omega$. We have proved that
$f^*\colon \tN^*_{\ov{q}}(\Psi_{L},\ttQ;R) \to \tN^*_{\ov{p}}(\Psi_{K},\ttP;R)$
is a cochain map.
\end{proof}

\subsection{Compatibility with the homotopy classes}
Recall that $\mdg$ has a closed model structure \cite[Section 2.3]{Hov} 
where weak-equivalences are quasi-isomorphisms and 
fibrations are surjective chain maps. The cofibrant objects are the 
cochain complexes of projective $R$-modules.
The rest of this section is devoted to the development of properties contributing to the proof of the 
following statement, which extends
the adjunction between $\langle -\rangle_{\ov{p}}$
and
$\tN^*_{\ov{p}}(-)$  to homotopy classes.

\begin{theoremb}\label{thm:isohomotopyclassesmdg}
Let $\Psi_{L}$ be an $s$-fibrant object of $\ssetp$, $\ov{p}\colon \ttP\to\ov{\Z}$  a perversity 
and $M\in \mdg$. Then, $\langle M\rangle_{\ov{p}}$ is s-fibrant and
the adjunction induces  an isomorphism between the homotopy classes,
$$[\Psi_{L},\langle M\rangle_{\ov{p}}]_{\ssetp} 
\cong [M,\tN^*_{\ov{p}}(\Psi_{L})]_{\mdg}.$$
\end{theoremb}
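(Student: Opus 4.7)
The plan is to upgrade the Hom-set adjunction of \propref{prop:adjoint} to an isomorphism of simplicial sets, derive from it the $s$-fibrancy of $\langle M\rangle_{\ov{p}}$, and finally identify $\pi_{0}$ on each side with the relevant notion of homotopy classes. The first step is essentially formal: applying \propref{prop:adjoint} degreewise to the definition $(\HomD_{\ssetp}(A,B))_{n}=\Hom_{\ssetp}(A\otimes\Delta[n],B)$, I obtain, naturally in $n$, a simplicial isomorphism
$$\HomD_{\ssetp}(\Psi_{L},\langle M\rangle_{\ov{p}})\;\cong\; \Phi_{\ov{p}}(M,\Psi_{L}),$$
where $\Phi_{\ov{p}}(M,\Psi_{L})$ is the simplicial set $n\mapsto \Hom_{\mdg}(M,\tN^*_{\ov{p}}(\Psi_{L}\otimes\Delta[n]))$.

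For the $s$-fibrancy of $\langle M\rangle_{\ov{p}}$, \defref{def:sfibrant} demands that $\HomD_{\ssetp}(\Psi_{K},\langle M\rangle_{\ov{p}})$ be Kan for every $\Psi_{K}\in\ssetp$. The simplicial isomorphism transports this to $\Phi_{\ov{p}}(M,\Psi_{K})$, which is a simplicial abelian group: the abelian structure is inherited pointwise from $\Hom_{\mdg}(M,-)$ and clearly commutes with the simplicial operators induced by $\Delta[n]$. By Moore's theorem, every simplicial group is Kan, so $\langle M\rangle_{\ov{p}}$ is $s$-fibrant. Combined with the $s$-fibrancy of $\Psi_{L}$, this ensures that the left-hand side of the theorem is genuinely computed by $\pi_{0}\HomD_{\ssetp}(\Psi_{L},\langle M\rangle_{\ov{p}})\cong \pi_{0}\Phi_{\ov{p}}(M,\Psi_{L})$.

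It then remains to match $\pi_{0}\Phi_{\ov{p}}(M,\Psi_{L})$ with the chain-homotopy classes $[M,\tN^*_{\ov{p}}(\Psi_{L})]_{\mdg}$. A $0$-simplex of $\Phi_{\ov{p}}(M,\Psi_{L})$ is a chain map $M\to\tN^*_{\ov{p}}(\Psi_{L})$, and two such $f_{0},f_{1}$ lie in the same component exactly when there exists a chain map $H\colon M\to \tN^*_{\ov{p}}(\Psi_{L}\otimes\Delta[1])$ restricting to $f_{0},f_{1}$ at the two vertices. The key input is a natural Eilenberg--Zilber-type quasi-isomorphism
$$\tN^*_{\ov{p}}(\Psi_{L})\otimes N^*(\Delta[1])\;\xrightarrow{\;\simeq\;}\; \tN^*_{\ov{p}}(\Psi_{L}\otimes\Delta[1])$$
splitting the evaluations at the endpoints. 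This is plausible because the projection $\Psi_{L}\otimes\Delta[n]\to\ttN(\ttP)$ factors through $\Psi_{L}$, so $\Delta[n]$ contributes only to the non-singular part and the blown up complex should split off the external factor. With such a decomposition, writing the standard basis $\{\iota_{0},\iota_{1},e\}$ of $N^*(\Delta[1])$ with $\delta e=\iota_{1}-\iota_{0}$, every $H$ becomes $\iota_{0}\otimes f_{0}+\iota_{1}\otimes f_{1}+e\otimes h$, and $H$ being a chain map is equivalent to $\delta h=f_{1}-f_{0}$, i.e.\ $h$ being a chain homotopy.

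The main obstacle is precisely the construction of the Eilenberg--Zilber comparison above: although the external $\Delta[n]$-factor carries no stratum information, the blown up complex is defined on regular simplices of $\ttN(\ttP)$ via the prism decomposition of \secref{sec:boum}, and the tensor factorisation must be checked to be compatible with perverse degree. Concretely, one should verify that a cochain on $\Psi_{L}\otimes\Delta[n]$ is $\ov{p}$-allowable if and only if it corresponds, under the Eilenberg--Zilber decomposition, to a tensor involving only $\ov{p}$-allowable components on the $\Psi_{L}$-factor, the $\Delta[n]$-factor being unconstrained; once this reconciliation is in place, the three steps combine to produce the claimed bijection.
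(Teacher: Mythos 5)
Your first two steps — upgrading \propref{prop:adjoint} to an isomorphism of simplicial sets, and deducing $s$-fibrancy of $\langle M\rangle_{\ov{p}}$ from the simplicial abelian group structure of $\HomD_{\ssetp}(-,\langle M\rangle_{\ov{p}})$ — are exactly what the paper does. The divergence, and the gap, is in your third step. To match $\pi_{0}$ with chain-homotopy classes you propose a natural Eilenberg--Zilber quasi-isomorphism $\tN^*_{\ov{p}}(\Psi_{L})\otimes N^*(\Delta[1])\xrightarrow{\simeq}\tN^*_{\ov{p}}(\Psi_{L}\otimes\Delta[1])$ compatible with the two endpoint evaluations and with perverse degree. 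You flag this yourself as unverified, and it is indeed a genuine gap: such a K\"unneth-type factorisation of the blown up complex is not established anywhere in the paper, and it is not obvious, because $\tN^*_{\ov{p}}(\Psi_{L}\otimes\Delta[n])$ is a Hom-complex into the simplicial dg-module $\ttM_{\ov{p}}$ over $\ttN(\ttP)$ rather than a singular cochain complex, so the standard EZ/AW machinery does not directly apply.

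The important point is that the paper asks for strictly less than what you are trying to prove. It only needs $\tN^*_{\ov{p}}(\Psi_{L}\otimes\Delta[1])$ to be a \emph{path object} for $\tN^*_{\ov{p}}(\Psi_{L})$ in $\mdg$, which requires two facts, neither of them a tensor decomposition: (a) the pair $(\iota_{0}^*,\iota_{1}^*)$ is surjective onto $\tN^*_{\ov{p}}(\Psi_{L})\oplus\tN^*_{\ov{p}}(\Psi_{L})$ — this is \propref{prop:epimorphism}, proved by a section built from the cup product with $\1_{e_{0}},\1_{e_{1}}$; and (b) each $\iota_{j}^*$ is a quasi-isomorphism — this is \propref{prop:productwithDelta1}, proved by an explicit, degree-by-degree chain homotopy $\tG$ respecting perverse degree. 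Together they give \corref{cor:pathobjectmdg}, and the theorem then follows by unwinding the adjunction on both directions of the implication. If you want to salvage your route, aim at (a) and (b) directly rather than the full EZ factorisation: the cup-product section you would use to split your putative EZ map is precisely the content of (a), and the acyclicity of the kernel is what (b) supplies. That is both easier to check and exactly enough.
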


We keep the notation of this statement all along the rest of this section. 
The tensorisation $\Psi_{L} \otimes -$ makes reference to the tensor product 
 in $\ssetp$ of $\Psi_{L}$ with a simplicial set.
The proof consists  of a construction  of an ad'hoc path object in $\mdg$, see
\corref{cor:pathobjectmdg}.

\medskip
For $j=0\,,1$, we denote by $\iota_{j}\colon  \Psi_{L}\cong  \Psi_{L}\otimes \{j\}\to  \Psi_{L}\otimes\Delta[1]$  the canonical inclusion.

 \begin{proposition}\label{prop:epimorphism}
The following restriction morphism is a surjection,
$$\tN^*_{\ov{p}}(\iota_{0})+\tN^*_{\ov{p}}(\iota_{1})\colon \tN^*_{\ov{p}}( \Psi_{L}\otimes \Delta[1]) 
\rightarrow \tN^*_{\ov{p}}( \Psi_{L})\oplus \tN^*_{\ov{p}}( \Psi_{L}).$$
\end{proposition}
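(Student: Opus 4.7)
My plan is to construct an explicit section of the restriction map, using the cup product in blown up cohomology to mimic the classical cylinder construction.

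Let $p\colon \Psi_L\otimes\Delta[1]\to \Psi_L$ denote the projection in $\ssetp$ and $\epsilon_0,\epsilon_1\in N^0(\Delta[1])$ the classical cochains dual to the two vertices, normalized so that $\epsilon_i([j])=\delta_{ij}$. As a first step I will lift each $\epsilon_i$ to a blown up $0$-cochain $\tilde\epsilon_i\in \tN^0(\Psi_L\otimes\Delta[1])$ as follows: on a simplex $(\tau,g)\in (\Psi_L\otimes\Delta[1])_m$ over $\sigma=\Psi_L\tau$, the element $\tilde\epsilon_i(\tau,g)\in \tN^0(\Delta[\sigma])$ will be the $0$-cochain on the blown up prism $\tDelta[\sigma]$ assigning value $\epsilon_i(g(v))$ to each non-virtual vertex indexed by a vertex $v$ of $\Delta[m]$, and $0$ to every virtual (cone) vertex. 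Because this cochain records information only from the unstratified $\Delta[1]$-factor, by \defref{def:degreperversboum} one has $\|\tilde\epsilon_i\|_\tts=-\infty$ for every $\tts\in\ttP$.

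Given $(\omega_0,\omega_1)\in\tN^*_{\ov p}(\Psi_L)^{\oplus 2}$, the candidate section is
\[
\omega \;=\; p^*\omega_0\cup\tilde\epsilon_0 \;+\; p^*\omega_1\cup\tilde\epsilon_1,
\]
formed using the blown up cup product of \cite[Proposition 4.2]{CST4}. Since cup product with a cochain of perverse degree $-\infty$ on every stratum does not raise the perverse degree of its partner (by the first convention of \defref{def:poset}), both $\omega$ and $\delta\omega$ remain $\ov p$-allowable, so $\omega$ lies in $\tN^*_{\ov p}(\Psi_L\otimes\Delta[1])$. At the endpoints, the relations $p\circ\iota_j=\id_{\Psi_L}$ and $\iota_j^*\tilde\epsilon_i = \delta_{ij}\,\mathbf{1}$ (the latter because the composite $\Psi_L\to \Psi_L\otimes\Delta[1]\to\Delta[1]$ is constant at $[j]$) combine with the unitality of the cup product to give $\iota_j^*\omega=\omega_j$.

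The hard part will be confirming that $\tilde\epsilon_i$ is genuinely a natural transformation in $\ssetp$, i.e.\ that the proposed pointwise description is compatible with every face and degeneracy of $\Psi_L\otimes\Delta[1]$ and with the way the category $\Delta[\ttP]$ acts on the combinatorics of the blown up prism. This amounts to tracking, under the restriction maps of \defref{def:degreperversboum}, how the vertex-labelling of $\tDelta[\sigma]$ interacts with the $\Delta[1]$-factor; once this bookkeeping is settled, the surjectivity follows at once from the two verifications above.
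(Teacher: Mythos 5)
Your overall strategy — an explicit section built from the cup product of \cite[Proposition 4.2]{CST4} with the endpoint 0-cochains on $\Delta[1]$ — is exactly the paper's. The paper sets $\Phi(\omega_0,\omega_1)=\1_{e_0}\smile\tN^*_{\ov p}(\pi)(\omega_0)+\1_{e_1}\smile\tN^*_{\ov p}(\pi)(\omega_1)$ and checks $\iota_j^*\Phi=\omega_j$ using $\pi\circ\iota_j=\id$, leaving implicit the passage from the ordinary cochain $\1_{e_i}\in N^0(\Delta[1])$ to a blown up cochain.

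The gap lies in your explicit description of that passage. You define $\tilde\epsilon_i$ by assigning $\epsilon_i(g(v))$ to ``non-virtual vertices'' and $0$ to ``virtual vertices'' of $\tDelta[\sigma]$. But for a regular $\sigma=\Delta[q_0]\ast\cdots\ast\Delta[q_\ell]$, a vertex of $\tDelta[\sigma]=\tc\Delta[q_0]\times\cdots\times\tc\Delta[q_{\ell-1}]\times\Delta[q_\ell]$ is a \emph{tuple} $(w_0,\dots,w_\ell)$, not a single vertex of $\Delta[m]$ or a single apex, so the description is not well-posed; and more importantly, the assignment ``$0$ on anything involving a cone apex'' cannot yield the claimed identity $\iota_j^*\tilde\epsilon_i=\delta_{ij}\,\1$. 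The unit of the blown up cup product on $\tN^*(\Delta[\sigma])=N^*(\tc\Delta[q_0])\otimes\cdots\otimes N^*(\Delta[q_\ell])$ is the tensor of the unit $0$-cochains of each factor, hence the constant $1$ on \emph{all} prism vertices, apices included; a $0$-cochain vanishing on apices is not (a scalar multiple of) this unit, so cup-producting with it does not return $\omega_i$ — faces with some $\varepsilon_k=1$ would be annihilated. (A quick consistency test: $\1_{e_0}+\1_{e_1}$ is the unit in $N^0(\Delta[1])$, so its blown up image must be the blown up unit, yet under your recipe it would vanish on every apex.) The correct lift is the one coming from the natural chain map $N^*(\Delta)\to\tN^*_{\ov 0}(\Delta)$ (\cite[Proposition 3.1]{CST7}), i.e.\ pullback along the collapse $\tDelta[\sigma]\to\Delta[\sigma]$, which assigns the appropriate value (not $0$) at apices and makes $\iota_j^*$ of the lifted $\1_{e_i}$ a scalar $\delta_{ij}$ times the genuine unit; once this is used, the rest of your argument goes through.
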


\begin{proof}
For constructing a section, we use the existence of a cup product at the cochain level 
(\cite[Proposition 4.2]{CST4}).
Let $\Delta[1]=[e_{0},e_{1}]$ and $\1_{e_{i}}\in N^0(\Delta[1])$ the 0-cochain taking the value 1 on $e_{i}$
and 0 otherwise, for $i=0,\,1$.
We denote by $\pi\colon  \Psi_{L}\otimes\Delta[1]\to  \Psi_{L}$ the canonical projection
and consider the morphism
$$\Phi\colon \tN^*_{\ov{p}}( \Psi_{L})\oplus \tN^*_{\ov{p}}( \Psi_{L})\to \tN^*_{\ov{p}}( \Psi_{L}\otimes \Delta[1]),$$
defined by 
$$\Phi(\omega_{0},\omega_{1})=\1_{e_{0}}\smile \tN^*_{\ov{p}}(\pi)(\omega_{0})+
\1_{e_{1}}\smile  \tN^*_{\ov{p}}(\pi)(\omega_{1}).
$$
We observe that 
$(\tN^*_{\ov{p}}(\iota_{0})+\tN^*_{\ov{p}}(\iota_{1})(\Phi(\omega_{0},\omega_{1}))=
(\tN^*_{\ov{p}}(\pi\circ \iota_{0})(\omega_{0}),\tN^*_{\ov{p}}(\pi\circ\iota_{1})(\omega_{1}))=
(\omega_{0},\omega_{1})$.  
\end{proof}

\begin{proposition}\label{prop:productwithDelta1}
 For $j=0,\,1$, the morphism
$\iota_{j}^*=\tN^*_{\ov{p}}(\iota_{j})\colon\tN^*_{\ov{p}}(\Psi_{L}\otimes \Delta[1])\longrightarrow \tN^*_{\ov{p}}(\Psi_{L})
$
is a trivial fibration in $\mdg$ and the morphisms $\iota_{0}$ and $\iota_{1}$
induce the same map in blown up cohomology. 
\end{proposition}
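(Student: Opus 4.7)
The plan is to split the statement into three pieces: surjectivity of $\iota_j^*$, the cochain homotopy between $\iota_0^*$ and $\iota_1^*$, and then deducing the quasi-isomorphism property. The verification that $\tN^*_{\ov{p}}$ respects the relevant perverse degrees along the way will be the only nontrivial bookkeeping.

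First, for surjectivity, I would reuse the section $\Phi$ built in \propref{prop:epimorphism}. For $j=0$, the formula
\[
\Phi(\omega,0)=\1_{e_{0}}\smile \tN^*_{\ov p}(\pi)(\omega)
\]
gives a right inverse of $\iota_0^*$, and similarly $\omega\mapsto \Phi(0,\omega)$ is a right inverse of $\iota_1^*$. So both $\iota_0^*$ and $\iota_1^*$ are surjective as cochain maps, i.e.\ fibrations in $\mdg$.

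Second, for the cochain homotopy, the idea is to define a degree $-1$ operator
\[
h\colon \tN^{*}_{\ov p}(\Psi_{L}\otimes\Delta[1])\longrightarrow \tN^{*-1}_{\ov p}(\Psi_{L})
\]
by ``integration along $\Delta[1]$'', i.e.\ a slant product with the fundamental $1$-chain of $\Delta[1]$, carried out through the Eilenberg-Zilber/K\"unneth decomposition at the chain level that underlies the cup product of \cite[Proposition 4.2]{CST4}. Since only the $\Delta[1]$ factor is contracted, and since $\Delta[1]$ carries the trivial stratification (no singular strata appear), this operator preserves perverse degrees along every $\tts\in\ttP$: the perverse degree of $h(\omega)$ along $\tts$ is bounded by $\|\omega\|_\tts$, so $h$ restricts to $\ov p$-intersection cochains. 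The usual Stokes-type identity for such a slant then gives
\[
\delta h(\omega) + h(\delta\omega) \;=\; \iota_1^{*}(\omega) - \iota_0^{*}(\omega),
\]
which proves that $\iota_0$ and $\iota_1$ induce the same map on $\crH^*_{\ov p}$.

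Third, to obtain that $\iota_j^*$ is a quasi-isomorphism, I would exploit the canonical projection $\pi\colon \Psi_L\otimes\Delta[1]\to\Psi_L$, which satisfies $\pi\circ\iota_j=\id_{\Psi_L}$ and hence $\iota_j^{*}\circ\pi^{*}=\id$ on cochains. Combined with the homotopy of the previous paragraph applied to $\omega':=\pi^*\iota_0^*(\omega)-\pi^*\iota_1^*(\omega)$, one gets that $\pi^*\iota_0^*$ and $\pi^*\iota_1^*$ agree in cohomology with $\pi^*\iota_j^*$; together with $\iota_j^*\pi^*=\id$ this forces $\pi^*$ and $\iota_j^*$ to be mutually inverse on cohomology, so $\iota_j^*$ is a quasi-isomorphism. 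The main obstacle is the explicit construction of $h$: one must make sure the slant along the $\Delta[1]$-direction commutes with the blow up on the $\ttN(\ttP)$-factors and with the perverse degree filtration, which amounts to unwinding the tensor decomposition \eqref{equa:boumcochain} and checking that $\Delta[1]$ appears only as an unblown-up factor with trivial perverse contribution.
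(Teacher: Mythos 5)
There is a genuine gap in your third step. The homotopy you construct in step two is a \emph{prism} operator $h\colon \tN^*_{\ov p}(\Psi_L\otimes\Delta[1])\to\tN^{*-1}_{\ov p}(\Psi_L)$ with $\delta h+h\delta=\iota_1^*-\iota_0^*$; this lands in $\tN^*_{\ov p}(\Psi_L)$, not back in $\tN^*_{\ov p}(\Psi_L\otimes\Delta[1])$, and proves only that $\iota_0$ and $\iota_1$ induce the same map on cohomology (the second half of the statement). It does not prove that $\iota_j^*$ is a quasi-isomorphism. Your deduction in step three — that $\iota_0^*\simeq\iota_1^*$ together with $\iota_j^*\circ\pi^*=\id$ forces $\pi^*$ and $\iota_j^*$ to be mutually inverse on cohomology — is a non-sequitur: in the graded-module picture, take $\pi^*$ the inclusion $R\hookrightarrow R^2$ and $\iota_0^*=\iota_1^*$ the projection onto the first coordinate; then $\iota_j^*\pi^*=\id$ and $\iota_0^*=\iota_1^*$, yet $\pi^*\iota_j^*$ is a rank-one idempotent and not an isomorphism. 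Nothing you have written controls classes in $\crH^*_{\ov p}(\Psi_L\otimes\Delta[1])$ that die under both restrictions.

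What is actually needed — and what the paper constructs — is a cochain homotopy $\tG\colon\tN^*_{\ov p}(\Psi_L\otimes\Delta[1])\to\tN^{*-1}_{\ov p}(\Psi_L\otimes\Delta[1])$ between the \emph{identity} and $\tN^*_{\ov p}(\psi_j)=\pi^*\circ\iota_j^*$, i.e.\ a self-homotopy of the source complex implementing the stratified homotopy $\id_{\Psi_L\otimes\Delta[1]}\simeq\iota_j\circ\pi$. That is a stronger statement: it yields at once that $\pi^*\iota_j^*$ is the identity in cohomology and, combined with $\iota_j^*\pi^*=\id$, that $\iota_j^*$ and $\pi^*$ are inverse quasi-isomorphisms. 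The paper builds $\tG$ by first taking the normalized-cochain homotopy $G$ on $N^*(L\otimes\Delta[1])$ and then extending it factor-by-factor through the blown-up tensor decomposition \eqref{equa:boumcochain} with the expected alternating signs, checking at the end that the formula preserves perverse degree. Your bookkeeping observation that $\Delta[1]$ contributes no singular strata is correct and is indeed the reason the perverse filtration survives, but it must be applied to the self-homotopy $\tG$, not to the contraction $h$. Surjectivity via the section $\Phi$ of \propref{prop:epimorphism} is fine and matches the paper.
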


\begin{proof}
The surjectivity of $\tN^*_{\ov{p}}(\iota_{j})$ comes from \propref{prop:epimorphism}.
 Let $\pi \colon  \Psi_{L}\otimes\Delta[1]\to \Psi_{L}$ be the canonical projection. The equality $\pi\circ\iota_{j}=\id$ implies $\crH^*_{\ov{p}}(\iota_{j})\circ \crH^*_{\ov{p}}(\pi)=\id$. 
 Set $\psi_{j}=\iota_{j}\circ \pi\colon \Psi_{L}\otimes\Delta[1]\to \Psi_{L}\otimes\Delta[1]$. 
 (We denote also $\psi_{j}$ the underlying map from  ${L}\otimes\Delta[1]$ to
 $ {L}\otimes\Delta[1]$.)
 The first part of the statement is therefore reduced to the existence of a cochain homotopy, $\tG$, 
 between  the identity map and $\tN^*_{\ov{p}}(\psi_{j})$. 
 Such homotopy and the previous equality imply
$\crH^*_{\ov{p}}(\iota_{0})=(\crH^*_{\ov{p}}(\pi))^{-1}=\crH^*_{\ov{p}}(\iota_{1})$. 

We suppose $j=0$,  the case $j=1$ being similar. For simplicity, we denote $\psi=\psi_{0}$.
Let $\sigma\colon\Delta[{\bullet}]\to \Psi_{L}$ be a simplex. 
The map $\psi\colon  \Psi_{L}\otimes\Delta[1]\to  \Psi_{L}\otimes\Delta[1]$ is a collection of maps,
 $\psi_{\Delta[k]}\colon \Delta[k]\to\Delta[k]$, for any simplex $\Delta[k]$ of the product $\Delta[{\bullet}]\otimes \Delta[1]$. 
 Such map can be extending in,
$c\psi_{\Delta[k]}\colon c\Delta[k]\to c\Delta[k]$, by taking the identity on the cone point. 
We denote by $c\psi_{\Delta[k]}^*$ and $\psi_{\Delta[k]}^*$ the induced cochain morphisms. 

The blow up of a simplex of  $\Psi_{L}\otimes\Delta[1]$ is a face of the product 
$\ti{\Delta}=c\Delta[{j_{0}]}\times\cdots\times c\Delta[{j_{\ell-1}}]\times \Delta[{j_{\ell}}]$ 
where each $\Delta[{j_{i}}]$ is a simplex of a product $\Delta[{\bullet}]\times\Delta[1]$. 
By naturality, it is sufficient to define the homotopy $\tG$ at the level of $\ti{\Delta}$.
Denote by $G\colon N^{*}( {L}\otimes\Delta[1])\to N^{*}( {L}\otimes \Delta[1])$ 
the homotopy between the identity map and $N^{*}(\psi)$, 
i.e., we have $\delta G+G\delta=N^{*}(\psi)-\id$. 
The restriction of $G$ to a simplex $\Delta[j]$ of  $ {L}\otimes\Delta[1]$ is denoted $G_{\Delta[j]}$ 
and its extension to $c\Delta[j]$ by $cG_{\Delta[j]}$. 
Let us also denote  $\id_{\Delta[j]}$ the identity map on $N^*(\Delta[j])$.
For
$$\1_{(F,\varepsilon)}=
\1_{(F_{0},\varepsilon_{0})}\otimes\dots\otimes \1_{(F_{\ell-1},\varepsilon_{\ell-1})}\otimes \1_{F_{\ell}}\in \tN^*(\Delta[m]),
$$
we set $|(F,\varepsilon)|_{<j}=\sum_{i=0}^{j-1}|(F,\varepsilon_{i})|
$ and we define $\tG(\1_{(F,\varepsilon)})$ as
$$
\sum_{\ell=0}^m (-1)^{|(F,\varepsilon)|_{<j}} 
\1_{(\psi(F_{0}),\varepsilon_{0})}
\otimes\dots\otimes
\1_{(\psi(F_{j-1}),\varepsilon_{j-1})}
\otimes
G(\1_{(F_{j}),\varepsilon_{j}})
\otimes
\1_{(F_{j},\varepsilon_{j})}
\otimes\dots\otimes
\1_{F_{\ell}}
.$$
We prove that $\tG\delta+\delta \tG=\tN^{*}(\psi)-\id $ by induction on $\ell$, the perverse degree being taken in account
at the end of the proof.
It is true for $\ell=0$, by choice of $G$. 
Let us suppose it is true for $\ell-1$ and we prove it for $\ell$.
The element $\1_{(F,\varepsilon)}$ can be written as 
$\1_{(F_{0},\varepsilon_{0})}\otimes B$, where $B$ is a tensor product on which the induction hypothesis can be applied. 
By definition of $\tG$, we have
$$\tG(\1_{(F_{0},\varepsilon_{0})}\otimes B) =
G(\1_{(F_{0},\varepsilon_{0})})\otimes B+(-1)^{|(F_{0},\varepsilon_{0})|}\1_{(F_{0},\varepsilon_{0})}\otimes \tG(B).
$$
A computation, using
$G\delta+\delta G=N^{*}(\psi)-\id $ and the induction, gives
$\tG\delta+\delta \tG=\tN^{*}(\psi)-\id $.
Finally, by construction, the homotopy $\tG$ respects the perverse degree and is the required homotopy, $\tG\colon
\tN^{*}_{\ov{p}}( \Psi_{L}\otimes\Delta[1])\to \tN^{*}_{\ov{p}}( \Psi_{L}\otimes \Delta[1])$.
\end{proof}

\begin{corollary}\label{cor:pathobjectmdg}
The two injections $\iota_0,\,\iota_1$,
and the projection $\pi$
generate a path object in the category $\mdg$,
$$\tN^*_{\ov{p}}(\Psi_{L}){\xrightarrow[]{\pi^*}}
\tN^*_{\ov{p}}(\Psi_{L}\otimes \Delta[1])
\xrightarrow[]{\iota_0^*+\iota_1^*}
\tN^*_{\ov{p}}(\Psi_{L})\oplus \tN^*_{\ov{p}}(\Psi_{L}).$$
Thus if $\Phi_{1}\sim \Phi_{2}$ in $\ssetp$, then
$\tN_{\ov{p}}^*(\Phi_{1})$ and $\tN_{\ov{p}}^*(\Phi_{2})$ are homotopic in $\mdg$
and two homotopic maps in $\ssetp$ induce the same
map in blown up cohomology. In particular,  a homotopy equivalence induces an isomorphism. 
\end{corollary}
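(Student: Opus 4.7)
The plan is to assemble the corollary directly from Propositions~\ref{prop:epimorphism} and \ref{prop:productwithDelta1}, using the standard model structure on $\mdg$ where fibrations are surjections and weak equivalences are quasi-isomorphisms. A path object for $A \in \mdg$ is a factorization $A \xrightarrow{\sim} A^I \twoheadrightarrow A \oplus A$ of the diagonal into a weak equivalence followed by a fibration, and everything we need is essentially already in hand.

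First I would check that $\pi^*$ is a weak equivalence. Since $\iota_j \circ \pi = \id_{\Psi_L}$ (as maps in $\ssetp$), functoriality gives $\iota_j^* \circ \pi^* = \id$; by \propref{prop:productwithDelta1}, $\iota_j^*$ is a quasi-isomorphism, hence by 2-out-of-3 so is $\pi^*$. Next I would verify that $\iota_0^* + \iota_1^*$ is a fibration, which is exactly the surjectivity statement of \propref{prop:epimorphism}. Finally, the composition sends $\omega$ to $(\iota_0^*\pi^*\omega, \iota_1^*\pi^*\omega) = (\omega,\omega)$, so it realizes the diagonal. This establishes the desired path object.

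The second part follows by unwrapping the definition. A simplicial homotopy between $\Phi_1,\Phi_2\colon \Psi_L\to \Psi_K$ in $\ssetp$ is by definition a map $H\colon \Psi_L\otimes\Delta[1]\to \Psi_K$ with $H\circ \iota_j = \Phi_j$ for $j=0,1$. Applying the functor $\tN^*_{\ov{p}}$ and composing with the path object structure yields
\begin{equation*}
\tN^*_{\ov{p}}(\Psi_K) \xrightarrow{\tN^*_{\ov{p}}(H)} \tN^*_{\ov{p}}(\Psi_L\otimes\Delta[1]) \xrightarrow{\iota_0^*+\iota_1^*} \tN^*_{\ov{p}}(\Psi_L)\oplus \tN^*_{\ov{p}}(\Psi_L),
\end{equation*}
whose two components are $\tN^*_{\ov{p}}(\Phi_0)$ and $\tN^*_{\ov{p}}(\Phi_1)$. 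This is precisely a right homotopy in $\mdg$ with respect to the path object just constructed. Since right homotopic maps into any object of $\mdg$ are chain homotopic (and cofibrancy of $\tN^*_{\ov{p}}(\Psi_K)$ is not even needed because we only want to conclude equality after passing to cohomology), both maps induce the same morphism on $H^*$, i.e.\ on blown up intersection cohomology.

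The ``in particular'' statement is then automatic: if $f\colon \Psi_L\to \Psi_K$ admits a simplicial homotopy inverse $g$, then $f\circ g\sim \id$ and $g\circ f\sim \id$, so $\tN^*_{\ov{p}}(f)\circ \tN^*_{\ov{p}}(g)$ and $\tN^*_{\ov{p}}(g)\circ \tN^*_{\ov{p}}(f)$ induce the identity in cohomology, whence $\crH^*_{\ov{p}}(f)$ is an isomorphism. No step is really an obstacle since all the hard work (the explicit section via the cup product, and the construction of the cochain homotopy $\tG$ respecting perverse degree) was done in Propositions~\ref{prop:epimorphism} and \ref{prop:productwithDelta1}; the corollary is purely formal model-categorical bookkeeping once those are established.
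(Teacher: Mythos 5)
Your proof is correct and follows essentially the same route as the paper: the paper's (very terse) argument also just observes that the composition realizes the diagonal and invokes Propositions~\ref{prop:mapinterseccohomology}, \ref{prop:epimorphism}, and \ref{prop:productwithDelta1} together with the definition of a path object. You spell out the formal bookkeeping (two-out-of-three for $\pi^*$, fibration from the surjectivity, the right-homotopy unwinding and the observation that cofibrancy is irrelevant since all the legs of the path object are quasi-isomorphisms) that the paper leaves implicit, which is entirely sound.
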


\begin{proof}
The composition $(\iota_{0}^*+\iota_{1}^*)\circ \pi^*$ is  the diagonal map
$\tN_{\ov{p}}(\Psi_{L})\to \tN_{\ov{p}}(\Psi_{L})\oplus \tN_{\ov{p}}(\Psi_{L})$. The statement is
thus a direct consequence of the definition of a path object and of Propositions 
\ref{prop:mapinterseccohomology}, \ref{prop:epimorphism}
and \ref{prop:productwithDelta1}.
\end{proof}

\begin{proof}[Proof of \thmref{thm:isohomotopyclassesmdg}]
For any $\Psi_{L}\in\ssetp$, the simplicial set
$\HomD_{\ssetp}(\Psi_{L},\langle M\rangle_{\ov{p}})$
is a simplicial group, therefore $\langle M\rangle_{\ov{p}}\in\ssetp$
is s-fibrant in the sense of \defref{def:sfibrant}.
Denote by 
$\cT\colon 
\Hom_{\ssetp}(\Psi_{L},\langle M\rangle_{\ov{p}})
\to
\Hom_{\mdg}(M,\tN^*_{\ov{p}}(\Psi_{L}))$
the bijection given by the adjunction. 
Let $\Phi_{1}$, $\Phi_{2}$ be two elements of  $\Hom_{\ssetp}(\Psi_{L},\langle M\rangle_{\ov{p}})$. We have to prove
$$\Phi_{1}\sim \Phi_{2}\text{ if, and only if, } \cT(\Phi_{1})\sim \cT(\Phi_{2}).$$
Suppose first $\Phi_{1}\sim \Phi_{2}$ and recall that, for $i=1,\,2$,  $\cT(\Phi_{i})$ is the composition
$$\xymatrix@1{
M\ar[r]&
\tN^*_{\ov{p}}(\langle M\rangle_{\ov{p}})\ar[rr]^-{\tN^*_{\ov{p}}(\Phi_{i})}&&
\tN^*_{\ov{p}}(\Psi_{L}).
}$$
Thus $\cT(\Phi_{1})\sim \cT(\Phi_{2})$ is a consequence of \corref{cor:pathobjectmdg}.

\medskip
Suppose now $\cT(\Phi_{1})\sim \cT(\Phi_{2})$. Then, for $i=1,\,2$, the map $\Phi_{i}$ is the composition
$$\xymatrix@1{
\Psi_{L}\ar[r]&
\langle \tN^*_{\ov{p}}(\Psi_{L})\rangle_{\ov{p}}
\ar[rr]^-{\langle \cT(\Phi_{i})\rangle_{\ov{p}}}&&
\langle M\rangle_{\ov{p}}.
}$$
The homotopy  $\cT(\Phi_{1})\sim \cT(\Phi_{2})$ consists of a map
$H\colon M\to \tN^*_{\ov{p}}(\Psi_{L}\otimes \Delta[1])\in\mdg$ whose projection to 
$\tN^*_{\ov{p}}(\Psi_{L})\oplus \tN^*_{\ov{p}}(\Psi_{L})$ is $\cT(\Phi_{1})+\cT(\Phi_{2})$.
Thus, by adjunction, we get a morphism in $\ssetp$,
$$\Psi_{L}\otimes \Delta[1]\to \langle \tN^*_{\ov{p}}(\Psi_{L}\otimes \Delta[1])\rangle_{\ov{p}}
\to \langle M\rangle_{\ov{p}},$$
which is a homotopy between $\Phi_{1}$ and $\Phi_{2}$.
\end{proof}

 We denote by $\Lambda^{k}[m]$ the $k^{\text{th}}$-horn, which is the subcomplex of 
 $\Delta[m]$ generated by all faces except the $k^{\text{th}}$ face.

 \begin{proposition}\label{prop:trivialfibration}
 Let $\Psi_{L}$ be an object of $\ssetp$ and $\ov{p}$  a perversity on $\ttP$.
 For any $m\geq 1$ and any $k$, $0\leq k\leq m$, the canonical inclusion 
 $\Lambda^{k}[m]\hookrightarrow \Delta[m]$ 
 induces a trivial fibration,
 $$\varphi\colon\tN_{\ov{p}}^*(\Psi_{L}\otimes \Delta[m])\to \tN_{\ov{p}}^*(\Psi_{L}\otimes \Lambda^{k}[m]).$$
 \end{proposition}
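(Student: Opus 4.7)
The proof naturally splits into showing $\varphi$ is surjective (the fibration part) and that $\varphi$ is a quasi-isomorphism.

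For surjectivity, I would adapt the cup product construction of \propref{prop:epimorphism}. The classical restriction $N^{*}(\Delta[m]) \to N^{*}(\Lambda^{k}[m])$ on normalized cochains is surjective via extension by zero on the two non-degenerate simplices of $\Delta[m]$ lying outside $\Lambda^{k}[m]$, namely the missing $k$-th face $F$ and the top simplex $T$. Lifting this section to the blown-up setting by combining it with cup products $\1_{\rho} \smile \pi^{*}(-)$, where $\pi \colon \Psi_{L} \otimes \Delta[m] \to \Psi_{L}$ is the projection and $\rho \in \{F, T\}$, should produce a section of $\varphi$ along the lines of $\Phi(\omega_{0},\omega_{1}) = \1_{e_{0}} \smile \pi^{*}\omega_{0} + \1_{e_{1}} \smile \pi^{*}\omega_{1}$ used for $\Delta[1]$.

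For the quasi-isomorphism, the plan is to invoke \corref{cor:pathobjectmdg}. I would first observe that $\Delta[m]$ admits an explicit simplicial contraction $H \colon \Delta[m] \times \Delta[1] \to \Delta[m]$ to the vertex $0$, defined on $n$-simplices by $H(f,\epsilon)(i) = f(i)$ if $\epsilon(i) = 1$ and $H(f,\epsilon)(i) = 0$ otherwise; order-preservation is immediate. Tensoring with $\Psi_{L}$ and applying \corref{cor:pathobjectmdg} shows that the vertex inclusion induces a homotopy equivalence $\tN^{*}_{\ov{p}}(\Psi_{L} \otimes \Delta[m]) \to \tN^{*}_{\ov{p}}(\Psi_{L})$. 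For $k \in \{0, m\}$ this same linear contraction restricts to $\Lambda^{k}[m]$ (every simplex of $\Lambda^{0}[m]$ contains the vertex $0$, and symmetrically for $\Lambda^{m}[m]$), so the analogous statement holds for $\Psi_{L} \otimes \Lambda^{k}[m]$ and two-out-of-three in the commutative triangle formed by the vertex inclusions and $\varphi$ yields the result.

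The main obstacle is the intermediate case $0 < k < m$: here the above linear contraction does not restrict to $\Lambda^{k}[m]$ (for instance, in $\Lambda^{1}[2]$, the edge $[1,2]$ contracts to the missing edge $[0,2]$), and in fact no single $\Delta[1]$-homotopy contracts $\Lambda^{k}[m]$ to a vertex. To bypass this, I would induct on $m$, with base case $m=1$ given by \propref{prop:productwithDelta1}. The key tool is that the contravariant functor $\tN^{*}_{\ov{p}}(\Psi_{L} \otimes -) \colon \sset \to \mdg$ converts colimits in $\sset$ to limits in $\mdg$, since it is a representable functor valued in an abelian category. Decomposing $\Lambda^{k}[m]$ as an iterated pushout built from the faces $d_{j}\Delta[m]$ for $j \neq k$ glued along their intersections (which are themselves horns of dimension $< m$), and applying the inductive hypothesis together with a five-lemma argument on the resulting Mayer--Vietoris square of blown up cochain complexes, should yield that $\iota_{v}^{*} \colon \tN^{*}_{\ov{p}}(\Psi_{L} \otimes \Lambda^{k}[m]) \to \tN^{*}_{\ov{p}}(\Psi_{L})$ is a quasi-isomorphism, and hence that $\varphi$ is a quasi-isomorphism by two-out-of-three.
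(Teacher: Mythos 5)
The paper's proof is a three-step adjunction chase and never touches contractions or Mayer--Vietoris at all: it fixes a cofibrant $M \in \mdg$, notes that a lift of a map $M \to \tN^*_{\ov{p}}(\Psi_L \otimes \Lambda^k[m])$ through $\varphi$ corresponds under the adjunction $\tN^*_{\ov{p}} \dashv \langle-\rangle_{\ov{p}}$ to an extension of a map $\Psi_L \otimes \Lambda^k[m] \to \langle M\rangle_{\ov{p}}$ over $\Psi_L \otimes \Delta[m]$, which by the simplicial enrichment (\defref{def:simplicialcat}) is precisely a horn-filling problem $\Lambda^k[m] \to \HomD_{\ssetp}(\Psi_L, \langle M\rangle_{\ov{p}})$ over $\Delta[m]$, and the latter always has a solution because $\HomD_{\ssetp}(\Psi_L, \langle M\rangle_{\ov{p}})$ is a simplicial abelian group, hence Kan. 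Your route is genuinely different — a direct surjectivity-plus-quasi-isomorphism verification — but it has real gaps.

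First, the surjectivity sketch via a cup-product section does not transfer from \propref{prop:epimorphism}. That argument works because $\partial\Delta[1]$ is a \emph{disjoint} union, so $\tN^*_{\ov{p}}(\Psi_L \otimes \partial\Delta[1])$ is an honest direct sum and one can write a section termwise as $\1_{e_0}\smile\pi^*\omega_0 + \1_{e_1}\smile\pi^*\omega_1$. For $m\geq 2$ the faces of $\Lambda^k[m]$ overlap, so $\tN^*_{\ov{p}}(\Psi_L \otimes \Lambda^k[m])$ is a limit (an equalizer), not a direct sum, and the proposed section formula is not even well-defined on the target. (The mechanism the paper actually uses elsewhere to get surjectivity on horn inclusions — see the end of the proof of Theorem~\ref{thm:conjen0} — is the Kan contractibility of $\tN^n_{\ov{p}}(\Psi_L\otimes\Delta[\bullet])$, \lemref{lem:secondcomplex}, not a cup-product section.)

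Second, the Mayer--Vietoris induction for the middle horns is underdeveloped. The assertion that $\Lambda^k[m]$ is built by gluing faces ``along horns of dimension $<m$'' needs to be made precise (pairwise intersections $d_j\Delta[m]\cap d_{j'}\Delta[m]$ are single codimension-two faces, not horns; it is only the intersection of a new face with the union of those previously attached that is a lower horn, after careful relabeling). More seriously, to turn the resulting pushout squares in $\sset$ into short exact (hence Mayer--Vietoris) sequences of cochain complexes you must already know that $\tN^*_{\ov{p}}(\Psi_L\otimes-)$ takes the relevant monomorphisms to epimorphisms — i.e.\ you need the surjectivity half of the proposition, for arbitrary pairs, inside the induction establishing the quasi-isomorphism half, so the logic as sketched is circular. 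None of this is insurmountable, but it is substantially more work than the paper's argument, which trades all of it for the single structural fact that $\langle M\rangle_{\ov{p}}$ is a simplicial-group-valued object.
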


  \begin{proof}
  Let $M\in\mdg$ be cofibrant. We have to prove the existence of a dotted arrow
making commutative the following diagram in $\mdg$,
$$
\xymatrix{
M\ar[r]\ar@{-->}[rd]&
\tN^*_{\ov{p}}(\Psi_{L}\otimes\Lambda^{k}[m])\\
&\tN^*_{\ov{p}}(\Psi_{L}\otimes \Delta[m]).\ar[u]
} $$
 By adjunction, this is equivalent to the existence of a dotted arrow making commutative the following diagram in $\ssetp$,
 $$
\xymatrix{
\Psi_{L}\otimes \Lambda^{k}[m]\ar[d]\ar[r]&
\langle M\rangle_{\ov{p}}\\
\Psi_{L}\otimes \Delta[m].\ar@{-->}[ru]
} $$
From \defref{def:simplicialcat},  this last property amounts the existence of a dotted arrow, $g$,  
making commutative the following diagram in $\sset$
 $$
\xymatrix{
\Lambda^{k}[m]\ar[d]\ar[r]&
\HomD_{\ssetp}(\Psi_{L},\langle M\rangle_{\ov{p}})\\
\Delta[m]\ar@{-->}[ru]_-{g}
} $$
The morphism $g$ exists since $\HomD_{\ssetp}(\Psi_{L},\langle M\rangle_{\ov{p}})$ is a simplicial group, therefore
a Kan simplicial set.
  \end{proof}

\section{Perverse Eilenberg-MacLane  simplicial sets}\label{sec:perversEML}

\begin{quote}
Let $R$ be a commutative ring, $\ttP$  a poset and $\ov{p}\colon \ttP\to \ov{\Z}$  a perversity.
In this section, we show that the blown up cohomology is a representable functor on $\ssetp$.
If $\tkpn$  is the simplicial set over $\ttP$ representing
$\crH^n_{\ov{p}}(-;R)$, we  prove that the family 
$(\tkpn)_{n})$ is an infinite loop space in $\ssetp$.
We also introduce the cohomological operations on  the blown up cohomology and present some
basic properties. In the case of a GM-perversity on a pseudomanifold, they are 
cohomological operations on the Goresky-MacPherson hypercohomology of the Deligne's sheaves.
\end{quote}

\subsection{Representability}\label{subsec:representable}

In the classical case, a simplicial model of the Eilenberg-MacLane space $K(R,n)$ 
has for set of $k$-simplices the  $R$-module of cocycles of degree $n$ in $N^*(\Delta[k];R)$.
We follow a similar treatment remplacing the category of $\Delta[n]$'s by $\Delta[\ttP]$.

\medskip
Denote by $R(n)$ the object of $\mdg$, reduced to a free $R$-module  generated by one cocycle 
of degree $n$. Recall from\eqref{equa:tNNP}, that, for any ${\sigma}\in\ttN(\ttP)$,
we have
$$\langle R(n)\rangle_{\ov{p}}({\sigma})=\Hom_{\mdg}(R(n),\tN^*_{\ov{p}}(\Delta[{\sigma}];R))=
\cZ^n\tN^*_{\ov{p}}(\Delta[{\sigma}];R),$$
where $\cZ^n$ denotes the subspace of cocycles in degree $n$.
Therefore, we have
$$\langle R(n)\rangle_{\ov{p}}=
\cZ^n\tN_{\ov{p}}({\ttN(\ttP)};R).$$
This definition fits the case of restricted  simplicial sets over $\ttP$ and gives
$$\langle R(n)\rangle^+_{\ov{p}}=
\cZ^n\tN^+_{\ov{p}}(\cR({\ttN(\ttP)});R).$$

\begin{proposition}\label{prop:emlsset}
The functor  sending $\Psi_{L}\in \ssetp$ to  $\crH^n_{\ov{p}}({\Psi_{L}};R)$ is representable; i.e.,
for any s-fibrant $\Psi_{L}\in\ssetp$, we have an isomorphism,
$$\crH^n_{\ov{p}}(\Psi_{L};R)
\cong
[\Psi_{L},\langle R(n)\rangle_{\ov{p}}]_{\ssetp}.$$
Similarly, the functor  sending $\Psi_{K}\in \ssetp^+$ to  $\crH^n_{\ov{p}}({\Psi_{K}};R)$ is representable; i.e.,
for any s-fibrant $\Psi_{K}\in\ssetp^+$, we have an isomorphism,
$$\crH^n_{\ov{p}}(\Psi_{K};R)
\cong
[\Psi_{L},\langle R(n)\rangle^+_{\ov{p}}]_{\ssetp^+}.$$
\end{proposition}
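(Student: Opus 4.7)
The plan is to derive this representability statement directly from the homotopy-class adjunction already established in \thmref{thm:isohomotopyclassesmdg}, combined with the standard identification of cohomology as homotopy classes of maps out of $R(n)$ in $\mdg$.

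For the first (unrestricted) claim, I would apply \thmref{thm:isohomotopyclassesmdg} with $M=R(n)$. Since $\Psi_{L}$ is s-fibrant by assumption, this yields an isomorphism
$$[\Psi_{L},\langle R(n)\rangle_{\ov{p}}]_{\ssetp}\cong [R(n),\tN^*_{\ov{p}}(\Psi_{L})]_{\mdg}.$$
It then remains to identify the right-hand side with $\crH^{n}_{\ov{p}}(\Psi_{L};R)$. Because $R(n)$ is the free $R$-module on a single generator placed in degree $n$ with zero differential, it is a cofibrant object of $\mdg$, and a cochain map $R(n)\to C^{\ast}$ is the same data as a cocycle in $C^{n}$. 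Two such cochain maps are chain-homotopic precisely when the corresponding cocycles differ by a coboundary, so $[R(n),C^{\ast}]_{\mdg}\cong H^{n}(C^{\ast})$. Applied to $C^{\ast}=\tN^*_{\ov{p}}(\Psi_{L})$ and using \defref{def:allowableboum}, this is the desired $\crH^{n}_{\ov{p}}(\Psi_{L};R)$.

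For the restricted (second) claim, I would reduce it to the unrestricted one through the adjoint pair $(\tti,\cR)$ of \propref{prop:inadjoint}. Using the second equality of \eqref{equa:+or+}, namely $\langle R(n)\rangle_{\ov{p}}=\ttn(\langle R(n)\rangle^{+}_{\ov{p}})$, and the identity $\cR\circ\ttn=\id$, we get $\cR(\langle R(n)\rangle_{\ov{p}})=\langle R(n)\rangle^{+}_{\ov{p}}$. Therefore the simplicial adjunction of \propref{prop:inadjoint} gives
$$\HomD_{\ssetp^{+}}(\Psi_{K},\langle R(n)\rangle^{+}_{\ov{p}})\cong \HomD_{\ssetp}(\tti(\Psi_{K}),\langle R(n)\rangle_{\ov{p}}),$$
and passing to $\pi_{0}$ yields an isomorphism of homotopy classes. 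The unrestricted case then produces $\crH^{n}_{\ov{p}}(\tti(\Psi_{K});R)$, which by \eqref{equa:blowupcohomologyin} coincides with $\crH^{+,n}_{\ov{p}}(\Psi_{K};R)$, completing the argument.

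The only potential obstacle is verifying compatibility of the homotopy relations on both sides under \thmref{thm:isohomotopyclassesmdg}, but this is essentially already packaged in that theorem together with \corref{cor:pathobjectmdg}; so modulo carefully tracking s-fibrancy of $\langle R(n)\rangle_{\ov{p}}$ (guaranteed by the theorem since $\HomD_{\ssetp}(\Psi_{L},\langle R(n)\rangle_{\ov{p}})$ is an abelian simplicial group, hence Kan) and of $\tti(\Psi_{K})$ in the restricted case, the proof is a formal consequence of the adjunctions already proved.
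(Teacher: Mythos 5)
Your treatment of the first isomorphism is exactly the paper's: take $M=R(n)$ in \thmref{thm:isohomotopyclassesmdg} and identify $[R(n),C^{\ast}]_{\mdg}\cong H^n(C^{\ast})$ for $C^{\ast}=\tN^*_{\ov{p}}(\Psi_L)$; you simply unpack the cocycle/coboundary content that the paper states in one compressed line. For the restricted case, however, you diverge from the paper's intent. The paper's ``similar argument'' means running the proof of \thmref{thm:isohomotopyclassesmdg} again for the $+$-adjunction $(\tN^{+,*}_{\ov{p}},\langle -\rangle^{+}_{\ov{p}})$ from Section~\ref{subsec:twofunctors}, in which the s-fibrancy hypothesis lands directly on $\Psi_K$. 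You instead reduce to the unrestricted case via the simplicial adjunction $(\tti,\cR)$ of \propref{prop:inadjoint}, the equality $\cR(\langle R(n)\rangle_{\ov{p}})=\langle R(n)\rangle^{+}_{\ov{p}}$ from \eqref{equa:+or+}, and \eqref{equa:blowupcohomologyin}. This is a legitimate alternative and arguably more economical, since it derives the restricted statement from the unrestricted one rather than reproving it; the cost is that you now need the first part applied to $\tti(\Psi_K)$, and you flag but do not resolve whether $\tti(\Psi_K)$ is s-fibrant. In fact this is not an obstruction: in the proof of \thmref{thm:isohomotopyclassesmdg}, the s-fibrancy of the source is never used — what is used is that $\langle M\rangle_{\ov{p}}$ is an abelian simplicial group, hence s-fibrant, so that $\pi_0\HomD_{\ssetp}(-,\langle M\rangle_{\ov{p}})$ computes homotopy classes. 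You should state this explicitly to close the loop rather than leave it as a ``modulo.''
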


\begin{proof}
This comes from the definition of $\crH^*_{\ov{p}}(-)$ and \thmref{thm:isohomotopyclassesmdg},
$$\crH^n_{\ov{p}}(\Psi_{L};R)=
H^n(\tN^*_{\ov{p}}(\Psi_{L}))=
[R(n),\tN^*_{\ov{p}}(\Psi_{L})]_{\mdg}
\cong
[\Psi_{L},\langle R(n)\rangle_{\ov{p}}]_{\ssetp}.$$
The second assertion follows by a similar argument.
\end{proof}

\begin{definition}\label{def:EMLsimplicialperverse}
The simplicial set over $\ttP$, $\langle R(n)\rangle_{\ov{p}}$, is called a
\emph{$\ov{p}$-perverse Eilenberg-MacLane  simplicial set}, henceforth $\emlp$-space,
and denoted $\tkpn$.
The \emph{$\ov{p}$-perverse Eilenberg-MacLane  restricted  simplicial set} (henceforth $\emlp^+$-space)
corresponds to $\langle R(n)\rangle^+_{\ov{p}}$ and is denoted $\tkpn^+$.
\end{definition}

The $\emlp$-spaces and their restricted  analogs are connected as follows.

\begin{proposition} The two $\ov{p}$-perverse Eilenberg-MacLane  spaces are connected by
$$\tkpn=\ttn(\tkpn^+)
\quad \text{and}\quad
\tkpn^+=\cR(\tkpn).$$
\end{proposition}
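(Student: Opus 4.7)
The plan is to derive both equalities directly from formula \eqref{equa:+or+} and the identities $\cR \circ \ttn = \id$ and $\cR \circ \tti = \id$ of \propref{prop:leftrightR}, specialised to the cochain complex $R(n) \in \mdg$.

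For the first equality, I would simply apply \eqref{equa:+or+} with $M = R(n)$. Unfolding definitions, the left-hand side is $\tkpn = \langle R(n)\rangle_{\ov{p}}$, whose value on a simplex $\sigma \in \Delta[\ttP]$ is $\Hom_{\mdg}(R(n), \tN^*_{\ov{p}}(\Delta[\sigma]))$. By \defref{def:degrepervers}(2), the complex $\tN^*_{\ov{p}}(\Delta[\sigma])$ vanishes whenever $\sigma$ is not regular, so $\langle R(n)\rangle_{\ov{p}}(\sigma)$ reduces to a point on non-regular simplices and coincides with $\langle R(n)\rangle^+_{\ov{p}}(\sigma) = \tkpn^+(\sigma)$ on regular ones. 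This matches the definition of $\ttn$ from \defref{def:functorn}, giving $\tkpn = \ttn(\tkpn^+)$.

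For the second equality, I would apply the restriction functor $\cR$ to the first identity and invoke $\cR \circ \ttn = \id$ from \propref{prop:leftrightR}, which immediately yields
\[
\cR(\tkpn) = \cR(\ttn(\tkpn^+)) = \tkpn^+.
\]
Alternatively, one sees it directly: since $\Delta[\ttP]^+$ is a full subcategory of $\Delta[\ttP]$ and $\langle R(n) \rangle^+_{\ov{p}}$ is defined by the same formula as $\langle R(n) \rangle_{\ov{p}}$ on regular simplices, the restriction of $\tkpn$ to $\Delta[\ttP]^+$ is literally $\tkpn^+$.

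There is essentially no obstacle here: the proposition is a bookkeeping statement, an immediate corollary of \eqref{equa:+or+} and \propref{prop:leftrightR} specialised to $M = R(n)$. The only subtlety to mention is the convention in \defref{def:degrepervers}(2) that extends $\tN^*_{\ov{p}}$ by zero on non-regular simplices, which is precisely what makes the extension of $\langle - \rangle^+_{\ov{p}}$ to $\Delta[\ttP]$ agree with $\ttn(\langle - \rangle^+_{\ov{p}})$ rather than with $\tti(\langle - \rangle^+_{\ov{p}})$.
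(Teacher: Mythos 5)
Your proposal is correct and follows essentially the same route as the paper: both equalities are read off from \eqref{equa:+or+} with $M = R(n)$, together with $\cR \circ \ttn = \id$ from \propref{prop:leftrightR}. You supply a bit more detail than the paper (unpacking why $\tN^*_{\ov{p}}$ vanishing on non-regular simplices is exactly what makes the extension agree with $\ttn$ rather than $\tti$), but the argument is the same.
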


\begin{proof}
Recall from \eqref{equa:+or+}, the equality $\langle M\rangle_{\ov{p}}=\ttn\left(\langle M\rangle_{\ov{p}}^+\right)$, 
for any $M\in\mdg$. This gives immediatly $\tkpn=\ttn(\tkpn^+)$ from their definition. 
The second equality follows from $\cR\circ \ttn=\id$, see \propref{prop:leftrightR}.
\end{proof}

We now specify the $(n-1)$-skeleton of $\tkpn$.

\begin{proposition}\label{prop:skeletonEML}
For any perversity $\ov{p}$, the $(n-1)$-skeleta of $\tkpn$  
and $\ttN(\ttP)$ coincide.
\end{proposition}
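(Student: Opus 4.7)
The plan is to unwind the construction of $\tkpn=\langle R(n)\rangle_{\ov{p}}$ simplex-by-simplex and show that, for every simplex $\sigma$ of $\ttN(\ttP)$ of dimension $k<n$, the fiber of $\tkpn\to\ttN(\ttP)$ over $\sigma$ is a singleton. Since a map in $\ssetp$ of the form $\Psi_K\to\ttN(\ttP)$ whose fibers over every simplex of dimension $\leq n-1$ are singletons induces an isomorphism on $(n-1)$-skeleta, this will suffice.

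First I would recall from \subsecref{subsec:representable} that, viewing $\tkpn$ as a presheaf on $\Delta[\ttP]$, the value on a simplex $\sigma:\Delta[k]\to\ttN(\ttP)$ is
\[
\tkpn(\sigma)=\Hom_{\mdg}(R(n),\tN^*_{\ov{p}}(\Delta[\sigma]))=\cZ^n\tN^*_{\ov{p}}(\Delta[\sigma]).
\]
Thus $\tkpn(\sigma)$ is always an $R$-module containing $0$, and it is a singleton if and only if $\tN^n_{\ov{p}}(\Delta[\sigma])=0$. The claim therefore reduces to showing that the latter vanishes whenever $k=\dim\sigma<n$.

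Next I would split into the two cases that appear in \defref{def:degrepervers}. If $\sigma$ is not a regular simplex of $\ttN(\ttP)$ in the sense of \defref{def:simplexregular}, then by definition $\tN^*_{\ov{p}}(\Delta[\sigma])=0$, and there is nothing to check. If instead $\sigma=\tts_{0}^{[q_{0}]}\prec\dots\prec \tts_{\ell}^{[q_{\ell}]}$ is regular with $k=q_{0}+\dots+q_{\ell}+\ell$, I would use the explicit description of the blown up complex,
\[
\tN^*(\Delta[\sigma])=N^*(\tc\Delta[q_{0}])\otimes\dots\otimes N^*(\tc\Delta[q_{\ell-1}])\otimes N^*(\Delta[q_{\ell}]),
\]
together with the dimensions of its factors: each $\tc\Delta[q_{i}]$ contributes top cohomological degree $q_{i}+1$ and the last factor $\Delta[q_{\ell}]$ contributes $q_{\ell}$. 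Summing, the top degree of $\tN^*(\Delta[\sigma])$ is precisely $q_{0}+\dots+q_{\ell-1}+\ell+q_{\ell}=k$. Hence $\tN^n(\Delta[\sigma])=0$ for $n>k$, and a fortiori $\tN^n_{\ov{p}}(\Delta[\sigma])=0$ and $\cZ^n\tN^*_{\ov{p}}(\Delta[\sigma])=0$.

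Combining the two cases, for every simplex $\sigma$ of $\ttN(\ttP)$ of dimension $k<n$ the set $\tkpn(\sigma)$ consists of the single element $0$, so the projection $\tkpn\to\ttN(\ttP)$ is a bijection on simplices in dimensions $<n$; that is, the $(n-1)$-skeleta agree. There is no real obstacle here — the argument is a pure dimension count — but I would be careful to note that the statement is insensitive to the choice of perversity $\ov{p}$, since the vanishing comes from the underlying (non-perverse) blown up complex and not from the allowability condition.
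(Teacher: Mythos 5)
Your proof is correct and takes essentially the same route as the paper: both reduce to showing that $\tN^n_{\ov{p}}(\Delta[\sigma])=0$ when $\dim\sigma<n$, using the fact that normalized cochain complexes are bounded above in degree. The paper phrases this as ``the $n$-simplices of $\Delta[\ell]$ are degenerate, and $N^*$ is normalized,'' while you carry out the explicit dimension count on the tensor factors $N^*(\tc\Delta[q_i])$ and $N^*(\Delta[q_\ell])$; your version is a bit more detailed but not a different argument. (One tiny inaccuracy that does not affect the proof: $\cZ^n\tN^*_{\ov{p}}(\Delta[\sigma])$ being a singleton is \emph{implied by}, not equivalent to, $\tN^n_{\ov{p}}(\Delta[\sigma])=0$; you only use the implication, so no harm done.)
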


\begin{proof} 
 If $\ell <n$, the $n$-simplices of $\Delta[\ell]$ are degenerate. As
 $N^*(\Delta[\ell])$ is the normalized complex, we have 
 $\cZ^n\tN_{\ov{p}}([\Delta[\ell])=0$.
Thus 
$\langle R(n)\rangle_{\ov{p}}({\sigma})=\Hom_{\mdg}(R(n),\tN^*_{\ov{p}}(\Delta[{\sigma}]))=\{0\}$,
for any $\sigma$ of dimension $<n$,
and
$\langle R(n)\rangle_{\ov{p}}^{(\ell)}=
\ttN(\ttP)^{(\ell)}$ if $\ell<n$.
\end{proof}

As in the classical algebraic topology situation, the family of $\ov{p}$-perverse Eilenberg-MacLane  spaces is
an infinite loop space. 
(We refer to  Subsection~\ref{subsec:infiniloop} for a reminder on infinite loop spaces
in a simplicial category.)
 First, we have to select a base point in $\tkpn=\langle R(n)\rangle_{\ov{p}}\in \ssetp$. 
 The final object of $\ssetp$ being the identity map on $\ttN(\ttP)$, such a basepoint  
is an element of
$$\Hom_{\ssetp}({\ttN(\ttP)},\langle R(n)\rangle_{\ov{p}})
\cong
\Hom_{\mdg}(R(n),\tN_{\ov{p}}({\ttN(\ttP)}))
\cong
\cZ^n \tN_{\ov{p}}({\ttN(\ttP)}).
$$
Therefore, we can choose  the map 
$\epsilon\colon \ttN(\ttP) \to \cZ^n \tN_{\ov{p}}({\ttN(\ttP)})$
constant on 0. 

\begin{remark}
The zero cocycle is a natural basepoint. Let us notice that, depending on the combinatorics of the poset $\ttP$, 
the Kan simplicial set $\cZ^n \tN_{\ov{p}}({\ttN(\ttP)})$ is not necessarily connected. But, as a simplicial group,
all its connected components are homotopy equivalent, in fact isomorphic.
\end{remark}

\begin{theoremb}\label{thm:EMLinfinitloop}
Let $\ov{p}\colon \ttP\to\ov{\Z}$ be a perversity on a poset $\ttP$ and $n\geq 0$.
The families of 
$\emlp$ and $\emlp^+$ simplicial sets are infinite loop spaces in the categories $\ssetp$ and $\ssetp^+$, respectively; 
i.e., there are weak $s$-equivalences,
$$\tkpp
\simeq
\Omega_{\epsilon} \tkpn
\quad \text{and}\quad
\tkpp^+
\simeq
\Omega_{\epsilon} \tkpn^+.
$$
\end{theoremb}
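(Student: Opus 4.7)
The plan is to verify the criterion for a weak $s$-equivalence recalled in \remref{rem:ssetpsimplicial}: it suffices to produce a canonical simplicial map $\tkpp \to \Omega_{\epsilon} \tkpn$ in $\ssetp$ that induces an isomorphism
$$\pi_{0}\HomD_{\ssetp}(\Psi_{L},\tkpp) \xrightarrow{\cong} \pi_{0}\HomD_{\ssetp}(\Psi_{L},\Omega_{\epsilon} \tkpn)$$
for every $\Psi_{L}\in \ssetp$. The restricted statement $\tkpp^{+}\simeq \Omega_{\epsilon}\tkpn^{+}$ will then follow by transferring mapping spaces through the simplicial adjunctions of \propref{prop:inadjoint}, using that $\cR$, $\tti$, $\ttn$ preserve the simplicial structure.

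Combining \propref{prop:adjoint} with the enrichment of \remref{rem:ssetpsimplicial}, one identifies
$$\HomD_{\ssetp}(\Psi_{L},\tkpn)_{k}=\cZ^{n}\tN^{*}_{\ov{p}}(\Psi_{L}\otimes \Delta[k]),$$
so this mapping space is a simplicial $R$-module, hence Kan, and its $\pi_{0}$ is $\crH^{n}_{\ov{p}}(\Psi_{L};R)$ by \propref{prop:emlsset}. Appendix B (see \appendref{sec:simplicialcat}) recalls that the loop functor $\Omega_{\epsilon}$ commutes with $\HomD_{\ssetp}(\Psi_{L},-)$, so the target of the displayed map above equals $\pi_{1}(\HomD_{\ssetp}(\Psi_{L},\tkpn),0)$. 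The core of the proof is then the computation
$$\pi_{k}\bigl(\cZ^{n}\tN^{*}_{\ov{p}}(\Psi_{L}\otimes \Delta[\bullet])\bigr)\cong \crH^{n-k}_{\ov{p}}(\Psi_{L};R)$$
for every $k\geq 0$, natural in $\Psi_{L}$. The $k=0$ case is \propref{prop:emlsset}; for $k\geq 1$ I would apply the Dold-Kan correspondence together with a K\"unneth-type quasi-isomorphism
$$\tN^{*}_{\ov{p}}(\Psi_{L}\otimes \Delta[k])\simeq \tN^{*}_{\ov{p}}(\Psi_{L})\otimes N^{*}(\Delta[k])$$
in $\mdg$, which reduces the statement to the classical computation of homotopy groups of mapping spaces into Eilenberg-MacLane spaces. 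This quasi-isomorphism would be built by iterating the path object of \corref{cor:pathobjectmdg} along the horn filtration of $\Delta[k]$, using the trivial fibrations of \propref{prop:trivialfibration} at each step, and keeping track of the perverse-degree constraint of \defref{def:degrepervers} along the way.

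Granting the $k=1$ instance of the computation, \propref{prop:emlsset} applied in degree $n-1$ yields
$$\pi_{0}\HomD_{\ssetp}(\Psi_{L},\Omega_{\epsilon}\tkpn)\cong \crH^{n-1}_{\ov{p}}(\Psi_{L};R)\cong \pi_{0}\HomD_{\ssetp}(\Psi_{L},\tkpp).$$
Naturality in $\Psi_{L}$ together with a Yoneda argument realize this natural isomorphism through a canonical simplicial map $\tkpp\to \Omega_{\epsilon}\tkpn$ in $\ssetp$, which is therefore a weak $s$-equivalence. The main obstacle is precisely the K\"unneth-type equivalence above: the blown-up complex $\tN^{*}_{\ov{p}}$ is built from the iterated cone decomposition of \eqref{equa:boumcochain}, so tensoring its argument with an additional classical simplex $\Delta[k]$ requires compatibility with these cone directions and a careful verification that the homotopies arising in \corref{cor:pathobjectmdg} and \propref{prop:trivialfibration} preserve the perverse degree uniformly in $k$. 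Once this K\"unneth equivalence is settled, everything downstream is formal.
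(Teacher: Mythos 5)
Your high-level plan -- compare $\pi_{0}\HomD_{\ssetp}(\Psi_{L},\Omega_{\epsilon}\tkpn)$ with $\crH^{n-1}_{\ov{p}}(\Psi_{L};R)$ naturally in $\Psi_{L}$ and conclude by Yoneda -- is indeed the right one and matches the structure of the paper's proof. But the reduction you propose via a K\"unneth-type quasi-isomorphism
$\tN^{*}_{\ov{p}}(\Psi_{L}\otimes\Delta[k])\simeq\tN^{*}_{\ov{p}}(\Psi_{L})\otimes N^{*}(\Delta[k])$
is a genuine gap, and moreover it is \emph{not} sufficient for your purpose even if granted. The target of your computation is $\pi_{k}$ of the simplicial $R$-module $[k]\mapsto\cZ^{n}\tN^{*}_{\ov{p}}(\Psi_{L}\otimes\Delta[k])$, and to read off $\pi_{k}$ through Dold--Kan you need far more than a quasi-isomorphism at each level $k$: you need compatibility with the face and degeneracy maps, and you need to understand how passing to $\cZ^{n}$ (an operation that does not commute with quasi-isomorphism) interacts with the normalization. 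Your sketch (``iterate the path object of \corref{cor:pathobjectmdg} along the horn filtration of $\Delta[k]$, using \propref{prop:trivialfibration} at each step'') produces, at best, quasi-isomorphisms $\tN^{*}_{\ov{p}}(\Psi_{L}\otimes\Delta[k])\simeq\tN^{*}_{\ov{p}}(\Psi_{L})$ for each fixed $k$, not a K\"unneth decomposition that keeps track of the simplicial direction, and it doesn't address $\cZ^{n}$ at all. You correctly flag this as ``the main obstacle,'' but that obstacle is exactly the content that is missing.

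The paper bypasses all of this by working only with $k=1$ and avoiding K\"unneth entirely. Using the pullback defining $\Omega_{\epsilon}$ (\defref{def:pointed}) and the adjunction of \propref{prop:adjoint}, it identifies $\Hom_{\ssetp}(\Psi_{L},\Omega_{\epsilon}\tkpn)$ with the degree-$n$ cocycles of the kernel of $\tN^{*}_{\ov{p}}(\Psi_{L}\otimes\Delta[1])\to\tN^{*}_{\ov{p}}(\Psi_{L}\otimes\partial\Delta[1])$, hence $[\Psi_{L},\Omega_{\epsilon}\tkpn]\cong\crH^{n}_{\ov{p}}(\Psi_{L}\otimes\Delta[1],\Psi_{L}\otimes\partial\Delta[1])$. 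Then a Ker--Coker argument on a morphism of short exact sequences, using the acyclicity of the kernel (which is precisely the content of the blown-up prism homotopy $\tG$ constructed in \propref{prop:productwithDelta1}), yields the suspension isomorphism $\crH^{n-1}_{\ov{p}}(\Psi_{L})\cong\crH^{n}_{\ov{p}}(\Psi_{L}\otimes\Delta[1],\Psi_{L}\otimes\partial\Delta[1])$, and \propref{prop:emlsset} plus Yoneda finish. In other words, the perverse-degree-preserving homotopy you are worried about building has already been constructed in \propref{prop:productwithDelta1}, and it is used there in a much lighter way than your K\"unneth argument would demand: one acyclicity statement rather than a simplicially compatible tensor decomposition. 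To salvage your approach, you would need to either prove the simplicial K\"unneth equivalence and redo the classical Dold--Kan computation in this blown-up setting, or recognize that only the $k=1$ relative-cohomology comparison is needed and switch to the paper's argument.
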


\begin{proof}
Let  $\Psi_{A}\colon A\to \ttN(\ttP)$ in $\ssetp$, pointed by $\epsilon$.
The loop space associated to $\epsilon$ being 
defined as a pullback (see \defref{def:pointed}), for any
$\Psi_{L}\in\ssetp$,
we have the following pullback, 
$$\xymatrix{
\Hom_{\ssetp}(\Psi_{L},\Omega_{\epsilon}\Psi_{A})
\ar[r]\ar[d]&
\Hom_{\ssetp}(\Psi_{L},\Psi_{A}^{\Delta[1]})
\ar[d]\\
\Hom_{\ssetp}(\Psi_{L},\ttN(\ttP))
\ar[r]&
\Hom_{\ssetp}(\Psi_{L},\Psi_{A}^{\partial \Delta[1]}).
}$$
If $\Psi_{A}=\tkpn$ and $\epsilon=0$, we deduce from
\defref{def:simplicialcat} that
$$\Hom_{\ssetp}(\Psi_{L},\Omega_{\epsilon}\,\tkpn=
\Ker (\cZ^n\tN^*_{\ov{p}}(\Psi_{L}\otimes \Delta[1])\to \cZ^n\tN^*_{\ov{p}}(\Psi_{L}\otimes\partial \Delta[1]))
$$
and
\begin{equation}\label{equa:petitesuite}
[\Psi_{L},\Omega_{\epsilon}\,\tkpn]\cong
\crH^n(\Psi_{L}\otimes \Delta[1],\Psi_{L}\otimes\partial \Delta[1]).
\end{equation}
For the determination of this relative cohomology, we consider the Ker-Coker exact sequence applied to
the following morphism of short exact sequences,
$$
\scriptsize{\xymatrix{
0\ar[r]&
\tN^*_{\ov{p}}(\Psi_{L}\otimes \Delta[1], \Psi_{L}\otimes \partial \Delta[1])
\ar[r]\ar[d]_{{\mu}}&
\tN^*_{\ov{p}}(\Psi_{L}\otimes \Delta[1])
\ar[r]^-{\iota_{0}^*+\iota_{1}^*}\ar@{=}[d]&
\tN^*_{\ov{p}}(\Psi_{L})\oplus \tN^*_{\ov{p}}(\Psi_{L})
\ar[r]\ar@{->>}[d]^{\nu}&0\\
0\ar[r]&
\Ker\ar[r]&
\tN^*_{\ov{p}}(\Psi_{L}\otimes \Delta[1])\ar[r]&
\tN^*_{\ov{p}}(\Psi_{L})\ar[r]^-{\iota_{0}^*}&0,
}}$$
where $\nu$ is the projection on the first factor.
We obtain an isomorphism between $\tN^*_{\ov{p}}(\Psi_{L})$
and the cokernel of $\mu$. 
The acyclicity of $\Ker$  (see \propref{prop:productwithDelta1}) gives
an isomorphism
\begin{equation}\label{equa:boumrelatif}
\crH^*_{\ov{p}}(\Psi_{L})
\xrightarrow{\cong}
 H^{*+1}(\tN^*_{\ov{p}}(\Psi_{L}\otimes \Delta[1], \Psi_{L}\otimes \partial \Delta[1]))
=\crH^{*+1}_{\ov{p}}(\Psi_{L}\otimes \Delta[1], \Psi_{L}\otimes \partial \Delta[1]).
\end{equation}
From \propref{prop:emlsset} and the isomorphisms \eqref{equa:boumrelatif}, \eqref{equa:petitesuite}, we deduce
\begin{eqnarray*}
[\Psi_{L},\tkpp]
&\cong&
\crH^{n-1}_{\ov{p}}(\Psi_{L})\\
&\cong&
\crH^{n}_{\ov{p}}(\Psi_{L}\otimes \Delta[1], \Psi_{L}\otimes \partial \Delta[1]))\\
&\cong&
[\Psi_{L},\Omega_{\epsilon}\,\tkpn].
\end{eqnarray*}
The result follows from the Yoneda lemma. The proof of the second assertion is similar.
\end{proof}

We deduce from \thmref{thm:EMLinfinitloop} an isomorphism involving classical 
Eilenberg-MacLane spaces and perverse ones.

\begin{corollary}\label{cor:productEML}
Let $\Psi_{L}\in \ssetp$, $\ov{p}\colon \ttP\to\ov{\Z}$ be a perversity and $n\geq 0$. Then the
simplicial set $\Hom_{\ssetp}^{\Delta}(\Psi_{L},\tkpn)$ is a product of 
Eilenberg-MacLane spaces,
$$\Hom_{\ssetp}^{\Delta}(\Psi_{L},\tkpn)\cong \prod_{j}K(\crH^{n-j}_{\ov{p}}(\Psi_{L};R),j).$$
\end{corollary}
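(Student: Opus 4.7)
The plan is to exhibit the simplicial set $\HomD_{\ssetp}(\Psi_{L},\tkpn)$ as a simplicial abelian group and then invoke the classical theorem of Moore (cited as \cite{MooreInvariant} in the Introduction) which asserts that any simplicial abelian group is homotopy equivalent to the product of Eilenberg-MacLane spaces indexed by its homotopy groups. Once this decomposition is available, the only remaining work is to identify the homotopy groups of $\HomD_{\ssetp}(\Psi_{L},\tkpn)$ with the perverse cohomology groups of $\Psi_{L}$.

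First I would note that for any $k\geq 0$, Lemma~\ref{lem:simplicial+} together with the adjunction of \propref{prop:adjoint} yields
\[
\bigl(\HomD_{\ssetp}(\Psi_{L},\tkpn)\bigr)_{k}
=\Hom_{\ssetp}(\Psi_{L}\otimes\Delta[k],\langle R(n)\rangle_{\ov{p}})
\cong \cZ^{n}\tN^{*}_{\ov{p}}(\Psi_{L}\otimes\Delta[k];R).
\]
Since the right-hand side is naturally an $R$-module and the face/degeneracy maps are $R$-linear, this identifies $\HomD_{\ssetp}(\Psi_{L},\tkpn)$ with a simplicial $R$-module, hence a fortiori a simplicial abelian group. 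Moore's theorem then produces a weak equivalence
\[
\HomD_{\ssetp}(\Psi_{L},\tkpn)\;\simeq\;\prod_{j\geq 0}K\bigl(\pi_{j}\HomD_{\ssetp}(\Psi_{L},\tkpn),\,j\bigr).
\]

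The remaining step is to identify these homotopy groups. For any pointed s-fibrant object $\Psi_A\in\ssetp$, the general formula $\pi_{j}\HomD_{\ssetp}(\Psi_{L},\Psi_A)\cong[\Psi_{L},\Omega_{\epsilon}^{\,j}\Psi_A]_{\ssetp}$ (a direct consequence of \defref{def:pointed} and the simplicial enrichment recalled in Appendix~B) together with $\Psi_A=\tkpn$ gives
\[
\pi_{j}\HomD_{\ssetp}(\Psi_{L},\tkpn)\;\cong\;[\Psi_{L},\Omega_{\epsilon}^{\,j}\tkpn]_{\ssetp}.
\]
By iterating the weak $s$-equivalence $\Omega_{\epsilon}\tkpn\simeq \ttK(R,n-1,\ttP,\ov{p})$ of \thmref{thm:EMLinfinitloop}, we obtain $\Omega_{\epsilon}^{\,j}\tkpn\simeq \ttK(R,n-j,\ttP,\ov{p})$ for $0\leq j\leq n$ (and a contractible space for $j>n$, since cohomology vanishes in negative degrees). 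Combining this with the representability statement of \propref{prop:emlsset} then yields
\[
\pi_{j}\HomD_{\ssetp}(\Psi_{L},\tkpn)\cong[\Psi_{L},\ttK(R,n-j,\ttP,\ov{p})]_{\ssetp}\cong\crH^{n-j}_{\ov{p}}(\Psi_{L};R),
\]
which, substituted into the Moore decomposition, gives the claimed homotopy equivalence.

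The main technical subtlety I would expect lies in the iteration of the loop-space equivalence and in checking that the basepoint chosen for $\tkpn$ (the zero cocycle of \thmref{thm:EMLinfinitloop}) is compatible at each step, so that the successive applications of $\Omega_{\epsilon}$ land in the pointed $\emlp$-spaces $\ttK(R,n-j,\ttP,\ov{p})$ with their canonical basepoints; this is where one must be careful that Moore's theorem, the representability statement, and the infinite-loop structure all refer to the same simplicial group structure on $\HomD_{\ssetp}(\Psi_{L},\tkpn)$. Granted this matching, the argument is essentially a formal combination of Theorems~\ref{thm:isohomotopyclassesmdg}, \ref{thm:EMLinfinitloop} and \propref{prop:emlsset}.
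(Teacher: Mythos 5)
Your proposal is correct and follows essentially the same route as the paper's proof: recognize $\HomD_{\ssetp}(\Psi_{L},\tkpn)$ as a simplicial abelian group (hence, by Moore, a product of Eilenberg-MacLane spaces), identify $\pi_{0}$ via representability, and obtain the higher homotopy groups by iterating $\tkpp\simeq\Omega_{\epsilon}\tkpn$ from \thmref{thm:EMLinfinitloop}. The only slip is a citation: the identification $(\HomD_{\ssetp}(\Psi_{L},\tkpn))_{k}=\Hom_{\ssetp}(\Psi_{L}\otimes\Delta[k],\langle R(n)\rangle_{\ov{p}})$ is just the simplicial enrichment of $\ssetp$ (\defref{def:simplicialcat} and \remref{rem:ssetpsimplicial}), not \lemref{lem:simplicial+}, which concerns $\ssetp^{+}$.
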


\begin{proof}
The simplicial set
 $Z=\Hom_{\ssetp}^{\Delta}(\Psi_{L},\tkpn)$
 is a  simplicial abelian group therefore a product of Eilenberg-MacLane spaces. 
 We are thus reduced to the determination of its homotopy groups. 
 We first have
$$\pi_{0}(Z)= [\Psi_{L},\tkpn]_{\ssetp}\cong \crH^{n}_{\ov{p}}(\Psi_{L};R).$$
The rest follows by induction from
$\tkpp 
\simeq
\,\Omega_{\epsilon}\tkpn$.
\end{proof}

Let $\psi_{X}\colon X\to \ttP$ be a topological pseudomanifold of poset of strata, $\ttP$.
For any perversity, $\ov{p}\colon \ttP\to \ov{\Z}$, and any commutative ring $R$, 
we denote  $\bQ_{\ov{p}}$ the Deligne's sheaf 
 introduced in \cite{GM2} and $\H^*(X;\bQ_{\ov{p}})$ its associated hypercohomology groups,
 which coincide with the original groups introduced in \cite{GM1}.
 (For general perversities as those we are using here, we refer to \cite{FR2}.)
 Let  $\singp \psi_{X}$ be the simplicial set over the poset of strata, $\ttP$,  introduced in \eqref{equa:filteredpullback}.
 In \propref{prop:deltaornot}, we prove the existence of an isomorphism,
 $$
 \crH_{\ov{p}}^*(\singp \psi_{X};R)
 \cong
 \crH_{\ov{p}}^{\Delta,*}(\cO_{\ttP}(\singp \psi_{X});R).
 $$
In \cite{CST5}, we denote $ \crH_{\ov{p}}^{\Delta,*}(\cO_{\ttP}(\singp \psi_{X});R)$ simply by
$\crH_{\ov{p}}^*(X;R)$ and prove in \cite[Theorem A]{CST5}
that it is isomorphic to Deligne's hypercohomology,
$$
\H^*(X;\bQ_{\ov{p}})
\cong
\crH_{\ov{p}}^*(X;R).
$$
We thus have an isomorphism,
\begin{equation}\label{equa:deligneforever}
 \crH_{\ov{p}}^*(\singp \psi_{X};R)
 \cong
\H^*(X;\bQ_{\ov{p}}),
\end{equation}
and \corref{cor:productEML} implies the following result.
\begin{corollary}
Let $\psi_{X}\colon X\to \ttP$ be a topological pseudomanifold of poset of strata, $\ttP$, and
 $\ov{p}\colon \ttP\to \ov{\Z}$ be a perversity. Then there is an isomorphism of simplicial sets,
$$\Hom_{\ssetp}^{\Delta}(\singp \psi_{X},\tkpn)\cong \prod_{j}K(\H^*(X;\bQ_{\ov{p}}),j).$$
\end{corollary}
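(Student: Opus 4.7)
The plan is to deduce this corollary as a direct specialization of \corref{cor:productEML}, combined with the identification of blown up intersection cohomology with Deligne hypercohomology recorded in \eqref{equa:deligneforever}.

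First, I would instantiate \corref{cor:productEML} at the object $\Psi_{L}=\Psi_{X}\colon\singp\psi_{X}\to\ttN(\ttP)$ of $\ssetp$ associated to the pseudomanifold $\psi_{X}\colon X\to\ttP$ (as defined in \eqref{equa:filteredpullback}). This yields an isomorphism of simplicial sets
$$\HomD_{\ssetp}(\singp\psi_{X},\tkpn)\cong \prod_{j\geq 0}K\bigl(\crH^{n-j}_{\ov{p}}(\singp\psi_{X};R),\,j\bigr).$$
For this step it is useful to note that $\singp\psi_{X}$ is s-fibrant by \propref{prop:singsfibrant}, which ensures the representability statement \propref{prop:emlsset} is applicable in the chain of isomorphisms leading to \corref{cor:productEML}.

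Second, I would substitute the identification
$$\crH^{*}_{\ov{p}}(\singp\psi_{X};R)\cong \H^{*}(X;\bQ_{\ov{p}})$$
furnished by \eqref{equa:deligneforever} (which itself rests on \propref{prop:deltaornot} together with \cite[Theorem A]{CST5}) into each factor indexed by $j$. Collecting the two isomorphisms gives the desired homotopy equivalence.

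No genuine obstacle should arise: the argument is a formal consequence of the two preceding results, and the only verification needed is that $\singp\psi_{X}$ lies in the scope of \corref{cor:productEML}, which is guaranteed by \propref{prop:singsfibrant}. The indexing on the right-hand side should, for consistency with \corref{cor:productEML}, be read as $\prod_{j\geq 0}K(\H^{n-j}(X;\bQ_{\ov{p}}),j)$.
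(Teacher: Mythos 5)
Your proof is correct and takes essentially the same route as the paper: specialize \corref{cor:productEML} to $\Psi_{L}=\singp\psi_{X}$ and substitute the identification \eqref{equa:deligneforever}. Your remark on the degree indexing (it should read $\H^{n-j}(X;\bQ_{\ov{p}})$) and your explicit invocation of \propref{prop:singsfibrant} to justify s-fibrancy are both correct and in fact tidy up points the paper leaves implicit.
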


Moreover, in the case of pseudomanifolds, the isomorphism \eqref{equa:deligneforever} identifies the cohomological
operations, developed in the next subsection and in \secref{sec:operations},
with  operations on Deligne's hypercohomology.

\subsection{Perverse cohomological operations}\label{subsec:operations}

Let us  define cohomological operations on intersection cohomology, reproducing  the classical case.

\begin{definition}\label{def:operationperverse}
Let $\ov{p},\,\ov{q}\colon \ttP\to\ov{\Z}$ be two perversities on a poset $\ttP$ and $n$, $m$ be two integers.
A \emph{perverse cohomological operation} of type $(\ov{p},n,\ov{q},m)$
is a natural transformation
between the  functors
$\crH_{\ov{p}}^n(-;R)$ and $\crH_{\ov{q}}^m(-;R)$,
from $\ssetp$ to the category of $R$-modules.
We denote $\natr(\crH^n_{\ov{p}},\crH^m_{\ov{q}})$
the set of perverse cohomological operations of this type. 
\end{definition}
 
Cohomological operations on intersection cohomology are also determined by the
perverse Eilenberg-MacLane  spaces.

\begin{proposition}\label{prop:emloperation}
Let $\ov{p},\,\ov{q}\colon \ttP\to\ov{\Z}$ be two perversities on a poset $\ttP$ and $n$, $m$ be two integers.
There is an isomorphism
$$\natr(\crH^n_{\ov{p}},\crH^m_{\ov{q}})=
\left[ \tkpn, \tkqm \right]_{\ssetp}
=\crH_{\ov{q}}^m(\tkpn;R).$$
\end{proposition}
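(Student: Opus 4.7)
The strategy is a Yoneda-type argument combined with the representability theorem already proven in Proposition~\ref{prop:emlsset}. The two equalities in the statement correspond to two different applications of that theorem.

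For the second equality, I would simply specialize the representability isomorphism of Proposition~\ref{prop:emlsset} to the test object $\Psi_{L}=\tkpn$. The object $\tkpn=\langle R(n)\rangle_{\ov p}$ is s-fibrant because $\HomD_{\ssetp}(-,\tkpn)$ takes values in simplicial abelian groups, hence in Kan complexes. Therefore Proposition~\ref{prop:emlsset} gives directly
$$[\tkpn,\tkqm]_{\ssetp}\cong \crH^{m}_{\ov q}(\tkpn;R).$$

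For the first equality, I would construct a bijection
$$\Theta\colon\natr(\crH^{n}_{\ov p},\crH^{m}_{\ov q})\longrightarrow [\tkpn,\tkqm]_{\ssetp}$$
in the classical Yoneda manner. Let $\iota_{n}\in \crH^{n}_{\ov p}(\tkpn;R)$ denote the \emph{fundamental class}, i.e.\ the class corresponding to $[\mathrm{id}_{\tkpn}]\in [\tkpn,\tkpn]_{\ssetp}$ under the representability isomorphism. Set $\Theta(\tau)=\tau_{\tkpn}(\iota_{n})$, viewed as a homotopy class via the second equality. In the other direction, given a homotopy class $[f]\in [\tkpn,\tkqm]_{\ssetp}$, define a natural transformation $\tau^{f}$ by post-composition: for every s-fibrant $\Psi_{L}\in\ssetp$ and every class in $\crH^{n}_{\ov p}(\Psi_{L};R)$ represented by $[g]\in[\Psi_{L},\tkpn]_{\ssetp}$, put
$$\tau^{f}_{\Psi_{L}}([g])=[f\circ g]\in [\Psi_{L},\tkqm]_{\ssetp}\cong \crH^{m}_{\ov q}(\Psi_{L};R).$$
Naturality in $\Psi_{L}$ is automatic, and the usual Yoneda calculation shows that $\Theta(\tau^{f})=[f]$ and $\tau^{\Theta(\tau)}=\tau$, so the two assignments are mutually inverse. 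The $R$-module structure is preserved since both sides inherit it from the abelian simplicial group structure on $\tkqm$.

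The main technical point, and the one I would be most careful about, is ensuring well-definedness of $\tau^{f}$ on \emph{all} of $\ssetp$, since the representability of Proposition~\ref{prop:emlsset} was stated only for s-fibrant objects. Two routes are available: either compose with a functorial s-fibrant replacement (for instance the unit $\Psi_{L}\to \singp|\Psi_{L}|$, which lands in an s-fibrant object by Proposition~\ref{prop:singsfibrant}), or observe that, thanks to Corollary~\ref{cor:pathobjectmdg}, the cochain functor $\tN^{*}_{\ov p}$ is homotopy invariant, so the cohomology $\crH^{*}_{\ov p}$ descends to the simplicial homotopy category where every object is canonically s-fibrant up to weak equivalence and the Yoneda argument runs without modification. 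Either way, $\Theta$ is a natural bijection and the proposition follows.
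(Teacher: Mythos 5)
Your proof is correct and follows essentially the same route as the paper, which disposes of the proposition in one line as a direct consequence of the Yoneda lemma and the representability established in Proposition~\ref{prop:emlsset}. Your extra care about extending the Yoneda argument from s-fibrant objects to all of $\ssetp$ is well-placed---the paper's terse proof does not address this point explicitly---and of the two repairs you offer, the second (using Corollary~\ref{cor:pathobjectmdg} to descend $\crH^{*}_{\ov{p}}$ to the homotopy category) is the cleaner one, since the first would additionally require knowing that the unit $\Psi_{L}\to \singp|\Psi_{L}|$ induces an isomorphism on blown up cohomology, which the paper does not establish.
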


\begin{proof}
This is a direct consequence of the  Yoneda lemma
and the representablity of $\crH_{\bullet}^*(-;R)$ 
established in \propref{prop:emlsset} .
\end{proof}

The following result has to be compared with the classical fact that $K(R,n)$
is $(n-1)$-connected.

 \begin{proposition}\label{prop:operationmpetit}
 Let
 $\ov{p}$, $\ov{q}$ be two perversities on the poset $\ttP=\N^\op$. If $0<m<n$, we have
$$\natr(\crH^n_{\ov{q}},\crH^{m}_{\ov{p}})=
\crH_{\ov{p}}^m(\tkqn;R)=0.$$
\end{proposition}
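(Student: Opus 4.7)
The first equality is immediate from Proposition~\ref{prop:emloperation}, so it suffices to show $\crH^m_{\ov p}(\tkqn; R) = 0$ when $0 < m < n$. My plan is to split the computation via the canonical retraction on $\tkqn$ and then analyze each summand using the skeletal structure of the perverse Eilenberg--MacLane spaces.

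First I would use the zero-cocycle section $\epsilon\colon \ttN(\ttP) \to \tkqn$, which splits the structure map $\Psi_{\tkqn}\colon \tkqn \to \ttN(\ttP)$ in $\ssetp$, to obtain a direct-sum splitting of cochain complexes
$$\tN^*_{\ov p}(\tkqn) \;\cong\; \tN^*_{\ov p}(\ttN(\ttP)) \,\oplus\, \Ker(\epsilon^*),$$
and hence the same decomposition in cohomology. This reduces the task to two claims: (a) $\crH^m_{\ov p}(\ttN(\ttP); R) = 0$ for every $m > 0$, and (b) the cohomology of $\Ker(\epsilon^*)$ vanishes in degrees less than $n$.

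For part (a), I would exploit that $\ttN(\ttP)$ is the terminal object of $\ssetp$, so that $\tN^*_{\ov p}(\ttN(\ttP))$ is a projective limit of the cochain complexes $\tN^*_{\ov p}(\Delta[\sigma])$ indexed by the regular simplices $\sigma$ of $\ttN(\N^{\op})$. The specific structure of $\N^{\op}$---notably the maximum element $0$ and the filtered join decompositions of its increasing chains---should permit the construction of an explicit contracting homotopy of this limit onto $R$ in degree $0$; this is where the hypothesis $\ttP = \N^{\op}$ enters. For part (b), the skeletal identification from Proposition~\ref{prop:skeletonEML} gives $\tkqn^{(n-1)} = \ttN(\ttP)^{(n-1)}$, so any cocycle representing a pointed class in $\Ker(\epsilon^*)$ is supported on simplices of dimension at least $n$. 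Translating via the representability of Proposition~\ref{prop:emlsset} to homotopy classes of pointed maps $\tkqn \to \tkpm$, I would then build a pointed null-homotopy by induction on skeleta starting in dimension $n > m$, exploiting the simplicial abelian group structure of $\tkpm$ together with the trivial-fibration property from Proposition~\ref{prop:trivialfibration} to fill the successive lifting problems.

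The hardest step will be part (a): although the vanishing is geometrically plausible (blown up cochains are a perverse analogue of singular cochains and $|\ttN(\N^{\op})|$ carries a contractible combinatorial structure above each element), producing an explicit contracting homotopy on the limit complex demands careful bookkeeping of the prism decompositions of $\tN^*(\Delta[\sigma])$ as tensor products of cone complexes and of their compatibility across all regular chains in $\N^{\op}$. Once (a) is in hand, the obstruction-theoretic step in (b) should be fairly standard, since $\tkpm$ is a simplicial abelian group and its liftings are governed by abelian cohomology in which the relevant obstruction cocycles will be explicit coboundaries.
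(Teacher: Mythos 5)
Your overall strategy lines up with the paper's: both proofs ultimately rest on two facts — that $\ttN(\N^{\op})$ has trivial reduced blown up cohomology in every perversity (because $\N^{\op}$ has the maximum $0$, so its nerve is a cone with apex at that regular vertex), and the skeletal identification of \propref{prop:skeletonEML} that $\tkqn^{(n-1)}=\ttN(\ttP)^{(n-1)}$. Your splitting $\tN^*_{\ov p}(\tkqn)\cong \tN^*_{\ov p}(\ttN(\ttP))\oplus\Ker(\epsilon^*)$ via the zero section is a correct and natural device, and your part (a) is exactly the paper's ``cone on an acyclic simplicial set'' step.

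Where you deviate is in part (b), and where you misjudge the difficulty is in part (a). For (a), you propose a bespoke contracting homotopy on the inverse-limit cochain complex; this is overkill. Acyclicity of $\tN^*_{\ov p}(\ttN(\N^{\op}))$ follows from the cone structure (the apex being the maximal, hence regular, element) combined with \corref{cor:pathobjectmdg}, which already shows that blown up cohomology is a homotopy invariant in $\ssetp$; no ad hoc bookkeeping of prism decompositions is required. For (b), the paper simply reads off the conclusion from \propref{prop:skeletonEML} together with the acyclicity of the ambient $\ttN(\ttP)$, whereas you pivot to representability and propose to build a pointed null-homotopy skeleton-by-skeleton. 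That route imports machinery the paper does not establish: a \emph{pointed} variant of the representability statement in \propref{prop:emlsset} (classes vanishing on $\ttN(\ttP)$) is not in the paper, the obstruction theory over $\ttN(\ttP)$ is not set up, and \propref{prop:trivialfibration} addresses horn-filling rather than skeletal extension. Moreover, the obstruction groups in such an induction would themselves involve $\crH^{\ast}_{\ov p}$ of $\tkqn$ relative to skeleta — dangerously close to what is being computed. Your instinct in the first sentence of (b), that the skeletal coincidence already forces pointed cocycles of degree $<n$ to be supported in high dimension, is the right observation; you should follow it to its direct conclusion rather than detouring through homotopy lifting.

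In short: correct ingredients, but the burden of proof is inverted — part (a) is easier than you fear, and part (b), as you propose to run it, would require substantially more development than the paper's one-line appeal to \propref{prop:skeletonEML}.
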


\begin{proof}
The simplicial set  $\ttN(\N^{\op})$ is a cone on an acyclic simplicial set.
Therefore, its reduced intersection cohomology is 0 for any perversity
and the result follows from \propref{prop:skeletonEML}.
\end{proof}

\begin{example} From \cite{CST6,CST4,MR761809,MR1014465}, 
we already know some perverse cohomological operations.
Let ${\sigma}=\tts_{0}^{[q_{0}]}\prec\dots\prec \tts_{\ell}^{[q_{\ell}]}\in \ttN(\ttP)$ and recall   that 
$\langle R(n)\rangle_{\ov{p}}[{\sigma}]
 =
 \cZ^n\tN_{\ov{p}}({\sigma})=\cZ^n\tN_{\ov{p}}(\Delta[q_0]\ast\cdots\ast\Delta[q_\ell])$,
 where $\cZ$ denotes the subspace of cocycles.
\begin{enumerate}
\item For any commutative ring, $R$, there exists a square map,
$\tN_{\ov{p}}^i({\sigma})
\to
\tN_{2\ov{p}}^{2i}({\sigma})$,
coming from the cup product established in \cite[Proposition 4.2]{CST4}.
This  map gives an element in 
$\natr(\crH^n_{\ov{p}},\crH^{2n}_{2\ov{p}})$.
\item Let $R=\Z_{2}$ and $\cE(2)$ be the normalized homogeneous bar resolution of the symmetric group $\Sigma_{2}$.
In \cite[Theorem A]{CST6}, 
we establish the existence of an $\cE(2)$-algebra structure on $\tN^*({\sigma})$,
inducing $\Sigma_{2}$-equivariant cochain maps,
$\Phi\colon\cE(2)\otimes \tN^*_{\ov{p}}({\sigma})\otimes \tN^*_{\ov{q}}({\sigma})  \to 
\tN^*_{\ov{p}+\ov{q}}({\sigma})$.
In particular, setting $\psi(e_{i}\otimes x\otimes y)=x\smile_{i}y$, the square maps verify
$\|x\smile_{|x|-i}x\|\leq \ov{p}+i$, see \cite[Proof of Theorem B]{CST6}. 
With the notation $\cL(\ov{p})=\min(2\ov{p},\ov{p}+i)$, we prove in \cite{CST6}
that the Steenrod perverse squares
$\sq^i\in \nat_{\Z_{2}}(\crH^n_{\ov{p}},\crH^{n+i}_{\cL(\ov{p})})$,
as conjectured in \cite{MR761809} and \cite{MR1014465}.
\end{enumerate}
\end{example}

\section{Examples of operations in blown up cohomology}\label{sec:operations}

\begin{quote}
In this section, we suppose that $R$ is a Dedekind domain and we   focus on the  case 
of Goresky and MacPherson perversities  for singular spaces with one singular stratum.
We show that  the  perverse Eilenberg-MacLane  spaces are Joyal's projective cone over the classical Eilenberg-MacLane
spaces. We also determine them for the perversities $\ov{\infty}$ and $\ov{0}$,
as well as  some sets of perverse cohomological operations.
\end{quote}

Depth one singular spaces are singular spaces with one regular stratum and one singular stratum.
They  correspond to the poset $\ttP=[1]=\{0,1\}$ and  $\ttN(\ttP)=\Delta[1]$.
An object of $\ssets$ can be described as a family of sets $K_{k,\ell}$, with 
\begin{equation}\label{equa:emldelta1}
(k,\ell)\in \N^2\cup \left(\N\times\{-1\}\right)\cup \left(\{-1\}\times \N\right),
\end{equation}
the value -1 corresponding to an empty subset.
The objects of $\ssetp^+$ are the objects of $\ssetp$ such that $\ell\neq -1$; i.e.,
with
$(k,\ell)\in \N^2\cup \left(\{-1\}\times \N\right)$.
A \emph{completely regular simplicial set} over $[1]$ verifies $K_{k,\ell}=\emptyset$ if $k\geq 0$.
A perversity on $\ttP=[1]$ reduces to an element $\lambda\in \ov{\Z}$ corresponding to its value on 0, and
 is  denoted $\ov{\lambda}$.

\subsection{Perverse Eilenberg-MacLane  spaces as Joyal cylinders}\label{subsec:joyal}
Simplicial sets over $[1]$ have been studied by A. Joyal, see \cite[Section 7]{joyalbarcelona}. 
Let us first recall his presentation.

\medskip
A simplicial map $\Psi_{K}\colon K\to \Delta[1]$ is called \emph{a simplicial cylinder.} The simplicial sets
$K(0)$ and $K(1)$, defined by the following pullback
$$\xymatrix{
K(0)\sqcup K(1)\ar[r]\ar[d]
&
K\ar[d]^-{\Psi_{K}}\\
\partial\Delta[1]\ar[r]
&
\Delta[1],
}$$
generate a functor $i^*\colon \ssets\to \sset\times\sset$, sending $\Psi_{K}$ to $(K(0),K(1))$.
The simplicial sets $K(1)$ and $K(0)$ are respectively called the \emph{base} and the \emph{cobase} of 
the cylinder $\Psi_{K}$.

\medskip
The functor $i^*$ admits a left adjoint  $i_{*}\colon \sset\times\sset\to \ssets$ 
which sends $(A,B)$ to the composite
$\Psi_{A\sqcup B}\colon A\sqcup B\to \partial\Delta[1]\to\Delta[1]$, 
\begin{equation}\label{equa:agauche}
\Hom_{\sset\times\sset}((A,B),i^*(\Psi_{K}))\cong \Hom_{\sset_{[1]}}(\Psi_{A\sqcup B},\Psi_{K}).
\end{equation}

\medskip
For any pair $(A,B)\in \sset\times\sset$, we can define their join, $A\ast B$
(see \cite[\& 3.1]{joyalbarcelona}) as a simplicial set
whose sets of simplices are given by 
 $$(A\ast B)_{k}=\left\{
 \begin{array}{ccl}
 \sigma&\text{with}&\sigma\in A_{k},  \quad \text{denoted}\quad (\sigma,\emptyset),\\
 \tau&\text{with}&\tau\in B_{k},  \quad \text{denoted}\quad (\emptyset,\tau),\\
 (\sigma,\tau)&\text{with}&\sigma\in A_{i},\;\tau\in B_{j}\quad \text{and}\quad i+j+1=k.
 \end{array}\right.$$
 Then, we have two canonical maps, the injection $\iota_{A\sqcup B}\colon A\sqcup B\to A\ast B$, and
 the surjection
 $\Psi_{A\ast B}\colon A\ast B\to \Delta[1]$ which is the join of $A\to\Delta[0]$ and
 $B\to\Delta[0]$. The join construction gives a right adjoint $i_{!}\colon \sset\times\sset\to \ssets$ 
 sending  $(A,B)$ to  $\Psi_{A\ast B}$, 
\begin{equation}\label{equa:adroite}
\Hom_{\sset\times\sset}(i^*(\Psi_{\psi_{K}}),(A,B))
\cong
\Hom_{\sset_{[1]}}(\Psi_{K},\Psi_{A\ast B})
.
\end{equation}
In particular, for each cylinder $\Psi_{K}$, there are canonical maps
$$\Psi_{K(0)\sqcup K(1)}\to \Psi_{K}\to \Psi_{K(0)\ast K(1)}.$$
Denote by $\cC(A,B)$ the set of cylinders $\Psi_{K}$ with cobase $K(0)=A$ and base $K(1)=B$.
Any $\Psi_{K}\in \cC(A,B)$ is characterized by a simplicial map
$\Psi_{K}\to \Psi_{A\ast B}$ such that $A\sqcup B\to K\to A\ast B$ is the canonical inclusion.
Joyal also proves that cylinders coincide with presheaves over the category product of the category 
$\Delta[A]$ of simplices of 
$A$ by the category $\Delta[B]$ of simplices of $B$. 
A cylinder whose base is $B$ and cobase is a point is called a \emph{projective cone over $B$} in \cite{joyalbarcelona}. 

\medskip
If $\ttP=[1]$, any perverse Eilenberg-MacLane  space is a cylinder \`a la Joyal.
We first determine their base and cobase. By \defref{def:nonsingular}, the base
is the non singular part.

 \begin{proposition}\label{prop:regularpartEML}
 For any perversity, a perverse Eilenberg-MacLane  space over the poset $\ttP=[1]$ is a projective cone 
 over the classical Eilenberg-MacLane space.
 \end{proposition}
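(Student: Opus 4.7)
The plan is to unpack the Joyal cylinder structure of $\tkpn$ by computing its base and its cobase directly from the presheaf description $\langle R(n)\rangle_{\ov{p}}(\sigma)=\Hom_{\mdg}(R(n),\tN^*_{\ov{p}}(\Delta[\sigma]))$ on $\Delta[\ttP]$, and to observe that the cobase collapses to a point while the base is the classical $K(R,n)$. Once these identifications are made, the statement follows because a cylinder in $\ssets$ whose cobase is a point is, by definition in \cite{joyalbarcelona}, a projective cone over its base.

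First I would identify the base. The base $\tkpn(1)$ consists of those $q$-simplices $\sigma$ with $\Psi_{\tkpn}(\sigma)=1^{[q]}$, i.e., regular simplices whose entire image lies at the maximal vertex $1$. For such a chain, the join decomposition reduces to $\ell=0$ and the blown up cochain complex \eqref{equa:boumcochain} is $\tN^*(\Delta[q])=N^*(\Delta[q])$, with no cone factor. Since the perverse degree of any cochain along a maximal element is automatically $0\leq\ov{p}$ by \defref{def:poset}, the perverse truncation is trivial and $\tN^*_{\ov{p}}(1^{[q]})=N^*(\Delta[q])$. Hence
\[
\tkpn(1^{[q]})=\Hom_{\mdg}(R(n),N^*(\Delta[q]))=\cZ^n N^*(\Delta[q])=K(R,n)_{q},
\]
and the cosimplicial structure induced by the morphisms of $\Delta[\ttP]$ restricted to chains of the form $1^{[q]}$ is the standard one, identifying the base with the classical Eilenberg-MacLane simplicial abelian group $K(R,n)$.

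Next I would compute the cobase. The cobase $\tkpn(0)$ consists of $q$-simplices mapping to $0^{[q]}\in \ttN([1])$. Such a chain is \emph{not} regular, since $0$ is not maximal in $[1]$. By the final clause of \defref{def:degrepervers}, we therefore have $\tN^*_{\ov{p}}(0^{[q]})=0$, so
\[
\tkpn(0^{[q]})=\Hom_{\mdg}(R(n),0)=\{0\},
\]
a single element. This holds in every simplicial degree $q$, and the face and degeneracy maps between these singletons are forced; hence the cobase is the constant simplicial set $\Delta[0]$.

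Finally, both computations are visibly independent of the particular perversity $\ov{p}$ (the base lives over the regular stratum, where $\ov{p}=0$, and the cobase lives over a non-regular chain where $\tN^*_{\ov{p}}$ is zero by definition). Hence for \emph{any} perversity on $[1]$, the cylinder $\Psi_{\tkpn}\colon \tkpn\to\Delta[1]$ has cobase a point and base $K(R,n)$, which by Joyal's terminology means precisely that $\tkpn$ is a projective cone over $K(R,n)$. The only mildly subtle point is to make sure that the induced maps from the cobase into the intermediate simplices and from there into the base are the expected ones, but this is automatic from the functoriality of $\tN^*_{\ov{p}}$ and from the uniqueness of maps out of the singleton cobase into each $\tkpn(\sigma)$.
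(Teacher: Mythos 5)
Your proof is correct and takes essentially the same approach as the paper's: you compute the base of the cylinder $\tkpn\to\Delta[1]$ to be $K(R,n)$ (using that the blow-up over a chain concentrated at the maximal vertex has no cone factor and the perverse constraint is vacuous there), compute the cobase to be a point (since $\tN^*_{\ov{p}}$ vanishes on non-regular simplices by the final clause of \defref{def:degrepervers}), and invoke Joyal's definition of projective cone. The paper's argument is more terse but has the same structure, so no gap here.
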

 
 \begin{proof}
 With the notation of \eqref{equa:emldelta1}, the non singular part of $\ttK(R,n,[1],\ov{p})$ is obtained from the simplices 
 $(-1, i_{1})$, with $i_{1}\neq -1$, and thus
 $$
 \ttK(R,n,[1],\ov{p})( \emptyset\ast\Delta[i_{1}]) =
  \cZ^n(N(\tc\emptyset)\otimes N(\Delta[i_{1}])=
  \cZ^n(N(\Delta[i_{1}])).
  $$
The result follows from the simplicial definition of  Eilenberg-MacLane spaces.

The cobase consists of non regular simplices in the sense of \defref{def:simplexregular}. 
The result follows from the
equality $\tN^*(\Delta[m])=0$ for non regular simplices, see \subsecref{subsec:perversedegree}.
 \end{proof}

Base and cobase do not depend on the perversity; we show now that it is the same for the $n$-skeleta
of perverse Eilenberg-MacLane  spaces.

\begin{proposition}\label{prop:nskeleton} 
The $n$-skeleton of $\ttK(R,n,[1],\ov{p})$ 
does not depend on the perversity $\ov{p}$; i.e., for any perversity $\ov{p}$, we have
$$(\ttK(R,n,[1],\ov{p}))^{(n)}=(\ttK(R,n,[1],\ov{\infty}))^{(n)}.$$
\end{proposition}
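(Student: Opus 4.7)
The plan is to verify the equality fiber-by-fiber over $\ttN([1])=\Delta[1]$: for each simplex $\sigma$ of $\Delta[1]$ of dimension $k\leq n$, I will show that the fiber of $\ttK(R,n,[1],\ov{p})$ over $\sigma$, namely $\cZ^n\tN^*_{\ov{p}}(\Delta[\sigma])$, is independent of $\ov{p}$. Since the face and degeneracy maps between these fibers are restrictions of the underlying cochain maps on the unrestricted complex $\tN^*(\Delta[-])$, this fiberwise independence will yield the equality of $n$-skeleta as simplicial sets.

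For $k<n$ the prism $\tDelta[\sigma]$ has total dimension $k$, so $\tN^n(\Delta[\sigma])=0$; this recovers \propref{prop:skeletonEML}. If $\sigma$ is non-regular (its last vertex being the non-maximal element $0\in[1]$), then by the extension of $\tN^*_{\ov{p}}$ to $\Delta[\ttP]$ in \defref{def:degrepervers} we have $\tN^*(\Delta[\sigma])=0$, so the fiber is trivial for every $\ov{p}$.

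The substantive case is that of a regular $n$-simplex, which I write uniquely as $\sigma=\Delta[q_0]\ast \Delta[q_1]$ with $q_0\geq -1$, $q_1\geq 0$, and $q_0+q_1=n-1$. The prism $\tDelta[\sigma]=\tc\Delta[q_0]\times \Delta[q_1]$ has total dimension $n$, so $\tN^{n+1}(\Delta[\sigma])=0$ and the cocycle condition in degree $n$ is vacuous. The crucial observation is that any basis element $\1_{(F,\varepsilon)}$ of $\tN^n(\Delta[\sigma])$ must satisfy $\varepsilon_0=1$: otherwise $\dim F_0\leq q_0$ and $\dim F_1\leq q_1$ would force the total degree to be at most $q_0+q_1=n-1<n$. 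By \defref{def:degreperversboum}, having $\varepsilon_0=1$ forces the perverse degree along the only non-maximal element $0\in[1]$ to equal $-\infty$. Hence every $n$-cochain is $\ov{p}$-allowable for every perversity $\ov{p}$, and
$$\cZ^n\tN^*_{\ov{p}}(\Delta[\sigma])=\cZ^n\tN^*(\Delta[\sigma])$$
is manifestly independent of $\ov{p}$. The completely regular subcase $q_0=-1$ collapses the prism to $\Delta[n]$ itself and involves no non-maximal stratum, so the allowability condition is trivially empty there as well.

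There is no real obstacle here; the entire argument rests on an elementary dimension count forcing the cone coordinate of the prism to be fully activated in top degree, which automatically depresses the perverse degree to $-\infty$. The only minor care points are the segregation of the degenerate subcase $q_0=-1$ and the verification that the face operations between the $n$-simplices and lower strata of $\ttK(R,n,[1],\ov{p})$ are transported by cochain maps that do not see the perversity.
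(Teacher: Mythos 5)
Your proof is correct and takes essentially the same approach as the paper: the heart of the matter, in both treatments, is the dimension count showing that any degree-$n$ basis cochain over a regular $n$-simplex $\Delta[q_0]\ast\Delta[q_1]$ of $\Delta[1]$ must have the cone coordinate activated ($\varepsilon_0=1$, i.e.\ it ``contains the apex''), which by \defref{def:degreperversboum} forces the perverse degree along $0$ to be $-\infty$ and makes the allowability condition vacuous. Your version is somewhat more explicit than the paper's one-line argument — in particular you segregate the non-regular and completely-regular cases, note that the cocycle condition in degree $n$ is automatic because $\tN^{n+1}(\Delta[\sigma])=0$, and flag the compatibility of the simplicial operators — but there is no difference in the underlying idea.
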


\begin{proof} 
By definition,
an $n$-simplex of $\ttK(R,n,[1],\ov{p})$ over $\Delta[a]\ast\Delta[b]$ is  a linear combination of $n$-cochains
of the shape $\1_{\tc\Delta[a]\otimes\Delta[b]}$ with $a+b+1=n$. For dimensional reasons, these cochains must
``contain'' the apex of $\tc \Delta[a]$. 
Thus, by \defref{def:degreperversboum}, they are of perverse degree $-\infty$
and the result follows.
\end{proof}

 \subsection{The perversity $\ov{\infty}$}\label{subsec:loinloin}
 We show that the classifying space for the  infinite perversity is the final element of the 
 Joyal cylinder $\cC(\Delta[0],K(R,n))$.

\begin{proposition}\label{prop:EMLinfini}
There is a homotopy equivalence,
$\ttK(R,n,[1],\ov{\infty})\simeq\Delta[0]\ast K(R,n)$.
\end{proposition}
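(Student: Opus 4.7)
The plan is to construct a canonical map $\alpha\colon \ttK(R,n,[1],\ov\infty) \to \Delta[0]\ast K(R,n)$ in $\ssets$ using Joyal's join adjunction, and then to verify that $\alpha$ is a weak equivalence by representability. By \propref{prop:regularpartEML}, the space $\ttK(R,n,[1],\ov\infty)$ is a cylinder with cobase $\Delta[0]$ and base $K(R,n)$, hence an object of $\cC(\Delta[0],K(R,n))$. Since $\Delta[0]\ast K(R,n)$ is the terminal object of this cylinder category by \eqref{equa:adroite}, the map $\alpha$ is determined as the unique morphism in $\ssets$ inducing the identity on both base and cobase.

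Next, I would identify the functors represented by the two sides on any s-fibrant $\Psi_{L}\in\ssets$. By \propref{prop:emlsset},
$$\pi_{0}\HomD_{\ssets}(\Psi_{L}, \ttK(R,n,[1],\ov\infty)) \cong \crH^{n}_{\ov\infty}(\Psi_{L}; R).$$
Applying \eqref{equa:adroite} to $\Psi_{L}\otimes\Delta[k]$ for every $k$ (whose base and cobase are $L(1)\times\Delta[k]$ and $L(0)\times\Delta[k]$), and using that any simplicial map to $\Delta[0]$ is unique, one obtains a natural isomorphism of simplicial sets $\HomD_{\ssets}(\Psi_{L}, \Delta[0]\ast K(R,n)) \cong \HomD_{\sset}(L(1), K(R,n))$, whose $\pi_{0}$ is the classical cohomology $H^{n}(L(1); R)$, with $L(1)$ denoting the non-singular part of $\Psi_{L}$.

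The core of the proof is to show that $\alpha$ realizes a natural quasi-isomorphism of cochain complexes
$$\alpha^{*}\colon \tN^{*}_{\ov\infty}(\Psi_{L}) \xrightarrow{\;\simeq\;} N^{*}(L(1)),$$
given by restricting a compatible family of blown up cochains to the non-singular simplices (induced by the face inclusions $\Delta[q_{1}]\hookrightarrow\Delta[q_{0}]\ast\Delta[q_{1}]$). Since the perversity $\ov\infty$ imposes no allowability constraint, the kernel $K^{*}$ of $\alpha^{*}$ sits in a short exact sequence whose contribution at each straddling regular simplex $\Delta[q_{0}]\ast\Delta[q_{1}]$ is $\widetilde{N}^{*}(\tc\Delta[q_{0}])\otimes N^{*}(\Delta[q_{1}])$, acyclic by K\"unneth and the contractibility of the cone factor. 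The main obstacle is propagating this fiberwise acyclicity through the end defining $\tN^{*}_{\ov\infty}(\Psi_{L})$: I would proceed by induction on the skeletal filtration of $\Psi_{L}$, attaching one regular simplex at a time and invoking a Mayer--Vietoris long exact sequence at each step. Once $\alpha^{*}$ is known to be a natural quasi-isomorphism, $\alpha$ induces isomorphisms on $\pi_{0}\HomD_{\ssets}(\Psi_{L},-)$ for every $\Psi_{L}\in\ssets$, hence is an $s$-weak equivalence in $\ssets$ by the criterion recalled in \remref{rem:ssetpsimplicial}.
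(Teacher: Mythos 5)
Your strategy matches the paper's almost exactly: both proofs feed the Joyal join adjunction \eqref{equa:adroite} into the representability theorem and then invoke Yoneda/uniqueness of representing objects. The one genuine difference is how the key cohomological input
$$\crH^{n}_{\ov\infty}(\Psi_{L};R)\ \cong\ H^{n}(L(1);R)$$
is obtained. The paper simply cites \corref{cor:normal}, which repackages \cite[Propositions 1.54, 1.57, 1.60]{CST1} (and is why \secref{sec:operations} assumes $R$ Dedekind). You instead attempt to re-derive it from scratch by showing $\alpha^{*}\colon\tN^{*}_{\ov\infty}(\Psi_{L})\to N^{*}(L(1))$ is a quasi-isomorphism. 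This is a legitimate and arguably more self-contained route (and, if completed, would not even need the Dedekind hypothesis, since your fiberwise acyclicity only uses freeness of $N^{*}(\Delta)$ and contractibility of $\tc\Delta[q_{0}]$), but it also re-proves what the paper already has on file.

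That said, the core of your argument is still a sketch, not a proof. The fiberwise statement — that the kernel of $N^{*}(\tc\Delta[q_{0}])\otimes N^{*}(\Delta[q_{1}])\to N^{*}(\Delta[q_{1}])$, i.e.\ $\Ker\bigl(N^{*}(\tc\Delta[q_{0}])\to N^{*}(\{\tv\})\bigr)\otimes N^{*}(\Delta[q_{1}])$, is acyclic — is fine. But propagating this through the end defining $\tN^{*}_{\ov\infty}(\Psi_{L})$ is exactly the point you say "I would proceed by" rather than actually doing. The Mayer--Vietoris step at each simplex attachment needs a surjectivity lemma: the restriction $\tN^{*}_{\ov\infty}(\Psi_{L}^{(k)})\to\tN^{*}_{\ov\infty}(\Psi_{L}^{(k-1)})$ and the corresponding restriction along $\partial\Delta[J]\hookrightarrow\Delta[J]$ must be surjective for the long exact sequence to exist, and you must also control the $\varprojlim^{1}$ term in passing from skeleta to $\Psi_{L}$. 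None of this is hard — the paper proves analogous surjectivities in \propref{prop:epimorphism} and \propref{prop:trivialfibration} — but it is not free, and without it your argument doesn't close. If your goal was a self-contained alternative to invoking \corref{cor:normal}, you should either fill in that lemma or explicitly appeal to \corref{cor:normal} as the paper does.
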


\begin{proof}
Let $\Psi_{K}\in\ssets$.
The isomorphism \eqref{equa:adroite} with $A=\Delta[0]$ and $B=K(R,n)$ implies
$$
\Hom_{\sset}(K(1),K(R,n))
\cong
\Hom_{\sset_{[1]}}(\Psi_{K},\Psi_{\Delta[0]\ast K(R,n)}).
$$
From
$(\Psi_{K}\otimes \Delta[1])(1)= K(1)\otimes \Delta[1]$,
we deduce also an isomorphism between the sets of homotopy classes,
$$[K(1), K(R,n)]_{\sset}\cong 
[\Psi_{K},\Psi_{\Delta[0]\ast K(R,n)}]_{\ssets}.$$
Recall that $K(1)$ is the completely regular part of $\Psi_{K}$. 
Thus, together with \corref{cor:normal} and the representability of cohomology, we have:
$$\crH^n_{\ov{\infty}}(\Psi_{K})\cong H^n(K(1))\cong [K(1),K(R,n)]_{\sset}
\cong
[\Psi_{K},\Psi_{\Delta[0]\ast K(R,n)}]_{\ssets}.
$$
From the uniqueness up to homotopy equivalence of the representing object, we deduce
$$\ttK(R,n,[1],\ov{\infty})\simeq \Psi_{\Delta[0]\ast K(R,n)}.$$
\end{proof}

\begin{corollary}\label{cor:ntkpn}
For any perversity $\ov{p}$, we have 
$\crH_{\ov{\infty}}^n(\ttK(R,n,[1],\ov{p});R)=R
$.
\end{corollary}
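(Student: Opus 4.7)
The plan is to chain together three results already established in the excerpt: the representability of the blown up cohomology, the identification of $\ttK(R,n,[1],\ov{\infty})$ as a Joyal projective cone, and the fact that the non-singular part of every perverse Eilenberg--MacLane space over $[1]$ is the classical $K(R,n)$ irrespective of~$\ov{p}$.

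Concretely, I would proceed as follows. By \propref{prop:emlsset} applied to the (s-fibrant) object $\ttK(R,n,[1],\ov{p})$,
$$
\crH_{\ov{\infty}}^n(\ttK(R,n,[1],\ov{p});R)
\cong [\ttK(R,n,[1],\ov{p}),\ttK(R,n,[1],\ov{\infty})]_{\ssets}.
$$
Using the homotopy equivalence $\ttK(R,n,[1],\ov{\infty}) \simeq \Delta[0] \ast K(R,n)$ supplied by \propref{prop:EMLinfini}, the join adjunction \eqref{equa:adroite} with $A=\Delta[0]$ and $B=K(R,n)$ gives, at the level of simplicial maps and then of homotopy classes,
$$
[\ttK(R,n,[1],\ov{p}), \Delta[0] \ast K(R,n)]_{\ssets} \cong [K(1), K(R,n)]_{\sset},
$$
where $K(1)$ denotes the non-singular (base) part of $\ttK(R,n,[1],\ov{p})$ in the sense of \defref{def:nonsingular}. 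By \propref{prop:regularpartEML}, this base is precisely the classical Eilenberg--MacLane space $K(R,n)$, so the right hand side equals
$$
[K(R,n), K(R,n)]_{\sset} \cong H^n(K(R,n);R) = R,
$$
the last equality being the standard representability of ordinary cohomology by $K(R,n)$, with generator the fundamental class.

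All three ingredients are already in hand, so there is no serious obstacle; the plan is essentially to splice the adjunction argument used inside the proof of \propref{prop:EMLinfini} with \propref{prop:regularpartEML}. The only point deserving a moment of care is the transfer of \eqref{equa:adroite} to homotopy classes. This uses the observation, already exploited in the proof of \propref{prop:EMLinfini}, that $(\Psi_{K}\otimes\Delta[1])(1) = K(1)\otimes\Delta[1]$, so that a simplicial homotopy in $\ssets$ between maps into $\Psi_{\Delta[0]\ast K(R,n)}$ corresponds exactly to a simplicial homotopy in $\sset$ between the induced maps $K(1)\to K(R,n)$ (the cobase component being forced to land in $\Delta[0]$).
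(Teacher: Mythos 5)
Your proof is correct and follows essentially the same route as the paper's: the paper compresses the argument into a single chain $\crH_{\ov{\infty}}^n(\ttK(R,n,[1],\ov{p});R)\cong H^n(\ttK(R,n,[1],\ov{p})(1))\cong H^n(K(R,n))=R$ citing Propositions~\ref{prop:regularpartEML} and~\ref{prop:EMLinfini}, while you explicitly unpack the first isomorphism via representability (\propref{prop:emlsset}) and the join adjunction, which is precisely the reasoning carried out inside the proof of \propref{prop:EMLinfini}.
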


\begin{proof}
From Propositions \ref{prop:regularpartEML} and \ref{prop:EMLinfini}, we deduce:
$$\crH_{\ov{\infty}}^n(\ttK(R,n,[1],\ov{p});R)\cong H^n(\ttK(R,n,[1],\ov{p})(1))\cong H^n(K(R,n))=R.$$
\end{proof}

In \propref{prop:operationmpetit}, we prove the nullity of
$\crH^m_{\ov{p}}(\tkqn;R)$
for $m<n$. Let us study now the $n$-cohomology of the $n$-skeleton of $\ttK(R,n,[1],\ov{q})$.

\begin{proposition}\label{prop:kskeletonofcone}
Let  $\ov{p}$, $\ov{q}$ be two perversities on $\ttP=[1]$.
The $\ov{p}$-blown up cohomology in degree $n$ of the $n$-th skeleton, 
$(\ttK(R,n,[1],\ov{q}))^{(n)}$, does not depend on the perversities $\ov{p}$ and $\ov{q}$.
More precisely,   we have
$$\crH^n_{\ov{p}}((\ttK(R,n,[1],\ov{q}))^{(n)};R)
\cong
H^n(K(R,n)^{(n)};R).
$$
\end{proposition}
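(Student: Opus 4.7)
The plan is to proceed in three stages. First, I would reduce the $\ov q$-dependence via \propref{prop:nskeleton}: that result gives the equality
$$(\ttK(R,n,[1],\ov q))^{(n)} = (\ttK(R,n,[1],\ov\infty))^{(n)},$$
so write $L$ for this common $n$-skeleton. The non-singular part of $L$ (in the sense of \defref{def:nonsingular}) is exactly $K(R,n)^{(n)}$, because over each simplex $\emptyset\ast\Delta[b]$ of $\ttN([1])$ the lifts in $\ttK(R,n,[1],\ov\infty)$ are $\cZ^n N^*(\Delta[b]) = K(R,n)_b$. On a completely regular simplicial set the blow-up is trivial, hence $\tN^*_{\ov p}$ restricted to $K(R,n)^{(n)}$ is the ordinary normalized cochain complex $N^*(K(R,n)^{(n)};R)$, whose $n$-th cohomology is $H^n(K(R,n)^{(n)};R)$.

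Second, I would analyze $\tN^*_{\ov p}(L;R)$ directly. Since $L$ has no simplex of dimension $>n$, we have $\tN^m_{\ov p}(L;R)=0$ for $m>n$ and every $n$-cochain is automatically a cocycle. Each $n$-simplex $\tau$ of $L$ lies over a regular $n$-simplex $\Delta[a]\ast\Delta[b]$ of $\ttN([1])$ whose blown-up complex has top total degree $a+b+1=n$. By the perverse-degree computation reproduced in the proof of \propref{prop:nskeleton}, the only non-zero top-degree basis elements contain the cone-apex of $\tc\Delta[a]$; their perverse degree along the singular stratum is therefore $-\infty$. So $\tN^n_{\ov p}(\Delta[\tau])$ does not depend on $\ov p$, and neither does the group $\tN^n_{\ov p}(L;R)$ of $n$-cochains.

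Third, I would consider the base inclusion $j\colon K(R,n)^{(n)}\hookrightarrow L$ and the induced restriction
$$j^\ast\colon \tN^*_{\ov p}(L;R) \longrightarrow N^*(K(R,n)^{(n)};R).$$
Extending by zero on mixed $n$-simplices shows that $j^\ast$ is surjective in degree $n$, so the induced map on $\crH^n_{\ov p}$ is surjective. Injectivity amounts to showing that any $\omega\in\tN^n_{\ov p}(L;R)$ vanishing on $K(R,n)^{(n)}$ is a coboundary. I would build an explicit primitive simplex by simplex: on each mixed prism $\tc\Delta[a]\times\Delta[b]$ the apex-collapse of the cone factor gives a contracting homotopy in the cone direction (the same construction underlying $\tG$ in the proof of \propref{prop:productwithDelta1}), producing an $(n-1)$-cochain whose coboundary is $\omega_\tau$. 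Because this primitive is built only from the cone factor and sits in a lower bidegree on the $\Delta[b]$-side, its perverse degree along the singular stratum is still $-\infty$, hence $\ov p$-allowable for every $\ov p$.

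\textbf{Main obstacle.} The delicate step is to verify that the local primitives, one per mixed $n$-simplex, glue into a single natural transformation, i.e., an element of $\tN^{n-1}_{\ov p}(L;R)$. This requires checking compatibility with face and degeneracy maps among the blown-up prisms indexed by $\Delta[\ttP]$. The point is that the apex of the projective cone $\Delta[0]\ast K(R,n)$ furnished by \propref{prop:EMLinfini} provides a coherent global cone direction; all the local apex-collapse homotopies originate from this single vertex, and their compatibility with faces of $\Delta[a]\ast\Delta[b]$ follows by a straightforward but notationally heavy bookkeeping on the factor structure $N^*(\tc\Delta[a])\otimes N^*(\Delta[b])$ and the restriction maps $\alpha^\ast$ of \secref{sec:boum}.
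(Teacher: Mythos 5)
Your first two steps track the paper's line of reasoning (reduce to $\ov q=\ov\infty$ via \propref{prop:nskeleton}, observe that the top‑degree cochain group is perversity‑independent). But for the third step the paper does not build explicit primitives at all: it uses the long exact sequence of the pair $\bigl((\Delta[0]\ast K(R,n))^{(n)},\,\Delta[0]\ast K(R,n)^{(n-1)}\bigr)$ — the subcomplex being a cone, hence blown‑up acyclic — followed by excision of the apex, which lands directly on $H^n(K(R,n)^{(n)})$ and sidesteps every allowability and gluing question your approach has to face.

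The injectivity step of your proposal contains a genuine error, not merely an unfinished bookkeeping problem. Contracting the cone factor $\tc\Delta[a]$ to its apex sends the top cochain $\1_{\tc\Delta[a]}\otimes\1_{\Delta[b]}$ to $\pm\,\1_{\Delta[a]}\otimes\1_{\Delta[b]}$: the dual contracting homotopy \emph{removes} the apex from the support rather than keeping it. In the notation of \defref{def:degreperversboum} this primitive has $(F_0,\varepsilon_0)=(\Delta[a],0)$, so $\varepsilon_0=0$ and its perverse degree along the singular vertex $0\in[1]$ is $\dim\Delta[b]=b$, not $-\infty$ as you assert. For $\ov p=\ov 0$ and $b>0$ the primitive is therefore not $\ov p$‑allowable and the construction collapses. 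The appeal to the homotopy $\tG$ from the proof of \propref{prop:productwithDelta1} is also misdirected: there the contraction is along the auxiliary $\Delta[1]$ in $\Psi_L\otimes\Delta[1]$, orthogonal to every cone factor, so it preserves all $\varepsilon_i$'s by inspection — that is exactly why the paper can say $\tG$ respects perverse degree. Collapsing an actual cone factor changes $\varepsilon_0$ from $1$ to $0$, which is precisely what the perversity filtration detects. You would have to replace the apex‑collapse by local primitives that still carry the apex (and then resolve the gluing issue you flag); the paper's exact‑sequence‑plus‑excision route avoids all of this.
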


\begin{proof}
With \propref{prop:nskeleton}, we may choose $\ov{q}=\ov{\infty}$.
We know that $\ttK(R,n,[1],\ov{\infty})= \Delta[0]\ast K(R,n)$ and that the 
$(n-1)$-skeleton of $K(R,n)$ is contractible. By using the exact sequence associated to the pair
$((\Delta[0]\ast K(R,n))^{(n)}, \Delta[0]\ast(K(R,n)^{(n-1)}))$, we get
$$\crH^n_{\ov{p}}((\Delta[0]\ast K(R,n))^{(n)})
\cong 
\crH^n_{\ov{p}}((\Delta[0]\ast K(R,n))^{(n)}, \Delta[0]\ast (K(R,n)^{(n-1)})).
$$
 The excision of the apex implies
$$\crH^n_{\ov{p}}((\Delta[0]\ast K(R,n))^{(n)})\cong H^n(K(R,n)^{(n)},K(R,n)^{(n-1)})\cong H^n(K(R,n)^{(n)}).$$
\end{proof}

We determine the set of perverse cohomology operations keeping the same cohomological degree 
for two perversities, $\ov{p}$ and $\ov{q}$.
The result depends on the respective situation of $\ov{p}$ and $\ov{q}$ in $\ov{\Z}$.
We first establish a lemma.

\begin{lemma}\label{lem:twoinclusions}
Let   $\ov{p},\,\ov{q}\in \ov{\Z}$ be two perversities on $\ttP=[1]$. 
If  $\ov{q}\leq \ov{p}$, then the inclusion 
 $\ttK(R,n,[1],\ov{q})
 \hookrightarrow 
\ttK(R,n,[1],\ov{p})$
 and the natural transformation $\crH_{\ov{p}}^*\to \crH_{\infty}^*$
 induce injective maps:
\begin{equation*} 
\crH_{\ov{p}}^n(\ttK(R,n,[1],\ov{p});R)
\hookrightarrow 
\crH_{\ov{p}}^n(\ttK(R,n,[1],\ov{q});R)
\hookrightarrow
\crH_{\ov{\infty}}^n(\ttK(R,n,[1],\ov{q});R)= R.
\end{equation*}
\end{lemma}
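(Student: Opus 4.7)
The hypothesis $\overline{q}\leq\overline{p}$ implies that every $\overline{q}$-intersection cochain on a regular simplex is also $\overline{p}$-allowable, yielding for each $\sigma\in\ttN([1])$ an inclusion of subcomplexes $\tN^*_{\overline{q}}(\Delta[\sigma])\hookrightarrow\tN^*_{\overline{p}}(\Delta[\sigma])$. Restricting to degree-$n$ cocycles is componentwise injective, so this gives a simplicial subcomplex inclusion $f\colon B:=\ttK(R,n,[1],\overline{q})\hookrightarrow A:=\ttK(R,n,[1],\overline{p})$ in $\ssets$, and \propref{prop:mapinterseccohomology} shows that $f$ induces the first map of the statement; the second map is the evaluation at $B$ of the natural transformation $\kappa\colon\crH^*_{\overline{p}}\Rightarrow\crH^*_{\overline{\infty}}$ coming from the further inclusion $\tN^*_{\overline{p}}\hookrightarrow\tN^*_{\overline{\infty}}$.

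\textbf{Reduction via a commutative square.} Functoriality produces the diagram
$$
\xymatrix@R=1em{
\crH^n_{\overline{p}}(A;R) \ar[r]^{f^*} \ar[d]_{\kappa_A} & \crH^n_{\overline{p}}(B;R) \ar[d]^{\kappa_B} \\
\crH^n_{\overline{\infty}}(A;R) \ar[r]_{f^*} & \crH^n_{\overline{\infty}}(B;R),
}
$$
whose bottom horizontal is an isomorphism. Indeed, \propref{prop:EMLinfini} together with the join-adjunction \eqref{equa:adroite} gives $\crH^n_{\overline{\infty}}(\Psi;R)\cong H^n(\Psi(1);R)$ for any $\Psi\in\ssets$; by \propref{prop:regularpartEML} the non-singular parts satisfy $A(1)=B(1)=K(R,n)$ and $f$ acts as the identity on them. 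Both bottom corners therefore equal $R$ and the bottom arrow is the identity. It follows that the two injectivities of the lemma will both be established once we show that the vertical maps $\kappa_A$ and $\kappa_B$ are injective: the composite $\kappa_B\circ f^*$ at the top is then injective (forcing the first map $f^*$ to be injective), while $\kappa_B$ itself is the second map.

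\textbf{Toward injectivity of $\kappa_X$.} The core task is to prove that $\kappa_X\colon\crH^n_{\overline{p}}(X;R)\to\crH^n_{\overline{\infty}}(X;R)=R$ is injective for $X\in\{A,B\}$. Writing $X=\ttK(R,n,[1],\overline{r})$, the representability Proposition~\ref{prop:emloperation} identifies $\kappa_X$, via Yoneda, with postcomposition by $\kappa$:
$$
\natr(\crH^n_{\overline{r}},\crH^n_{\overline{p}})\longrightarrow\natr(\crH^n_{\overline{r}},\crH^n_{\overline{\infty}})\cong R,
$$
and the fundamental class $\iota_X$ (represented by the canonical natural transformation $\crH^n_{\overline{r}}\Rightarrow\crH^n_{\overline{p}}$ induced by the chain inclusion) is sent to the nonzero canonical transformation $\crH^n_{\overline{r}}\Rightarrow\crH^n_{\overline{\infty}}$, i.e.\ to a generator of $R$.

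\textbf{Main obstacle.} The genuine difficulty is then to show that $\crH^n_{\overline{p}}(X;R)$ is itself a cyclic $R$-module generated by $\iota_X$, so that an $R$-linear map sending a generator of the source to a generator of $R$ is automatically injective. For this, the plan is to exploit the projective-cone structure of $X$ over $K(R,n)$ supplied by \propref{prop:regularpartEML}, the independence of the $n$-skeleton from the perversity (\propref{prop:nskeleton}), and the fact that the $(n-1)$-skeleton coincides with $\ttN([1])$ (\propref{prop:skeletonEML}), and to run the long exact sequence of the pair $(X,X(1))$. The delicate step — a perverse Poincar\'e-type lemma for the blown up complex on cones of standard simplices — is to verify that any $\overline{p}$-allowable $n$-cocycle whose restriction to the non-singular part $K(R,n)$ bounds classically admits a $\overline{p}$-allowable $(n-1)$-primitive. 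This controls the cone-direction contribution to $\tN^*_{\overline{p}}(X)$ in degrees $n$ and $n+1$ and is where the Dedekind hypothesis on $R$ and the restriction to the top degree $m=n$ actually enter.
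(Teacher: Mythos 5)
Your reduction to the injectivity of the natural transformations $\kappa_A$ and $\kappa_B$ is correct, and the identification of $\crH^n_{\ov{\infty}}(-)$ with $H^n(-(1);R)$ on the non-singular part is also fine. But the proof stops exactly where the real work begins. You write that ``the genuine difficulty is then to show that $\crH^n_{\ov{p}}(X;R)$ is itself a cyclic $R$-module generated by $\iota_X$'' and then describe only a \emph{plan} (a long exact sequence of the pair $(X,X(1))$ together with an unproved ``perverse Poincar\'e-type lemma for the blown-up complex on cones''). No step of that plan is actually carried out, so the argument is incomplete as stated. Worse, the cyclicity claim $\crH^n_{\ov{p}}(\tkpn;R)\cong R$ is precisely the content of \propref{prop:kskeleton}, which in the paper is \emph{deduced from} the present lemma together with \corref{cor:ntkpn}; attempting to prove it first and then derive the lemma inverts the logical order of the section.

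The paper's proof avoids the Poincar\'e-lemma-for-cones issue entirely. Instead of trying to compute $\crH^n_{\ov{p}}(X)$, it restricts to the $n$-skeleton. The horizontal arrows in the commutative diagram~\eqref{equa:tkpntkqn} are the restriction maps $\crH^n(-)\to\crH^n((-)^{(n)})$, which are injective in degree $n$. The vertical arrows on the right are isomorphisms thanks to two skeletal facts specific to these spaces: the $n$-skeleton of $\ttK(R,n,[1],\ov{r})$ is independent of $\ov{r}$ (\propref{prop:nskeleton}), and its blown-up cohomology in degree $n$ is independent of both the perversity of the space and the perversity used to compute, namely $\crH^n_{\ov{p}}((\ttK(R,n,[1],\ov{q}))^{(n)})\cong H^n(K(R,n)^{(n)})$ (\propref{prop:kskeletonofcone}, proved via excision of the cone apex). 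A diagram chase then gives the injectivity of the left-hand vertical composites without ever identifying the modules $\crH^n_{\ov{p}}(X)$ themselves. If you want to repair your proof, you should either import these skeletal facts and reproduce the diagram chase, or genuinely prove the cone-Poincar\'e lemma you alluded to; as written, the proposal contains a gap at its central step.
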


\begin{proof}
 This is  a consequence of \corref{cor:ntkpn} and of the  commutative diagram,
\begin{equation}\label{equa:tkpntkqn}
\xymatrix{
\crH_{\ov{p}}^n(\ttK(R,n,[1],\ov{p});R)
\ar@{^{(}->}[r]
\ar@{-->}[d]
&
\crH_{\ov{p}}^n((\ttK(R,n,[1],\ov{p}))^{(n)};R)
\ar[d]^-{\cong}
\\
\crH_{\ov{p}}^n(\ttK(R,n,[1],\ov{q});R)
\ar@{^{(}->}[r]
\ar@{-->}[d]
&
\crH_{\ov{p}}^n((\ttK(R,n,[1],\ov{q}))^{(n)}\,;R)
\ar[d]^-{\cong}\\
\crH_{\ov{\infty}}^n(\ttK(R,n,[1],\ov{q});R)=R
\ar@{^{(}->}[r]
&
\crH_{\ov{\infty}}^n((\ttK(R,n,[1],\ov{q}))^{(n)}\,;R),\\
}\end{equation}
where the isomorphisms of the right-hand column come from \propref{prop:kskeletonofcone} .
\end{proof}

\begin{proposition}\label{prop:kskeleton}
Let  $\ov{p},\,\ov{q}\in \ov{\Z}$ be two perversities on $\ttP=[1]$. 
If $\ov{q}\leq \ov{p}$, we have, 
$$\natr(\crH^n_{\ov{q}},\crH^{n}_{\ov{p}})=
[\ttK(R,n,[1],\ov{q}), \ttK(R,n,[1],\ov{p})] = 
\crH^{n}_{\ov{p}}(\ttK(R,n,[1],\ov{q});R)=R.$$ 
\end{proposition}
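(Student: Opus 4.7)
The plan is to split the statement into two parts: the first two equalities are formal consequences of the representability theorem, and then the substantive content is to identify $\crH^n_{\ov{p}}(\ttK(R,n,[1],\ov{q});R)$ with $R$.

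First I would obtain the equalities $\natr(\crH^n_{\ov{q}},\crH^n_{\ov{p}}) = [\ttK(R,n,[1],\ov{q}), \ttK(R,n,[1],\ov{p})]_{\ssets} = \crH^n_{\ov{p}}(\ttK(R,n,[1],\ov{q});R)$ as immediate applications of \propref{prop:emloperation} (Yoneda applied to the representability established in \propref{prop:emlsset}). It then suffices to prove that the last module equals $R$.

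Next I would apply \lemref{lem:twoinclusions} to get the chain of injections
$$\crH^n_{\ov{p}}(\ttK(R,n,[1],\ov{p});R) \hookrightarrow \crH^n_{\ov{p}}(\ttK(R,n,[1],\ov{q});R) \hookrightarrow \crH^n_{\ov{\infty}}(\ttK(R,n,[1],\ov{q});R)=R.$$
Denote by $\iota_n^{\ov{p}} \in \crH^n_{\ov{p}}(\ttK(R,n,[1],\ov{p});R)$ the fundamental class corresponding under \propref{prop:emlsset} to the identity of $\ttK(R,n,[1],\ov{p})$. Since $\crH^n_{\ov{p}}(\ttK(R,n,[1],\ov{p});R)$ is an $R$-module containing $\iota_n^{\ov{p}}$, scalar multiplication gives a natural map $R \to \crH^n_{\ov{p}}(\ttK(R,n,[1],\ov{p});R)$, $r \mapsto r\,\iota_n^{\ov{p}}$. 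The strategy is to show that composing with the chain above yields the identity $R \to R$, which forces every intermediate inclusion to be an isomorphism.

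The main step is this chase. Under representability, the image of $\iota_n^{\ov{p}}$ in $\crH^n_{\ov{\infty}}(\ttK(R,n,[1],\ov{q});R) \cong [\ttK(R,n,[1],\ov{q}),\ttK(R,n,[1],\ov{\infty})]_{\ssets}$ is the homotopy class of the composite $\ttK(R,n,[1],\ov{q}) \hookrightarrow \ttK(R,n,[1],\ov{p}) \to \ttK(R,n,[1],\ov{\infty})$, where the first map is the natural inclusion associated with $\ov{q} \leq \ov{p}$ and the second is induced by the cochain inclusion $\tN^*_{\ov{p}} \hookrightarrow \tN^*_{\ov{\infty}}$. Now \propref{prop:regularpartEML} shows that all three $\emlp$-spaces share the same non-singular part $K(R,n)$, and both maps restrict to the identity there. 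Via \propref{prop:EMLinfini} and the join adjunction \eqref{equa:adroite}, the identification $\crH^n_{\ov{\infty}}(\ttK(R,n,[1],\ov{q});R) \cong H^n(K(R,n);R) = R$ used in \corref{cor:ntkpn} therefore sends this class to the identity $[K(R,n) \to K(R,n)]$, which is $1 \in R$.

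It follows that the composite $R \to R$ is the identity, hence the intermediate inclusions are all bijections, and $\crH^n_{\ov{p}}(\ttK(R,n,[1],\ov{q});R) = R$, as required. The main delicate point I expect is precisely the last step of the chase: verifying carefully that the various representability identifications, the join-adjunction of \secref{subsec:joyal}, and the common non-singular part of \propref{prop:regularpartEML} all fit together coherently so that $\iota_n^{\ov{p}}$ ends up mapped to the unit of $H^n(K(R,n);R)$. Everything else is formal bookkeeping with \lemref{lem:twoinclusions} and the representability theorem.
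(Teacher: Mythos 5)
Your proposal is correct and follows essentially the same approach as the paper: use \lemref{lem:twoinclusions} to sandwich the group between copies of $R$, then trace the fundamental class (the homotopy class of the identity of $\ttK(R,n,[1],\ov{p})$) through the chain of injections to verify that $1\in R$ is attained, which forces the injections to be isomorphisms. The only cosmetic difference is that the paper first specializes the left column of diagram \eqref{equa:tkpntkqn} to $\ov{q}=\ov{p}$ to conclude $\crH^n_{\ov{p}}(\ttK(R,n,[1],\ov{p});R)=R$ and then invokes \lemref{lem:twoinclusions} for the general case, whereas you track the composite through the full chain directly; both versions rest on the same verification that the canonical inclusion restricts to the identity on the common non-singular part $K(R,n)$.
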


\begin{proof}
In  the left-hand column of \eqref{equa:tkpntkqn} with  $\ov{p}=\ov{q}$, the
 unit $1\in  \crH_{\ov{\infty}}^n(\ttK(R,n,[1],\ov{p});R)=R$ 
 corresponds to the homotopy class of the canonical inclusion 
 $\ttK(R,n,[1],\ov{p}) 
 \hookrightarrow
 \ttK(R,n,[1],\ov{\infty})$. 
 This is the image of the homotopy class of the identity map on 
 $\ttK(R,n,[1],\ov{p})$  
 by the natural transformation
 $
 \crH^n_{\ov{p}}(-)=[-,\ttK(R,n,[1],\ov{p})] 
 \to
 \crH^n_{\ov{\infty}}(-)=[-,\ttK(R,n,[1],\ov{\infty})] 
 $. Thus the unit $1$ is reached and
 $\crH^{n}_{\ov{p}}(\ttK(R,n,[1],\ov{p});R)=R$. 
 The result follows now from \lemref{lem:twoinclusions}.
\end{proof}

In contrast with \propref{prop:kskeleton}, we have:

\begin{proposition}\label{prop:emlk}
Let  $\ov{p},\,\ov{q}\in \ov{\Z}$ be two perversities on $\ttP=[1]$. 
If $\ov{p}< \ov{q}<\ov{\infty}$, then, for any $n>0$, we have
$$\natr(\crH^n_{\ov{q}},\crH^{n}_{\ov{p}})=
[\ttK(R,n,[1],\ov{q}),\ttK(R,n,[1],\ov{p})]= 
\crH^{n}_{\ov{p}}(\ttK(R,n,[1],\ov{q});R)=0.$$ 
\end{proposition}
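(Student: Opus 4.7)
The first two equalities are immediate from Proposition~\ref{prop:emloperation}, so the real content is the vanishing $\crH^n_{\ov{p}}(\ttK(R,n,[1],\ov{q});R)=0$. The plan is to combine the skeletal injection already at work in Propositions~\ref{prop:kskeletonofcone} and~\ref{prop:kskeleton} with an obstruction argument peculiar to the case $\ov{p}<\ov{q}$.

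First I would set up the skeletal reduction. By Proposition~\ref{prop:skeletonEML}, the $(n-1)$-skeleton of $\ttK(R,n,[1],\ov{q})$ equals $\ttN([1])=\Delta[1]$, which has vanishing reduced cohomology. The long exact sequence of the pair $(\ttK(R,n,[1],\ov{q}),\ttK(R,n,[1],\ov{q})^{(n)})$ therefore produces an injection
$$\crH^n_{\ov{p}}(\ttK(R,n,[1],\ov{q});R)\hookrightarrow \crH^n_{\ov{p}}(\ttK(R,n,[1],\ov{q})^{(n)};R)\cong R,$$
where the isomorphism is Proposition~\ref{prop:kskeletonofcone}. By Proposition~\ref{prop:nskeleton} the $n$-skeleton of $\ttK(R,n,[1],\ov{q})$ coincides with that of $\ttK(R,n,[1],\ov{p})$ and that of $\ttK(R,n,[1],\ov{\infty})\simeq \Delta[0]\ast K(R,n)$, and through these identifications the generator $1\in R$ is represented by (the restriction of) the fundamental $n$-cocycle of $K(R,n)$.

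The second step is to show that this generator is \emph{not} hit. Via the natural transformation $\crH^n_{\ov{p}}\to\crH^n_{\ov{q}}$ (arising from $\tN^*_{\ov{p}}\subseteq \tN^*_{\ov{q}}$ because $\ov{p}\leq \ov{q}$), every class in the left-hand side maps into $\crH^n_{\ov{q}}(\ttK(R,n,[1],\ov{q});R)=R$ (Proposition~\ref{prop:kskeleton} with $\ov{p}=\ov{q}$), whose generator is the homotopy class of the identity map. By representability (Propositions~\ref{prop:emlsset} and~\ref{prop:emloperation}), a preimage of the generator under this map would be a simplicial map over $\ttN([1])$
$$f\colon \ttK(R,n,[1],\ov{q})\to \ttK(R,n,[1],\ov{p})$$
such that the composite with the canonical inclusion $\ttK(R,n,[1],\ov{p})\hookrightarrow \ttK(R,n,[1],\ov{q})$ is homotopic to the identity.

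The main obstacle is to rule out such a retraction-up-to-homotopy, and this is where the strict inequalities $\ov{p}<\ov{q}<\ov{\infty}$ enter. Since $\ov{q}$ is finite and $\ov{p}(0)<\ov{q}(0)$, pick integers $q_{0},q_{1}\geq 0$ with $q_{0}+q_{1}=n$ and $\ov{p}(0)<q_{1}\leq \ov{q}(0)$. Then, using the formula for $\tN^*$ on a regular simplex (Subsection~\ref{subsec:perversedegree}), the basis cocycle $\1_{(F,\varepsilon)}\in \cZ^n\tN^*(\Delta[q_{0}]\ast\Delta[q_{1}])$ with $\varepsilon_{0}=0$ and $\dim F_{1}=q_{1}$ has perverse degree exactly $q_{1}$ along the singular stratum $0$; hence it is a simplex of $\ttK(R,n,[1],\ov{q})$ but not of $\ttK(R,n,[1],\ov{p})$, and composed with the identity on the regular stratum it witnesses an $(n+1)$-simplex whose boundary, under a hypothetical $\ov{p}$-allowable cocycle extending the fundamental class, forces a nontrivial relation. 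Concretely, evaluating the cocycle condition $\delta\omega=0$ of any $\ov{p}$-intersection representative $\omega$ of the purported class on this $(n+1)$-simplex yields an equation in $R$ which is incompatible with $\omega$ restricting to a generator of $H^n(K(R,n)^{(n)};R)$ on the $n$-skeleton. The hard part of the proof is this final combinatorial bookkeeping: matching the boundary of the distinguished $(n+1)$-simplex with the faces on which the fundamental cocycle is prescribed, and reading off that the extension must be zero.
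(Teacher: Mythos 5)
The first two equalities and the reduction via Lemma~\ref{lem:twoinclusions} to an injection $\crH^n_{\ov{p}}(\ttK(R,n,[1],\ov{q});R)\hookrightarrow R$ are correctly identified; up to that point you are following the same route as the paper. But you then stop short of an actual proof: you gesture at extending a hypothetical $\ov{p}$-allowable cocycle over a distinguished $(n+1)$-simplex and extracting a contradiction from $\delta\omega=0$, and you concede that the ``hard part of the proof is this final combinatorial bookkeeping'' without carrying it out. As written, no contradiction is actually produced: nothing is said about why the cocycle equation should be inconsistent with the prescribed restriction to the $n$-skeleton, and the existence of a single $\ov{q}$-allowable but not $\ov{p}$-allowable basis cochain does not by itself obstruct the desired class. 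This is a genuine gap.

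The paper closes the argument much more efficiently by exploiting naturality of the operation rather than reasoning directly in the Eilenberg-MacLane model. Given a class $[\omega]\in\crH^n_{\ov{p}}(\ttK(R,n,[1],\ov{q});R)$ viewed (Proposition~\ref{prop:emloperation}) as a natural transformation $\crH^n_{\ov{q}}\to\crH^n_{\ov{p}}$, it is evaluated on a concrete depth-one pseudomanifold $X=S^{n-q}\times\tc S^{q}$, chosen exactly with your numerical constraint $\ov{p}(0)<q\leq\ov{q}(0)$. The cone formula makes $\crH^n_{\ov{q}}(X)\cong R$ while $\crH^n_{\ov{p}}(X)=0$, so naturality of the maps to $\crH^n_{\ov{\infty}}(X)\cong R$ forces the scalar $\lambda\in R$ giving the image of $[\omega]$ under the injection of Lemma~\ref{lem:twoinclusions} to be $0$, hence $[\omega]=0$. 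The insight you are missing is precisely this passage from the representing object to a test space: instead of inspecting cochains on simplices of $\ttK(R,n,[1],\ov{q})$, use the very naturality encoded in $[\omega]$ to evaluate on a pseudomanifold where the two perverse cohomology groups visibly disagree. This replaces the open-ended combinatorics you point to with a one-line diagram chase.
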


\begin{proof}
Let   $[\omega]\in \crH^n_{\ov{p}}(\ttK(R,n,[1],\ov{q});R)$.  
Its image in 
$\crH_{\ov{\infty}}^k(\ttK(R,n,[1],\ov{q});R)=R$  
is an element $\lambda\in R$.
Consider $X=S^{n-{q}}\times \tc S^{{q}}$ and translate $[\omega]$ and $\lambda$ in terms of maps between the
cohomology groups. The commutativity of the diagram
$$\xymatrix{
\crH^n_{\ov{q}}(X)=R\ar[rr]^-{[\omega]_{X}}\ar[d]^{\cong}&&
\crH^n_{\ov{p}}(X)=0\ar[d]\\
\crH_{\ov{\infty}}^n(X)=R\ar[rr]^-{\lambda}&&
\crH^n_{\ov{\infty}}(X)=R
}$$
implies $\lambda=0$. The  injectivity of
$\crH^n_{\ov{p}}(\ttK(R,n,[1],\ov{q});R) 
\to
\crH_{\ov{\infty}}^n(\ttK(R,n,[1],\ov{q});R)$ 
established in \lemref{lem:twoinclusions} gives the conclusion.
\end{proof}

\subsection{The perversity $\ov{0}$}\label{subsec:nul}

We determine the classifying space associated to the zero perversity.

 \begin{proposition}\label{prop:eml0}
 The perverse Eilenberg-MacLane  space $\mathtt{K}(R,n,[1],\ov{0})$ 
 is homotopically equivalent to
 $$(\ttn\circ\cR)(\Delta[1]\times  K(R,n) )=(\Delta[1]\times K(R,n))/(\Delta[0]\times  K(R,n)).$$
 \end{proposition}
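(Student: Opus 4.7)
The plan is to construct an explicit morphism between the two simplicial sets by assembling the natural chain isomorphism $N^*(\Delta)\to\tN^{+,*}_{\ov{0}}(\Delta)$ (recalled just before \conjref{conjecture1}) over the regular simplices of $\ttN([1])$, and then to apply the functor $\ttn$ to obtain a weak $s$-equivalence in $\ssets$. The overall scheme parallels the proof of \propref{prop:EMLinfini}, using the adjunction $(\ttn,\cR)$ of \propref{prop:inadjoint} in place of the join adjunction.

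First, I would identify the target with $\ttn\circ\cR(\Delta[1]\times K(R,n))$. The functor $\cR$ retains only those simplices of $\Delta[1]\times K(R,n)$ whose image in $\Delta[1]$ ends in the maximal vertex~$1$, and $\ttn$ reintroduces every non-regular simplex as a copy of a single virtual vertex~$\tv$. Since the non-regular simplices of $\Delta[1]\times K(R,n)$ constitute exactly $\Delta[0]\times K(R,n)$, the composite $\ttn\circ\cR$ collapses this subcomplex to $\tv$, matching the quotient description stated in the proposition.

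Next, for each regular simplex $\sigma=\Delta[q_0]\ast\cdots\ast\Delta[q_\ell]$ of $\ttN([1])$ one computes $\cR(\Delta[1]\times K(R,n))(\sigma)=K(R,n)_{|\sigma|}=\cZ^n N^*(\Delta[|\sigma|])$, whereas $\tkpn^+(\sigma)=\cZ^n\tN^{+,*}_{\ov{0}}(\Delta[\sigma])$. The natural chain isomorphism of \cite[Proposition~3.1]{CST7} induces a bijection on cocycles; assembled over $\sigma$, this yields a morphism in $\ssets^+$,
\[
\Phi^+\colon\cR(\Delta[1]\times K(R,n))\longrightarrow\tkpn^+,
\]
and applying $\ttn$ produces
\[
\ttn(\Phi^+)\colon(\Delta[1]\times K(R,n))/(\Delta[0]\times K(R,n))\longrightarrow\ttn(\tkpn^+)=\mathtt{K}(R,n,[1],\ov{0}).
\]
Because $\Phi^+$ comes from a chain isomorphism on each regular simplex, it induces a bijection on $\pi_0$ of $\HomD_{\ssets^+}(\cR(\Psi_L),-)$ for every $\Psi_L\in\ssets$ (exploiting the simplicial abelian group structure on the target), hence is a weak $s$-equivalence in $\ssets^+$. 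The simplicial adjunction $(\cR,\ttn)$ of \propref{prop:inadjoint} then transports this to a weak $s$-equivalence $\ttn(\Phi^+)$ in $\ssets$, which is the claimed homotopy equivalence.

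The main obstacle will be verifying the naturality in the second step: that the chain isomorphism $N^*(\Delta[|\sigma|])\to\tN^{+,*}_{\ov{0}}(\Delta[\sigma])$ is strictly compatible with the face and degeneracy maps of $\ttN([1])^+$, so that it assembles into a well-defined morphism in $\ssets^+$. This requires a careful match between the classical face maps of $\Delta[|\sigma|]$ and the blown up tensor-product structure $N^*(\tc\Delta[q_0])\otimes\cdots\otimes N^*(\tc\Delta[q_{\ell-1}])\otimes N^*(\Delta[q_\ell])$ when the partition $(q_0,\ldots,q_\ell)$ changes along a face of $\ttN([1])$; once this naturality is established, the conclusion follows formally from the uniqueness, up to weak equivalence, of a representing object.
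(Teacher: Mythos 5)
Your approach differs genuinely from the paper's. The paper never constructs an explicit morphism: it works entirely with represented functors. For $\ov{0}$-perversity, Corollary~\ref{cor:normal} (relying on~\cite[Prop.~1.57 and 1.60]{CST1} via Proposition~\ref{prop:normal}) identifies $\crH^n_{\ov{0}}(\Psi_L;R)$ with the ordinary cohomology $H^n\bigl(\cU(\tti\circ\cR)(\Psi_L);R\bigr)$ of the normalized underlying simplicial set. One then strings together: representability of $H^n$ by $K(R,n)$ in $\sset$, the adjunction $(\cU,\ttN([1])\times{-})$ of Proposition~\ref{prop:forgetpair}, the adjunction $(\tti,\cR)$ and finally $(\cR,\ttn)$ from Proposition~\ref{prop:inadjoint}, to conclude that $\ttn\cR(\Delta[1]\times K(R,n))$ also represents $\crH^n_{\ov{0}}(-)$, whence Yoneda yields the homotopy equivalence. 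What this buys is that only the \emph{quasi}-isomorphism statement of Corollary~\ref{cor:normal} is needed, never a strict cochain-level comparison assembled over all of $\Delta[[1]]^+$.

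Your argument hinges on exactly that strict comparison, and the gap you flag is real: for $\Phi^+$ to be a morphism in $\ssets^+$ at all, the isomorphism $N^*(\Delta[|\sigma|])\to\tN^{+,*}_{\ov{0}}(\Delta[\sigma])$ from~\cite[Prop.~3.1]{CST7} must commute strictly with every operator of $\Delta[[1]]^+$, including those that change the join partition $(q_0,q_1)$; this is not established by the chain-level naturality recalled before Conjecture~\ref{conjecture1}, which is phrased for a single filtered simplex. If this strict naturality does hold, then $\Phi^+$ is an \emph{isomorphism} of presheaves, and your intermediate reasoning about $\pi_0\HomD_{\ssets^+}(\cR(\Psi_L),-)$ and s-fibrancy is redundant — an isomorphism is automatically a weak $s$-equivalence, and applying $\ttn$ preserves it by the $(\cR,\ttn)$ adjunction. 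If it does not hold strictly, $\Phi^+$ is not even a morphism, and the argument collapses. Your closing appeal to uniqueness of representing objects is likewise superfluous once a direct equivalence is in hand; it is precisely the device the paper uses \emph{instead of} a direct construction. So: a different and more hands-on route, but incomplete at the step you yourself identify, where the paper's route is complete because it substitutes the already-proved quasi-isomorphism of Corollary~\ref{cor:normal} for any strict identification.
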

 
 On the subcategory of $\ssets$ formed of normal simplicial sets over $[1]$, the
 classifying space of the blown up cohomology reduces to  $\Delta[1]\times K(R,n)$.
 
 \begin{proof}
 Let $\Psi_{K}\in\ssets$.
 From \eqref{equa:blowupcohomologyin}, \propref{cor:normal} and \cite[Propositions 1.57 and 1.60]{CST1}, we get
 $$\crH_{\ov{0}}^n(\Psi_{K};R)\cong 
 \crH_{\ov{0}}^n((\tti\circ\cR)(\Psi_{K});R)\cong H^n(\cU(\tti\circ\cR)(\Psi_{K});R),$$
 where $\cU\colon \ssetp\to \sset$ is the forgetful functor. 
 Therefore, from the representability of the cohomology in $\sset$, \propref{prop:forgetpair}
 and \propref{prop:inadjoint}, we deduce
 \begin{eqnarray*}
 \crH^n_{\ov{0}}(\Psi_{L})
 &\cong&
 [\cU(\tti\circ\cR)(\Psi_{L}), K(R,n)]_{\sset}
 \cong
 [(\tti\circ\cR)(\Psi_{L}),\Delta[1]\times K(R,n)]_{\ssets}\\
 &\cong&
 [\cR(\Psi_{L}),\cR(\Delta[1]\times K(R,n))]_{\ssets}
 \cong
 [\Psi_{L},\ttn(\cR(\Delta[1]\times K(R,n)))]_{\ssets}.
 \end{eqnarray*}
 \end{proof}

The next result is a first step in the direction of \conjref{conjecture1}.

\begin{theoremb}\label{thm:conjen0}
For any positive perversity, $\ov{p}$, there are isomorphisms,
$$\crH^k_{\ov{p}}(\ttK(R,n,[1],\ov{0});R)
\cong
\nat_{R}(\crH_{\ov{0}}^n,\crH_{\ov{p}}^k)
\cong
H^k(K(R,n);R).
$$
\end{theoremb}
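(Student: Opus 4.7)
The first isomorphism is immediate from \propref{prop:emloperation}, specialized to source perversity $\ov{0}$ and target perversity $\ov{p}$ on $\ttP = [1]$: it is just the Yoneda reformulation of representability,
\[
\natr(\crH^n_{\ov{0}}, \crH^k_{\ov{p}}) \cong [\ttK(R,n,[1],\ov{0}), \ttK(R,k,[1],\ov{p})]_{\ssets} = \crH^k_{\ov{p}}(\ttK(R,n,[1],\ov{0});R).
\]

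For the second isomorphism, the plan is to replace the classifying space by the concrete cylinder model of \propref{prop:eml0}, then prove a general comparison lemma. The cited proposition gives a homotopy equivalence $\ttK(R,n,[1],\ov{0}) \simeq \ttn(\cR(\Delta[1] \times K(R,n)))$ in $\ssets$. Combining the homotopy invariance of blown up cohomology (\corref{cor:pathobjectmdg}), the identifications in \eqref{equa:blowupcohomologyin}, and the equality $\cR \circ \ttn = \id$ from \propref{prop:leftrightR}, the computation reduces to
\[
\crH^k_{\ov{p}}(\Delta[1] \times K(R,n); R).
\]

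The heart of the argument is then the general lemma asserting that, for any classical simplicial set $L$ and any positive perversity $\ov{p}$ on $[1]$,
\[
\crH^k_{\ov{p}}(\Delta[1] \times L; R) \cong H^k(L; R).
\]
To prove it, the plan is to exploit the stratified inclusion $\iota \colon \{1\} \times L \hookrightarrow \Delta[1] \times L$, which is a morphism of $\ssets$. Every simplex of $\{1\} \times L$ lies over the maximal vertex of $[1]$, so its image in $\ttN([1])$ has only one (maximal) stratum; accordingly the blow up carries no cone factor, $\tN^*_{\ov{p}}(\{1\} \times L) = N^*(L)$, and $\crH^*_{\ov{p}}(\{1\} \times L) = H^*(L; R)$. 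It therefore suffices to show that $\iota$ induces a quasi-isomorphism on blown up cochains, i.e.\ that the relative complex $\tN^*_{\ov{p}}(\Delta[1] \times L, \{1\} \times L)$ is acyclic for any positive $\ov{p}$.

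The principal obstacle is this last acyclicity. The relative complex decomposes along the strictly mixed simplices of $\Delta[1] \times L$, of type $\Delta[a] \ast \Delta[b]$ with $a \geq 0$, on which the blown up complex takes the shape $N^*(\tc \Delta[a]) \otimes N^*(\Delta[b])$ cut out by the perverse degree constraints of \defref{def:degrepervers}. The cone factor $N^*(\tc \Delta[a])$ is contractible and admits the standard retraction onto its base; what must be checked is that, for any positive $\ov{p}$, the resulting contracting homotopy preserves both the $\ov{p}$-allowability and the intersection conditions, the positivity hypothesis being precisely what prevents the perversity bound $\dim F_1 \leq \ov{p}(0)$ from obstructing the homotopy in the base direction. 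A perhaps cleaner alternative, consistent with the announced role of \appendref{sec:ffsareback}, is to transport the computation through the functor $F$ to the category $\FFS$ of filtered face sets over $[1]$ and invoke the cone formula established in \cite{CST1} in that framework. Either route produces the required quasi-isomorphism, and composing with the earlier reductions yields the second isomorphism of the theorem.
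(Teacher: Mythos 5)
Your strategy tracks the paper's: use \propref{prop:eml0} to reduce to the cylinder $\Delta[1]\times K(R,n)$ and prove a general comparison $\crH^k_{\ov{p}}(\Delta[1]\times L;R)\cong H^k(L;R)$, which is exactly what the paper does (working with $(\Delta[0]\ast\Delta[0])\times K(R,n)$ via \lemref{lem:fucknotregular}). The first isomorphism via Yoneda is fine. The genuine gap is that the entire content of the theorem, namely the acyclicity of the relative complex $\tN^*_{\ov{p}}(\Delta[1]\times L,\{1\}\times L)$, is asserted but never established; you correctly flag it as ``the principal obstacle'' and then offer two unexecuted sketches, neither of which is carried out. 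The paper devotes \lemref{lem:fucknotregular}, \lemref{lem:secondcomplex}, the verification of the cocontinuity and cofibration-to-epimorphism hypotheses, and the appeal to \cite[Proposition 3.1.14]{MR3931682} precisely to close this gap.

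There are concrete reasons the sketches would not close it as written. The claimed ``decomposition along the strictly mixed simplices'' is not literally available: $\tN^*_{\ov{p}}(\Delta[1]\times L)$ is a limit (an end) over the category of simplices, not a direct sum indexed by them, so even once one checks that the relevant subquotient of $N^*(\tc\Delta[a])\otimes N^*(\Delta[b])$ is acyclic on each model, the per-simplex contractions do not automatically assemble into a natural contraction of the global relative complex. This is exactly what the acyclic models machinery is for, and it is why the paper first establishes \lemref{lem:secondcomplex} (contractibility of the simplicial $R$-module $\tN^k_{\ov{p}}(\Psi_L\otimes\Delta[\bullet])$) in order to verify that the functor sends cofibrations to epimorphisms, and only then reduces the check to the models $\Delta[m]$, where it collapses to $\crH^*_{\ov{p}}(\Delta[0]\ast\Delta[0])\cong H^*(\mathrm{pt})$. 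The alternate route through $\FFS$ is also not immediate: the cone formula of \cite{CST1} is for simplicial cones, while the object in play is the cylinder $\Delta[1]\times L$ (equivalently the quotient $Q(R,n)$), so one still needs an argument to pass between them. Note finally that your target quasi-isomorphism and the paper's are equivalent ($\iota^*\circ\theta_L$ is the identity of $N^*(L)$, so $\iota^*$ is a quasi-isomorphism precisely when $\theta_L$ is), so the difference is not the target but the missing execution of the key technical step.
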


Before giving the proof, we need two lemmas. For sake of simplicity, we denote 
$$Q(R,n)=((\Delta[0]\ast\Delta[0])\times K(R,n))/((\Delta[0]\ast\emptyset)\times K(R,n)),$$
the simplicial set given by the pushout,
\begin{equation}\label{equa:Qpi}
\xymatrix{
(\Delta[0]\ast\emptyset)\times K(R,n)\ar[r]\ar[d]_-{pr}
&
(\Delta[0]\ast\Delta[0])\times K(R,n)\ar[d]^-{\pi}\\
\Delta[0]\ast\emptyset\ar[r]
&
Q(R,n).
}
\end{equation}
In \propref{prop:eml0}, we have proven that $Q(R,n)$ is homotopically equivalent to the Eilenberg-MacLane space,
$\ttK(R,n,[1],\ov{0})$. The next lemma  expresses the fact that the identification of non regular simplices to a
point has no influence on the blown up cohomology, due to the equality $\tN^*(-)=0$ on these elements.

\begin{lemma}\label{lem:fucknotregular}
The canonical surjection $\pi$ in the diagram \eqref{equa:Qpi} induces an isomorphism of cochain complexes
$$\pi^*\colon
\tN^*_{\ov{p}}(Q(R,n);R)
\xrightarrow{\cong}
\tN^*_{\ov{p}}((\Delta[0]\ast\Delta[0])\times K(R,n);R).
$$
\end{lemma}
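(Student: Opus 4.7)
The plan is to establish the isomorphism directly at the cochain level, exploiting the fact that the functor $\tN^*_{\ov{p}}\colon \Delta[\ttP]\to \mdg$ was extended in \defref{def:degrepervers}(2) by setting $\tN^*_{\ov{p}}(\Delta[m])=0$ on non-regular simplices of $\ttN(\ttP)$. For $\ttP=[1]$, a simplex of $\ttN([1])$ is regular precisely when it contains the maximal vertex $1$.

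First, I would analyze how the canonical surjection $\pi$ behaves on simplices. The pushout \eqref{equa:Qpi} is obtained by collapsing $(\Delta[0]\ast\emptyset)\times K(R,n)$ to $\Delta[0]\ast\emptyset$, so $\pi$ restricts to a bijection on the complement of this collapsed subspace and identifies all simplices of $(\Delta[0]\ast\emptyset)\times K(R,n)$ together with their images in $\Delta[0]\ast\emptyset$. Every simplex lying in $(\Delta[0]\ast\emptyset)\times K(R,n)$ (resp. in $\Delta[0]\ast\emptyset\subset Q(R,n)$) projects to the vertex $\{0\}\in\ttN([1])$, hence its associated simplex in $\ttN([1])$ is a degeneracy of $\{0\}$, which is not regular in the sense of \defref{def:simplexregular}.

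Next, I use this to show $\pi^*$ is both injective and surjective as cochain maps. A cochain $\omega\in\tN^*_{\ov{p}}(L;R)=\Hom_{\ssetp}(\Psi_L,\ttM_{\ov{p}}(\bullet,*))$ assigns to a simplex $\sigma\colon\Delta[m]\to L$ an element of $\tN^*_{\ov{p}}(\Delta[\sigma])$, which is zero whenever $\Delta[\sigma]$ is non-regular. Applied to both $(\Delta[0]\ast\Delta[0])\times K(R,n)$ and $Q(R,n)$, this forces every cochain on either side to vanish on the respective non-regular locus described above. For injectivity of $\pi^*$: if $\pi^*\omega=0$, then $\omega$ vanishes on all simplices outside the collapsed subspace (because $\pi$ is a bijection there), and it vanishes on the collapsed subspace by the regularity argument; thus $\omega=0$. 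For surjectivity: given $\omega\in\tN^*_{\ov{p}}((\Delta[0]\ast\Delta[0])\times K(R,n);R)$, define $\bar\omega$ on $Q(R,n)$ by $\bar\omega_{\pi(\sigma)}=\omega_\sigma$. Whenever $\pi(\sigma_1)=\pi(\sigma_2)$ with $\sigma_1\neq\sigma_2$, both simplices lie in the collapsed subspace and hence $\omega_{\sigma_1}=\omega_{\sigma_2}=0$, so $\bar\omega$ is well defined. Naturality with respect to face and degeneracy operators is inherited from that of $\omega$, and $\pi^*\bar\omega=\omega$ by construction.

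Finally, compatibility with the differentials is automatic because $\pi$ is a morphism in $\ssets$ and $\tN^*_{\ov{p}}$ is functorial, so $\pi^*$ commutes with $\delta$. There is no serious obstacle here; the only point that requires care is the bookkeeping on non-regular simplices, i.e., verifying that every simplex of $(\Delta[0]\ast\emptyset)\times K(R,n)$ projects to a non-regular simplex of $\ttN([1])$, which is immediate since its image lies in the non-maximal vertex $\{0\}\in[1]$.
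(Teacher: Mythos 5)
Your proof is correct and rests on exactly the same key observation as the paper's: the functor $\tN^*_{\ov{p}}$ vanishes on non-regular simplices, and every simplex lying over $\Delta[0]\ast\emptyset$ (equivalently, with image $\{0\}$ in $\ttN([1])$) is non-regular. The paper packages this more tersely: since $\tN^*_{\ov{p}}$ sends colimits to limits, applying it to the pushout \eqref{equa:Qpi} yields a pullback square whose two left-hand corners are zero, which forces $\pi^*$ to be an isomorphism. Your argument unpacks this categorical statement at the level of elements, verifying injectivity and surjectivity directly; it is a perfectly sound, if slightly more verbose, rendering of the same idea.
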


\begin{proof}
As the functor $\tN^*_{\ov{p}}(-)$ takes the value 0 on the non regular simplices and sends
inductive limits on projective limits, the result follows from the following pullback,
$$\xymatrix{
0=\tN^*_{\ov{p}}((\Delta[0]\ast\emptyset)\times K(R,n))
\ar[r]
&
\tN^*_{\ov{p}}((\Delta[0]\ast\Delta[0])\times K(R,n))\\
0=\tN^*_{\ov{p}}(\Delta[0]\ast\emptyset)
\ar[r]\ar[u]
&
\tN^*_{\ov{p}}(Q(R,n)).
\ar[u]_-{\pi^*}
}$$
\end{proof}

The second lemma introduces contractible simplicial $R$-modules.

\begin{lemma}\label{lem:secondcomplex}
Let $k$ be a positive integer, $\ov{p}$ a perversity on a poset $\ttP$ and $\Psi_{L}\in\ssetp$.
Then the simplicial $R$-module, defined by $n\mapsto \tN^k_{\ov{p}}(\Psi_{L}\otimes \Delta[n])$,
is contractible.
\end{lemma}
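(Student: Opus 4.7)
The goal is to show that the simplicial $R$-module $M_\bullet$ defined by $n \mapsto \tN^k_{\ov{p}}(\Psi_L \otimes \Delta[n])$ is contractible, i.e.\ that all simplicial homotopy groups $\pi_i(M_\bullet, 0)$ vanish. By the Dold--Kan correspondence this is equivalent to the acyclicity of the Moore complex $NM_\bullet$, and I would prove this by verifying the vanishing of each $\pi_i$ in turn.

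For $\pi_0(M_\bullet)$, Proposition~\ref{prop:epimorphism} is exactly what is needed: the surjectivity of $\iota_0^* + \iota_1^* \colon M_1 \twoheadrightarrow M_0 \oplus M_0$ immediately forces $d_0 - d_1 \colon M_1 \to M_0$ to be surjective, so $\pi_0(M_\bullet) = M_0/\mathrm{im}(d_0 - d_1) = 0$.

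For the higher homotopy groups, I would construct an explicit contracting chain homotopy $h_n \colon NM_n \to NM_{n+1}$ by extending the techniques of Propositions~\ref{prop:epimorphism} and~\ref{prop:productwithDelta1} inductively to all dimensions. The starting point is the standard degeneracy $s_0 \colon M_n \to M_{n+1}$ coming from the codegeneracy $\sigma^0 \colon \Delta[n+1] \to \Delta[n]$: the simplicial identity $d_0 s_0 = \id$ already provides the correct leading behaviour $d_0(s_0 \omega) = \omega$. The remaining face conditions $d_i(h_n \omega) = 0$ for $i \geq 1$ are enforced by a cup-product correction with the $0$-cochains $\1_{e_i} \in N^0(\Delta[n+1])$ dual to the vertices of $\Delta[n+1]$, using the cup product of blown-up cochains from \cite[Proposition~4.2]{CST4}. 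This extends the higher-dimensional surjection $\bigoplus_{i=0}^{n+1}\iota_i^*\colon M_{n+1}\twoheadrightarrow \bigoplus_{i=0}^{n+1}M_0$ with explicit section $(\omega_0,\ldots,\omega_{n+1})\mapsto \sum_i \1_{e_i}\smile \pi^*\omega_i$ which generalises Proposition~\ref{prop:epimorphism}. The cone-extension cochain homotopy $\tilde G$ from the proof of Proposition~\ref{prop:productwithDelta1} is then used to close the residual cochain-level discrepancies left by the simplicial corrections.

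The main obstacle is the bookkeeping required to verify that the resulting map $h_n$ simultaneously lies in the Moore subcomplex $NM_{n+1} = \bigcap_{i=1}^{n+1}\ker d_i$, satisfies the contraction identity $d_0 h_n + h_{n-1} d_0 = \id_{NM_n}$, and preserves the $\ov p$-allowability of the cochains (which is guaranteed by the fact that $\|\1_{e_i}\|_{\tts}\leq 0$ for all $\tts$, so the cup product does not raise perversities beyond $\ov p$). The hypothesis $k > 0$ is essential: the cup-product corrections $\1_{e_i} \smile (\cdot)$ raise the cochain degree only when there is room, and the whole construction degenerates at $k=0$, which is consistent with the fact that $M^0_\bullet$ is not contractible in general. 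Once these verifications are complete, the acyclicity of $NM_\bullet$ follows, and $M_\bullet$ is contractible.
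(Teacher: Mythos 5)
Your overall strategy matches the paper's: observe that the simplicial set is a simplicial abelian group, hence Kan, so it suffices to show its homotopy groups vanish, and produce the required fillers using cup product with pullbacks of vertex $0$-cochains applied to degenerate elements. Your treatment of $\pi_0$ via \propref{prop:epimorphism} is fine, and the key technical device you identify---set $y = \pi^*(\alpha) \smile s_0 x$ for a suitable $0$-cochain $\alpha$---is exactly what the paper does.

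However, two of the auxiliary claims you lean on are incorrect as stated. First, the ``higher-dimensional surjection'' $\bigoplus_{i=0}^{n+1}\iota_i^* \colon M_{n+1} \twoheadrightarrow \bigoplus_{i=0}^{n+1} M_0$ does not type-check: the face maps land in $M_n$, not $M_0$, and the section formula $(\omega_0,\dots,\omega_{n+1}) \mapsto \sum_i \1_{e_i}\smile\pi^*\omega_i$ is not what is needed to build a contraction of the Moore complex. The paper avoids any such intermediate surjection and instead exhibits the filler directly: for $n > 0$, given $x$ with $d_i x = 0$ for all $i$, it sets $\alpha = \sum_{i>0}\1_{e_i} \in N^0(\Delta[n+1])$ and $y = \pi^*(\alpha) \smile s_0 x$, then checks $d_0 y = x$ and $d_i y = 0$ for $i \geq 1$ using only the simplicial identities for $s_0$ and the fact that $d_1\alpha$ vanishes on $e_0$ (so the cup product kills the $i=1$ face). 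No appeal to the homotopy $\tG$ of \propref{prop:productwithDelta1} is needed or made. Second, your rationale for the hypothesis $k>0$ is wrong: cup product with the degree-$0$ cochains $\1_{e_i}$ does \emph{not} raise the cochain degree, so there is no ``room'' argument. Whatever role $k>0$ plays, it is not the one you describe.
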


\begin{proof}
The simplicial set 
$\tN^k_{\ov{p}}(\Psi_{L}\otimes \Delta[\bullet])$, 
being a simplicial abelian group,
is Kan. Thus it is sufficient to prove that its homotopy groups $\pi_{i}(-)$ are trivial. For that,
we use \cite[Proposition 1]{MR431137}, where two cases are considered.

\medskip
$\bullet$ For $n=0$, we consider $x\in\tN^k_{\ov{p}}(\Psi_{L}\otimes \Delta[0])$. We have to prove
that there exists $y\in \tN^k_{\ov{p}}(\Psi_{L}\otimes \Delta[1])$ with $d_{0}y=x$ and $d_{1}y=0$, where
$d_{0},\,d_{1}\colon \tN^k_{\ov{p}}(\Psi_{L}\otimes \Delta[1])
\to
\tN^k_{\ov{p}}(\Psi_{L}\otimes \Delta[0])$
are the face operators. Let $\Delta[1]=[e_{0},e_{1}]$ and $\alpha=1_{e_{1}}\in N^0(\Delta[1])$ the cochain with values
0 on $e_{0}$ and $1$ on $e_{1}$. We denote
$\omega=\pi^*({\alpha})\in N^0(\Psi_{L}\otimes \Delta[1])$ its pullback by the projection
$\pi\colon \Psi_{L}\otimes \Delta[1]\to \Delta[1]$. We set
$$y=\omega\smile s_{0}x\in \tN^k_{\ov{p}}(\Psi_{L}\otimes \Delta[1]),$$
where $s_{0}\colon \tN^k_{\ov{p}}(\Psi_{L}\otimes \Delta[0])\to \tN^k_{\ov{p}}(\Psi_{L}\otimes \Delta[1])$ 
is the 0-degeneracy map. With these choices, from $d_{1}\omega=\pi^*(d_{1}\alpha)=0$,
 $d_{0}\omega=\pi^*(d_{0}\alpha)=1$ and the simplicial identities, 
we have $d_{1}y=0$ and $d_{0}y=x$, as expected.

\medskip
$\bullet$ If $n>0$, we have to prove that the homotopy groups 
are trivial. Let $x\in \tN^k_{\ov{p}}(\Psi_{L}\otimes \Delta[n])$ with all face restrictions 
$d_{i}x\in\tN^k_{\ov{p}}(\Psi_{L}\otimes \Delta[n-1])$ trivial.
Set $\Delta[n]=[e_{1},\dots,e_{n+1}]$ and $\Delta[n+1]=[e_{0},e_{1},\dots,e_{n+1}]$.
We choose $\alpha=\sum_{i>0} 1_{e_{i}}\in N^*(\Delta[n+1])$
and $\omega=\pi^*(\alpha)$ where $\pi\colon \Psi_{L}\otimes \Delta[n+1]\to \Delta[n+1]$ is the canonical projection.
We set
$$y=\omega\smile s_{0}x\in \tN^k_{\ov{p}}(\Psi_{L}\otimes \Delta[n+1]).$$
From $d_{0}s_{0}=\id$, $d_{0}\omega=\pi^*(d_{0}\alpha)$ and $d_{0}\alpha=1$, we deduce
$d_{0}y=x$. We now have to prove
\begin{equation}\label{equa:di}
d_{i}y=d_{i}\omega\smile d_{i}s_{0}(x)=0 \text{ for } i>0.
\end{equation}
If $i>1$, then we have $d_{i}s_{0}x=s_{0}d_{i-1}x=0$, by hypothesis on $x$. 
We still have to consider the case $i=1$ which corresponds to
$$d_{1}y=d_{1}\omega\smile d_{1}s_{0}x=\pi^*(d_{1}\alpha)\smile x.$$
Let us notice that the 0-cochain $d_{1}\alpha$ is the restriction of $\alpha$ to the face
$[e_{0},e_{2},\dots,e_{n+1}]$ and that $\alpha(e_{0})=0$. In the cup product
$\pi^*(d_{1}\alpha)\smile x$, the first term is evaluated on the first vertex, thus $d_{1}y=0$.
We have established \eqref{equa:di}.
\end{proof}

\begin{proof}[Proof of \thmref{thm:conjen0}]
Let $Z\in\sset$.
The two cochain maps, $p_{1}^*$ and $p_{2}^*$, induced by the projections,
$p_{1}\colon (\Delta[0]\ast\Delta[0])\times Z \to \Delta[0]\ast\Delta[0]$
and $p_{2}\colon (\Delta[0]\ast\Delta[0])\times Z \to Z$,
and the cup product, $\smile$, on $\tN^*_{\bullet}(-)$ give a cochain map
$\chi=\smile \circ (p_{1}^*
\otimes
p_{2}^*)$.
We compose it with the canonical injection to obtain a natural transformation,
$$\theta_{-}\colon \xymatrix@1{
 N^*(-)
 \ar[r]^-{1\otimes -}
 &
 N^*_{\ov{p}}(\Delta[0]\ast\Delta[0])\otimes_{R}N^*(-)
 \ar[r]^-{\chi}&
 N^*_{\ov{p}}((\Delta[0]\ast\Delta[0])\otimes (-)),
 }
 $$
between the functors $N^*(-)$ and $G(-)=N^*_{\ov{p}}((\Delta[0]\ast\Delta[0])\otimes (-))$,
from $\sset^\op$ to $\mdg$.
The theorem is proven if we show that $\theta_{Z}$ is a quasi-isomorphism for any $Z\in\sset$.
For that, we use \cite[Proposition 3.1.14]{MR3931682} as ``a theorem of acyclic models.'' 

\medskip
-- A first step is to establish that the two functors send inductive limits on limits and cofibrations on epimorphisms.
The only point which needs a proof is that $G$ sends cofibrations on epimorphisms. 
For that, we consider the 
cochain complex, $D(n)^*$, defined by
$$D(n)^k=\left\{
\begin{array}{cl}
R&\text{if}\; k=n,\,n+1,\\
0&\text{otherwise,}
\end{array}\right.
$$
and an isomorphism $d\colon D(n)^n\to D(n)^{n+1}$ as differential.
We first notice that, for any cochain complex, $C^*$, one has
$\Hom_{\mdg}(D(n)^*,C^*)=C^n$.
Let $\Psi_{L}\in\ssetp$, $Y\in \sset$. With the notations of
\lemref{lem:secondcomplex}, there are isomorphisms,
\begin{eqnarray}
\Hom_{\sset}(Y,\tN^n_{\ov{p}}(\Psi_{L}\otimes \Delta[\bullet]))
&\cong&
\Hom_{\sset}(Y,\Hom_{\mdg}(D({n}),\tN^*_{\ov{p}}(\Psi_{L}\otimes \Delta[\bullet]))) \nonumber \\ 
&\cong&
\Hom_{\sset}(Y,\Hom_{\ssetp}(\Psi_{L}\otimes \Delta[\bullet],\langle D(n) \rangle_{\ov{p}})) \label{equa:lados}\\
&\cong&
\Hom_{\sset}(Y,\HomD_{\ssetp}(\Psi_{L},\langle D(n) \rangle_{\ov{p}})) \label{equa:latres}\\
&\cong&
\HomD_{\ssetp}(\Psi_{L}\otimes Y,\langle D(n) \rangle_{\ov{p}})), \label{equa:laquatro}
\end{eqnarray}
where \eqref{equa:lados} comes from \propref{prop:adjoint}, \eqref{equa:latres} from  \remref{rem:ssetpsimplicial}
and \eqref{equa:laquatro} from \defref{def:simplicialcat}. The last isomorphism allows 
the determination of the set of 0-simplices,
\begin{eqnarray}
(\Hom_{\sset}(Y,\tN^n_{\ov{p}}(\Psi_{L}\otimes \Delta[\bullet])))_{0}
&\cong&
\Hom_{\ssetp}(\Psi_{L}\otimes Y, \langle D(n)\rangle_{\ov{p}})\nonumber \\
&\cong&
\Hom_{\mdg}(D(n),\tN^*_{\ov{p}}(\Psi_{L}\otimes Y))\nonumber\\
&\cong&
\tN^n_{\ov{p}}(\Psi_{L}\otimes Y).\label{equa:una}
\end{eqnarray}
Let $j\colon X\to Y$ be a cofibration in $\sset$. As $\tN^n_{\ov{p}}(\Psi_{L}\otimes \Delta[\bullet])$ is a Kan, contractible simplicial set, 
any diagram as below admits a dot extension,
$$\xymatrix{
X\ar[d]_{j}\ar[r]
&
\tN^n_{\ov{p}}(\Psi_{L}\otimes \Delta[\bullet])\\
Y\ar@{-->}[ru]
}$$
This implies the surjectivity of the  map obtained by composition with $j$,
$$j^{\sharp}\colon
\Hom_{\sset}(Y,\tN^n_{\ov{p}}(\Psi_{L}\otimes \Delta[\bullet]))
\to
\Hom_{\sset}(X,\tN^n_{\ov{p}}(\Psi_{L}\otimes \Delta[\bullet])).
$$
From the surjectivity at the level of the 0-simplices and \eqref{equa:una}, we deduce the surjectivity of
$$(\id\otimes j)^*\colon \tN^n_{\ov{p}}(\Psi_{L}\otimes Y)\to \tN^n_{\ov{p}}(\Psi_{L}\otimes X)$$
for any $\Psi_{L}\in\ssetp$. Setting $\Psi_{L}=\Delta[0]\ast\Delta[0]$ gives the desired result for the functor $G$.

\medskip
-- After this first step, we can apply \cite[Proposition 3.1.14]{MR3931682}.
The proof is thus reduced to the verification that $\theta_{\Delta[m]}$ is a quasi-isomorphism for any $m$. More specifically, we have to verify that
$$N^*(\Delta[m])\to \tN^*_{\ov{p}}((\Delta[0]\ast\Delta[0])\otimes \Delta[m])$$
is a quasi-isomorphism, which is clearly the case.
(Let us recall that $\crH^*_{\ov{p}}(\Delta[0]\ast\Delta[0])$
is isomorphic to
$H^*(\mathrm{pt})$.)
\end{proof}
%

%
\appendix

\section{Filtered face sets}\label{sec:ffsareback}

\begin{quote}
In \cite{CST1}, we study the blown up cohomology (called TW-cohomology) by using filtered face sets, in the
spirit of the $\Delta$-sets of Rourke and Sanderson (\cite{MR0300281}).
In this section, we compare the blown up cohomology  thus obtained with the blown up cohomology of \secref{sec:functors}.
\end{quote}

Let $\ttP$ be a poset.
Let us denote by $\Delta[\ttP]^{\face}$ the subcategory of $\Delta[\ttP]$ whose morphisms
\eqref{equa:morphisms}
come from injective maps $\Delta[k]\to\Delta[\ell]$.
\begin{definition}\label{def:ffs}
A \emph{filtered face set over $\ttP$} is a presheaf on the category $\Delta[\ttP]^{\face}$; i.e.,
 a functor $\Psi_{T}\colon \left(\Delta[\ttP]^{\face}\right)^{\op}\to \set$. We denote $\FFS$ the 
category of natural transformations between \ffss~over $\ttP$.
\end{definition}

Let $\Psi_{T}\in \FFS$ and $\Psi_{L}\in \ssetp$. We define a \ffs~over $\ttP$, $\cO_{\ttP}(\Psi_{L})\in\FFS$, by restriction
and a simplicial set over $\ttP$, $F_{\ttP}(\Psi_{T})\in\ssetp$, by left Kan extension as we do in \defref{def:functiri}. 
These constructions are extended in functors and the functor $F_{\ttP}$ is left-adjoint to $\cO_{\ttP}$.

\medskip
We take over the presentation of \subsecref{subsec:twofunctors}
for the blown up cochains on \ffss~introduced in \cite{CST1}.
For any $\sigma\in\ttN(\ttP)$ and $k\in\N$, we have set
 $\ttM_{\ov{p}}(\sigma,k) =\tN^k_{\ov{p}}(\Delta[\sigma])$
 and $\tN_{\ov{p}}^k(\Psi_{L})=\Hom_{\ssetp}(\Psi_{L},\ttM_{\ov{p}}(\bullet,k))$
 for $\Psi_{L}\in\ssetp$.
Let $\Psi_{T}\in \FFS$ 
and $M\in\mdg$, we set
$$\tN^{\Delta,k}_{\ov{p}}(\Psi_{T})=\Hom_{\FFS}(\Psi_{T},\ttM_{\ov{p}}(\bullet,k))$$
and
$$\langle M\rangle_{\ov{p}}^{\Delta}=\varinjlim_{\sigma\in\Delta[P]^{\face}}\Hom_{\mdg}(M,\ttM_{\ov{p}}(\sigma,*)).$$
We  summarize these data in the following diagram composed of three pairs of adjoint functors:
$$\xymatrix{
\ssetp
\ar@<1ex>[rr]^-{\cO_{\ttP}}
\ar@<1ex>[rdd]^-{\tN_{\ov{p}}}
&&\FFS
\ar@<1ex>[ll]^-{F_{\ttP}}
\ar@<-1ex>[ldd]_{\tN_{\ttP}^\Delta}\\
&&\\
&\mdg
\ar@<-1ex>[ruu]_{\langle-\rangle_{\ttP}^\Delta}
\ar@<1ex>[luu]^{\langle-\rangle_{\ttP}}
&
}$$

\begin{remark}\label{rem:adrouette}
Let $\Psi_{T}\in\FFS$ and $\Delta[J]\in\ttN(\ttP)$. We set
$$\cR_{\ttP}(\Psi_{T})(\Delta[J])=\Hom_{\FFS}(\cO_{\ttP}(\Delta[J]),\Psi_{T}).$$
This definition extends in a functor $\cR_{\ttP}\colon \FFS\to \ssetp$, which is a right adjoint to $\cO_{\ttP}$. 
We do not use the functor $\cR_{\ttP}$ in this work.
\end{remark}

\begin{proposition}\label{prop:deltaornot}
Let $\ttP$ be a poset, $\ov{p}$ a perversity on $\ttP$, $\Psi_{L}\in \ssetp$ and $\Psi_{T}\in\FFS$. 
By denoting $\crH_{\ov{p}}^{\Delta,*}(-)$ the homology of $\tN^{\Delta,*}_{\ov{p}}(-)$, there are natural isomorphisms,
$$\crH^{\Delta,*}_{\ov{p}}(\Psi_{T};R)\cong
 \crH^*_{\ov{p}}(F_{\ttP}(\Psi_{T});R)
 \;\text{ and }\;
 \crH_{\ov{p}}^*(\Psi_{L};R)\cong
 \crH^{\Delta,*}_{\ov{p}}(\cO_{\ttP}(\Psi_{L});R).$$
\end{proposition}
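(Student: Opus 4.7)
The plan is to derive both isomorphisms from the adjunction $(F_{\ttP},\cO_{\ttP})$ together with the observation that the ``coefficient presheaf'' $\ttM_{\ov{p}}(\bullet,k)$ takes the same value on each simplex $\sigma\in\Delta[\ttP]$ whether we regard it as an object of $\ssetp$ or of $\FFS$; indeed $\cO_{\ttP}\bigl(\ttM_{\ov{p}}(\bullet,k)\bigr)$ is, by construction, precisely the object of $\FFS$ used to define $\tN^{\Delta,k}_{\ov{p}}$.

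First I would establish the first isomorphism at the cochain level. Using the adjunction and the identification above one has
\begin{align*}
\tN^{\Delta,k}_{\ov{p}}(\Psi_{T})
&=\Hom_{\FFS}\bigl(\Psi_{T},\cO_{\ttP}(\ttM_{\ov{p}}(\bullet,k))\bigr) \\
&\cong \Hom_{\ssetp}\bigl(F_{\ttP}(\Psi_{T}),\ttM_{\ov{p}}(\bullet,k)\bigr)
=\tN^{k}_{\ov{p}}\bigl(F_{\ttP}(\Psi_{T})\bigr).
\end{align*}
These bijections are natural in $k$ and commute with the cochain differentials coming from the cochain complex structure of $\tN^{*}_{\ov{p}}(\Delta[\sigma])$, so they assemble into an isomorphism of cochain complexes; passing to cohomology yields the first asserted isomorphism.

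For the second I would apply the first to $\Psi_{T}=\cO_{\ttP}(\Psi_{L})$, obtaining
$$\crH^{\Delta,*}_{\ov{p}}\bigl(\cO_{\ttP}(\Psi_{L});R\bigr)\cong \crH^{*}_{\ov{p}}\bigl(F_{\ttP}(\cO_{\ttP}(\Psi_{L}));R\bigr),$$
and then show that the counit $\varepsilon\colon F_{\ttP}(\cO_{\ttP}(\Psi_{L}))\to \Psi_{L}$ of the adjunction induces an isomorphism $\varepsilon^{*}\colon \tN^{*}_{\ov{p}}(\Psi_{L};R)\to \tN^{*}_{\ov{p}}\bigl(F_{\ttP}(\cO_{\ttP}(\Psi_{L}));R\bigr)$ of cochain complexes. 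The underlying simplicial map $\varepsilon$ is a bijection on non-degenerate simplices and merely reattaches degeneracies formally, so this reduces to checking that $\tN^{*}_{\ov{p}}\colon \ssetp\to \mdg$ is insensitive to the formal addition of degeneracies, i.e.\ factors up to natural isomorphism through $\cO_{\ttP}$.

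The latter point is where I expect the main difficulty. It is a simplicial analogue of the classical statement that the normalized cochain complex of a simplicial set agrees with that of its underlying face set. I would verify it directly for each $\Delta[m]\in \Delta[\ttP]^{+}$, tensor factor by tensor factor, using the product description $\tN^{*}(\Delta[m])=N^{*}(\tc\Delta[q_{0}])\otimes\dots\otimes N^{*}(\tc\Delta[q_{\ell-1}])\otimes N^{*}(\Delta[q_{\ell}])$ from \eqref{equa:boumcochain}: each factor $N^{*}$ is the normalized complex, forming the cone is compatible with this normalization, and the perverse-degree restriction is imposed simplex-by-simplex and therefore commutes with the identification. This compatibility then legitimates the transfer of results from \cite{CST1} to the framework of $\ssetp$ that is needed in \secref{sec:operations}.
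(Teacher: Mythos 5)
Your derivation of the first isomorphism via the adjunction $F_{\ttP}\dashv\cO_{\ttP}$ is correct and is a clean packaging of the paper's argument (the paper instead writes $F_{\ttP}(\Psi_{T})$ as a colimit and uses that $\tN^*_{\ov{p}}$ turns colimits into limits, which is the adjunction in disguise).

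The second isomorphism is where the proposal has a genuine gap. Your reduction to showing that the counit induces an \emph{isomorphism} $\varepsilon^*\colon \tN^*_{\ov{p}}(\Psi_L)\to\tN^*_{\ov{p}}(F_{\ttP}\cO_{\ttP}(\Psi_L))$, and your claim that $\tN^*_{\ov{p}}$ ``factors up to natural isomorphism through $\cO_{\ttP}$,'' are both false. The paper points out a counterexample explicitly: for $\ttP=\{0\}$ and $L=\Delta[0]$, the left side is the normalized cochain complex and the right side the non-normalized one. These are chain homotopy equivalent, not isomorphic, and establishing the natural homotopy equivalence is the actual content of the proposition. (Your remark that $\varepsilon$ is a bijection on non-degenerate simplices is also incorrect: $F_{\ttP}\cO_{\ttP}(\Psi_L)$ has a non-degenerate simplex for \emph{every} simplex of $\Psi_L$, degenerate or not, before the formal degeneracies are added on top.) Moreover, your factor-by-factor argument on representables glosses over the real subtlety: for a regular simplex $\Delta[J]=\Delta[j_0]\ast\dots\ast\Delta[j_n]$, the complex $\tN^{\Delta,*}_{\ov{p}}(\cO_{\ttP}(\Delta[J]))$ has cone factors $\ov{N}^*(\Delta[j_k])=\varprojlim_{\Delta[i]\to\Delta[j_k]}N^*(\tc\Delta[i])$, and because the apex of $\tc\Delta[i]$ does not appear in the indexing category, this is \emph{not} $C^*(\tc\Delta[j_k])$. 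So ``forming the cone is compatible with normalization'' does not hold at face value. The paper handles this by presenting $\tc\Delta[i]$ as a colimit of $\ast\leftarrow\Delta[i]\to\Delta[i]\ast\Delta[1]$ and then assembling a natural homotopy equivalence using Eilenberg--Zilber, Alexander--Whitney, and the natural contraction $N^*\simeq C^*$; that construction, and the check that perverse degree is preserved throughout, is the work your proposal is missing.
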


\begin{proof}
Let $\Psi_{T}\in\FFS$. 
The functor $F_{\ttP}$ is defined as a direct limit,
$F_{\ttP}(\Psi_{T})=\varinjlim_{\Delta[J]\to \Psi_{T}} \Delta[J]$,
and we have a natural isomorphism at the  level of the complexes,
$\tN^{\Delta,*}_{\ov{p}}(\Psi_{T})\cong \tN^*_{\ov{p}}(F_{\ttP}(\Psi_{T}))$.

\medskip
Let $\Psi_{L}\in\ssetp$. The second isomorphism is not so direct since  
$\tN^*_{\ov{p}}(\Psi_{L})\neq \tN_{\ov{p}}^{\Delta,*}(\cO_{\ttP}(\Psi_{L}))$:
for instance, if $\ttP=\{0\}$ and $L=\Delta[0]$, then
$\tN^*_{\ov{p}}(\Delta[0])$ is the complex of normalized cochains and
$\tN^{\Delta,*}_{\ov{p}}(\cO_{\ttP}(\Delta[0]))$ is the complex of non normalized cochains.

\medskip
Denote $\cS(\Psi_{L})=\left\{
\sigma\colon \Delta[J]\to \Psi_{L}\mid \Delta[J]\in \ttN(\ttP)\right\}$. 
The functor $\cO_{\ttP}$ being compatible with direct limits, we have
$$\cO_{\ttP}(\Psi_{L})=\varinjlim_{\sigma\in\cS(\Psi_{L})} \cO_{\ttP}(\Delta[J]).$$
Applying the functor $\tN^{\Delta,*}_{\ov{p}}(-)$ which sends direct limits to inverse limits, we get
$$\tN^{\Delta,*}(\cO_{\ttP}(\Psi_{L}))=\varprojlim_{\sigma\in\cS(\Psi_{L})} \tN^{\Delta,*}_{\ov{p}}(\cO_{\ttP}(\Delta[J])).$$
On the other hand, we have
$$\tN^*_{\ov{p}}(\Psi_{L})=\varprojlim_{\sigma\in\cS(\Psi_{L})}\tN^*_{\ov{p}}(\Delta[J]).$$
Thus, we are reduced to prove the existence of a natural homotopy equivalence between
$\tN^*_{\ov{p}}(\Delta[J])$ 
and 
$\tN^{\Delta,*}_{\ov{p}}(\cO_{\ttP}(\Delta[J]))$,
for $ \Delta[J]\in \ttN(\ttP)$.

\medskip
If $L\in \sset$, we denote $N^*(L)$ the normalized  cochain complex and $C^*(L)$ the non normalized one. There
exists a natural homotopy equivalence between them, $(f,g,H)$, 
$$\xymatrix@1{ 
N^*(L) 
\ar@<.4ex>[r]^g
&
C^*(L),  
\ar@<.4ex>[l]^f
}\;\;
f\circ g=\id_{N^*(L)},
$$
and  $H$ a natural homotopy between $g\circ f$ and $\id_{C^*(L)}$.

First, we consider the global complexes $\tN^{\Delta,*}$ and $\tN^*$ without referring to a perversity. By definition,
for $\Delta[J]=\Delta[j_{0}]\ast\dots\ast\Delta[j_{n-1}]\ast \Delta[j_{n}]$, we have
$$\tN^{\Delta,*}(\cO_{\ttP}(\Delta[J]))=
\ov{N}^*(\Delta[j_{0}])\otimes\dots\otimes \ov{N}^*(\Delta[j_{n-1}])\otimes C^*(\Delta[j_{n}]),$$
with 
$\ov{N}^*(\Delta[j_{k}])=\varprojlim_{\{\Delta[i]\to\Delta[j_{k}]\}} N^*(\tc \Delta[i])$.
As the apex of $\tc \Delta[i])$ does not appear in the limit, we do \emph{not} have
$\ov{N}^*(\Delta[j_{k}])=C^*(\tc \Delta[j_{k}])$.
To manage with the apex, we consider the cone
$\tc \Delta[i]$ as the direct limit of
$$\xymatrix@1{
\ast&
\Delta[i]\ar[r]\ar[l]&
\Delta[i]\ast\Delta[1].
}$$
Applying the normalized functor, we obtain $N^*(\tc \Delta[i])$ as the inverse limit of
$$\xymatrix@1{
R\ar[r]&
N^*(\Delta[i])&
N^*(\Delta[i]\times \Delta[1]).\ar[l]
}$$
Using the commutation of direct and inverse  limits, we can write $\ov{N}^*(\Delta[j_{k}])$ as the pullback of
$$\xymatrix@1{
R\ar[r]&
N^*(\Delta[j_{k}])&
\ar[l]
\varprojlim_{\{\Delta[i]\to\Delta[j_{k}]\}}N^*(\Delta[i]\times \Delta[1]).
}$$
By applying the Eilenberg-Zilber theorem and the Alexander-Whitney map, we get a natural homotopy equivalence
between $\ov{N}^*(\Delta[j_{k}])$ and the pullback of
$$\xymatrix@1{
R\ar[r]&
N^*(\Delta[j_{k}])&
\ar[l]
\varprojlim_{\{\Delta[i]\to \Delta[j_{k}\}}N^*(\Delta[i])\otimes N^*(\Delta[1]).
}$$
Let us notice that the right hand expression is the tensor product $C^*(\Delta[j_{k}])\otimes N^*(\Delta[1])$.
Using the natural homotopy equivalence, $(f,g,H)$, the Eilenberg-Zilber theorem and the Alexander-Whitney map, 
we have a natural homotopy equivalence between $\ov{N}^*(\Delta[j_{k}])$ and the pullback
$$\xymatrix@1{
R\ar[r]&
C^*(\Delta[j_{k}])&
\ar[l]
C^*(\Delta[j_{k}]\times \Delta[1]).
}$$
As non normalized cochains send direct limits on inverse limits, we have 
a natural homotopy equivalence between
$C^*(\tc \Delta[j_{k}])$ and $\ov{N}^*(\Delta[j_{k}])$
and thus  
a natural homotopy equivalence between
$\tN^*(\Delta[J])$ and $\tN^{\Delta,*}(\cO_{\ttP}(\Delta[J]))$.

Finally, as the perverse degree is the sum of the degrees of some factors of the tensor product,
which are preserved all along the previous process, we get a natural homotopy equivalence between
$\tN^*_{\ov{p}}(\Delta[J])$
and
$\tN^{\Delta,*}_{\ov{p}}(\cO_{\ttP}(\Delta[J]))$,
as expected.
\end{proof}

The determination of the $\ov{0}$-intersection cohomology of a filtered space 
as the ordinary singular cohomology of the space
 needs an hypothesis of normality
(see \cite{GM1}). This is also true for \ffss~and we send
the reader to \cite[Subsection 1.5]{CST1} for more details. 
As the reference \cite{CST1} is written for $\ttP=[n]$,
we restrict to this case for the end of this section.

\begin{proposition}\label{prop:normal}
Let $\ttP=[n]$  and $\Psi_{L}\in \ssetp$.
The map $(\cO\circ \tti\circ\cR )(\Psi_{L})\to \cO(\Psi_{L})$,
coming from the adjunction $(\tti,\cR)$,
is a normalization of the filtered face set $\cO(\Psi_{L})$.
\end{proposition}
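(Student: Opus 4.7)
The plan is to identify the map $\cO(\tti\circ\cR)(\Psi_{L})\to \cO(\Psi_{L})$ with the classical normalization of a filtered face set from \cite[Subsection 1.5]{CST1}. First, I would verify that the map really exists: since $\tti$ is left adjoint to $\cR$ (Proposition~\ref{prop:leftrightR}) and $\cR\circ\tti=\id$, the counit of the adjunction provides a natural transformation $\tti\circ\cR\to\id_{\ssetp}$, and applying the functor $\cO$ produces the claimed morphism in $\FFS$.

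Next, I would unravel what this map does combinatorially using the colimit description of $\tti$ (Definition~\ref{def:functiri}). For $\Psi_{L}\in\ssetp$, the restriction $\cR(\Psi_{L})$ retains only the simplices over regular elements $\tts_{0}\preceq\dots\preceq \tts_{\ell}$ with $\tts_{\ell}$ maximal, and forgets how their non-regular faces are glued together in $\Psi_{L}$. Writing $\cR(\Psi_{L})$ as a colimit of representables $(\Delta[q_{0}]\ast\dots\ast\Delta[q_{\ell}])^+$ and applying $\tti$, one obtains a simplicial set over $\ttN(\ttP)$ in which every non-regular simplex appears with a separate copy for each regular simplex containing it as a face; the counit map then collapses these copies back to the original identifications. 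Example~\ref{exam:papillon} is the prototype: the singular vertex of the bowtie is duplicated in $\tti(\cR(K))$, producing two disjoint triangles.

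Finally, I would check that this description matches the defining properties of a normalization in \cite{CST1}: the map is a bijection on regular (top-dimensional, in the pseudomanifold picture) simplices; the non-regular simplices above are in bijection with pairs (regular simplex $\sigma$ of $\cO(\Psi_{L})$, non-regular face of $\sigma$); and the induced link of each non-regular simplex becomes a disjoint union of the connected components that were identified in $\cO(\Psi_{L})$, so that the result is normal in the sense of \cite[Definition 1.55]{CST1}. Applying $\cO$ to pass from $\ssetp$ to $\FFS$ is harmless because the colimit defining $\tti$ is indexed by simplices, which are preserved under the forgetful passage to face sets.

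The main obstacle will be the precise combinatorial translation between the categorical left Kan extension defining $\tti$ and the explicit ``branch-separating'' construction of normalization used in \cite{CST1}; once one establishes that the counit introduces no new identifications among non-regular faces beyond those already forced within each regular simplex, all three characterizing properties follow automatically, and the proof reduces to verifying these properties face-by-face from the universal property of the colimit.
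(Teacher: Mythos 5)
Your proposal takes essentially the same route as the paper: identify the map with the counit of $\tti\dashv\cR$, use the left Kan extension (colimit) description of $\tti$ to see that the added simplices are exactly the non-regular faces of regular simplices, and then check the conditions of \cite[Definitions~1.55 and 1.59]{CST1}, using $\cR\circ\tti=\id$ for the normalization equality $((\cO\circ\tti\circ\cR)(\Psi_L))_+=(\cO(\Psi_L))_+$.

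One imprecision worth flagging: you say the non-regular simplices of $\tti(\cR(\Psi_L))$ are ``in bijection with pairs (regular simplex $\sigma$, non-regular face of $\sigma$).'' That over-counts, since the colimit identifies the non-regular face coming from $\sigma$ with the one coming from a regular face $\tau\prec\sigma$ that still contains it --- e.g.\ a single $2$-simplex $\Delta[0]\ast\Delta[1]$ with one singular vertex yields only one copy of that vertex, not three. You essentially acknowledge this when noting that the only identifications are ``those already forced within each regular simplex,'' which is precisely the colimit relation the paper invokes to obtain conditions (a) and (b); the paper phrases it directly as: each added simplex is a non-regular face of a regular one (condition (a)), and the added faces are pairwise distinct (condition (b)).
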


\begin{proof}
We refer to \cite[Definitions 1.55 and 1.59]{CST1} for the definitions of normal \ffs~and normalization.
Set ${T}=\cO(\Psi_{L})$. 
By definition, the expression $\cO(\cR(\Psi_{L}))$ corresponds to $T_{+}$ in \cite{CST1}.
Recall that the functor $\tti$ is constructed from a left Kan extension. Therefore, any simplex in 
$(\tti\circ \cR)(\Psi_{L})$ which is not in 
$(\cR\circ \tti\circ \cR)(\Psi_{L})=\cR(\Psi_{L})$
is a non regular face of a simplex in $\cR(\Psi_{L})$. This gives  condition (a) of \cite[Definition 1.55]{CST1}.
The unicity condition (b) of \cite[Definition 1.55]{CST1} comes from the fact that all added faces by $\tti$ are distinct.
Finally, the adaptation of the equality $(\cR\circ \tti\circ \cR)(\Psi_{L})=\cR(\Psi_{L})$ to the notations of \cite{CST1} is 
$((\cO\circ \tti\circ \cR)(\Psi_{L}))_{+}=(\cO(\Psi_{L}))_{+}$,
which is the required property of a normalization.
\end{proof}

From the Propositions \ref{prop:deltaornot}, \ref{prop:normal}
and from \cite[Propositions 1.54, 1.57  and 1.60]{CST1},
we deduce immediately the following result.

\begin{corollary}\label{cor:normal}
Let $R$ be a Dedekind domain, $\ov{p}$ a perversity, $\ttP=[n]$ and $\Psi_{L}\in \ssetp$.
Then there exist isomorphisms,
and 
$\crH^*_{\ov{\infty}}(\Psi_{L};R)\cong \mbox{} \crH^{\Delta,*}_{\ov{\infty}}(\cO(\Psi_{L});R)\cong H^*(L^{\reg};R)$.
\end{corollary}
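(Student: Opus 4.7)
The plan is to chain together the three inputs cited in the statement. First I would invoke Proposition~\ref{prop:deltaornot} to obtain the natural isomorphism
$$\crH^*_{\ov{\infty}}(\Psi_{L};R)\cong \crH^{\Delta,*}_{\ov{\infty}}(\cO_{\ttP}(\Psi_{L});R),$$
which reduces the problem to computing the blown up intersection cohomology of the filtered face set $\cO(\Psi_{L})$, the setting of the paper \cite{CST1}. At this stage the Dedekind hypothesis on $R$ has not yet been used; it will be needed only for the passage to the singular cohomology of the regular part.

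Next I would use Proposition~\ref{prop:normal}, which identifies $(\cO\circ\tti\circ\cR)(\Psi_{L})\to \cO(\Psi_{L})$ as a normalization of the filtered face set $\cO(\Psi_{L})$ in the sense of \cite[Definition~1.59]{CST1}. Invoking \cite[Proposition~1.54]{CST1}, which asserts that a normalization induces an isomorphism on blown up intersection cohomology for every perversity, yields
$$\crH^{\Delta,*}_{\ov{\infty}}(\cO_{\ttP}(\Psi_{L});R)\cong \crH^{\Delta,*}_{\ov{\infty}}\bigl((\cO\circ\tti\circ\cR)(\Psi_{L});R\bigr).$$

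Since the right-hand side is now the blown up cohomology of a \emph{normal} filtered face set, I would then apply \cite[Propositions~1.57 and~1.60]{CST1}, which for a Dedekind domain $R$ identify the $\ov{\infty}$-intersection cohomology of a normal filtered face set with the singular cohomology of the underlying regular part, giving the final isomorphism with $H^*(L^{\reg};R)$. The Dedekind hypothesis is used precisely here, through the universal coefficient argument inside \cite[Proposition~1.60]{CST1}.

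The only genuine point requiring care, and the main obstacle in writing out the proof cleanly, is bookkeeping: one must check that the regular part $L^{\reg}$ referred to in \cite{CST1} (defined there for filtered face sets indexed by $[n]$) coincides, after applying $\cO_{\ttP}$ and the normalization, with the simplicial set of non singular simplices of $\Psi_{L}$ in the sense of Definition~\ref{def:nonsingular}. Once this dictionary between the $\FFS$ language of \cite{CST1} and the $\ssetp$ language used here is explicit, the three isomorphisms compose to yield the claim.
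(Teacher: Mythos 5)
Your proposal follows exactly the paper's own argument: the paper deduces the corollary "immediately" from Propositions~\ref{prop:deltaornot} and~\ref{prop:normal} together with Propositions~1.54, 1.57 and~1.60 of \cite{CST1}, which is precisely the chain you spell out. The bookkeeping point you flag about matching $L^{\reg}$ with the non singular part of $\Psi_{L}$ is legitimate but minor, and is implicitly handled by the same dictionary the paper relies on.
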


In the statement of \cite[Proposition 1.57]{CST1}, the ring $R$ is required to be principal, but
this hypothesis is only used for the existence of a universal coefficient formula which also
exists for Dedekind domains.

\section{Simplicial category}\label{sec:simplicialcat}

\subsection{Definitions}\label{subsec:defsimplicialcat}
Recall from \cite[Chapter II]{MR0223432} (\cite[Chapter 2]{MR1711612} or \cite[Appendix A.1]{MR2522659})
  the following definition.

\begin{definition}\label{def:simplicialcat}
A category $\cC$ is a \emph{simplicial category} if there is a mapping
space functor 
$\HomD_{\cC}(-,-)\colon \cC^{\op}\times \cC \to \sset$,
satisfying the following  properties  for A and B objects in $\cC$, $K$ and $L$ in $\sset$.
\begin{enumerate}[(i)]
\item $\HomD_{\cC}(A,B)_{0}=\Hom_{\cC}(A,B)$.
\item The functor $\HomD_{\cC}(A,-)\colon \cC\to \sset$ has a left adjoint,
$A\otimes -\colon \sset\to \cC$; i.e.
$$\HomD_{\cC}(A\otimes K,B)\cong \HomD_{\sset}(K,\HomD_{\cC}(A,B)),$$
 which is associative in the sense there is an isomorphism,
$A\otimes (K\otimes L)\cong (A\otimes K)\otimes L$,
natural in $A\in\cC$ and $K,\,L\in\sset$.
\item The functor $\HomD_{\cC}(-,B)\colon \cC^{\op}\to\sset$ has a right adjoint
$B^{-}\colon \sset \to \cC^{\op}$; i.e.,
$$\HomD_{\sset}(K,\HomD_{\cC}(A,B))\cong \HomD_{\cC}(A, B^K),$$
for any $K\in\sset$ and  $A,\,B\in\cC$.
\end{enumerate}
\end{definition}

With the previous notation, for all $n\geq 0$, we have 
$$\HomD_{\cC}(A,B)_{n}=\Hom_{\cC}(A\otimes \Delta[n],B).$$

Let $K$ be a simplicial set. Two elements $x,y\in K_{0}$ are strictly homotopic 
(\cite[Section II.1]{MR0223432}) if there exists $z\in K_{1}$ with $d_{1}z=x$ and $d_{0}z=y$. 
The notion of homotopy is the generated equivalence relation, denoted $\sim$.
Thus, in a simplicial category $\cC$, we have  homotopy classes  defined
by $[A,B]=\pi_{0}\HomD_{\cC}(A,B)$. 
More specifically, let  $f,g\colon A\to B\in \cC$. By definition, we have
 $f\sim g$ if there exists $H$ (resp. $H'$) such that the left-hand (resp. right-hand) following diagram commutes,
$$
\xymatrix{
&B^{\Delta[1]}\ar[d]^{(\ev_{0},\ev_{1})}\\
A\ar[r]_-{f\times g}\ar[ru]^-{H}&
B^{\partial\Delta[1]}\cong B\times B,
}
\quad\quad \quad
\xymatrix{
A\otimes \Delta[1]\ar[dr]^{H'}&\\
A\otimes \partial \Delta[1]\ar[r]_-{(f, g)}\ar[u]&
B.
}
$$

\begin{definition}\label{def:sfibrant}
Let $\cC$ be a simplicial category.
An object $A$ of a simplicial category $\cC$ is s-\emph{fibrant} if, for any object $Z$ of $\cC$, the 
simplicial set $\HomD_{\cC}(Z,A)$ is a Kan complex.
A map $f\colon A\to B\in\cC$ is a \emph{weak s-equivalence} if for any object $Z$ of $\cC$, there is
an isomorphism,
$\pi_{0}\HomD_{\cC}(Z,A)\xrightarrow{\cong}\pi_{0} \HomD_{\cC}(Z,B)$.  
\end{definition}

\begin{definition}\label{def:cathomotopique}
The \emph{homotopy category, ${\rm Ho}$-s-$\cC$, associated to a simplicial category, $\cC$,} has
\begin{itemize}
\item for objects, the s-fibrants objects of $\cC$,
\item for morphisms, the connected components of morphisms of $\cC$; i.e.,
$$[A,B]=\pi_{0}\HomD_{\cC}(A,B).$$
\end{itemize}
Two objects of $\cC$ are \emph{homotopically equivalent}, denoted by $A\simeq_{\cC}B$, if there exist 
two morphisms of $\cC$, $f\colon A\to B$ and $g\colon B\to A$ such that $f\circ g$ is homotopic to $\id_{B}$ and 
$g\circ f$ homotopic to $\id_{A}$.
\end{definition}

In the category $\sset$, the notions of s-fibrant objects and of  
weak s-equivalences coincide with those of the Kan closed model structure.
In particular, ${\rm Ho}$-s-$\cC$ is the localisation at the weak homotopy equivalences.

\begin{proposition}
Let $\cC$ be a simplicial category.
A weak s-equivalence between s-fibrant objects of $\cC$ is a homotopy equivalence in $\cC$.
\end{proposition}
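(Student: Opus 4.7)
The plan is to adapt the classical Whitehead-type argument to the simplicial enrichment. Let $f\colon A\to B$ be a weak $s$-equivalence between $s$-fibrant objects. The first step is to invoke the functoriality of the mapping spaces: for each object $Z\in\cC$, the morphism $f$ induces a simplicial map
$$f_{*}\colon\HomD_{\cC}(Z,A)\longrightarrow \HomD_{\cC}(Z,B),$$
whose composition with a $0$-simplex $g$ is the usual composite $f\circ g$. Similarly, $f$ induces a map $f^{*}\colon\HomD_{\cC}(B,Z)\to\HomD_{\cC}(A,Z)$. These maps descend to $\pi_{0}$ and, by assumption on $f$, the induced maps on $\pi_{0}$ are bijections for every $Z$.

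Specializing to $Z=B$, the bijection $f_{*}\colon\pi_{0}\HomD_{\cC}(B,A)\xrightarrow{\cong}\pi_{0}\HomD_{\cC}(B,B)$ supplies a morphism $g\colon B\to A$, unique up to the equivalence relation $\sim$, such that $f\circ g\sim \id_{B}$. This already produces a right homotopy inverse. Note that this step uses only that $f_{*}$ is surjective on $\pi_{0}$ for $Z=B$, and the existence of the equivalence relation generated by strict homotopies (via the simplicial path object $B^{\Delta[1]}$, which is available because $B$ is $s$-fibrant).

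For the left homotopy inverse, I would specialize to $Z=A$ and use \emph{injectivity} of $f_{*}$ on $\pi_{0}\HomD_{\cC}(A,A)\to\pi_{0}\HomD_{\cC}(A,B)$. Computing both images:
$$f_{*}[g\circ f]=[f\circ g\circ f]=[\id_{B}\circ f]=[f]=f_{*}[\id_{A}],$$
where the second equality uses $f\circ g\sim\id_{B}$ together with the fact that $f_{*}$ is well-defined on $\pi_{0}$ (so pre-composition by $f$ preserves the homotopy relation on elements of $\HomD_{\cC}(A,B)$). Injectivity then forces $[g\circ f]=[\id_{A}]$, i.e.\ $g\circ f\sim\id_{A}$. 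Combined with the first step, this gives the required homotopy equivalence.

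The main conceptual point to verify carefully is that composition in $\cC$ is compatible with the equivalence relation $\sim$ on both sides. This follows formally from the definition of a simplicial category: the adjunction in part (ii) of \defref{def:simplicialcat} makes $A\otimes\Delta[1]$ into a cylinder object whose images under post-composition with $f$ and pre-composition by any morphism from $Z$ are again cylinders of the appropriate form, so that strict homotopies are preserved by composition. Once this compatibility is in place, the above argument is entirely formal, and no further use of $s$-fibrancy beyond ensuring that $\HomD_{\cC}(Z,A)$ and $\HomD_{\cC}(Z,B)$ are Kan (so that $\pi_{0}$ is the quotient by the honest equivalence relation $\sim$) is required.
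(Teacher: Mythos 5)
Your proof is correct and follows essentially the same two-step argument as the paper: produce a right homotopy inverse $g$ from the surjectivity of $f_{*}$ on $\pi_{0}\HomD_{\cC}(B,-)$ applied to $\id_{B}$, then use injectivity of $f_{*}$ on $\pi_{0}\HomD_{\cC}(A,-)$ to upgrade $g$ to a two-sided homotopy inverse via $f_{*}[g\circ f]=[f\circ g\circ f]=[f]=f_{*}[\id_{A}]$. The extra remarks you include about composition descending to $\pi_{0}$ and the role of $s$-fibrancy are left implicit in the paper but are correct and harmless.
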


\begin{proof}
Let $f\colon A\to B\in\cC$ be a weak s-equivalence with $A$ and $B$ s-fibrant.
A right inverse up to homotopy of $f$ is the map $g$ given by the surjectivity of $f_{*}\colon [Z,A]\to [Z,B]$
applied to the identity on $B$,
$$\xymatrix{
&A\ar[d]^{f}\\
B\ar@{=}[r]^{\id}\ar[ru]^{g}&B
}$$
Now, the injectivity of $f_{*}\colon [A,A]\to [A,B]$ gives $g\circ f$ homotopic to $\id_{A}$.
\end{proof}

\subsection{Infinite loop space}\label{subsec:infiniloop}
Let $\cC$ be a complete and cocomplete simplicial category. We denote $\ast$ the final  object of $\cC$.
We now introduce the notion of based loop space in $\cC$. %

\begin{definition}\label{def:pointed}
A \emph{pointed object} of $\cC$ is a couple $(X,\epsilon)$ of an object $X$ of $\cC$
and a morphism $\epsilon\colon \ast\to X$.
The \emph{pointed loop space}  $\Omega_{\epsilon} X$ of $(X,\epsilon)$
is the pull-back
\begin{equation}\label{equa:loop}
\xymatrix{
\Omega_{\epsilon}X\ar[rr]\ar[d]&&X^{\Delta^1}\ar[d]^{(\ev_{0},\ev_{1})}\\
\ast\ar[rr]^-{(\epsilon,\epsilon)}&&X^{\partial \Delta^1}\cong X\times X. 
}
\end{equation}
\end{definition}
Let $Z$ be an object of $\cC$, we apply the functor $\HomD_{\cC}(Z,-)$ to the previous diagram and get
the pullback,
\begin{equation}\label{equa:loop2}
\xymatrix{
\HomD_{\cC}(Z,\Omega_{\epsilon}X)\ar[rr]\ar[d]&&
\HomD_{\cC}(Z,X^{\Delta[1]})
\ar[d]^{\HomD_{\cC}(Z,(\ev_{0},\ev_{1}))}\\
\HomD_{\cC}(Z,\ast)\ar[rr]^-{\HomD_{\cC}(Z,(\epsilon,\epsilon))}&&
\HomD_{\cC}(Z,X^{\partial \Delta[1]}).
}
\end{equation}
By using  \defref{def:simplicialcat}.(iii), the right-hand vertical map
is induced by the canonical inclusion $\iota\colon \partial\Delta[1]\hookrightarrow \Delta[1]$, up to isomorphisms,
$$
\xymatrix{
\HomD_{\cC}(Z,X^{\Delta[1]})
\ar[r]^-{\cong}
\ar[d]_{\HomD_{\cC}(Z,(\ev_{0},\ev_{1}))}&
\HomD_{\sset}(\Delta[1],\HomD_{\cC}(Z,X))
\ar[d]^-{\iota^*}\\
\HomD_{\cC}(Z,X^{\partial \Delta^1})
\ar[r]^-{\cong}&
\HomD_{\sset}(\partial\Delta[1],\HomD_{\cC}(Z,X)).
}
$$
Suppose $X$ is s-fibrant, then the simplicial set $\HomD_{\cC}(Z,X)$ is Kan and
$\iota^*$ is a Kan fibration (\cite[Corollary 5.3]{MR1711612}).
As the diagram \eqref{equa:loop2} is a pull-back, with $\HomD_{\cC}(Z,\ast)\cong \Delta[0]$, 
we obtain that 
$\HomD_{\cC}(Z,\Omega_{\epsilon}X)$
is the fiber of a Kan fibration, thus it is a Kan simplicial set.
Finally, we have proven that $\Omega_{\epsilon}X$ is s-fibrant if $X$ is too.

\begin{definition}\label{def:infiniteloop}
An \emph{infinite loop space} in $\cC$ is a sequence of s-fibrant pointed objects of $\cC$,
$\{ (B_i,\epsilon_i) \}_{i\in \N}$,
such that $B_{i}$ is weakly s-equivalent to $\Omega_{\epsilon_i} B_{i+1}$.
\end{definition}

\begin{remark}
Let $\{ (B_i,\epsilon_i) \}_{i\in \N}$ be an infinite loop space in $\cC$. We can define a cohomological functor,
$\B\colon \cC^{\op}\to \mathbf{Ab}$-$\mathbf{gr}$,
with values in the category of graded abelian groups, by
$$A\mapsto \B^i(A)=\pi_{0}\HomD_{\cC}(A,B_{i}).$$
\end{remark}


\begin{thebibliography}{10}

\bibitem{MR2286904}
Markus Banagl, \emph{Topological invariants of stratified spaces}, Springer
  Monographs in Mathematics, Springer, Berlin, 2007. \MR{2286904 (2007j:55007)}

\bibitem{Bor}
A.~Borel and et~al., \emph{Intersection cohomology}, Modern Birkh\"auser
  Classics, Birkh\"auser Boston Inc., Boston, MA, 2008, Notes on the seminar
  held at the University of Bern, Bern, 1983, Reprint of the 1984 edition.
  \MR{2401086 (2009k:14046)}

\bibitem{MR0425956}
A.~K. Bousfield and V.~K. A.~M. Gugenheim, \emph{On {${\rm PL}$} de {R}ham
  theory and rational homotopy type}, Mem. Amer. Math. Soc. \textbf{8} (1976),
  no.~179, ix+94. \MR{0425956}

\bibitem{MR431137}
Henri Cartan, \emph{Th\'{e}ories cohomologiques}, Invent. Math. \textbf{35}
  (1976), 261--271. \MR{431137}

\bibitem{CST6}
David Chataur, Martintxo Saralegi-Aranguren, and Daniel Tanr\'e, \emph{Steenrod
  squares on intersection cohomology and a conjecture of {M} {G}oresky and {W}
  {P}ardon}, Algebr. Geom. Topol. \textbf{16} (2016), no.~4, 1851--1904.
  \MR{3546453}

\bibitem{CST7}
\bysame, \emph{Singular decompositions of a cap product}, Proc. Amer. Math.
  Soc. \textbf{145} (2017), no.~8, 3645--3656. \MR{3652815}

\bibitem{CST4}
\bysame, \emph{Blown-up intersection cohomology}, An alpine bouquet of
  algebraic topology, Contemp. Math., vol. 708, Amer. Math. Soc., Providence,
  RI, 2018, pp.~45--102. \MR{3807751}

\bibitem{CST1}
\bysame, \emph{Intersection cohomology, simplicial blow-up and rational
  homotopy}, Mem. Amer. Math. Soc. \textbf{254} (2018), no.~1214, viii+108.
  \MR{3796432}

\bibitem{CST2}
\bysame, \emph{Poincar\'e duality with cap products in intersection homology},
  Adv. Math. \textbf{326} (2018), 314--351. \MR{3758431}

\bibitem{CST3}
\bysame, \emph{{Intersection Homology. General perversities and topological
  invariance}}, Illinois J. Math. \textbf{63} (2019), no.~1, 127--163.

\bibitem{CST5}
\bysame, \emph{Blown-up intersection cochains and {D}eligne's sheaves},
  Geometriae Dedicata \textbf{204} (2020), no.~1, 315--337.

\bibitem{MR3931682}
Denis-Charles Cisinski, \emph{Higher categories and homotopical algebra},
  Cambridge Studies in Advanced Mathematics, vol. 180, Cambridge University
  Press, Cambridge, 2019. \MR{3931682}

\bibitem{2018arXiv180104797D}
Sylvain Douteau, \emph{A simplicial approach to stratified homotopy theory},
  Trans. Amer. Math. Soc. \textbf{374} (2021), no.~2, 955--1006. \MR{4196384}

\bibitem{FR2}
Greg Friedman, \emph{Intersection homology with general perversities}, Geom.
  Dedicata \textbf{148} (2010), 103--135. \MR{2721621}

\bibitem{Greg}
\bysame, \emph{Singular intersection homology}, New Mathematical Monographs,
  Cambridge University Press, 2020.

\bibitem{MR1711612}
Paul~G. Goerss and John~F. Jardine, \emph{Simplicial homotopy theory}, Progress
  in Mathematics, vol. 174, Birkh\"auser Verlag, Basel, 1999. \MR{1711612
  (2001d:55012)}

\bibitem{MR761809}
Mark Goresky, \emph{Intersection homology operations}, Comment. Math. Helv.
  \textbf{59} (1984), no.~3, 485--505. \MR{761809 (86i:55008)}

\bibitem{MR440533}
Mark Goresky and Robert MacPherson, \emph{La dualit\'{e} de {P}oincar\'{e} pour
  les espaces singuliers}, C. R. Acad. Sci. Paris S\'{e}r. A-B \textbf{284}
  (1977), no.~24, A1549--A1551. \MR{440533}

\bibitem{GM1}
\bysame, \emph{Intersection homology theory}, Topology \textbf{19} (1980),
  no.~2, 135--162. \MR{572580 (82b:57010)}

\bibitem{GM2}
\bysame, \emph{Intersection homology. {II}}, Invent. Math. \textbf{72} (1983),
  no.~1, 77--129. \MR{696691 (84i:57012)}

\bibitem{MR932724}
\bysame, \emph{Stratified {M}orse theory}, Ergebnisse der Mathematik und ihrer
  Grenzgebiete (3) [Results in Mathematics and Related Areas (3)], vol.~14,
  Springer-Verlag, Berlin, 1988. \MR{932724 (90d:57039)}

\bibitem{MR1014465}
Mark Goresky and William Pardon, \emph{Wu numbers of singular spaces}, Topology
  \textbf{28} (1989), no.~3, 325--367. \MR{1014465}

\bibitem{GS}
Mark Goresky and Paul Siegel, \emph{Linking pairings on singular spaces},
  Comment. Math. Helv. \textbf{58} (1983), no.~1, 96--110. \MR{699009
  (84h:55004)}

\bibitem{Hov}
Mark Hovey, \emph{Intersection homological algebra}, New topological contexts
  for {G}alois theory and algebraic geometry ({BIRS} 2008), Geom. Topol.
  Monogr., vol.~16, Geom. Topol. Publ., Coventry, 2009, pp.~133--150.
  \MR{2544388 (2010g:55009)}

\bibitem{joyalbarcelona}
Andr{\'e} {Joyal}, \emph{{The theory of quasi-categories and its
  applications}}, Quadern 45, Vol. II, Centre de Recerca Matem{\`a}tica
  Barcelona, 2008.

\bibitem{zbMATH05219541}
Andr\'e {Joyal} and Myles {Tierney}, \emph{{Quasi-categories vs Segal
  spaces.}}, {Categories in algebra, geometry and mathematical physics.
  Conference and workshop in honor of Ross Street's 60th birthday, Sydney and
  Canberra, Australia, July 11--16/July 18--21, 2005}, Providence, RI: American
  Mathematical Society (AMS), 2007, pp.~277--326 (English).

\bibitem{MR2207421}
Frances Kirwan and Jonathan Woolf, \emph{An introduction to intersection
  homology theory}, second ed., Chapman \& Hall/CRC, Boca Raton, FL, 2006.
  \MR{2207421 (2006k:32061)}

\bibitem{LurieSecondBook}
Jacob Lurie, \emph{Higher algebra}, Available at
  \url{http://www.math.harvard.edu/~lurie/papers/HA.pdf}.

\bibitem{MR2522659}
\bysame, \emph{Higher topos theory}, Annals of Mathematics Studies, vol. 170,
  Princeton University Press, Princeton, NJ, 2009. \MR{2522659}

\bibitem{MacPherson90}
Robert MacPherson, \emph{Intersection homology and perverse sheaves},
  Unpublished AMS Colloquium Lectures, San Francisco, 1991.

\bibitem{MR0196744}
Michael~C. McCord, \emph{Singular homology groups and homotopy groups of finite
  topological spaces}, Duke Math. J. \textbf{33} (1966), 465--474. \MR{0196744
  (33 \#4930)}

\bibitem{MooreInvariant}
J.~C. Moore, \emph{Homotopie des complexes mono\"\i daux, {II}}, S\'eminaire
  Henri Cartan \textbf{7} (1954-1955), no.~2 (fr), talk:19.

\bibitem{MR0223432}
Daniel~G. Quillen, \emph{Homotopical algebra}, Lecture Notes in Mathematics,
  No. 43, Springer-Verlag, Berlin-New York, 1967. \MR{0223432}

\bibitem{Qui}
Frank Quinn, \emph{Homotopically stratified sets}, J. Amer. Math. Soc.
  \textbf{1} (1988), no.~2, 441--499. \MR{928266 (89g:57050)}

\bibitem{MR0300281}
C.~P. Rourke and B.~J. Sanderson, \emph{{$\triangle $}-sets. {I}. {H}omotopy
  theory}, Quart. J. Math. Oxford Ser. (2) \textbf{22} (1971), 321--338.
  \MR{0300281 (45 \#9327)}

\bibitem{saralegiaranguren2019refinement}
Martintxo Saralegi-Aranguren, \emph{Refinement invariance of intersection
  (co)homologies}, Homology Homotopy Appl. \textbf{23} (2021), no.~1, 311--340.
  \MR{4170473}

\bibitem{ST2}
Martintxo {Saralegi-Aranguren} and Daniel {Tanr{\'e}}, \emph{Poincar\'{e}
  duality, cap product and {B}orel-{M}oore intersection homology}, Q. J. Math.
  \textbf{71} (2020), no.~3, 943--958. \MR{4142716}

\bibitem{ST1}
\bysame, \emph{Variations on {P}oincar\'{e} duality for intersection homology},
  Enseign. Math. \textbf{65} (2020), no.~1-2, 117--154. \MR{4057357}

\bibitem{Serrekpin}
Jean-Pierre Serre, \emph{Les espaces $k(\pi ,n)$}, S\'eminaire Henri Cartan
  \textbf{7} (1954-1955), no.~1 (fr), talk:1.

\bibitem{Sieg}
P.~H. Siegel, \emph{Witt spaces: a geometric cycle theory for {$K{\rm
  O}$}-homology at odd primes}, Amer. J. Math. \textbf{105} (1983), no.~5,
  1067--1105. \MR{714770}

\bibitem{2018arXiv180411274T}
Dai {Tamaki} and Hiro~Lee {Tanaka}, \emph{{Stellar Stratifications on
  Classifying Spaces}}, arXiv e-prints (2018), arXiv:1804.11274.

\bibitem{MR2575092}
David Treumann, \emph{Exit paths and constructible stacks}, Compos. Math.
  \textbf{145} (2009), no.~6, 1504--1532. \MR{2575092}

\bibitem{MR2591969}
Jon Woolf, \emph{The fundamental category of a stratified space}, J. Homotopy
  Relat. Struct. \textbf{4} (2009), no.~1, 359--387. \MR{2591969}

\end{thebibliography}
%
\providecommand{\bysame}{\leavevmode\hbox to3em{\hrulefill}\thinspace}
\providecommand{\MR}{\relax\ifhmode\unskip\space\fi MR }
\providecommand{\MRhref}[2]{%
  \href{http://www.ams.org/mathscinet-getitem?mr=#1}{#2}
}
\providecommand{\href}[2]{#2}

 \end{document}